\begin{document}

%%%%%%%%%%%%%
%%% Title %%%
%%%%%%%%%%%%%

\title{Finding Planted Cycles in a Random Graph}

 \author{Julia Gaudio\footnote{Northwestern University, Department of Industrial Engineering and Management Sciences; julia.gaudio@northwestern.edu} \and Colin Sandon \footnote{\'Ecole Polytechnique F\'ed\'erale de Lausanne; colin.sandon@epfl.ch} \and Jiaming Xu \footnote{Duke University, The Fuqua School of Business; jiaming.xu868@duke.edu} \and Dana Yang \footnote{Cornell University, Department of Statistics and Data Science; xy374@cornell.edu}}

\date{\today}

\maketitle

\begin{abstract}
In this paper, we study the problem of finding a collection of planted cycles in an \ER random graph $G \sim \mathcal{G}(n, \lambda/n)$, in analogy to the famous Planted Clique Problem. When the cycles are planted on a uniformly random subset of $\delta n$ vertices, we show that almost-exact recovery (that is, recovering all but a vanishing fraction of planted-cycle edges as $n \to \infty$) is information-theoretically possible if $\lambda < \frac{1}{(\sqrt{2 \delta} + \sqrt{1-\delta})^2}$ and impossible if $\lambda > \frac{1}{(\sqrt{2 \delta} + \sqrt{1-\delta})^2}$. 
Moreover, despite the worst-case computational hardness of finding long cycles, we design a polynomial-time algorithm that attains almost exact recovery when $\lambda < \frac{1}{(\sqrt{2 \delta} + \sqrt{1-\delta})^2}$. This stands in stark contrast to the Planted Clique Problem, where a significant computational-statistical gap is widely conjectured. A key technical contribution is a novel generating-function approach for counting imbalanced circuits that arise in decompositions of the symmetric difference between the planted cycles and alternative feasible solutions.
%Unlike for the Planted Clique Problem, which has a (conjectured) computational-statistical gap, we can recover the planted cycles almost exactly whenever it is information-theoretically feasible to do so. Namely, we propose an efficient algorithm for almost exact recovery which succeeds whenever $\lambda < \frac{1}{(\sqrt{2 \delta} + \sqrt{1-\delta})^2}$. 
\end{abstract}

\providecommand{\keywords}[1]
{
  \small	
  \textbf{\textit{Keywords---}} #1
}

\keywords{Random graphs,
 Planted cycles, Phase transitions, Generating functions, Branching processes}

\newpage 
\section{Introduction}
The problem of finding the longest cycle in a graph $G=([n],E)$ is a classical and fundamental challenge in theoretical computer science. It is well-known to be NP-hard in the worst case. A celebrated color-coding technique can find a cycle of length $k$, if it exists, in  $e^{O(k)}n|E|$ expected time~\cite{Alon1994}. However, this method only gives a polynomial-time algorithm to find a cycle of length $O(\log n). $

Given the problem's computational intractability in worst-case instances, a natural direction is to study it in random graph models, where one can hope for more tractable behavior. This line of inquiry dates back to the foundational work of Erd\H{o}s and R\'enyi~\cite{erdos1960evolution}, and has since grown into a rich area of research.
%Since finding the longest cycle is difficult in general graphs, it is natural to study this problem for randomly generated input graphs, which results in a rich line of research dating back to the foundational work by Erd\H{o}s and R\'enyi~\cite{erdos1960evolution}. 
A celebrated result of \cite{bollobas1984long} shows that in the \ER random graph $\mathcal{G}(n,\lambda/n)$, the length of the longest cycle satisfies $L_{\lambda,n} \geq (1-\lambda^6 e^{-\lambda})n$ for $\lambda > 0$ sufficiently large. Subsequent works~\cite{frieze1986large} have further refined these bounds, culminating in the result of \cite{anastos2021scaling,anastos2023note} proving that $L_{\lambda,n}/n \to f(\lambda)$ almost surely for $\lambda \ge 20$, where $f(\lambda)=1-\sum_{k=1}^\infty p_k(\lambda) e^{-k\lambda}$ and $p_k$ is a polynomial in $\lambda$.
From a computational standpoint, the simple depth-first-search algorithm can find a cycle of length $\Theta(n)$ efficiently~\cite{krivelevich2015long}. However, whether the longest cycle can be found in polynomial time remains an open question.

%While linear-length cycles are known to exist in random graphs, they appear to be difficult to find. \nbr{double check} The best known efficient algorithm finds cycles of length $O(\log n)$ using color-coding \cite{Alon1994}. \nbr{this is for deterministic graph} 

% \textcolor{green}{need positive results about finding long cycles; also include the fact that color-coding finds cycles of length $O(\log n)$ in arbitrary graphs?}
A similar phenomenon is observed for the problem of finding the largest clique in a graph. It is known that the largest clique in $\mathcal{G}(n,p)$ for constant $p$ is of size $(2 \pm o(1)) \log_{1/p}(n)$ with high probability \cite{Bollobas1998}, yet the best known polynomial-time algorithms can only achieve half of this size \cite{Karp1976}. In response to this gap, a prominent line of research initiated by~\cite{jerrum1992large,kuvcera1995expected} considers the problem of finding a clique which is \emph{planted} inside an Erd\H{o}s--R\'enyi random graph $G \sim \mathcal{G}(n,p)$. Letting $k$ be the size of the planted clique, it is known that one
can recover a planted clique of size $k=\Theta(\log n)$ information-theoretically; however, the best known polynomial-time algorithms 
can only recover a planted clique of size $k = \Omega(\sqrt{n})$ (see e.g. \cite{alon1998finding}).
% the planted clique can be recovered in polynomial-time whenever 
We study the analogous question for cycles:
 %Given how elusive cycles are, \nbr{Sorry, what's the meaning of this sentence?} it is natural to ask, 
\begin{center}
\textit{Under what conditions can we (efficiently) find cycles that are planted in a random graph}?
\end{center}
%In this paper, we study the problem of finding planted cycles in a random graph. 
%\nbr{Is there any reason that we do not directly introduce the formal definition of our model?}
Concretely, we generate a graph according to the following planted cycles model.\footnote{We note that our planted cycles problem is distinct from the similarly-named \emph{planted dense cycle problem} \cite{mao2023detection,mao2024information}. In the planted dense cycle problem, the vertices of the background graph are spatially embedded on a circle. Additional edges are then added, connecting pairs of vertices randomly with a probability that decays with the circular distance.}
\begin{definition}[Planted cycles model]
To generate a simple graph (i.e., a graph without self-loops or parallel edges) $G$ from the planted cycles model, denoted by $\mathcal{G}(n,  \lambda, \delta)$, first generate a background \ER graph $G_0 = ([n],E) \sim \mathcal{G}(n, \lambda/n)$ by connecting each pair of two vertices independently at random with probability $\lambda/n$. Then choose a set of $\delta n$ vertices\footnote{Strictly speaking we choose $\lfloor \delta n\rfloor$ vertices, but for ease of notation we generally just say $\delta n$.} $V \subseteq [n]$, uniformly at random. Finally, select a labeled $2$-factor 
$H^*$ on $V$, a vertex-disjoint union of cycles spanning all vertices in $V$, uniformly at random.
%Plant a vertex-disjoint union of cycles on $V$, denoted by $H^*$, which is chosen uniformly at random from among all labeled $2$-factors on $V$. 
Let $G = G_0 \cup H^*$ be the union of the background graph and the planted cycles.  
\end{definition}
%background \ER graph $G_0 \sim \mathcal{G}(n, p_n)$. Letting $\delta \in (0,1]$ be a constant, we choose a set $V'$ of $\delta n$ vertices in $G_0$ uniformly at random. We then plant a disjoint union of cycles on this $V'$, chosen uniformly at random from among all labeled $2$-factors on $V'$. Let $H^*$ denote the planted graph. Letting $G = G_0 \cup H^*$ denote the resulting graph, 
The goal is to recover the edges in the planted $2$-factor $H^*$ based on the observation $G$. Note that crucially, the vertex set $V$ of the planted $2$-factor $H^*$ is unknown. We focus on the regime where $\lambda, \delta > 0$ are fixed constants, independent of $n$. It turns out that exactly recovering $H^*$ with high probability is information-theoretically impossible for constant $\lambda$ (Theorem \ref{thm:exact}).  We therefore study the problem of almost exact recovery; that is, recovering all but an $o(1)$ fraction of edges in the planted cycles in expectation. More formally, let $\hat{H}$ be an estimator of $H^*$; that is, a set of edges on the complete graph $K_n$. The reconstruction error, namely the fraction of misclassified edges, is 
\begin{equation}
\risk(H^*, \hat H) = \frac{\left| \Mplanted \Delta \hat{H} \right|}{|\Mplanted|},
\label{eq:risk}
\end{equation}
where $\Delta$ denotes the symmetric set difference.
%\nbr{Maybe we should say something like When$\lambda$ is a constant, exactly recovering $H^*$ with high probability is information-theoretically impossible. Thus we resort to almost exact recovery?} 
We say that $\hat{H}$ achieves almost exact recovery if $\Expect[\risk(\Mplanted,\hat H)] = o(1)$, or equivalently, 
$\prob{\risk(\Mplanted,\hat H) = o(1)}=1-o(1)$, as $n \to \infty. $

% It turns out that exactly recovery all edges in the planted cycles with high probability is fundamentally impossible for constant $\lambda.$ Thus, we resort to the almost exact recovery, that is, recovering all but $o(1)$ fraction of edges in the planted cycles with high probability. 

% and  

\begin{comment}
We next describe the notions of exact, almost exact, and weak recovery, stated in terms of the \emph{reconstruction error}. 

\begin{definition}[Reconstruction error]
 Let $\calH$ denote the set of all possible $ k$ factors contained in the complete graph $K_n$. Let $\hat H \equiv \hat H(G)$ denote an estimator of $H^*$ that is a set of edges on $K_{n}$.
%the estimated perfect matching. 
The reconstruction error, namely, the fraction of misclassified edges is 
\begin{equation}
\risk(H^*, \hat H) = \frac{\left| \Mplanted \symdiff \hat{H} \right|}{|\Mplanted|} ,
\label{eq:risk}
\end{equation}
where $\symdiff$ denotes the symmetric set difference.   
\end{definition}

 We say that $\hat H$ achieves 
\begin{itemize}
\item \emph{exact recovery}, if 
$\risk(\Mplanted,\hat H) =0$ with high probability;
    \item \emph{almost exact recovery}, if $\Expect[\risk(\Mplanted,\hat H)] = o(1)$;
    \item \emph{partial recovery},
if $\Expect[\risk(\Mplanted,\hat H)] \le 1-\Omega(1).$
\end{itemize}
\end{comment}

\begin{comment}
\textcolor{green}{Notes to be deleted
\begin{itemize}
\item Exact recovery is possible if and only if $\lambda=o(1)$
\item Almost exact recovery is possible if and only if $\lambda \le f(\delta)$
\item Partial recovery is impossible if $\lambda \to \infty.$
\end{itemize}}
\end{comment}

\subsection{Main results}
\begin{comment}
We now formally define our planted cycles model.
\begin{definition}[Planted cycles model]\label{def:partial_k_factor}
Let $\delta \in (0,1]$ be such that $\delta n$ is an integer and $\lambda >0$. Suppose a subset $V$ of $\delta n $ vertices is chosen uniformly at random from $[n]$, and $H^*$ is a $ 2$-factor (vertex-disjoint union of cycles) on $V$ chosen uniformly at random.  
Then every node pair not in $H^*$ is connected independently with probability $\lambda/n.$ 
Let $\calG(n,\delta,\lambda)$ denote the resulting random graph model. 
\end{definition}
\end{comment}

Our main results precisely characterize the threshold for almost exact recovery of the planted cycles.

\begin{theorem}\label{thm:almost-exact}
Consider the planted cycles model $\calG(n,\lambda,\delta)$, where $\delta \in (0,1]$. If $\lambda < \frac{1}{(\sqrt{2\delta} + \sqrt{1-\delta})^2}$, then almost exact recovery is possible. Moreover, there exists a polynomial-time algorithm that achieves almost exact recovery. Conversely, if $\lambda > \frac{1}{(\sqrt{2\delta} + \sqrt{1-\delta})^2}$, then almost exact recovery is information-theoretically impossible for any algorithm, regardless of computation time. 
\end{theorem}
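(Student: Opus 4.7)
My plan is to address the three parts of the theorem---information-theoretic impossibility, information-theoretic possibility, and polynomial-time achievability---through a unified framework based on $\mathcal{F}(G)$, the collection of all $2$-factors on $\delta n$-vertex subsets of $[n]$ that are subgraphs of $G$. A direct Bayesian calculation shows that the posterior distribution of $H^*$ given $G$ is \emph{uniform} over $\mathcal{F}(G)$: writing $p = \lambda/n$, the likelihood ratio $P(G \mid H)/P_{\mathrm{null}}(G)$ equals $\bigl((1-p)/p\bigr)^{|H|}$ whenever $H \subseteq G$, and $|H| = \delta n$ is the same for every candidate. Consequently, almost exact recovery is possible if and only if a uniformly random element of $\mathcal{F}(G)$ agrees with $H^*$ on all but $o(\delta n)$ edges with high probability.

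I will parametrize deviations from $H^*$ by the symmetric difference $H \Delta H^*$ for each alternative $H \in \mathcal{F}(G)$. Since $H^*$ and $H$ are both $2$-regular on vertex sets of size $\delta n$ (say $V$ and $V'$), this symmetric difference decomposes into a vertex-disjoint union of alternating cycles and alternating paths, whose edges alternate between $H^*$ and $G_0 \setminus H^*$ and whose path endpoints lie in $V \Delta V'$. Enumerating these alternating structures of total length $2k$ admits a two-type branching-process interpretation: at each step of an alternating walk we either follow a cycle edge of $H^*$ (arriving at one of roughly $2\delta n$ cycle-slots) or traverse a background edge of weight $\lambda/n$ (landing on a cycle vertex with probability $\delta$ or a non-cycle vertex with probability $1-\delta$). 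A Cram\'er-type optimization of the exponential growth rate over the mix of cycle-edge versus background-edge extensions yields $\lambda^* = 1/(\sqrt{2\delta} + \sqrt{1-\delta})^2$, with the two square-root terms corresponding to the two extension types at the optimum.

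For the impossibility direction ($\lambda > \lambda^*$), the counting above, combined with a second-moment argument, produces with high probability exponentially many $H \in \mathcal{F}(G)$ at Hamming distance $\Omega(n)$ from $H^*$, forcing a uniform sample from $\mathcal{F}(G)$ to incur $\Omega(n)$ reconstruction error on average. For the possibility direction ($\lambda < \lambda^*$), the expected number of alternating structures of length $\geq 2k$ decays geometrically in $k$, so summing yields $\mathbb{E}\, |H \Delta H^*| = o(n)$ for a uniform $H \in \mathcal{F}(G)$, implying that the MAP estimator achieves almost exact recovery. For the polynomial-time algorithm, I will classify each observed edge $e$ by a local statistic counting short alternating witnesses within depth $O(\log n)$ around $e$; a coupling of these local neighborhoods with a truncated version of the branching process above shows that below threshold the statistic concentrates at distinct values for cycle versus background edges, yielding $o(n)$ misclassifications.

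The main technical obstacle is the sharp generating-function analysis underlying the threshold: matching the information-theoretic and algorithmic constants requires the exact exponential rate of alternating-structure counts, which in turn demands a careful multi-type large-deviation optimization and tight second-moment concentration to avoid constant-factor losses. A secondary obstacle is ensuring that the polynomial-time algorithm attains almost exact recovery all the way up to $\lambda^*$, not merely for $\lambda$ bounded away from it; this requires controlling polynomial-order corrections in the branching-process coupling and potentially choosing the exploration depth adaptively.
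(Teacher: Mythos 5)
Your overall framework — posterior uniform over $2$-factors in $G$, decomposition of $H\Delta H^*$ into alternating structures, threshold computed via a growth-rate / generating-function optimization — matches the paper's skeleton, and the possibility direction is essentially the paper's argument (Lemma~\ref{validWalkCountLem}, the generating function \prettyref{eq:generating_function}, and \prettyref{lmm:lambdaThresholdLem} together give $\Expect|H\Delta H^*|=O(1)$). However, there are two substantive gaps.

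First, the impossibility direction as you sketch it does not engage the central structural difficulty when $\delta<1$: in the circuit decomposition of $H\Delta H^*$, only the \emph{collection} of circuits needs to have half its edges planted, while individual circuits may be arbitrarily imbalanced. A naive second-moment count of "alternating structures of total length $2k$" does not directly produce alternative $2$-factors, because stitching together circuits that are individually imbalanced while keeping global balance is exactly the hard part. The paper handles this by first proving (Lemma~\ref{lmm:m-star}) that some fixed diagonal coefficient $c_{m^*,m^*}$ of the generating function exceeds $1$, and then building long balanced cycles from $(m^*,m^*)$-paths via an explicit supercritical-branching-process tree construction plus sprinkling; a bare second-moment argument for the existence of $\Theta(n)$-length balanced cycles is not known to work and faces severe correlation issues that the tree/sprinkling route is specifically designed to sidestep.

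Second, your algorithmic route is genuinely different from the paper's and is likely to fail at the sharp threshold. You propose classifying each observed edge by a local statistic counting short alternating witnesses in an $O(\log n)$ neighborhood. The difficulty is that an unplanted edge $(u,v)$ with both $u,v\in V(H^*)$ sits in a local neighborhood that looks essentially identical to that of a planted edge: both endpoints have two planted neighbors, and short alternating walks are abundant around both. Edge-local statistics therefore do not cleanly separate the two classes, and they also do not enforce the global constraint that the output be (close to) a $2$-factor, so even if the statistic were informative you would still need a rounding step. The paper's Algorithm~\ref{alg:search} avoids both problems by maintaining a global candidate $H$ (a disjoint union of cycles and a few paths) and greedily XOR-ing in short trails of length $<\log n$ that either strictly grow $H$ at no cost in degree-$1$ vertices or grow it by $\ge\sqrt{\log n}$ at the cost of at most two new degree-$1$ vertices; the trail decomposition (Lemma~\ref{lmm:decomposition_new}) guarantees that such a trail always exists until $|H|=\delta n - O(n/\sqrt{\log n})$, and Lemma~\ref{lmm:alg.ae.recovery} then shows that \emph{any} such $H$ is within $o(n)$ of $H^*$. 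If you pursue your local-statistic route, you would need a separate argument establishing threshold-sharp separation — which is far from clear near $\lambda^*$ — whereas the greedy route reduces the algorithmic claim back to the same generating-function bounds used for achievability.
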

%\textcolor{green}{Explain that positive results hold for an $H^*$ (which includes the case of a single cycle). The negative result also holds for a single cycle via a reduction argument.}

The case $\delta = 1$ corresponds to a planted $2$-factor on the entire vertex set $[n]$, and was previously studied in  \cite{gaudio2025all}.  In that setting, almost exact recovery is possible if $\lambda \le \frac{1}{2}$, while it is impossible for $\lambda > \frac{1}{2}$. Interestingly, the threshold $\frac{1}{(\sqrt{2\delta}+\sqrt{1-\delta})^2}$ is non-monotonic in $\delta$, attaining its maximum value $1$ at $\delta=0$
and minimum value $1/3$ at $\delta=2/3$. Thus, when $\lambda < \frac{1}{(\sqrt{2\delta} + \sqrt{1-\delta})^2}$, we are always in the subcritical regime of the \ER random graph, which contains only short cycles of bounded length.

A salient challenge in our problem is that the vertex set $V$ of the planted cycles $H^*$ is unknown. If $V$ were known, the problem would reduce to recovering a planted $2$-factor within the induced subgraph $G[V]$, which is distributed as  $\calG(n', \lambda'/n')$ with $n'=\delta n$  and $\lambda'=\delta \lambda$. Thus, in this known-$V$ setting, the sharp threshold for almost exact recovery is $\lambda' = 1/2$, or equivalently $\lambda= 1/(2\delta)$. Since this threshold is always higher than $\frac{1}{(\sqrt{2\delta}+\sqrt{1-\delta})^2}$ for $\delta < 1$, this comparison highlights that the unknown support $V$ renders the recovery task statistically strictly harder.

From a computational perspective, the unknown vertex set $V$ also introduces substantial challenges, as enumerating over all $\binom{n}{\delta n}$ possible subsets requires exponential time.  Nevertheless, and perhaps surprisingly, we establish that the sharp recovery threshold can still be attained  by a polynomial-time algorithm; hence, there is no statistical-computational gap. This stands in stark contrast to the planted clique problem. Information-theoretically, it is possible to recover a planted clique of size  $\Theta(\log n)$. %but no known polynomial-time algorithm can recover cliques smaller than $o(\sqrt{n}$~\cite{alon1998finding,deshpande2015finding}. 
However, it is conjectured that no polynomial-time algorithm can find a planted clique of size $o(\sqrt{n})$, and numerous computational hardness results, such as sum-of-squares lower bounds~\cite{barak2019nearly}, have been derived.

%\begin{remark}
 %\nbr{Maybe we should not talk about partial recovery here?} In fact, one can say more: when $\lambda > \frac{1}{2}$ is a constant, partial recovery is possible. Thus, recovery of a planted $2$-factor undergoes a so-called ``All-Something-Nothing'' phase transition as $\lambda$ increases (see the discussion for further details).
%\end{remark}

%\nbr{Is there any reason the above result does not include $\delta=1$? need to briefly comment on the existing results for $\delta=1...$}

\begin{remark}\label{remark:single-cycle}
Note that under the planted cycles model, $H^*$ is a random $2$-factor and thus likely consists of multiple cycles. We can consider a variation of the model where $H^*$ is conditioned to be a single Hamiltonian cycle on $\delta n$ randomly chosen vertices, and our main results continue to hold. In particular, our achievability result in Theorem \ref{thm:almost-exact} holds conditionally on any $H^*$, so in particular it holds when $H^*$ is a single cycle (see Theorem \ref{thm:possibility_almost_exact_partial_2_factor}). Our algorithmic result similarly holds conditionally on $H^*$ (see~\prettyref{thm:alg}). 
As for the impossibility result, we leverage a reduction argument along with the observation that 
a random 2-factor $H^*$ is a cycle with probability at least $1/(\delta n)$. %Thus, any event that holds with sufficiently high probability for a random $2$-factor $H^*$ continues to hold when $H^*$ is conditioned to be a cycle. Therefore, \prettyref{thm:almost-exact} still holds when $H^*$ is conditioned to be a cycle. 
The detailed arguments are included in~\prettyref{sec:planted-cycle}, Lemma \ref{lemma:impossibility-cycle}. 
\end{remark}
% Finally, consider a version of this problem where instead of  being a planted $2$-factor,  $H^*$ is a planted cycle on $\delta n$ randomly-chosen vertices. \julia{We said that we would include the results on the Planted Cycle Problem here, with proofs in the appendix (Section \ref{sec:planted-cycle}). However, I'm not sure what main result to put here- please see my comment in Section \ref{sec:planted-cycle}.}

\subsection{Analytical and algorithmic innovations}

To delineate the achievability and impossibility results, we characterize the structure of the difference graph $H^*\Delta H$ between the planted cycles $H^*$ and any other 2-factor $H$  supported on $\delta n$ vertices. Since $|H^*|=|H|$, exactly half of the edges in $H^*\Delta H$ are planted. Moreover, this difference graph can be decomposed into a vertex-disjoint union of circuits---closed trails\footnote{A trail is a sequence of distinct edges
$(e_1, e_2, \ldots, e_{n-1})$ where $e_i=(v_i,v_{i+1})$ for $1 \le i \le n-1$ and $n\ge 1.$ A circuit is a closed trail with $v_n=v_1.$}---where edges must alternate between planted and unplanted edges at shared vertices in $V(H^*) \cap V(H)$. 

The core challenge arises in analyzing how the presence of these circuits affects the recoverability of $H^*$. When $\delta = 1$, the possibility of recovering $H^*$ is driven by the presence of balanced circuits. Intuitively, if the number of balanced circuits---those with equal numbers of planted and unplanted edges---is small, then any competing 2-factor $H$ must largely overlap with $H^*$, making almost-exact recovery feasible. Conversely, if there exists a diverging number of balanced circuits, then there are many plausible 2-factors that differ significantly from $H^*$, rendering almost-exact recovery impossible. However, beyond the special case $\delta=1$, the situation becomes significantly more complex: individual circuits in the decomposition may be imbalanced, as long as the collection as a whole remains balanced (i.e., has half of its edges planted). This added layer of structural flexibility necessitates more refined tools for counting and characterizing such circuits.

% Intuitively, if there are few balanced circuits (those containing the same number of planted and unplanted edges), then any $2$-factor on $\delta n$ vertices cannot differ too much from $H^*$, and so almost exact recovery is possible. On the other hand, if there exists a diverging number of balanced circuits, then there exists a $2$-factor which differs significantly from $H^*$, rendering almost exact recovery impossible. However, except for the special case of $\delta=1$, significant complication arises as each circuit may not be perfectly balanced, as long as all the circuits in the decomposition together have half of their edges planted. 

\paragraph{Generating function approach} %\colin{At this point, we would like to be able to count these circuits; however, due to the multiple options where such a circuit would have planted and unplanted edges it is easier to define an appropriate generating function for these circuits than to count them directly. A little more precisely, we assign every planted edge weight $x$ and every unplanted edge weight $y$ and wish to estimate the sum of the products of the weights of all appropriate circuits. Estimating the numbers of circuits with all edges planted or all edges unplanted is relatively easy; that leaves that ones with edges of both types. Any such circuit must have at least one vertex where it switches from planted to unplanted edges, so we can start there. Starting at a vertex in the planted $2$-factor, there are $2$ directions we can go in and we could go along any number of planted edges, so there are two planted paths we could take of any given positive length. That corresponds to a generating function of $2x+2x^2+...=\frac{2x}{1-x}$. After that we would have to have a series of unplanted edges, and all vertices in the middle would be forbidden from being in the planted $2$-factor because we are not allowed to have two planted edges in a row that meet at a vertex in the planted $2$-factor. The expected number of unplanted edges from any given vertex is $\lambda$, so the expected number of length $i$ unplanted paths from a given vertex to a vertex in $H^*$ with all vertices in the middle outside of $H^*$ is approximately $\delta\lambda ((1-\delta)\lambda)^{i-1}$. That corresponds to a generating function of $\frac{\delta\lambda y}{1-(1-\delta)\lambda y)}$. We could switch between serieses of planted edges and serieses of unplanted edges an arbitrary number of times, and the $1/\delta n$ chance we end up back at the starting vertex balances out the $\delta n$ possible choices of a starting verte, so the appropriate generating function for counting these circuits with starting vertices is $\sum_{k=1}^\infty\left(\frac{2x}{1-x}\cdot \frac{\delta\lambda y}{1-(1-\delta)\lambda y)}\right)^k$.}
To address this complexity, we develop a novel generating function approach that systematically counts balanced and imbalanced circuits. It turns out that the relevant object to count is the set of \emph{$(a,b)$-trails}—trails consisting of $a$ planted and $b$ unplanted edges, subject to the constraint that no two consecutive unplanted edges meet at a vertex in $V(H^*)$
%alternating at shared vertices in $V(H^*) \cap V(H)$ 
(See the complete definition in Definition \ref{def:trail}). We show that when the number of $(a,b)$-trails with $b \ge a$ is small, then almost-exact recovery is information-theoretically possible. Conversely, the presence of many perfectly balanced trails (with $a = b$) implies impossibility.

Specifically, let $c_{a,b}$ be a constant such that the expected number of  $(a,b)$-trails from a given vertex to vertices in $V(H^*)$ is approximately equal to $c_{a,b}$.
%(see the upper bound in Lemma \ref{validWalkCountLem} and the almost matching lower bound in~\prettyref{lmm:lower_bound_path}). 
We show that the generating function of the sequence $
(c_{a,b})_{a,b=1}^{\infty}$ is
\[
g(x,y) \triangleq \sum_{a,b = 1}^{\infty} c_{a,b} x^a y^b = \sum_{k=1}^{\infty} \left(\frac{2x}{1-x} \cdot \frac{\delta \lambda y}{1 - (1-\delta)\lambda y} \right)^k.\]
This generating function admits a natural interpretation. We assign weight $x$ to each planted edge  and weight $y$ to each  unplanted edge, and then enumerate the weighted $(a,b)$-trails from a given vertex to vertices in $V(H^*)$ in expectation as follows:
\begin{itemize} 
\item The factor $\frac{\delta \lambda y}{(1-(1-\delta)\lambda y}=\delta \lambda y \left(1+(1-\delta)\lambda y+((1-\delta)\lambda y)^2+ \ldots \right)$ enumerates an unplanted segment, where the exponent of $y$ records the number of unplanted edges. In such a segment, all but the last edge terminate at unplanted vertices.  Hence, the factor $(1-\delta)\lambda$ captures the expected number of unplanted edges incident to an unplanted vertex, while $\delta \lambda$  is the expected number of unplanted edges incident to a planted vertex.
\item 
    The factor $\frac{2x}{1-x}=2x\left(1+x+x^2+\ldots\right)$  enumerates a planted segment, where the exponent of $x$ records the number of planted edges. The coefficient $2$ arises from the choice of two directions when traversing a planted cycle. 
\item Finally, the summation over $k \geq 1$ accounts for the possibility of switching between unplanted and planted segments arbitrarily many times. 
\end{itemize}

To bound the number of circuits with more unplanted than planted edges, we choose $0 < x < 1 < y$ such that $xy = 1$. In this case, $g(x,y)$ upper bounds $\sum_{a \le b} c_{a,b}$, and more generally, we have $\sum_{ b\ge a+\ell} c_{a,b} \le g(x,y) (x/y)^{\ell/2}$ for any $\ell \ge 0$,\footnote{Note that 
$\sum_{b\ge a+\ell} c_{a,b} = 
\sum_{b \ge a+\ell} c_{a,b} (xy)^{(a+b)/2}
=\sum_{b \ge a+\ell} c_{a,b} x^a y^b (x/y)^{(b-a)/2}
\le (x/y)^{\ell/2} \sum_{b \ge a+\ell} c_{a,b} x^a y^b. 
$
}
which decays exponentially in $\ell.$ Hence, if there exists such a pair $(x,y)$ for which $g(x,y)$ is finite, we can conclude that the total number of $(a,b)$-trails with $b \ge a$ is bounded—implying almost-exact recovery is possible. We show that this occurs precisely when $\lambda < \frac{1}{(\sqrt{2\delta} + \sqrt{1-\delta})^2}$.

% Choosing $0<x<1<y$ with $xy=1$, $g(x,y)$ serves as an upper bound to 
% $\sum_{a,b: a\le b}^{\infty} c_{a,b}$, which itself is an upper bound on the number of $(a,b)$-circuits with $b \ge a.$ Moreover, it is hard to see that  
% $\sum_{ a, b: b\ge a+\ell}^{\infty} c_{a,b} \le g(x,y) (x/y)^\ell$ for any $\ell \ge 0$,\footnote{Note that   
% $\sum_{a, b \ge a+\ell}^{\infty} c_{a,b} = 
% \sum_{a, b \ge a+\ell}^{\infty} c_{a,b} (xy)^{(a+b)/2}
% =\sum_{a, b \ge a+\ell}^{\infty} c_{a,b} x^a y^b (x/y)^{(b-a)/2}
% \le (x/y)^{\ell/2} \sum_{a, b \ge a+\ell}^{\infty} c_{a,b} x^a y^b. 
% $
% }
% which decays exponentially in $\ell.$
%Choosing $x \in (0,1)$ and $y>1$ so that $x y = 1$, the left hand side is lower-bounded by 
%, which itself is an upper bound on the (scaled) number of $(a,b)$-trails with $a = b$.
% Thus, if there exists such a choice of $x,y$ so that $g(x,y)$ is bounded, then we can show the expected number of $(a,b)$-circuits for $b\ge a$ is bounded, from which it follows that almost exact recovery is possible. The condition under which such $(x,y)$ exists is precisely the condition $\lambda < \frac{1}{(\sqrt{2\delta} + \sqrt{1-\delta})^2}$. 

%\nbr{We probably need to highlight more on the algorithmic innovations} 

Conversely, when $\lambda > \frac{1}{(\sqrt{2\delta} + \sqrt{1-\delta})^2}$, the generating function $g(x,y)$ diverges for all choices of  $x,y$ satisfying $0<x<1<y$ and $xy = 1$. While this divergence does not imply that $c_{a,a}$ is large for every $a$, we prove that there exists an $m^*$ such that  $c_{m^*,m^*}>1 $(cf. \prettyref{lmm:m-star}). This enables us to construct a supercritical branching process that recursively grows many balanced $(a,b)$-paths with $a = b$, rooted at any given vertex. These paths, in turn, can be stitched together into long cycles, ultimately forming exponentially many $2$-factors that significantly deviate from the planted $H^*$. This establishes the impossibility of almost-exact recovery in this regime.

%This allows us to construct a supercritical branching process to build up many trees where each path to a left has equal numbers of planted and unplanted edges.    
%This allows us to build many $(a,b)$-paths with $a=b$ through a tree construction approach, leveraging the idea that balanced paths starting from a vertex grow like the offspring of a supercritical branching process. 
%Finally, we demonstrate that these balanced paths can be linked together to form large cycles, and thus certify the existence of a $2$-factor on $\delta n$ vertices which differs significantly from $H^*$. 
%The linking occurs Paths from multiple trees are then linked together in order to form long cycles according to Algorithm \ref{alg:cycle-construction2} (though we note that the algorithm is a proof device only, as the identity of planted vs unplanted edges is not known).

\paragraph{Greedy search algorithm}
When $\lambda < \frac{1}{(\sqrt{2\delta} + \sqrt{1-\delta})^2}$, we further design a novel polynomial-time greedy algorithm to achieve almost-exact recovery of the planted cycles. While enumerating all cycles in a graph is computationally intractable, our algorithm circumvents this bottleneck by leveraging structural insights from the generating function analysis and the decomposition of the symmetric difference $H^* \Delta H$.

A central observation is that 
when the generating function is bounded, any subgraph consisting of a disjoint union of cycles and a small number (i.e., $o(n)$) of paths, with a total of $\delta n - o(n)$ edges, differs from the planted cycles $H^*$ by at most $o(n)$ edges. Guided by this observation, our algorithm iteratively grows a subgraph that consists of a disjoint union of cycles and a small number of paths. It begins with an empty graph $H$ and incrementally grows it by searching through the collection of trails of length less than $\log n$ to identify ``good'' candidates $P$ to update $H$ to $H\Delta P$ (i.e., the XOR operation). These ``good'' trails, when XOR'ed onto $H$, either strictly increase $|H|$ without introducing new degree-$1$ vertices or increase $|H|$ by at least $\sqrt{\log n}$ while introducing at most $2$ new degree-$1$ vertices. 
Such ``good'' trails are guaranteed to exist thanks to the trail decomposition of $H^* \Delta H$: One can always take $P$ as either a full trail with more planted edges than unplanted ones or a segment of a trail with $\sqrt{\log n}$ more planted edges than unplanted ones. 
Importantly, these ``good'' trails can be efficiently identified through exhaustive search, as the total number of trails of length less than $\log n$ is $n^{O(1)}$ with high probability. 
%\julia{I got a whp total of at most $n^{1+\epsilon}  \cdot (2+\lambda)^{\log n}$ from Lemma \ref{lemma:poly.time}.}

It is instructive to contrast this setting with the planted clique problem, where similar greedy algorithms are expected to fail.  In that context, attempting to grow a clique $C$ from an empty graph by iteratively adding vertices connected to all current members typically stalls at size $\Theta(\log n)$: at that point, many vertices in $C$ are likely not part of the planted clique, and the chance of finding a vertex connected to all of $C$ becomes negligible, due to the independence of edge connections outside the planted structure. The algorithm thus becomes trapped in a local optimum.
By contrast, the planted cycles problem offers a more benign combinatorial landscape. Even if the current subgraph $H$ contains many unplanted edges, the trail decomposition of $H \Delta H^*$ ensures that ``good'' trails always exist to continue growing $H$, until getting close to $H^*.$

\subsection{Organization} 
The rest of this paper is structured as follows. Section \ref{sec:achievability} includes the achievability proofs, with Theorem \ref{thm:possibility_almost_exact_partial_2_factor} corresponding to the achievability statement of Theorem \ref{thm:almost-exact}. Section \ref{sec:impossibility} treats the impossibility results, with Theorem \ref{thm:impossibility-delta} corresponding to the impossibility statement of Theorem \ref{thm:almost-exact}. Finally, Section \ref{sec:algorithm} proposes a computationally efficient estimator that achieves the almost exact recovery threshold, as stated in~\prettyref{thm:alg}.
%, with the corresponding recovery guarantee stated in Lemma \ref{lmm:alg.Hgrows}. 
 Section \ref{sec:discussion} includes several directions for future work.

\section{Generating function bounds and proof of possibility of recovery}\label{sec:achievability}

% \jx{Some high-level comments and clarifications. 

% Note that in our previous arguments, we often first enumerate the number of alternating Eulerian circuits and then time with the probability of each Eulerican circuit appearing in the graph. Here, this generating function-based argument works differently. If I understand it correctly, 
% $$
% g(x,y) \triangleq \sum_{i=1}^\infty\left(\frac{2x}{1-x}\cdot \frac{\delta\lambda y}{1-(1-\delta)\lambda y}\right)^i
% $$
% is the generating function for the expected number of $(a,b)$-walks in the infinite tree model. Then we claim that the expected number of $(a,b)$-walks in the infinite tree model is either an upper bound or asymptotically equal to the 
% expected number of $(a,b)$-walks in the graph model.
% }
The following lemma characterizes the difference graph of two $2$-factors, both on (possibly different) subsets of $m$ vertices. The proof follows from a general result in~\cite[Theorem 1]{kotzig1968moves}, which characterizes the existence of a Eulerian circuit that always traverses from one edge class to another edge class at every vertex.
\begin{lemma}\label{lemma:difference-graph}
Let $H^{\ast}$ be a planted $2$-factor on $m$ vertices, and let $H$ be another $2$-factor on a possibly different set of $m$ vertices. Then
\begin{itemize}
    \item The difference graph $H^{\ast} \Delta H$ is a graph where every vertex has degree $2$ or $4$. Furthermore, every degree-$4$ vertex has two incident red (planted) edges and two incident blue (unplanted) edges.
    \item %We can decompose $H^{\ast} \Delta H$ into edge-disjoint circuits with the following property: if $v$ is a vertex which belongs to both $H^{\ast}$ and $H$, and there is no $u \in [n]$ such that $(u,v) \in H^{\ast} \cap H$ (i.e., $v$ is not an endpoint of a common edge), then in the circuit decomposition $v$ lies between a planted and an unplanted edge.
    Furthermore, every connected component of  $H^{\ast} \Delta H$ has a Eulerian circuit whose edge color must alternate between red (planted) and blue (unplanted) at vertices in $V(H^*)\cap V(H).$  
\end{itemize}
\end{lemma}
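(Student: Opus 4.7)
The plan is to establish both items by a short degree analysis followed by an application of Kotzig's classical theorem on alternating Eulerian circuits. For the first item, I will fix any vertex $v$ of $H^* \Delta H$ and split into cases based on whether $v$ lies in $V(H^*) \setminus V(H)$, $V(H) \setminus V(H^*)$, or $V(H^*) \cap V(H)$. Since $H^*$ and $H$ are both 2-factors, each contributes degree $0$ or $2$ at $v$. If $v$ is outside the intersection, no incident edges cancel in the symmetric difference and its degree is exactly $2$; if $v$ lies in the intersection with $s \in \{0,1,2\}$ shared edges, its degree is $4 - 2s$, and the case $s=0$ gives a degree-$4$ vertex at which the two $H^*$-edges (red) and the two $H$-edges (blue) all survive, establishing the color balance claim.

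For the second item, I would invoke Kotzig's theorem, which guarantees a properly alternating Eulerian circuit in a connected edge-colored Eulerian graph whenever, at each vertex, no color exceeds half of the degree. At shared vertices this condition follows immediately from the first item. However, a non-shared vertex has both of its incident edges in the same 2-factor and hence of the same color, so Kotzig's hypothesis fails there and a direct application is not possible. To get around this, I would first observe that any connected component of $H^* \Delta H$ containing no shared vertex must be monochromatic (a vertex in $V(H^*) \setminus V(H)$ is incident only to red edges, and symmetrically for $V(H) \setminus V(H^*)$, so no red--blue adjacency can occur without passing through an intersection vertex), making such a component a single cycle for which the alternation requirement is vacuous. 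For the remaining components I would contract each maximal monochromatic subpath running through non-shared vertices into a single edge of the same color. The resulting auxiliary multigraph has all of its vertices in $V(H^*) \cap V(H)$ with unchanged red/blue balance at each vertex, so Kotzig's theorem yields an alternating Eulerian circuit in the auxiliary graph, which lifts back to the original component by reinserting each contracted path in place of its auxiliary edge.

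The principal obstacle will be formalizing the contraction step: monochromatic subpaths may close on themselves at a single shared vertex (producing self-loops in the auxiliary multigraph) or run parallel to one another (producing multi-edges), so one must invoke the multigraph version of Kotzig's theorem as recorded in Theorem~1 of~\cite{kotzig1968moves} and check that alternation at shared vertices is preserved under the lifting. Once this has been done carefully, the remaining bookkeeping is routine: each auxiliary edge appearing in the alternating Eulerian circuit expands to its underlying monochromatic path, and the required alternation at vertices in $V(H^*) \cap V(H)$ is inherited directly from the alternation in the contracted circuit.
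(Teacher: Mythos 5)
Your first item is identical to the paper's argument and is correct.

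For the second item, you take a genuinely different and unnecessarily complicated route, and the reason is a misreading of Kotzig's theorem. You interpret the hypothesis of \cite[Theorem 1]{kotzig1968moves} as a condition on the \emph{global edge coloring} (red/blue), and since a non-shared vertex is incident to two edges of the same color you conclude Kotzig cannot be applied directly. But Kotzig's theorem does not ask that the labels be consistent across vertices: each vertex $v$ is assigned its \emph{own} partition $Q_v$ of its incident edges, and the same edge may receive different labels at its two endpoints. The condition $|Q_{v,i}|\le \tfrac12\deg(v)$ is therefore a purely local one, and one is free to choose the partition at each vertex however one likes. The paper exploits exactly this freedom: at every degree-$2$ vertex (whether its two edges are both red, both blue, or one of each) it simply places the two incident edges in two distinct label classes of size $1$; at degree-$4$ vertices it labels by color. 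This choice satisfies the hypothesis everywhere, Kotzig gives an Eulerian circuit that alternates labels at every vertex, and at vertices of $V(H^*)\cap V(H)$ the labels coincide with the colors, which is the only place the lemma requires alternation. Your contraction of monochromatic subpaths to a multigraph on the shared vertices would also work, but it forces you into the self-loop and parallel-edge bookkeeping that you yourself flag as the principal obstacle; the paper's direct per-vertex labeling avoids all of it. Since you plan to cite the very same \cite[Theorem 1]{kotzig1968moves} for the multigraph case, re-reading the statement of that theorem would let you drop the entire contraction step.
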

% \nbr{There seems no need to prove this lemma. The result directly follows from a general result in Theorem 1 of \url{https://dml.cz/bitstream/handle/10338.dmlcz/136066/MathSlov_18-1968-1_8.pdf}. Basically, we have 4 different types of vertices depending on the colors of incident edges, 2 red, 2 blue, 1 red and 1 blue, 2 red and 2 blue. For the first three cases, we simply classify the two incident edges into two different classes; for the last case, we classify red edges into one class and the blue edges into the other class.  Theorem 1 in the above reference proves the existence of an Eulerian circuit that always traverses from one class to the other class at every vertex. The resulting Eulerian circuit is exactly what we want.     } \julia{I wrote the proof below in light of your observation. Something to discuss: the difference graph is not necessarily a $2$-factor, as there can be vertices of degree $4$. But it seems that the circuits that come out of applying \cite[Theorem 1]{kotzig1968moves} are the what we want to be counting in the subsequent analysis?} \nbr{Yes, that's correct. At high-level, previously for the analysis of the planted $(n,n,k)$ model, we were counting the alternating Eulerian circuits. Here for the analysis of the planted $(n,m,2)$ model, we are counting a generalized alternating Eulerian circuit, which alternates between edge label classes. By the way, it seems that we can possibly generalize our result to the planted $(n,m,k)$ model? Basically, in the difference graph, we have $k+2$ types of vertices: $k$ red, $k$ blue, $i$ red and $i$ blue for $1 \le i \le k$.}

The second part ensures that for any vertex in a circuit that is in the planted $2$-factor, at least one of the adjacent edges in the circuit is a planted edge.
\begin{proof}[Proof of Lemma \ref{lemma:difference-graph}]
To prove the first part, observe that for any $v \in [n]$, the sum of the degrees of $v$ in $H^{\ast}$ and $H$ is either $0$, $2$, or $4$. The degree of $v$ in $H^{\ast} \Delta H$ is equal to the sum of degrees of $v$ in $H^{\ast}$ and $H$, minus twice the number of edges incident to $v$ in $H^{\ast} \cap H$. It follows that $v$ has a degree of either $0$, $2$, or $4$ in $H^{\ast} \Delta H$. Observe that any degree-$4$ vertex in $H^{\ast} \Delta H$ must have no edges incident to $v$ in $H^{\ast} \cap H$, which implies that it has two red (planted) incident edges with the remaining two edges being blue (unplanted).

To prove the second part, we appeal to a general result of Kotzig \cite{kotzig1968moves}. Theorem 1 therein considers the setting of a connected graph $G = (V,E)$ where every vertex has an even degree and is associated with a labeled set of its incident edges. That is, a vertex $v$ is associated with a labeled partition of its incident edges $Q_v = (Q_{v,1}, Q_{v,2}, \dots)$, where $(v,w) \in Q_{v,i}$ means that the edge $(v,w)$ is given label $i$ relative to $v$. The result states that if $|Q_{v,i}| \leq \frac{1}{2} \deg(v)$ for all $v \in V$, then there exists a Eulerian circuit of $G$ such that for every pair of adjacent edges of the form $(u,v), (v,w)$, we have that $(u,v)$ and $(v,w)$ have different labels relative to $v$.

In our setting, we apply \cite[Theorem 1]{kotzig1968moves} to each connected component of $H^{\ast} \Delta H$. There are four different types of vertices, depending on the colors of the incident edges: 2 red, 2 blue, 1 red and 1 blue, or 2 red and 2 blue. The vertices belong to $V(H^*)\cap V(H)$ in the latter two cases but not in the first two.  In the first three cases, we simply classify the two incident edges into two label classes. For the third case, we put the red edges in one class and the blue edges in another class. The result then follows immediately from \cite[Theorem 1]{kotzig1968moves}.
\end{proof}

 \begin{figure}[ht]
   \begin{center}
\begin{tikzpicture}
    % Define a pentagon size
    \def\pentagonSize{1.5cm}
    \draw[thick,red] (0:\pentagonSize) 
        \foreach \i in {72,144,216} {
            -- (\i:\pentagonSize) 
        }
        -- cycle; % Close the pentagon
    \draw[thick,blue,dashed](0:\pentagonSize)
        \foreach \i in {144,216,288} {
            -- (\i:\pentagonSize)
        }
        -- cycle; % Close the pentagon
     \node at (0,-2) {$H^* \cup H$};
\end{tikzpicture}
~~~~~~
\begin{tikzpicture}
    % Define a pentagon size
    \def\pentagonSize{1.5cm}
    \draw[thick,red] (0:\pentagonSize) 
        \foreach \i in {72,144} {
            -- (\i:\pentagonSize) 
        };
      \draw[thick,red] (216:\pentagonSize) --(0:\pentagonSize); % Close the pentagon
      \draw[thick,blue,dashed] (0:\pentagonSize) --(144:\pentagonSize); 
    \draw[thick,blue,dashed] (216:\pentagonSize)
        \foreach \i in {288,0} {
            -- (\i:\pentagonSize)
        }; % Close the pentagon
    \node at (0,-2) {$H^* \Delta H$};
\end{tikzpicture}
\end{center}
\caption{An example where $H^*$ is a $4$-cycle with red solid edges, $H$ is a $4$-cycle with blue dashed edges, and $H^*\Delta H$ has a $(3,3)$-Eulerian circuit.}
\label{fig:illustration}
\end{figure}
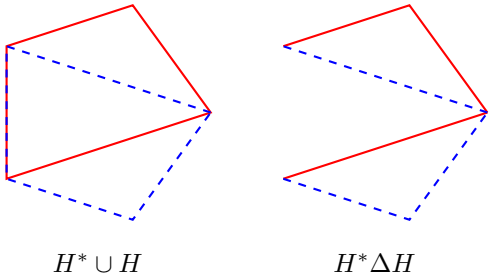

In light of Lemma~\ref{lemma:difference-graph}, $H^*\Delta H$ can be represented as a vertex-disjoint union of a certain class of circuits.  To specify that class of circuits, we introduce the definition of $(a,b)$-circuit, or more generally, $(a,b)$-trail. See~\prettyref{fig:illustration} for a graphical illustration.
\begin{definition}\label{def:trail}
Consider a graph $G$ with a planted $2$-factor  on a subset of its vertices. 
For $a\ge 0,b\ge 1$, 
an $(a,b)$-trail in $G$ is a trail in $G$ containing exactly $a$ planted edges and $b$ unplanted edges, such that the first edge is unplanted, the last edge is planted (if $a \geq 1$), no edge is repeated, and no two consecutive unplanted edges meet at a vertex that is contained in the planted $2$-factor. An $(a,b)$-path is an $(a,b)$-trail that does not repeat vertices. 
%\dana{I believe for all our purposes, we are actually assuming $a,b \ge 1$. In fact by this definition, a valid $(a,b)$ path/walk must include at least 1 planted and 1 unplanted edge.}
An $(a,b)$-circuit is a closed $(a,b)$-trail.
% is a loop in the graph containing $a$ planted edges and $b$ unplanted edges such that no edge is repeated and there are no two consecutive unplanted edges in the loop that meet at a vertex that is contained in the planted $2$-factor. An $(a,b)$-valid walk is a walk in the graph containing $a$ planted edges and $b$ unplanted edges such that the first edge in the walk is planted, the last edge in the walk is unplanted, no edge is repeated, and there are no two consecutive unplanted edges in the walk that meet at a vertex that is contained in the planted $2$-factor. An $(a,b)$-valid path is an $(a,b)$-valid walk that does not repeat vertices.
\end{definition}

The following lemma gives an upper bound on the expected number of $(a,b)$-trails, which will be used later to bound $\expect{|H^\ast \Delta H| }.$ 
\begin{lemma}\label{validWalkCountLem}
    
%Both of the following hold. 
% \julia{Now that the second part is modified, it's more complicated to prove. Should we split into two lemmas?}
Let $G\sim \mathcal{G}(n,  \lambda, \delta)$ be the graph generated from the planted cycles model and let $H^*$ be the planted $2$-factor on the set of $\delta n$ vertices. 
Let $v$ and $v'$ be (not necessarily distinct) vertices. 
% Let $v$ and $v'$ be (not necessarily distinct) vertices of $H^*$, and $G$ be a graph constructed by starting with $H^*$, adding in $n-\delta n$ more vertices, and then independently connecting every node pair not in $H^*$ with probability $\lambda/n$. 
Fix $a \geq 1, b \ge 1$. Then, conditioned on $H^*$, the expected number of $(a,b)$-trails from $v$ to $v'$ in $G$ is at most $c_{a,b}/(\delta n)$,
where
\begin{align}
c_{a,b} \triangleq (\lambda(1-\delta))^b \sum_{k=1}^\infty (2\delta/(1-\delta))^k \binom{a-1}{k-1} \binom{b-1}{k-1} . \label{eq:def_c_a_b}
\end{align}
The expected number of $(0,b)$ trails from $v$ to $v'$ in $G$ is at most
$\left(1-\delta\right)^{b-1} \lambda ^b/n.$ 
%(Here $v,v'$ can be any vertices).
%\jx{The above seems to hold even when $a=0$. Note that $\binom{-1}{i}=\indc{i=0}.$}
% \nbr{Just as a placeholder. Later, to prove the positive result, we just need to take $v=v'$ and consider all possible $v$. This gives the desired upper bound bound on the expected number of difference graphs.}
\end{lemma}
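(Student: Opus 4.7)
My plan is to enumerate all possible $(a,b)$-trail skeletons from $v$ to $v'$, multiply by the probability that all $b$ unplanted edges are present in $G$, and sum. I first invoke the alternation constraint in Definition \ref{def:trail}: because every planted edge has both endpoints in $V(H^*)$, and two consecutive unplanted edges may only meet outside $V(H^*)$, any $(a,b)$-trail with $a\ge 1$ decomposes uniquely into an alternating sequence $U_1P_1U_2P_2\cdots U_kP_k$ of $k\ge 1$ unplanted runs (of lengths $b_i\ge 1$, summing to $b$) and $k$ planted runs (of lengths $a_i\ge 1$, summing to $a$), with every transition vertex between consecutive runs lying in $V(H^*)$ and all interior vertices of the $U_i$'s lying in $[n]\setminus V(H^*)$.

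\textbf{Counting skeletons.} For fixed $k$ and fixed compositions $(a_i)$ and $(b_i)$, I count skeletons by proceeding forward through $U_1,P_1,\ldots,U_{k-1},P_{k-1}$ and then handling $U_kP_k$ specially to absorb the endpoint constraint. Each generic unplanted run $U_i$ (with $i<k$) contributes at most $((1-\delta)n)^{b_i-1}\cdot \delta n$ choices (for its $b_i-1$ interior non-$V(H^*)$ vertices and its $V(H^*)$ endpoint), and each generic planted run $P_i$ (with $i<k$) contributes at most $2$ direction choices along the $2$-factor. For the final $P_k$, walking backward from $v'$ for $a_k$ steps along the $2$-factor in at most $2$ directions yields at most $2$ candidate starting vertices $t_k\in V(H^*)$; then $U_k$ must terminate at the specified $t_k$, contributing $((1-\delta)n)^{b_k-1}$ choices for interior vertices only (no freedom in the endpoint). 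Multiplying gives at most $2^k(\delta n)^{k-1}((1-\delta)n)^{b-k}$ skeletons for this composition and, since each skeleton carries $b$ specified unplanted edges independently present with probability $\lambda/n$, an expected contribution of
\[
2^k(\delta n)^{k-1}((1-\delta)n)^{b-k}(\lambda/n)^b \;=\; \frac{(\lambda(1-\delta))^b}{\delta n}\left(\frac{2\delta}{1-\delta}\right)^{\!k}.
\]
Summing over the $\binom{a-1}{k-1}\binom{b-1}{k-1}$ compositions at each $k$ and over $k\ge 1$ reproduces $c_{a,b}/(\delta n)$ exactly. The upper-bound nature of the argument comes from dropping the no-repeated-edge constraint and from ignoring that vertex pairs already in $H^*$ cannot host unplanted edges.

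\textbf{The $a=0$ case and main obstacle.} If $a=0$ the trail is a single unplanted run of length $b$ from $v$ to $v'$ with $b-1$ interior vertices in $[n]\setminus V(H^*)$; the same calculation gives $((1-\delta)n)^{b-1}(\lambda/n)^b=(1-\delta)^{b-1}\lambda^b/n$, as claimed. The only delicate point in the general case is the asymmetric treatment of $v$ (unconstrained) and $v'$ (which must be the endpoint of the terminal planted run): correctly absorbing this endpoint constraint into a single factor of $\delta n$ in the denominator, rather than double-counting or losing the $2^k$ factor, is what yields the precise combinatorial shape of $c_{a,b}$. Once the backward-walk bookkeeping for $U_kP_k$ is set up, the rest of the proof is an elementary multiplication and reindexing.
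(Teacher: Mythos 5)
The proposal is correct and takes essentially the same approach as the paper: decompose the trail into alternating unplanted/planted runs $U_1P_1\cdots U_kP_k$, count skeletons in $K_n$ by allocating $\delta n$ choices to the $k-1$ free transition vertices and $2$ direction choices to each planted run (with the endpoint constraint at $v'$ absorbing one $\delta n$ factor), multiply by $(\lambda/n)^b$, and sum over compositions via $\binom{a-1}{k-1}\binom{b-1}{k-1}$. The only difference is cosmetic bookkeeping — you attribute each $\delta n$ factor to the terminal vertex of $U_i$, whereas the paper attributes it to the initial vertex of $P_i$, but these are the same vertices and the resulting bound $2^k(\delta n)^{k-1}((1-\delta)n)^{b-k}(\lambda/n)^b$ is identical.
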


% \colin{To do: make this proof more rigorous. I think I will want to define $((a_1,...,a_k),(b_1,...,b_k))$-valid walks as $(\sum a_i,\sum b_i)$-valid walks that have $a_1$ planted edges followed by $b_1$ unplanted edges then $a_2$ planted edges and so on. I should probably define potential $((a_1,...,a_k),(b_1,...,b_k))$-valid walks as serieses of vertices that could be $((a_1,...,a_k),(b_1,...,b_k))$-valid walks if the right unplanted edges are in the graph too.}

\begin{proof}
We first handle the case where $a = 0$. We need to bound the expected number of trails containing exactly $b$ unplanted edges that start at $v$ and end at $v'$. Furthermore, the $b-1$ intermediate vertices must be unplanted. Since there are at most $((1-\delta)n)^{b-1}$ such potential trails, and each one is present with probability $\left(\frac{\lambda}{n}\right)^b$, the expected number of $(0,b)$ trails is at most 
$\left((1-\delta)n\right)^{b-1} \cdot \left(\frac{\lambda}{n}\right)^b = \left(1-\delta\right)^{b-1} \lambda ^b /n$.
%where we have used the convention that $\binom{-1}{k-1} = \mathbbm{1}_{\{k = 1\}}$.
% \julia{I am not getting the $2$ from the $c_{a,b}$ formula to appear. I don't think there is a need for it when $a = 0$.}\jx{This is correct. $2$ comes from traversing direction in the planted cycles. There is no $2$ here because there is no planted edge.}

Next, we consider $a \geq 1$. For any $k>0$ and $a_1,...,a_k,b_1,...,b_k>0$, let an $((a_1,...,a_k),(b_1,...,b_k))$-trail be an $(\sum_i a_i,\sum_i b_i)$-trail that starts with 
$b_1$ unplanted edges
followed by $a_1$ planted edges, then  $b_2$ unplanted edges, $a_2$ planted edges and so on. 
Let $\calW_{\vec{a}, \vec{b}}(v,v')$ denote the set of $(\vec{a},\vec{b})$-trails 
from $v$ to $v'$ in the complete graph $K_n$ (where edges in the complete graph are red if they are contained in $H^*$, and they are blue otherwise).
Then we let
$$
\calW_{a,b}(v,v')=\bigcup_{k \in \naturals_+} \bigcup_{\vec{a},\vec{b} \in \naturals_+^k:\|\vec{a}\|_1=a,\|\vec{b}\|_1=b} \calW_{\vec{a},\vec{b}}(v,v')
$$
denote the set of $(a,b)$-trails from $v$ to $v'$ in the complete graph $K_n.$
For each $(a,b)$-trail $T$ in $\calW_{a,b}(v,v')$, 
$
\prob{T \subset G} = (\lambda/n)^b.
$
Thus, 
$$
\expect{\sum_{T \in \calW_{a,b}(v,v')}
\indc{ T \subset G} }
= \left| \calW_{a,b}(v,v') \right| \left(\frac{\lambda}{n}\right)^b.
$$
We claim that for each $\vec{a}, \vec{b} \in \naturals_+^k,$
\begin{align}
\left| \calW_{\vec{a},\vec{b}}(v,v') \right| \le 2 (2\delta n)^{k-1} ((1-\delta)n)^{\|\vec{b}\|_1-k}. \label{eq:W_a_b_bound}
\end{align}
To see this, we first choose a set of directed paths of length $a_i$ in $H^*$ for each $1 \le i \le k$. For each of the first $k-1$ directed paths, there are at most $\delta n$ choices for the starting vertex, and two possible directions to traverse the planted cycles it is in. For the last directed path of length $a_k$ in $H^*$, since it must end at a given vertex $v'$ in $H^*$, the only choice is which direction to traverse the cycle that $v'$ is in. Thus there are at most $2^k(\delta n)^{k-1}$ choices in total for the set of planted directed paths. Now, given such a set, we are left to determine the set of directed paths of length $b_i$ consisting of unplanted edges for each $1 \le i \le k$. For each of these directed paths, since the starting and ending vertices have already been fixed, we only need to choose the unplanted vertices in the middle. There are in total $b-k$ unplanted vertices in the middles of paths, and thus there are at most $[(1-\delta)n]^{b-k}$ different choices.
%we start from $v$ and decide which direction to traverse in the planted $2$-factor for $a_1$ steps (two choices), then choose $b_1-1$ unplanted vertices and $1$ planted vertices ($n^{b_1-1} (\delta n)$ choices), then we start from the last planted vertex and repeat the above procedure until finally choosing $b_k -1$ blue vertices (with the final one being $v'$). 
In total, this results in at most $2^k (\delta n)^{k-1} [(1-\delta) n]^{b-k}$ distinct
%$$
%2 [(1-\delta) n]^{b_1-1} (\delta n) \times 2 [(1-\delta) n]^{b_2-1} (\delta n) \times \cdots \times
%2 [(1-\delta) n]^{b_k-1}
%=
%2^k (\delta n)^{k-1} [(1-\delta) n]^{b-k}
%{\sum_i (b_i-1)}.
%$$
% \jx{If we change the definition to start with an unplanned edge, then here we first choose a set of  directed paths of length $a_i$ in $H^*$ for each $1 \le i \le k$. For each of the first $k-1$ directed paths, there are at most $\delta n$ choices for the starting vertex, and two different directions of traversing the cycles in $H^*$. For the last directed path of length $a_k$ in $H^*$, since it must end at a given vertex $v',$ there are at most $2$ choices for the traversing direction. Thus in total there are $2^k(\delta n)^{k-1}$ choices in total for the set of directed paths. Now, given such a set, we are left to choose $b-k$ unplanted vertices and there are at most $[(1-\delta)n]^{b-k}$ different choices.}
$(\vec{a},\vec{b})$-trails 
from $v$ to $v'$ in the complete graph $K_n$.
Therefore, 
\begin{align}
\expect{\sum_{T \in \calW_{a,b}(v,v')}
\indc{ T \subset G} } 
&\le \sum_{k=1}^\infty \sum_{\vec{a},\vec{b} \in \naturals_+^k:\|\vec{a}\|_1=a,\|\vec{b}\|_1=b} 2^k (\delta n)^{k-1} ((1-\delta)n)^{b-k}\cdot (\lambda/n)^{b} \nonumber \\
% &=\sum_{k>0,(a_1,...,a_k),(b_1,...,b_k):\sum a_i=a,\sum b_i=b} 2 (2\delta n)^{k-1} ((1-\delta)n)^{b-k}\cdot (\lambda/n)^{b}\\
&=\sum_{k=1}^\infty \sum_{\vec{a},\vec{b} \in \naturals_+^k:\|\vec{a}\|_1=a,\|\vec{b}\|_1=b} \frac{1}{\delta n} (2\delta/(1-\delta))^k (\lambda(1-\delta))^b \nonumber\\
& = \frac{1}{\delta n} (\lambda(1-\delta))^b \sum_{k=1}^\infty (2\delta/(1-\delta))^k \binom{a-1}{k-1} \binom{b-1}{k-1} = \frac{c_{a,b}}{\delta n},  \label{eq:c_a_b_step}
\end{align}
where the second equality holds because the number of distinct 
$\vec{a} \in \naturals_+^k$ such that $\|\vec{a}\|_1=a$ is $\binom{a-1}{k-1}.$
% Meanwhile,
% \[\sum_{i=1}^\infty\left(\frac{2x}{1-x}\cdot \frac{\delta\lambda y}{1-(1-\delta)\lambda y}\right)^i=\sum_{i=1}^\infty\left(\sum_{a',b'=1}^\infty \frac{2\delta}{1-\delta} x^{a'} ((1-\delta)\lambda y)^{b'} \right)^i
% \triangleq \sum_{a,b=1}^\infty c_{a,b}x^a y^b,
% \]
% where
% \begin{align*}
% c_{a,b}& =\sum_{k=1}^\infty \sum_{\vec{a},\vec{b} \in \naturals_+^k:\|\vec{a}\|_1=a,\|\vec{b}\|_1=b} (2\delta/(1-\delta))^k (\lambda(1-\delta))^b \\
% & = (\lambda(1-\delta))^b \sum_{k=1}^\infty (2\delta/(1-\delta))^k \binom{a-1}{k-1} \binom{b-1}{k-1} 
% \end{align*}
% \julia{Isn't the inner summand constant? Then we get
% \[c_{a,b}=\sum_{k=1}^\infty  (2\delta/(1-\delta))^k (\lambda(1-\delta))^b \cdot \left(\text{number of size-$k$ integer partitions of } a\right) \left(\text{number of size-$k$ integer partitions of } b\right)\]}\colin{That is correct.} \julia{I think we should define $c_{a,b}$ in terms of integer partitions to make this link explicit.}
Therefore, the expected number of $(a,b)$-trails from $v$ to $v'$ in $G$ is at most $c_{a,b}/\delta n$ for any choice of $H^*$.
\end{proof}

To study $c_{a,b}$, we  consider its generating function; that is, 
\begin{align}
\sum_{a,b=1}^\infty c_{a,b}x^a y^b
& = \sum_{a,b=1}^\infty 
\sum_{k=1}^\infty \sum_{\vec{a},\vec{b} \in \naturals_+^k:\|\vec{a}\|_1=a,\|\vec{b}\|_1=b} (2\delta/(1-\delta))^k (\lambda(1-\delta))^b
x^a y^b  \nonumber \\
& =
\sum_{k=1}^\infty \sum_{\vec{a},\vec{b} \in \naturals_+^k} (2\delta/(1-\delta))^k (\lambda(1-\delta))^{\|\vec{b}\|_1}
x^{\|\vec{a}\|_1} y^{\|\vec{b}\|_1}  \nonumber \\
& = \sum_{k=1}^\infty
\prod_{i=1}^k
\left(\sum_{a_i,b_i=1}^\infty \frac{2\delta}{1-\delta} x^{a_i} ((1-\delta)\lambda y)^{b_i} \right) \nonumber \\
% & = \sum_{k=1}^\infty
% \left(\sum_{a',b'=1}^\infty \frac{2\delta}{1-\delta} x^{a'} ((1-\delta)\lambda y)^{b'} \right)^k \nonumber \\
& = \sum_{k=1}^\infty\left(\frac{2x}{1-x}\cdot \frac{\delta\lambda y}{1-(1-\delta)\lambda y}\right)^k \triangleq g(x,y). \label{eq:generating_function}
\end{align}
% Note that 
%  \[\sum_{i=1}^\infty\left(\frac{2x}{1-x}\cdot \frac{\delta\lambda y}{1-(1-\delta)\lambda y}\right)^i=\sum_{i=1}^\infty\left(\sum_{a',b'=1}^\infty \frac{2\delta}{1-\delta} x^{a'} ((1-\delta)\lambda y)^{b'} \right)^i
%  \triangleq \sum_{a,b=1}^\infty c_{a,b}x^a y^b,
%  \]
% where
% \begin{align*}
% c_{a,b}& =\sum_{k=1}^\infty \sum_{\vec{a},\vec{b} \in \naturals_+^k:\|\vec{a}\|_1=a,\|\vec{b}\|_1=b} (2\delta/(1-\delta))^k (\lambda(1-\delta))^b \\
% & = (\lambda(1-\delta))^b \sum_{k=1}^\infty (2\delta/(1-\delta))^k \binom{a-1}{k-1} \binom{b-1}{k-1} 
% \end{align*}

If there exist $x$ and $y$ with 
$0<x<1$, $0<y<1/[\lambda(1-
\delta)]$, and $xy=1$ for which $\frac{2x}{1-x}\cdot \frac{\delta\lambda y}{1-(1-\delta)\lambda y}<1$ then $\sum_{k=1}^\infty\left(\frac{2x}{1-x}\cdot \frac{\delta\lambda y}{1-(1-\delta)\lambda y}\right)^k$ converges for these values of $x$ and $y$. That means that the expected number of $(a,b)$-circuits with $a=b$ is upper bounded by $g(x,y)$, and the expected number of $(a,b)$-circuits with $|a-b|=O(1)$ is at most $O(g(x,y)).$ With some more work, we can use that to prove that with probability $1-o(1)$, every $2$-factor on $\delta n$ vertices is nearly equal to the planted one. 

Conversely, if $\frac{2x}{1-x}\cdot \frac{\delta\lambda y}{1-(1-\delta)\lambda y}>1$ for all $x<1$, $y<1/[\lambda(1-
\delta)]$ with $xy=1$ then $\sum_{k=1}^\infty\left(\frac{2x}{1-x}\cdot \frac{\delta\lambda y}{1-(1-\delta)\lambda y}\right)^k$ diverges for all such $x$ and $y$. That can be used to show that the expected number of $(a,b)$-circuits with $a=b$ diverges as $n$ increases. This brings us to the question of when these cases hold, which has the following answer.
\begin{lemma}\label{lmm:lambdaThresholdLem}
If $\lambda<\frac{1}{(\sqrt{2\delta}+\sqrt{1-\delta})^2}$, then there exist 
$x$ and $y$ with $0<x<1$, $1<y<1/[\lambda(1-
\delta)]$, and $xy=1$ such that $\frac{2x}{1-x}\cdot \frac{\delta\lambda y}{1-(1-\delta)\lambda y}<1$. On the other hand, if $\lambda>\frac{1}{(\sqrt{2\delta}+\sqrt{1-\delta})^2}$, then
$\frac{2x}{1-x}\cdot \frac{\delta\lambda y}{1-(1-\delta)\lambda y}>1$ for all 
$x$ and $y$ with $0<x<1$, $1<y<1/[\lambda(1-
\delta)]$, and $xy=1$. 
% then there exist $x$ and $y$ with $x<1$, $y<1/\lambda(1-
% \delta)$, and $xy=1$ for which $\frac{2x}{1-x}\cdot \frac{\delta\lambda y}{1-(1-\delta)\lambda y}<1$. On the flip side, if $\lambda>\frac{1}{(\sqrt{2\delta}+\sqrt{1-\delta})^2}$ then $\frac{2x}{1-x}\cdot \frac{\delta\lambda y}{1-(1-\delta)\lambda y}>1$ for all $x$ and $y$ with $x<1$, $y<1/\lambda(1-
% \delta)$, and $xy=1$.
\end{lemma}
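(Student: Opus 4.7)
The plan is to eliminate the constraint $xy=1$ by a convenient change of variables. Setting $a = x$ and $b = (1-\delta)\lambda y$, the constraint $xy=1$ becomes $ab = (1-\delta)\lambda$, the admissible region $0<x<1$ and $1<y<1/[(1-\delta)\lambda]$ translates to $0<a,b<1$ (nonempty iff $(1-\delta)\lambda<1$), and a short manipulation using $ab=(1-\delta)\lambda$ collapses the product into
\[
\frac{2x}{1-x}\cdot\frac{\delta\lambda y}{1-(1-\delta)\lambda y} \;=\; \frac{2\delta}{1-\delta}\cdot\frac{ab}{(1-a)(1-b)} \;=\; \frac{2\delta\lambda}{(1-a)(1-b)}.
\]
So the lemma is equivalent to asking when $(1-a)(1-b) > 2\delta\lambda$ is achievable for some feasible $(a,b)$.

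I would then maximize $(1-a)(1-b)$ over the hyperbola $ab=(1-\delta)\lambda$ with $0<a,b<1$. Substituting $b=(1-\delta)\lambda/a$, the function $(1-a)(1-b)$ vanishes at both endpoints $a\to(1-\delta)\lambda^+$ and $a\to 1^-$, so the extremum is interior; by symmetry (or by a one-line Lagrangian), it occurs at $a=b=\sqrt{(1-\delta)\lambda}$, with maximum value $\bigl(1-\sqrt{(1-\delta)\lambda}\bigr)^2$. Hence the lemma reduces to the equivalence
$\bigl(1-\sqrt{(1-\delta)\lambda}\bigr)^2 > 2\delta\lambda \iff \lambda < 1/(\sqrt{2\delta}+\sqrt{1-\delta})^2$.

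For the final step, since $\sqrt{(1-\delta)\lambda}<1$ in the admissible regime, taking square roots in $\bigl(1-\sqrt{(1-\delta)\lambda}\bigr)^2 > 2\delta\lambda$ yields $\sqrt{\lambda}\bigl(\sqrt{2\delta}+\sqrt{1-\delta}\bigr)<1$, which rearranges to $\lambda<1/(\sqrt{2\delta}+\sqrt{1-\delta})^2$. For $\lambda$ below this threshold, the witness $(x,y)=\bigl(\sqrt{(1-\delta)\lambda},\,1/\sqrt{(1-\delta)\lambda}\bigr)$ achieves the strict inequality; for $\lambda$ above the threshold, $(1-a)(1-b)\leq\bigl(1-\sqrt{(1-\delta)\lambda}\bigr)^2<2\delta\lambda$ throughout the feasible region, so the reverse strict inequality holds uniformly. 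The boundary case $\lambda\geq 1/(1-\delta)$ makes the admissible set empty and the second assertion holds vacuously; one checks $(\sqrt{2\delta}+\sqrt{1-\delta})^2>1-\delta$ so that $\lambda^\ast<1/(1-\delta)$, and this degenerate case never conflicts with the first assertion.

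The one step I expect to be the main obstacle is noticing the substitution $(a,b)=(x,(1-\delta)\lambda y)$ that turns the two-variable constrained inequality into the clean extremal problem for $(1-a)(1-b)$ on a hyperbola with a symmetric extremum; once this is in hand, the remaining computations are routine calculus and algebra.
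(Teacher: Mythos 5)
Your proof is correct and takes a genuinely different route from the paper's. The paper works directly with $x$, clears denominators, and reduces the inequality to $f(x)=x^2+((3\delta-1)\lambda-1)x+(1-\delta)\lambda<0$ for some $x\in((1-\delta)\lambda,1)$; it then completes the square, examines the discriminant, and verifies that the vertex lies in the admissible interval, with the threshold $\lambda_\pm=1/(\sqrt{2\delta}\mp\sqrt{1-\delta})^2$ emerging after a chain of algebraic simplifications. Your substitution $(a,b)=(x,(1-\delta)\lambda y)$ collapses the same expression to $\frac{2\delta\lambda}{(1-a)(1-b)}$ subject to the hyperbolic constraint $ab=(1-\delta)\lambda$ with $a,b\in(0,1)$, so the lemma becomes a one-variable maximization of $(1-a)(1-b)$, whose optimum at $a=b=\sqrt{(1-\delta)\lambda}$ gives $(1-\sqrt{(1-\delta)\lambda})^2$ and hence the threshold by a single square root. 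This is cleaner and makes the origin of the threshold $(\sqrt{2\delta}+\sqrt{1-\delta})^2$ transparent rather than emergent from the quadratic formula. The only caveat worth flagging is the endpoint $\delta=1$ (included in the lemma's scope), where your change of variables degenerates since $b\equiv 0$: there $ab=(1-\delta)\lambda$ forces $b=0$ on the boundary of your open box, and the stated witness $x=\sqrt{(1-\delta)\lambda}=0$ is inadmissible. This is easily patched — for $\delta=1$ the product is $2\lambda/(1-x)$ and any $x<1-2\lambda$ works — but it should be mentioned. For $\delta<1$ the argument is tight, including the vacuity of the converse when $(1-\delta)\lambda\geq 1$.
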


\begin{proof}
If $xy=1$ then \begin{align*}
\frac{2x}{1-x}\cdot \frac{\delta\lambda y}{1-(1-\delta)\lambda y}&=\frac{2}{1-x}\cdot \frac{\delta\lambda}{1-(1-\delta)\lambda /x}=\frac{2\delta\lambda x}{(1-x)(x-(1-\delta)\lambda)}.
\end{align*}
At this point, the question of whether there is $(1-\delta) \lambda < x <1 $ for which this is less than $1$ is equivalent to the question of whether there is  $(1-\delta) \lambda < x <1 $ for which $(1-x)(x-(1-\delta)\lambda)>2\delta\lambda x$.
%, which is in turn equivalent to having $x^2+(2\delta\lambda-(1-\delta)\lambda-1)x+(1-\delta)\lambda<0$. 
% That takes on a minimum value of $(1-\delta)\lambda-(2\delta\lambda-(1-\delta)\lambda-1)^2/4$, so now we want to know when that is less than $0$. 
Let 
$$
f(x)=2\delta\lambda x - (1-x) \left(x-(1-\delta)\lambda \right)
= x^2+((3\delta-1) \lambda -1 )x+(1-\delta)\lambda.
$$
Note that $f(1)=2\delta \lambda \ge 0$, $f((1-\delta) \lambda))=2\delta \lambda^2(1-\delta) \ge 0 $, and 
$f(x)$ achieves its minimum
value of $f^*\triangleq (1-\delta)\lambda -((3\delta-1) \lambda -1 )^2/4$ at $x=x^*\triangleq (1- (3\delta-1)\lambda)/2$. Thus there exists $(1-\delta) \lambda < x <1 $ for which $f(x)<0$, provided that 
\begin{align}
    f^* <0 \text{ and } 
(1-\delta) \lambda \le x^* \le 1. \label{eq:condtion_lambda_check}
\end{align}

Note that $f^* <0$ is equivalent to $(3\delta-1)^2\lambda^2-(2\delta+2)\lambda+1>0$, which  
happens when $\lambda < \lambda_-$ or $\lambda > \lambda_+, $ where 
\begin{align*}
\lambda_{\pm} = \frac{(2\delta+2)\pm \sqrt{(2\delta+2)^2-4(3\delta-1)^2}}{2(3\delta-1)^2}&=\frac{\delta+1\pm \sqrt{(\delta+1)^2-(3\delta-1)^2}}{(3\delta-1)^2} =\frac{\delta+1 \pm \sqrt{8\delta-8\delta^2}}{(3\delta-1)^2}\\
&=\frac{(\sqrt{2\delta}\pm \sqrt{1-\delta})^2}{(3\delta-1)^2}=\frac{1}{(\sqrt{2\delta}\mp\sqrt{1-\delta})^2}.
\end{align*}
Also, $(1-\delta) \lambda \le x^* \le 1$ is equivalent to $(1+\delta) \lambda \le 1$. 
Note that $\lambda_-\le \frac{1}{1+\delta}$ and $\lambda_+\ge \frac{1}{1+\delta}$. Thus, \prettyref{eq:condtion_lambda_check} holds provided that $\lambda < \lambda_- = \frac{1}{(\sqrt{2\delta}+\sqrt{1-\delta})^2}.$

Conversely, if $\lambda>\lambda_-=\frac{1}{(\sqrt{2\delta}+\sqrt{1-\delta})^2}$, then either 
$f^*>0$
or $(1-\delta) \lambda \ge x^* $. Thus $f(x)>0$ for all 
$(1-\delta) \lambda < x <1 $.

\end{proof}

%\nbr{For the nothing phase, we can just do the reduction.} \julia{Does that belong somewhere else?}\jx{Yes, this is a placeholder. We need to talk about nothing phase somewhere.}

The previous lemma finally allows us to prove that we can achieve almost exact recovery whenever $\lambda<\frac{1}{(\sqrt{2\delta}+\sqrt{1-\delta})^2}$.

\begin{theorem}\label{thm:possibility_almost_exact_partial_2_factor}
Let $\delta\in (0,1]$ and $0<\lambda<\frac{1}{(\sqrt{2\delta}+\sqrt{1-\delta})^2}$. Suppose $G\sim \calG(n, \lambda,\delta)$ is the graph generated from the planted cycles model conditional on $H^*$.
%$H^*=h.$
%, and  $H^*$ is the hidden $2$-factor on the set of $\delta n$ vertices. 
% Let $\delta\in (0,1]$, $0<\lambda<\frac{1}{(\sqrt{2\delta}+\sqrt{1-\delta})^2}$, and $G$ be an $n$-vertex graph where there is a $2$-factor $H^*$ on $\delta n$ vertices and every other pair of vertices has an edge between them with probability $\lambda/n$.
%Also, let  
There exists $C$ independent of $n$ such that $\mathbb{E}[ |H^*\Delta H|]\le C$
for any $2$-factor $H$ on $\delta n$ vertices in $G$.  
\end{theorem}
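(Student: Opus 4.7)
My plan is to bound $\mathbb{E}[|H^*\Delta H|]$ uniformly over all valid $H$ by controlling the expected total edge count across all balanced alternating swap configurations in $G$. By Lemma~\ref{lemma:difference-graph}, each alternative $H$ gives a balanced swap $C := H\Delta H^*$, a vertex-disjoint union of alternating $(a_i,b_i)$-circuits with $\sum_i a_i = \sum_i b_i$, and $|H^*\Delta H| = \sum_i (a_i + b_i)$. Thus any $H$-specific edit distance is dominated by the total edge content of all balanced swap configurations realized in $G$, and bounding this total bounds the worst-case alternative $H$.

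By Lemma~\ref{lmm:lambdaThresholdLem}, I fix $x,y$ with $0<x<1<y<1/[(1-\delta)\lambda]$, $xy=1$, and $\alpha := \frac{2x}{1-x}\cdot\frac{\delta\lambda y}{1-(1-\delta)\lambda y} < 1$; hence $g(x,y) = \alpha/(1-\alpha) < \infty$, and by \eqref{eq:generating_function}, $\sum_{a,b\ge 1} c_{a,b}\, x^a y^b \le g(x,y)$. Since $(\sqrt{2\delta}+\sqrt{1-\delta})^2 \ge 1-\delta$, the threshold condition also forces $(1-\delta)\lambda < 1$, so $\sum_a ((1-\delta)\lambda)^a < \infty$.

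I then plan to bound three types of ``atomic'' balanced contributions. (i) Balanced individual circuits ($a_i=b_i$): Lemma~\ref{validWalkCountLem} gives $\mathbb{E}[\#(a,a)\text{-circuits in }G] \le c_{a,a}/(2a\delta)$ after dividing the ordered trail count by the $2a$ starting choices per circuit, so the expected total edges is $\sum_a 2a\cdot c_{a,a}/(2a\delta) = \sum_a c_{a,a}/\delta \le g(x,y)/\delta = O(1)$. (ii) Pure-planted/pure-unplanted pairs: a cycle of $H^*$ of length $a$ paired with a vertex-disjoint pure unplanted cycle of length $a$ in $G$ yields a valid 2-factor $H$; the expected count of pure unplanted cycles of length $a$ is at most $((1-\delta)\lambda)^a/(2a)$, each contributing $2a$ edges, for a total $\sum_a ((1-\delta)\lambda)^a = O(1)$. (iii) More complex balanced combinations of imbalanced mixed circuits: I would use the decay $\sum_{b\ge a+\ell} c_{a,b} \le g(x,y)\, x^\ell$ (which follows from the stated inequality since $xy=1$) to control the contribution of long mixed imbalanced circuits, noting that balance across a multi-circuit configuration is enforced by the $xy=1$ weighting, under which any balanced collection carries unit weight $\prod_i x^{a_i}y^{b_i} = (xy)^{\sum a_i} = 1$.

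The hardest step will be (iii), which requires carefully enumerating multi-circuit balanced combinations (e.g., $(a,b)$-circuits with $b>a$ paired with complementary-excess circuits) and showing the total expected contribution remains $O(1)$ without double-counting. This most likely calls for a cluster-expansion-style argument in which the summability of $c_{a,b}\,x^ay^b$ ensures each single-circuit contribution is summable, while the diagonal extraction via $xy=1$ selects exactly the balanced combinations without combinatorial blow-up. A cleaner alternative is to prove directly that $\Pr[\exists H\colon |H^*\Delta H|\ge 2t]$ decays exponentially in $t$---since any such $H$ must embed either a balanced circuit of length $\ge t/\mathrm{const}$ or a sub-trail with excess unplanted edges $\ge \Omega(\sqrt t)$, both of whose expected counts are suppressed by the $g(x,y)\,x^\ell$ bound---and then integrate to recover $\mathbb{E}[|H^*\Delta H|]=O(1)$.
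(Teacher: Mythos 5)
Your proposal correctly identifies the right analytic tools---Lemma~\ref{lemma:difference-graph}, the generating function bound $\sum_{a,b} c_{a,b} x^a y^b = g(x,y) < \infty$, and the exponential decay in excess unplanted edges---but your overall strategy has a genuine gap at step~(iii), and you flag it yourself. The paper avoids step~(iii) entirely by a different reduction, which you do not find.

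You try to bound $\mathbb{E}[|H^*\Delta H|]$ by ``the total edge content of all balanced swap configurations realized in $G$.'' This quantity is not well-posed: a balanced configuration is a \emph{collection} of circuits whose individual $(a_i,b_i)$ may be wildly imbalanced, and there can be exponentially many such collections sharing edges, so a naive sum over collections blows up. Your proposed cluster-expansion fix would have to control exactly that combinatorial explosion, and you never show how. The paper instead introduces a linear functional on \emph{single circuits}: $\ex(C)=b-(\tfrac12-\epsilon)(a+b)$. The crucial observation is that for \emph{any} competing $2$-factor $H$, writing $H^*\Delta H=\bigsqcup_i C_i$, one has $\sum_i b_i = \tfrac12|H^*\Delta H|$ (half the edges are unplanted), so $\sum_i\ex(C_i)=\epsilon|H^*\Delta H|$; dropping the circuits with negative excess only increases the sum, giving $|H^*\Delta H|\le \tfrac{1}{\epsilon}\sum_i\ex(C_i)^+\le \tfrac{1}{\epsilon}\Gamma$ where $\Gamma:=\sum_{C\subset G}\ex(C)^+$ sums over \emph{all circuits in $G$}. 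Since $\Gamma$ is a linear sum over circuits (each circuit counted once, not over collections), there is no double-counting issue and no cluster expansion needed; $\mathbb{E}[\Gamma]$ is bounded directly from Lemma~\ref{validWalkCountLem} and the generating function. This potential-function argument is the missing idea in your proposal.

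A smaller but real technical slip: you fix $xy=1$, but the paper's proof of $\mathbb{E}[\Gamma]<\infty$ needs the weighting $x^{1+2\epsilon}y^{1-2\epsilon}=1$ for some $\epsilon\in(0,\tfrac12)$ so that $x^ay^b = (y/x)^{\ex(C)}$ exactly; this is what lets the generating-function bound absorb the excess factor. The existence of such $\epsilon$ is guaranteed by Lemma~\ref{lmm:lambdaThresholdLem} plus a small perturbation of $y$, but your fixed $xy=1$ choice corresponds to $\epsilon=0$ and would not close the argument.
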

%\julia{The proof of Theorem \ref{thm:possibility_almost_exact_partial_2_factor} bounds $E[\Gamma]$ by a constant. Can we use this fact to show that a simple greedy algorithm works for almost exact recovery: i.e. for each vertex find two paths of length $\delta \log n$ from $v$ and include the traversed edges into $\hat{H}$. The idea is that if $v$ is planted, then it belongs to a cycle of length at least $\delta \log n$ whp, and if $\Gamma$ is small, then most planted vertices don't have any incident long, bad paths. Similarly, $\Gamma$ being small may also imply that most unplanted vertices don't have any incident long, bad paths.}

\begin{proof}
Recall that $H^*\Delta H$ can be represented as a vertex-disjoint union of $(a,b)$-circuits such that exactly half of the total edges are planted. However, each individual $(a,b)$-circuit may be imbalanced, \ie, have an unequal number of planted and unplanted edges. Nevertheless, we will argue that the expected number of $(a,b)$-circuits with more than $(1/2-\epsilon)$ fraction of their edges unplanted is bounded. 
%We further use this to bound the expected maximum size of a collection of valid cycles with half of its edges unplanted. 

Towards this end, for each $(a,b)$-circuit $C$ in graph $G$,
define its excess $\ex(C)=b-(1/2-\epsilon) (a+b)$, where $\epsilon>0$ will be specified later. Then define the total excess over all possible valid circuits with positive excess as 
$$
\Gamma = \sum_{C \subset G}
\ex(C) \indc{\ex(C) > 0},
$$
Given any $2$-factor on $\delta n$ vertices in $G$, suppose that 
$H^\ast \Delta H$ is the vertex-disjoint union of $(a_i,b_i)$-circuits $C_i$ for $1 \le i \le m$. Then by definition,
\begin{align*}
\frac{1}{2} |H^\ast \Delta H| = 
\sum_{i=1}^m b_i 
& = \sum_{i=1}^m 
\left[b_i - \left(1/2 - \epsilon\right)(a_i+b_i) \right] + 
(1/2-\epsilon) |H^\ast \Delta H| \\
& = \sum_{i=1}^m \ex(C_i) 
+ (1/2-\epsilon) |H^\ast \Delta H| \\
& \le \Gamma + (1/2-\epsilon) |H^\ast \Delta H|.
\end{align*}
It follows that $|H^\ast \Delta H| \le \Gamma /\epsilon$. 

It remains to bound $\expect{\Gamma }.$ 
In order to do that, we decompose $\Gamma=\Gamma_0+\Gamma_1$, where 
$\Gamma_0$ sums over the $(0,b)$-circuits $C$ and 
$\Gamma_1$ sums over the $(a,b)$-circuits $C$ for $a \ge 1.$

By Lemma~\ref{validWalkCountLem}, the expected number of $(0,b)$-trails from $v$ to $v'$ in $G$ is at most  $\lambda^b (1-\delta)^{b-1} /n$. Since a $(0,b)$-circuit is a special case of a $(0,b)$-trail that starts and ends at a vertex $v $ not in $H^*$, it follows that the expected number of 
$(0,b)$-circuits is at most $\lambda^b (1-\delta)^{b-1} /n \times (1-\delta) n = \lambda^b (1-\delta)^b.$
Therefore,
\begin{align}
\mathbb{E}[\Gamma_0 ]\le (1/2+\epsilon)  \sum_{b\ge 1} b(\lambda(1-\delta))^{b} = (1/2+\epsilon) \frac{\lambda(1-\delta)}{(1-\lambda(1-\delta))^2}
\end{align}
where the last equality holds because $\sum_{b \ge 1} b x^b=x/(1-x)^2$ for $|x|<1$
and $\lambda(1-\delta)<1$.

Next, we bound $\expect{\Gamma_1 }.$ By Lemma \ref{validWalkCountLem},
for $a, b \ge 1$, the expected number of $(a,b)$-trails from $v$ to $v'$ in $G$ is at most  $c_{a,b}/(\delta n)$. Since an $(a,b)$-circuit is a special case of an $(a,b)$-trail that starts and ends at a vertex $v$ in $H^*$, it follows that the expected number of 
$(a,b)$-circuits is at most $c_{a,b}/(\delta n) \times \delta n = c_{a,b}.$ It follows that 
\begin{align*}
\mathbb{E}[\Gamma_1 ] \le \sum_{a,b \ge 1 } [b- (1/2-\epsilon)(a+b)] c_{a,b} \cdot \indc{b> (1/2-\epsilon)(a+b)}.
\end{align*}
Now, we bound the RHS of the last display equation using the generating function of $c_{a,b}$. Recall that by \prettyref{lmm:lambdaThresholdLem}, there exist $x_0,y_0$ such that $0<x_0<1$, $0<y_0<\frac{1}{\lambda(1-\delta)}$, $x_0y_0=1$, and $\frac{2x_0}{1-x_0}\cdot\frac{\delta \lambda y_0}{1-(1-\delta)\lambda y_0}<1$. Next, observe that there must exist $y>y_0$ such that $\frac{2x_0}{1-x_0}\cdot\frac{\delta \lambda y}{1-(1-\delta)\lambda y}<1$. Also, let $x=x_0$. At this point, $x^2<1<xy$ so there must exist some $0<\epsilon<1/2$ such that $x^{1+2\epsilon}y^{1-2\epsilon}=1$. 
%Now, let $\Gamma$ be the sum over all valid circuits with at least $(\frac{1}{2}-\epsilon)$ of their edges unplanted of the difference between their numbers of unplanted edges and $(\frac{1}{2}-\epsilon)$ times their total lengths. 
Therefore, for all $b \ge (1/2-\epsilon)(a+b),$
\begin{align*}
 b- (1/2-\epsilon)(a+b)
 %& =  \left((1/2+\epsilon)b-(1/2-\epsilon)a \right) c_{a,b} %\left(x^{1/2+\epsilon}y^{1/2-\epsilon} \right)^{a+b}
 %\\
%&\le \frac{1}{y/x-1} \left[((1/2+\epsilon)b-(1/2-\epsilon)a)(y/x-1)+1\right] c_{a,b} %\left(x^{1/2+\epsilon}y^{1/2-\epsilon} \right)^{a+b}
%\\
\le \frac{y/x}{y/x-1} 
\left(y/x\right)^{b- (1/2-\epsilon)(a+b)} %\left(x^{1/2+\epsilon}y^{1/2-\epsilon}\right)^{a+b}
= \frac{y/x}{y/x-1}   x^ay^b,
%\\
%&\le \frac{1}{y/x-1} \sum_{a,b\ge 1} 
%c_{a,b} x^ay^b
%\\
% &= \frac{1}{y/x-1} \sum_{i=1}^\infty\left(\frac{2x}{1-x}\cdot \frac{\delta\lambda y}{1-(1-\delta)\lambda y}\right)^i\\
% &= \frac{1}{y/x-1}\frac{1}{1-\frac{2x}{1-x}\cdot \frac{\delta\lambda y}{1-(1-\delta)\lambda y}},
\end{align*}
where the inequality holds because $ t^{\alpha+1} \ge 1+(\alpha+1)(t-1) \ge \alpha (t-1)$ for $t \ge 1$ and $\alpha \ge 0$; the equality holds due to
$x^{1+2\epsilon}y^{1-2\epsilon}=1$. Combining the last displayed equations gives that 
\begin{align*}
\mathbb{E}[\Gamma_1 ]
% &\le \sum_{a,b: b> (1/2-\epsilon)(a+b)} [b- (1/2-\epsilon)(a+b)] c_{a,b}\\
% &= \sum_{a,b: b> (1/2-\epsilon)(a+b)} \left((1/2+\epsilon)b-(1/2-\epsilon)a \right) c_{a,b} \left(x^{1/2+\epsilon}y^{1/2-\epsilon} \right)^{a+b}\\
% &\le \frac{1}{y/x-1} \sum_{a,b: b> (1/2-\epsilon)(a+b)} \left[((1/2+\epsilon)b-(1/2-\epsilon)a)(y/x-1)+1\right] c_{a,b} \left(x^{1/2+\epsilon}y^{1/2-\epsilon} \right)^{a+b}\\
% &\overset{(a)}{\le} \frac{1}{y/x-1} \sum_{a,b: b> (1/2-\epsilon)(a+b)} 
% \left(y/x\right)^{(1/2+\epsilon)b-(1/2-\epsilon)a} c_{a,b} \left(x^{1/2+\epsilon}y^{1/2-\epsilon}\right)^{a+b}\\
% & \le \frac{y/x}{y/x-1} \sum_{a,b: b> (1/2-\epsilon)(a+b)} 
% c_{a,b} x^ay^b\\
&\le \frac{y/x}{y/x-1} \sum_{a,b\ge 1} 
c_{a,b} x^ay^b\\
&= \frac{y/x}{y/x-1} \sum_{k=1}^\infty\left(\frac{2x}{1-x}\cdot \frac{\delta\lambda y}{1-(1-\delta)\lambda y}\right)^k \\
& \le \frac{y/x}{y/x-1}\frac{1}{1-\frac{2x}{1-x}\cdot \frac{\delta\lambda y}{1-(1-\delta)\lambda y}},
\end{align*}
 
% \begin{align*}
% \mathbb{E}[\Gamma]&\le \sum_{a,b: b> (1/2-\epsilon)(a+b)} [b- (1/2-\epsilon)(a+b)] c_{a,b}\\
% &= \sum_{a,b: b> (1/2-\epsilon)(a+b)} \left((1/2+\epsilon)b-(1/2-\epsilon)a \right) c_{a,b} \left(x^{1/2+\epsilon}y^{1/2-\epsilon} \right)^{a+b}\\
% &\le \frac{1}{y/x-1} \sum_{a,b: b> (1/2-\epsilon)(a+b)} \left[((1/2+\epsilon)b-(1/2-\epsilon)a)(y/x-1)+1\right] c_{a,b} \left(x^{1/2+\epsilon}y^{1/2-\epsilon} \right)^{a+b}\\
% &\overset{(a)}{\le} \frac{1}{y/x-1} \sum_{a,b: b> (1/2-\epsilon)(a+b)} 
% \left(y/x\right)^{(1/2+\epsilon)b-(1/2-\epsilon)a} c_{a,b} \left(x^{1/2+\epsilon}y^{1/2-\epsilon}\right)^{a+b}\\
% &= \frac{1}{y/x-1} \sum_{a,b: b> (1/2-\epsilon)(a+b)} 
% c_{a,b} x^ay^b\\
% &\le \frac{1}{y/x-1} \sum_{a,b\ge 1} 
% c_{a,b} x^ay^b\\
% &= \frac{1}{y/x-1} \sum_{i=1}^\infty\left(\frac{2x}{1-x}\cdot \frac{\delta\lambda y}{1-(1-\delta)\lambda y}\right)^i\\
% &= \frac{1}{y/x-1}\frac{1}{1-\frac{2x}{1-x}\cdot \frac{\delta\lambda y}{1-(1-\delta)\lambda y}},
% \end{align*}
% where the first equality holds due to
% $x^{1+2\epsilon}y^{1-2\epsilon}=1$;
% $(a)$ holds because $1+(t-s)(y/x-1) \le (y/x)^{t-s}$ for $x \le y$ and $s \le t.$
% Now, $H^*\Delta H$ can be expressed as a union of valid circuits, so its total number of unplanted edges must be at most $\Gamma+(1/2-\epsilon)|H^*\Delta H|$. However, it must also have half its edges be unplanted, which then implies that it has at most $\Gamma/\epsilon$ edges. 
In conclusion, 
%$\expect{\Gamma} \le C$ for some constant $C$ only depending on $\epsilon,\lambda, \delta$, and hence 
$\expect{|H^\ast \Delta H| } \le 
\expect{\Gamma } /\epsilon \le C$ for some constant $C$ only depending on $\epsilon,\lambda, \delta$. 
%for $C= \frac{1}{\epsilon(y/x-1)}\frac{1}{1-\frac{2x}{1-x}\cdot \frac{\delta\lambda y}{1-(1-\delta)\lambda y}}$.

\end{proof}

\section{Generating functions to prove impossibility of recovery}\label{sec:impossibility}
In this section, we will prove the following theorem, establishing that when $\lambda>\frac{1}{(\sqrt{2\delta}+\sqrt{1-\delta})^2}$, any estimator must suffer at least constant error.
\begin{theorem}\label{thm:impossibility-delta}
If 
\begin{align}
    \lambda  >\frac{1}{(\sqrt{2\delta}+\sqrt{1-\delta})^2}, \label{eq:impossibility-delta}
\end{align}
then there exists a constant $\epsilon> 0$ depending only on $\lambda$ and $\delta$ such that
 for any estimator $\widehat{H}$, 
 \begin{align}\mathbb{P}\left(\ell(H^*, \hat{H}) \geq \epsilon/2 \right) \ge 1 -n^{-\Omega(\log n)}.
 \label{eq:error_lower_bound_prob}
 \end{align}
It follows that for all sufficiently large $n$, 
$$
%\mathbb{E}\left|\widehat{H}\triangle H^*\right| \ge ckn
\expect{\risk(\hat{H}, H^*)}\ge \epsilon/4.
$$
%where $c$ is a constant depending on $\delta, \lambda.$ \nbr{where is $c?$}
\end{theorem}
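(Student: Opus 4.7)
The plan is to prove impossibility via a Bayesian argument. A direct posterior computation shows that, conditional on $G$, the law of $H^*$ is uniform over $\mathcal{H}(G) := \{H \subseteq G : H \text{ is a } 2\text{-factor on exactly } \delta n \text{ vertices}\}$, since the prior on $H^*$ is uniform and $\mathbb{P}(G \mid H^* = H)$ depends on $H$ only through $|H| = \delta n$. It therefore suffices to show that with probability $1 - n^{-\Omega(\log n)}$ over $G$, the set $\mathcal{H}(G)$ is both enormous and ``spread out'' enough that a Chernoff-type concentration forces any estimator $\widehat{H}$ to disagree with $H^*$ on a constant fraction of $\delta n$ edges.

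To populate $\mathcal{H}(G)$, I would first apply \prettyref{lmm:m-star} (enabled by the divergence of $g(x,y)$ on the curve $xy = 1$, $0 < x < 1$, under condition \prettyref{eq:impossibility-delta}, via \prettyref{lmm:lambdaThresholdLem}) to fix an integer $m^*$ with $c_{m^*, m^*} > 1$. By \prettyref{validWalkCountLem}, this means that from any vertex $v \in V(H^*)$, the expected number of $(m^*, m^*)$-trails landing in $V(H^*)$ exceeds $1$. I would then build a multi-type branching process in which each vertex $v \in V(H^*)$ spawns endpoints of vertex-disjoint $(m^*, m^*)$-trails obtained via sprinkling on a freshly-revealed portion of $G_0$. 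Because this process has offspring mean $>1$, it is supercritical; iterating from many roots and pruning for disjointness yields, with probability $1 - e^{-\Omega(n)}$, a collection of $k = \Omega(n)$ vertex-disjoint $(m^*, m^*)$-circuits $C_1, \dots, C_k \subseteq G$ obtained by closing branches back to their roots.

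Given such circuits, \prettyref{lemma:difference-graph} together with the alternation rule of \prettyref{def:trail} ensures that for every $S \subseteq [k]$ the graph $H_S := H^* \Delta \bigcup_{i \in S} C_i$ is again a valid $2$-factor on $\delta n$ vertices contained in $G$, so $|\mathcal{H}(G)| \geq 2^k = 2^{\Omega(n)}$. For any fixed $\widehat{H} = \widehat{H}(G)$, since the posterior is uniform over $\mathcal{H}(G)$, the indicators $\mathbf{1}\{C_i \subseteq H^* \Delta \widehat{H}\}$ for $i \in [k]$ behave as independent fair coin flips; Chernoff then gives that at least $k/4$ of them equal $1$ with probability $1 - e^{-\Omega(k)} = 1 - e^{-\Omega(n)}$. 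This forces $|\widehat{H} \Delta H^*| \geq (m^*/4) k = \Omega(n)$, i.e.\ $\ell(\widehat{H}, H^*) \geq \epsilon/2$ for some $\epsilon > 0$ depending only on $\lambda, \delta$. Since $e^{-\Omega(n)} \leq n^{-\Omega(\log n)}$, the claimed high-probability bound follows, and $\mathbb{E}[\ell(H^*,\widehat H)] \geq \epsilon/4$ is immediate.

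The main obstacle will be the branching-process construction of $\Omega(n)$ vertex-disjoint $(m^*, m^*)$-circuits with the required tail. The processes initiated from distinct roots are not literally independent: they share the underlying graph and the planted set $V(H^*)$, and closing a branch back to its root is a non-trivial event that depends on previously-exposed edges. The technical core will therefore be a careful sprinkling / phased edge-reveal argument that couples the construction to a genuine supercritical Galton--Watson tree of mean still exceeding $1$, while ensuring that each closure respects the alternation of \prettyref{def:trail} so that the resulting $H^*\Delta C_i$ truly lies in $\mathcal{H}(G)$. The single-cycle variant promised in \prettyref{remark:single-cycle} follows from the standard reduction using $\mathbb{P}(H^* \text{ is a single cycle}) \geq 1/(\delta n)$, which costs only a polynomial factor that is absorbed by the $e^{-\Omega(n)}$ tail.
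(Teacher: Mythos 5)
Your overall framing matches the paper's --- uniform posterior over $\mathcal{H}(G)$, the threshold lemma giving $c_{m^*,m^*}>1$, and the goal of exhibiting exponentially many far-away $2$-factors --- but the construction step you propose has a quantitative flaw that the paper's architecture is specifically designed to avoid.

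You claim that a supercritical branching process of $(m^*,m^*)$-trails yields $k=\Omega(n)$ vertex-disjoint $(m^*,m^*)$-circuits $C_1,\dots,C_k$, then XOR subsets onto $H^*$ to get $2^k$ alternatives. But the expected number of $(m^*,m^*)$-circuits in $G$ is $O(1)$, not $\Omega(n)$: by \prettyref{validWalkCountLem} the expected number of $(m^*,m^*)$-trails from $v$ to a \emph{fixed} $v'$ is at most $c_{m^*,m^*}/(\delta n)$, so requiring the trail to close back at its own root has probability $O(1/n)$, and summing over roots gives at most $c_{m^*,m^*}=O(1)$ circuits in expectation. Supercriticality of the branching process is a statement about the number of trails landing \emph{somewhere} in $V(H^*)$ (mean $>1$), not about trails returning to a specific vertex (mean $O(1/n)$). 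So the ``closing branches back to their roots'' step cannot produce $\Omega(n)$ disjoint constant-length balanced circuits, and your lower bound on $|\mathcal{H}(G)|$ and the subsequent Chernoff argument both collapse. Note there is also a tension you cannot escape by making the circuits longer: with $\Omega(n)$ vertex-disjoint circuits each must have length $O(1)$, and constant-length balanced circuits are too rare.

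The paper's resolution is exactly this: it never attempts to close circuits within a single branching process. Instead, Algorithm~\ref{alg:tree-construction2} grows $\Theta(n)$ \emph{open} two-sided trees out of $(m^*,m^*)$-path layers (never closing), while Algorithm~\ref{alg:matching} reserves $\Theta(n)$ planted edges in advance. The sprinkling step (Algorithm~\ref{alg:cycle-construction2} and Lemma~\ref{lemma:cycle-construction2}) then links tree endpoints through the reserved edges via a five-edge construction, invoking Lemma~\ref{lemma:bipartite-ER} to produce $e^{\Omega(n)}$ long balanced cycles of length $\Omega(n)$, each giving a distinct element of $\Mbad$. The impossibility then follows from the posterior comparison inequality \prettyref{eq:posterior-tail} together with the cardinality bounds of Lemmas~\ref{lmm:Mgood-delta} and \ref{lmm:Mbad-delta}, rather than from an independence/Chernoff argument on indicators under the posterior, which you would also need to justify more carefully since $\mathcal{H}(G)$ is not a product set. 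A secondary technical point you would need to address in any event: one cannot literally ``freshly reveal'' $G_0$ in phases while also conditioning on the planted vertex set $V(H^*)$, which is why the paper prunes the available vertex set $\mathcal{A}$ (Line~\ref{line:remove1}) and handles dependence via the history-dependent branching process of Lemma~\ref{lmm:branching}.
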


The proof of Theorem \ref{thm:impossibility-delta} is similar in spirit to the proof of \cite[Theorem 2.3]{gaudio2025all}. Let $\mu_G$ denote the posterior distribution:
\begin{align}
\mu_G(H) \triangleq \prob{H^*=H \mid G } = \frac{1}{|\calH(G)|} \indc{H \in \calH(G)}, \quad \forall H \in \calH(G),  \label{eq:posterior_distribution}
\end{align}
where $\calH(G)$ denotes the set of degree-$2$ subgraphs on $\delta n$ vertices. That is, $\mu_G(\cdot)$ is the uniform distribution over degree-$2$ subgraphs on $\delta n$ vertices. 
A  simple yet crucial observation is that while a random draw $\tilde{H}$ from the posterior distribution~\eqref{eq:posterior_distribution} may not minimize the reconstruction error, its reconstruction error can be related to that of any estimator $\hat{H}$ as follows: for any $D>0$,
\begin{align}
\mathbb{P}\left(\ell(H^*,\hat{H})< D \right) \le 
\sqrt{\mathbb{P}\left(\ell(H^*,\tilde{H}) < 2D \right) }. \label{eq:posterior-tail}
\end{align}
(see~\cite[Eq. 3.2]{gaudio2025all} for a short proof).
Therefore, it suffices to prove~\prettyref{eq:error_lower_bound_prob} holds for the posterior sample $\tilde{H}$, which further reduces to demonstrating that the observed graph $G$ contains many more $H \in \mathcal{H}$ that are far from $H^*$ than those close to $H^*$.

We begin by defining two sets, one containing subgraphs $H$ that are similar to $H^*$, and the other containing subgraphs $H$ that are markedly different from $H^*$.
\begin{comment}
We begin similarly to the proof of Theorem \ref{thm:impossibility}, noting that for any estimator $\hat{H}$,
\[\Expect|H^*\symdiff\hat H| \geq \frac{1}{2} \Expect|\tilde H \symdiff H^*|,
\]
where $\tilde{H}$ is a sample from the posterior distribution of $H^{\ast}$ (that is, a uniform distribution over all possible $2$-factors on $\delta n$ vertices). Therefore, it suffices to lower-bound the right hand side. To this end, define the sets
\end{comment}
Concretely, given $\epsilon>0$, define
\begin{align*}
\Mgood  = & ~ \left\{ H \in \calH: \ell( H, H^*) < \epsilon, H \subset G \right\} \\
\Mbad = & ~ \left\{ H \in \calH: \ell(H,H^*)  \geq \epsilon, H \subset G \right\}.
\end{align*}
% The choice of parameters is due to the equivalence \[\ell(H, H^{\ast}) = \frac{ \epsilon}{\delta} \iff |H^{\ast} \Delta H| = \frac{\epsilon}{\delta} \cdot \delta n = \epsilon n\] (recalling \eqref{eq:risk}).
Using the fact that the posterior distribution $\mu_G$ is the uniform distribution over all possible $k$-factors contained in the observed graph $G$, we have 
\[
\prob{\ell(\tilde H,H^*) < \epsilon \mid G, H^*} = \frac{|\Mgood|}{ |\Mgood| + |\Mbad| }.
\]
The following lemmas give high probability bounds on $|\Mgood|$ and $|\Mbad|$.
\begin{lemma}
\label{lmm:Mgood-delta}	
For $\epsilon< \min\{1, 2\lambda/\delta\},$ conditioned on $H^*$, with probability at least $1-e^{-\Omega(n)}$, 
\begin{equation*}
\left| \Mgood \right|\leq \frac{\epsilon \delta n}{2} \left(\frac{2e}{\epsilon} \cdot \frac{4e \lambda }{\epsilon \delta} \right)^{\epsilon \delta n/2}.
\end{equation*}
 %\nbr{Some simple argument shows an upper bound $O(\exp(n\delta \log (1/\delta))$. Complete this later.}

 %\colin{$|\Mgood|\le\sum_{i\le \epsilon n/2} {\delta n\choose i}{(\lambda+c)n\choose i}$}
 
 %\julia{The analogous Lemma \ref{lmm:Mgood} relies on Lemma \ref{lmm:enumeration} which seems not to apply when $\delta < 1$. I have not tried to generalize Lemma \ref{lmm:enumeration} yet, so this may not be a real problem.}
 %\jx{OK. Good point. We need to generalize the argument in Theorem 7.9 in some way. I do not see how to easily do this though. Colin, do you have an idea?}\colin{I think we can just add a factor for the number of ways there are to adjust the set of vertices the $2$-factor is on.}
\end{lemma}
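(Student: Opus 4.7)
The plan is to upper bound $\mathbb{E}[|\Mgood|]$ by direct combinatorial counting and then apply Markov's inequality. By linearity of expectation, and since each unplanted edge of $H$ appears in $G$ independently with probability $\lambda/n$ while planted edges are present deterministically,
\[
\mathbb{E}[|\Mgood|] \;=\; \sum_{\substack{H \in \calH \\ \ell(H,H^*) < \epsilon}} (\lambda/n)^{|H \setminus H^*|} \;=\; \sum_{t=0}^{\lfloor \epsilon \delta n/2 \rfloor - 1} N_t \, (\lambda/n)^t,
\]
where $N_t$ is the number of $2$-factors $H$ on $\delta n$-vertex subsets of $[n]$ with $|H \Delta H^*| = 2t$.

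To bound $N_t$, I would parameterize each such $H$ by the pair $R = H^* \setminus H$ and $B = H \setminus H^*$, with $|R| = |B| = t$. First choose $R \subset H^*$ in $\binom{\delta n}{t}$ ways. Removing $R$ leaves a subgraph of $H^*$ with some number $d_0 \in [0,t]$ of isolated vertices and $d_1 = 2(t-d_0)$ degree-one vertices $V_1$. For $(H^* \setminus R) \cup B$ to be a valid $2$-factor on $\delta n$ vertices, $B$ must give every $V_1$-vertex degree exactly one, and give degree exactly two to precisely $d_0$ additional vertices chosen from the pool $P = U_0 \cup ([n] \setminus V^*)$ of size $d_0 + (1-\delta)n$. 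Applying a configuration-model style bound---at most $\binom{|P|}{d_0}$ choices of active pool vertices and $(2t-1)!!$ pairings of the $2t$ degree slots---together with the standard estimates $\binom{\delta n}{t} \le (e\delta n/t)^t$, $\binom{|P|}{d_0} \le (en/t)^t$ (valid since $d_0 \le t$ and $|P| \le n$), and $(2t-1)!! = O((2t/e)^t)$, yields
\[
N_t \;\le\; O(1) \left(\frac{2 e \delta n^2}{t}\right)^{t}, \qquad \text{hence} \qquad (\lambda/n)^t N_t \;\le\; O(1) \left(\frac{2e\delta \lambda n}{t}\right)^{t}.
\]

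The function $t \mapsto (2e\delta \lambda n / t)^t$ is unimodal with maximum $e^{2\delta \lambda n}$ at $t^\star = 2\delta\lambda n$. A short case split on whether $t^\star \le \epsilon \delta n/2$ (i.e.\ whether $\lambda \le \epsilon/4$) shows that, under the assumption $\epsilon < 2\lambda/\delta$, the maximum summand over $t \in [1, \epsilon \delta n/2]$ is bounded up to constants by $(4e\lambda/\epsilon)^{\epsilon \delta n/2}$. Summing and absorbing polynomial factors gives $\mathbb{E}[|\Mgood|] \le O(\epsilon \delta n) \cdot (4e\lambda/\epsilon)^{\epsilon \delta n/2}$. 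Applying Markov's inequality with the target $M = \tfrac{\epsilon \delta n}{2}\bigl(\tfrac{2e}{\epsilon}\cdot\tfrac{4e\lambda}{\epsilon\delta}\bigr)^{\epsilon\delta n/2}$,
\[
\mathbb{P}(|\Mgood| > M) \;\le\; \frac{\mathbb{E}[|\Mgood|]}{M} \;\le\; O(1) \left(\frac{\epsilon \delta}{2e}\right)^{\epsilon \delta n/2} \;=\; e^{-\Omega(n)},
\]
since $\epsilon\delta \le 1 < 2e$.

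The main obstacle is the combinatorial bound on $N_t$, particularly ensuring the estimate is uniform in $d_0 \in [0,t]$ and that the configuration-model count---which treats degree slots as labeled and permits loops and multi-edges---is a legitimate overcount of distinct simple graphs $B$. The slack this introduces is benign because the Markov step provides an exponentially small factor $(\epsilon\delta/(2e))^{\epsilon\delta n/2}$ that absorbs constant-per-$t$ losses. A secondary subtlety is the borderline regime $\lambda < \epsilon/4$, in which the peak of the summand sits strictly inside the summation range and equals $e^{2\delta\lambda n}$; the hypothesis $\epsilon < 2\lambda/\delta$ forces $\delta < 1/2$ and $4\lambda/\epsilon < 1$ in that regime, which is exactly what is needed to show $e^{2\delta\lambda n}$ remains below the target.
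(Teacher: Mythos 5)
Your proof is correct in outline but takes a genuinely different route from the paper's, and the route you chose has an internal inconsistency you should repair. The paper does not compute $\mathbb{E}[|\Mgood|]$ at all: it conditions on the high-probability event that $G$ has at most $2\lambda n$ unplanted edges, and then observes that any $H\subset G$ with $|H\Delta H^*|=2i$ is determined by choosing $i$ planted edges (from the $\delta n$ in $H^*$) and $i$ unplanted edges of $G$ (from at most $2\lambda n$). That gives the deterministic bound $\sum_{i}\binom{\delta n}{i}\binom{2\lambda n}{i}$, and the condition $\epsilon<\min\{1,2\lambda/\delta\}$ is used only to say that the summand is maximized at $i=\epsilon\delta n/2$. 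This avoids your entire configuration-model enumeration of $2$-factors on variable vertex sets (the $d_0/d_1/$pool bookkeeping), which is the most delicate part of your argument. Your route — counting all candidate $H$ in the complete graph, weighting each by $(\lambda/n)^{|H\setminus H^*|}$, and applying Markov — is more work, because it must correctly account for 2-factors whose vertex set $V(H)$ differs from $V(H^*)$, and because Markov's inequality forces you to track the maximum of $N_t(\lambda/n)^t$ over the whole range.

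The concrete issue is your assertion that ``the maximum summand over $t\in[1,\epsilon\delta n/2]$ is bounded up to constants by $(4e\lambda/\epsilon)^{\epsilon\delta n/2}$.'' That statement is simply false in the regime $\lambda<\epsilon/4$: there the unconstrained maximum $t^\star=2\delta\lambda n$ lies strictly inside the range and the peak value is $e^{2\delta\lambda n}$, which can be exponentially larger than $(4e\lambda/\epsilon)^{\epsilon\delta n/2}$ (indeed, if $4e\lambda/\epsilon<1$ the latter is exponentially small). You do acknowledge this case in your ``secondary subtlety,'' but then the earlier bound on $\mathbb{E}[|\Mgood|]$ is wrong and the displayed Markov step no longer applies as written. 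To fix this, drop the false intermediate claim and present the argument as a genuine case split: if $\lambda\ge\epsilon/4$ the max is at the endpoint and equals $(4e\lambda/\epsilon)^{\epsilon\delta n/2}$; if $\lambda<\epsilon/4$ the max is $e^{2\delta\lambda n}$, and one must verify directly that $e^{2\delta\lambda n}\le e^{-\Omega(n)}\cdot M$. The key inequality in the second case, $\tfrac{4\lambda}{\epsilon}\le\log\tfrac{8e^2\lambda}{\epsilon^2\delta}$, does hold: from $\epsilon\delta<2\lambda$ and $\epsilon<1$ you get $\tfrac{8e^2\lambda}{\epsilon^2\delta}>\tfrac{4e^2}{\epsilon}>4e^2$, so the right side exceeds $2+\log 4>1$, while the left side is $<1$ by assumption. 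You should make this computation explicit rather than just asserting that the hypotheses are ``exactly what is needed.'' With that repair, your Markov-based argument goes through, but it remains substantially longer and more error-prone than the paper's conditioning argument, which bypasses the need to know $N_t$ at all.
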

\begin{proof}
%Fix $\eta > 0$. 
With probability $1-e^{-\Omega(n)}$, there are at most $2\lambda n$ unplanted edges. Conditioned on this event, we bound the cardinality of $\Mgood$. Observe that for $H \in \Mgood$, $|H|=\delta n$
and $|H\Delta H^*|\le \epsilon |H^*|= \epsilon \delta n$. To determine $H$, it suffices to choose the planted edges for $H^*\setminus H$
and unplanted edges for $H\setminus H^*.$
%Thus, for each $H \in \Mgood$ with $=|H\Delta H^*|$
%$\Mgood$ is contained within the set of graphs on $\delta n$ edges and a set difference of at most $\epsilon n$ relative to $H^{\ast}$. 
Therefore, there are at most $\binom{\delta n}{i} \binom{2\lambda n}{i}$ such graphs $H$ with set difference exactly $2i$ relative to $H^{\ast}$. Hence, we can bound $|\Mgood|$ by
\begin{align*}
\sum_{i=1}^{\epsilon \delta n/2} \binom{\delta n}{i} \binom{2\lambda n}{i}
    &\le \frac{\epsilon \delta n}{2}
    \binom{\delta n}{\epsilon \delta n/2} \binom{2\lambda n}{\epsilon \delta n/2} \\
    &\leq \frac{\epsilon \delta n}{2} \left(\frac{2e}{\epsilon} \cdot \frac{4e\lambda }{\epsilon \delta} \right)^{\epsilon \delta n/2}  .  
\end{align*}
Here the first inequality holds since by assumption $\epsilon< \min\{1, 2\lambda/\delta\}$, which implies that the maximal summand is $\binom{\delta n}{\epsilon \delta n/2} \binom{2 \lambda n}{\epsilon \delta n/2}$.
The second step uses $\binom{n}{k} \le (en/k)^k.$
\end{proof}

\begin{lemma}
\label{lmm:Mbad-delta}
Assume that \eqref{eq:impossibility-delta} holds. Then there exist constants $c_1$ and $c_2$ that only depend on $\lambda, \delta$, 
such that for all $\epsilon \leq c_2/\delta$,  conditioned on $H^*$ containing at most $(\log_2 n)^2$ cycles, 
with probability at least $1-e^{-\Omega(n)}$,
		\begin{equation}
	\left| \Mbad  \right| \geq e^{c_1 n}.
	\label{eq:Mbad}
	\end{equation}	
\end{lemma}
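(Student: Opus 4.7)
The plan is to exhibit $e^{\Omega(n)}$ distinct $2$-factors $H \subset G$ on $\delta n$ vertices with $\ell(H, H^*) \ge \epsilon$, by producing $K = \Omega(n)$ pairwise vertex-disjoint ``switching structures'' in $G$. Each switch will be a balanced closed alternating substructure of $G$ whose symmetric difference with $H^*$ is again a $2$-factor on some $\delta n$-vertex set; if $K$ such disjoint switches exist, then each of the $2^K$ subsets gives a distinct element of $\calH(G)$ contained in $G$, providing the candidate pool from which $\Mbad$ will be extracted.

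The existence of many switching structures is driven by the supercritical branching implied by \prettyref{lmm:m-star}: since $c_{m^*, m^*} > 1$ for some $m^* \ge 1$, the expected number of $(m^*, m^*)$-trails from a given planted vertex to other planted vertices exceeds one. Viewing each such trail as spawning ``child'' trails rooted at its terminal endpoint sets up a Galton-Watson-like exploration on $V(H^*)$ whose offspring mean strictly exceeds one. Run from a starting vertex $v \in V(H^*)$ for $\Theta(\log \log n)$ generations, this process produces $\Omega(\mathrm{polylog}\, n)$ planted endpoints with constant probability, and a birthday-style pigeonhole among these endpoints (followed by joining two trails ending at a common endpoint) yields a closed balanced structure through $v$ with constant probability.

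To extract $K = \Omega(n)$ disjoint such structures, I would iterate the exploration from fresh starting vertices in $V(H^*)$, keeping only those resulting structures that remain disjoint from all previously-found ones. Each exploration touches $o(n)$ vertices, so $\Omega(n)$ iterations are possible, and a martingale concentration (e.g.\ Azuma on the number of successful iterations) yields a $1 - e^{-\Omega(n)}$ success probability. The hypothesis that $H^*$ contains at most $(\log_2 n)^2$ cycles is essential here: it ensures that most planted cycles have length $\gg \mathrm{polylog}\, n$, so the $H^*$-neighborhood of a typical starting vertex is a long arc rather than a tiny cycle, and the local geometry of $G$ on the exploration scale is sufficiently tree-like that the branching-process approximation is valid.

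Finally, letting $C_1, \ldots, C_K$ denote the extracted vertex-disjoint switches, set $H_S = H^* \Delta \bigl(\bigcup_{i \in S} C_i\bigr)$ for each $S \subseteq [K]$. Disjointness makes the $2^K$ subsets produce $2^K$ distinct elements of $\calH(G)$, and $|H_S \Delta H^*| = \sum_{i \in S} |C_i| = \Theta(|S|)$; a Chernoff bound over uniformly random $S$ shows that a $1 - o(1)$ fraction satisfies $|S| \ge K/4$, giving $\ell(H_S, H^*) \ge \epsilon$ for $\epsilon$ small enough depending only on $\lambda, \delta$, whence $|\Mbad| \ge 2^{K-1} = e^{c_1 n}$. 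The hardest part will be making the branching-process approximation rigorous on the non-tree graph $G$---carefully tracking edges revealed across generations and across independent explorations---and ensuring that each switching structure's XOR with $H^*$ is supported on exactly $\delta n$ vertices so that it lies in $\calH$; depending on whether $\lambda \ge \tfrac{1}{2\delta}$ or not, this last point may require combining multiple complementary circuits with matching planted-in/planted-out balance, rather than relying on a single internal $(a,a)$-circuit.
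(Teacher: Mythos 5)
Your plan hinges on finding $K = \Omega(n)$ pairwise-disjoint, \emph{constant-size} (or polylog-size) balanced circuits, each closed locally via a birthday collision. This is where the argument breaks, for two related reasons.

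First, constant-size balanced circuits are simply too rare: by Lemma~\ref{validWalkCountLem}, the expected number of $(a,a)$-trails from a fixed planted vertex $v$ back to itself is $O(c_{a,a}/(\delta n))$, so the \emph{total} expected number of $(a,a)$-circuits across the whole graph is $O(c_{a,a}) = O(1)$ for any fixed $a$. You cannot extract $\Omega(n)$ disjoint constant-size circuits from a pool whose expected cardinality is constant. Second, the birthday step is off by polynomial factors: after $\Theta(\log\log n)$ generations you have $\mathrm{polylog}(n)$ planted endpoints drawn (roughly uniformly) from $\delta n$ planted vertices, so the chance of \emph{any} coincidence is $O(\mathrm{polylog}(n)/n) = o(1)$, not $\Omega(1)$. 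To get a constant-probability collision you would need $\Theta(\sqrt{n})$ endpoints, hence $\Theta(\log n)$ generations; but then each exploration occupies $\Omega(\sqrt{n})$ vertices and you can no longer pack $\Omega(n)$ disjoint ones. In short, there is no window in which the switches are simultaneously abundant, disjoint, and closable.

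The paper's resolution is precisely to \emph{not} close each tree locally. It constructs $\Theta(n)$ disjoint two-sided trees of constant size (Algorithm~\ref{alg:tree-construction2}, Lemma~\ref{lemma:enough-trees2}), each reaching $\Theta(1)$ hub nodes but remaining \emph{open}. It then reserves $\gamma n$ vertex-disjoint planted edges in advance (Algorithm~\ref{alg:matching}) and sprinkles: the trees are stitched end-to-end into long balanced cycles through those reserved edges via the five-edge construction (Algorithm~\ref{alg:cycle-construction2}), and Lemma~\ref{lemma:bipartite-ER} from~\cite{Ding2023} then yields $e^{\Omega(n)}$ distinct alternating cycles of length $\Theta(n)$ in the auxiliary bipartite graph. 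Each such cycle $C$ gives a distinct $2$-factor $H = H^* \Delta C$ with $|H \Delta H^*| = |C| = \Theta(n)$, so $\ell(H,H^*) = \Omega(1)$, which is exactly the $\Mbad$ count. Your observation about the $(\log_2 n)^2$-cycle hypothesis preserving the arc structure around a typical vertex is correct, and your concern at the end about $(a,a)$-circuits not always producing $2$-factors on $\delta n$ vertices is a real subtlety (a closed trail can revisit an unplanted vertex and produce a degree-$4$ vertex after the XOR); the paper handles it by building paths rather than trails at the tree stage and by using non-shortcutted paths to avoid local overlaps. But the central quantitative idea --- many small closed local structures, combined via $2^K$ --- does not survive, and must be replaced by the global sprinkling argument.
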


\begin{proof}[Proof of~\prettyref{thm:impossibility-delta}]
Combining Lemmas
\ref{lmm:Mgood-delta} and \ref{lmm:Mbad-delta}, we obtain that conditioned on $H^*$ containing at most $(\log_2 n)^2$ cycles, with probability $1-e^{-\Omega(n)}$,
 $$
 \frac{|\Mgood |}{ |\Mgood| + |\Mbad| } \leq  \frac{|\Mgood |}{ |\Mbad| } \leq \frac{\epsilon \delta n}{2} \left(\frac{2e}{\epsilon} \cdot \frac{4e \lambda }{\epsilon \delta} \right)^{\epsilon\delta n/2} \cdot e^{-c_1 n}.
 $$ 
 Setting $\epsilon > 0 $ to be sufficiently small, we can bound the right-hand side by $n \cdot e^{-c_1 n/2}$.
% $$
 %\frac{|\Mbad |}{ |\Mgood| + |\Mbad| } \geq \frac{1}{1+ k\lambda/(k\lambda-1) e^{-c_1/n} }
 %\ge 1- \frac{k\lambda}{k\lambda-1} e^{-c_1/n},
 %$$ 
 %with probability $1- e^{-\Omega(n)}$. 
 Therefore, 
  \begin{align*}
    \left[\prob{\ell(H^*, \hat{H}) < \epsilon/2} \right]^2 &\leq \prob{\ell(H^*, \tilde{H}) < \epsilon }\\
    &= \mathbb{E}_{G, H^*} \left[ \prob{\ell(H^*, \tilde{H}) < \epsilon \mid G, H^* }\right]\\
    &= \mathbb{E}_{G, H^*} \left[\frac{|\Mgood |}{ |\Mgood| + |\Mbad| }   \right]\\
    &\leq \left(1 - e^{-\Omega(n)}\right) n  e^{-c_1n/2} +
    e^{-\Omega(n)} + \prob{\calE^c} = n^{- \Omega(\log n)},
 \end{align*}
 where the first inequality comes from~\prettyref{eq:posterior-tail}; $\calE$ denotes the event that $H^*$ contains at most $(\log_2 n)^2$ cycles;
 and the last inequality holds because $\prob{\calE^c} \le n^{-\Omega(\log n)}$ in view of~\prettyref{lmm:factorCycleLem}.
\end{proof}

The remainder of this section is devoted to proving Lemma \ref{lmm:Mbad-delta}. Building on the circuit decomposition in Lemma \ref{lemma:difference-graph}, our goal is to show that with high probability, there exist exponentially many $\Theta(n)$-length cycles containing an equal number of planted and unplanted edges—referred to as balanced cycles. Perhaps unexpectedly, these long balanced cycles can be assembled from constant-length paths that are themselves balanced, containing the same number of planted and unplanted edges.

% To ensure that we explore only independent blue edges at every point in the tree construction, we will prune the parts of the graph that we explore. \nbr{not clear} As a result, we require the paths that we include in the trees to be ``non-shortcutted,'' meaning that any other path connecting the same endpoints must be strictly longer. In other words, a path is ``shortcutted'' if there exists another path between its two endpoints that is of the same or shorter length. By including only non-shortcutted paths, we ensure that all the blue edges we explore have not been previously explored.

We will search for these constant-length, balanced paths by constructing a linear number of trees, each containing many constant-length paths. 
%Specifically, recall that when $\lambda>\frac{1}{(\sqrt{2\delta}+\sqrt{1-\delta})^2}$, $\frac{2x}{1-x}\cdot\frac{\delta\lambda y}{1-(1-\delta)\lambda y}>1$ for all $x\in(0,1)$ and $y\in (0,1/(1-\delta)\lambda)$ such that $xy=1$ due to Lemma \ref{lmm:lambdaThresholdLem}.
%goes to the coefficient of $x^ay^b$ in the power series expansion of $\sum_{i=1}^{\infty} \left(\frac{2x}{1-x} \cdot \frac{\delta \lambda y}{1-(1-\delta)\lambda y} \right)^i$. 
% \dana{To be precise, we mean the two numbers are $(1-o(1))$ multiples of each other since they could both be infinity.}\colin{No, any given coefficient will be a constant and the number of $(a,b)$-valid paths will go to that constant.}
Specifically, in Lemma \ref{lmm:m-star}, we first show that there exists a constant $m^{\ast}$ for which $c_{m^{\ast},m^{\ast}}>1$. 
Moreover, in~\prettyref{lmm:lower_bound_path}, we show that for any $a, b \ge 1$, the expected number of $(a,b)$-paths starting at a given vertex converges to
$c_{a,b}.$ Therefore, we can define a branching process, where each step goes from a planted vertex to all other planted vertices reachable from it by an $(m^{\ast},m^{\ast})$-path. The expected number of children of each node in this process will be greater than $1$, enabling us to show that its probability of dying out is bounded away from $0$. That allows us to construct many trees where each path to a leaf has an equal number of planted and unplanted edges.

Then, we apply a sprinkling argument, connecting the trees to form an exponentially large number of cycles that have equal numbers of planted and unplanted edges. Similar sprinkling constructions appeared in \cite{Ding2023,gaudio2025all}, with \cite{gaudio2025all} being the most similar to the present argument.

In \cite{gaudio2025all}, which studies the case of $\delta=1$, the impossibility of almost exact recovery coincides with the presence of many long \emph{alternating} cycles. Accordingly, their tree construction procedure searches for many trees that alternate between planted and unplanted edges across layers. In contrast,  when $\delta<1$, impossibility is driven instead by the presence of many long  \emph{balanced} cycles, which need not be alternating. Fortunately, we find that there still exist sufficiently many balanced cycles containing segments of length $2m^*$ comprising $m^*$ planted edges and $m^*$ unplanted edges, where $m^*$ is some constant depending on $\lambda$ and $\delta.$  If $\delta=1,$ one can simply take $m^* = 1$, recovering the case of alternating cycles. When $\delta < 1$, however, we need to take $m^* > 1$. Our tree construction procedure therefore attempts to find many trees built from such balanced segments (referred to as \emph{path layers}) of length $2m^{*}$. The challenge is that adding a path layer at each step requires exploring neighborhoods of radius $2m^*$, which introduces dependencies and significantly complicates the analysis, as we will see.

In what follows, we describe our tree construction procedure in~\prettyref{sec:tree_construction} and analyze it in~\prettyref{sec:tree_construction_analysis}. The sprinkling argument is presented  in~\prettyref{sec:sprinkling}.

\subsection{Tree construction} \label{sec:tree_construction}
Recall that when $\lambda>\frac{1}{(\sqrt{2\delta}+\sqrt{1-\delta})^2}$, we have that $\frac{2x}{1-x}\cdot\frac{\delta\lambda y}{1-(1-\delta)\lambda y}>1$ for all $x\in(0,1)$ and $y\in (0,1/(1-\delta)\lambda)$ such that $xy=1$ due to Lemma \ref{lmm:lambdaThresholdLem}. The following lemma further establishes that the power series $\sum_{k=1}^\infty\left(\frac{2x}{1-x}\cdot \frac{\delta\lambda y}{1-(1-\delta)\lambda y}\right)^k$ has a coefficient $c_{m^*,m^*}$ satisfying $c_{m^*,m^*} > 1$.

\begin{lemma}\label{lmm:m-star}
Let $0<\delta\le 1$ and $\lambda>\frac{1}{(\sqrt{2\delta}+\sqrt{1-\delta})^2}$. Then there exists $m^\ast$ such that 
%the coefficient on the $x^{m^\ast}y^{m^\ast}$ term in the power series expansion of $\sum_{i=1}^{\infty} \left(\frac{2x}{1-x} \cdot \frac{\delta \lambda y}{1-(1-\delta)\lambda y} \right)^i$ is greater than 1, that is, 
$c_{m^*,m^*}>1.$ In particular, when $\delta=1$,  and $\lambda>1/2$, we have $c_{a,a}=(2\lambda)^a$ and  $m^*$ can be chosen as $1.$
\end{lemma}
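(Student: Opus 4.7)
The plan is to prove that, under the hypothesis $\lambda>1/(\sqrt{2\delta}+\sqrt{1-\delta})^2$, the diagonal sequence $c_{a,a}$ in fact grows exponentially in $a$ at the rate $\lambda(\sqrt{2\delta}+\sqrt{1-\delta})^2$, which exceeds $1$ by assumption; hence $c_{a,a}\to\infty$, and in particular $c_{m^*,m^*}>1$ for every sufficiently large $m^*$.

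For $\delta<1$, I would set $B \triangleq 2\delta/(1-\delta)$ and specialize \eqref{eq:def_c_a_b} to the diagonal to obtain
\[
c_{a,a}=(\lambda(1-\delta))^a\sum_{k=1}^{a}B^k\binom{a-1}{k-1}^{2}.
\]
Reading off the coefficient of $x^n$ in $(1+x)^n(1+Bx)^n$ gives the identity $\sum_{k=0}^{n}B^k\binom{n}{k}^{2}=[x^n]\bigl((1+x)(1+Bx)\bigr)^n$, so
\[
c_{a,a}=B\,(\lambda(1-\delta))^a\,[x^{a-1}]\bigl((1+x)(1+Bx)\bigr)^{a-1}.
\]
Next I would invoke the large-powers principle from analytic combinatorics: for a polynomial $f$ with positive coefficients, $\lim_{n\to\infty}\bigl([x^n]f(x)^n\bigr)^{1/n}=\inf_{x>0}f(x)/x$. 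Taking $f(x)=(1+x)(1+Bx)$, AM--GM yields
\[
\tfrac{f(x)}{x}=\tfrac{1}{x}+(1+B)+Bx\ge(1+\sqrt{B})^{2},
\]
with equality at $x^{*}=1/\sqrt{B}$, so $[x^{a-1}]f(x)^{a-1}\ge C\,(1+\sqrt{B})^{2(a-1)}/\sqrt{a}$ for large $a$. Combined with the direct algebraic identity
\[
\lambda(1-\delta)(1+\sqrt{B})^{2}=\lambda\bigl((1-\delta)+2\sqrt{2\delta(1-\delta)}+2\delta\bigr)=\lambda(\sqrt{2\delta}+\sqrt{1-\delta})^{2},
\]
this yields $c_{a,a}\gtrsim\rho^{a}/\sqrt{a}$ with $\rho=\lambda(\sqrt{2\delta}+\sqrt{1-\delta})^{2}>1$, which drives $c_{a,a}$ to infinity and secures $c_{m^*,m^*}>1$ for large enough $m^*$.

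For the boundary case $\delta=1$, the formula \eqref{eq:def_c_a_b} is indeterminate ($0\cdot\infty$), so I would instead work directly from the generating function in \eqref{eq:generating_function}: setting $\delta=1$ gives $g(x,y)=\sum_{k\ge 1}\bigl(2\lambda xy/(1-x)\bigr)^{k}$, and extracting the coefficient of $x^a y^a$ via $(1-x)^{-k}=\sum_{j\ge 0}\binom{j+k-1}{k-1}x^j$ immediately yields $c_{a,a}=(2\lambda)^{a}$; then $\lambda>1/2$ gives $c_{1,1}=2\lambda>1$, so $m^{*}=1$ works.

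The main obstacle is securing the exponential lower bound on the sum $\sum_k B^k\binom{a-1}{k-1}^{2}$ cleanly and rigorously. The quickest substitute for the full saddle-point asymptotic is to retain only the single term $k^{*}=\lfloor a\sqrt{B}/(1+\sqrt{B})\rfloor$, which by Stirling's formula already contributes at least $C\,(1+\sqrt{B})^{2a}/a$ to the sum---this avoids the usual Gaussian-width analysis while still giving the correct exponential rate. The algebraic identity linking $\lambda(1-\delta)(1+\sqrt{B})^{2}$ to $\lambda(\sqrt{2\delta}+\sqrt{1-\delta})^{2}$ is the crucial bookkeeping step that ties the analytic growth rate to the threshold appearing in the lemma's hypothesis; once it is in hand the conclusion is automatic.
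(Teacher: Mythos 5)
Your proposal is correct, and it takes a genuinely different route from the paper. The paper works with the truncated generating function $h_{m_0}(x)=f_{m_0}(x,1/x)$, picks a point $x_0$ where $h_{m_0}$ has a local minimum exceeding $1$, interprets the normalized terms as a probability distribution on index pairs $(\mathbf{I},\mathbf{J})$ with $\mathbb{E}\mathbf{I}=\mathbb{E}\mathbf{J}$, and then uses Chebyshev on a sum of $\ell$ i.i.d.\ copies to locate a diagonal coefficient $c_{m^*,m^*}\gtrsim (h_{m_0}(x_0))^\ell$ that blows up. You instead specialize the explicit formula \eqref{eq:def_c_a_b} to the diagonal, recognize $\sum_k B^k\binom{a-1}{k-1}^2$ as $B\cdot[x^{a-1}]((1+x)(1+Bx))^{a-1}$, and extract the exponential rate $\inf_{x>0}(1+x)(1+Bx)/x=(1+\sqrt B)^2$ via a single Stirling-estimated term; the algebraic identity $\lambda(1-\delta)(1+\sqrt B)^2=\lambda(\sqrt{2\delta}+\sqrt{1-\delta})^2$ then shows $c_{a,a}\gtrsim\rho^a/a$ with $\rho>1$ by hypothesis. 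Your argument is sharper (it pins down the exact exponential growth rate $\lambda(\sqrt{2\delta}+\sqrt{1-\delta})^2$ of $c_{a,a}$, not just that some diagonal coefficient exceeds $1$) and arguably more transparent, but it leans on the specific binomial-square structure of the diagonal; the paper's probabilistic argument is heavier but more robust, as it would work with any sequence whose truncated generating function on the curve $xy=1$ has value $>1$ at its minimizer. Both approaches handle $\delta=1$ as a separate degenerate case, and your computation $c_{a,a}=(2\lambda)^a$ via the generating function matches the paper's. The only care needed in your write-up is to replace the stated limit principle from analytic combinatorics with the explicit single-term Stirling lower bound, which you already flag as the intended rigorous substitute.
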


\begin{proof}

For the special case of $\delta=1$, by the definition  in~\prettyref{eq:def_c_a_b},
$
c_{a,a}= (2\lambda)^a
$.
Thus when $\lambda>1/2$, $m^*$ can be chosen as $1.$ In general, note that $c_{a,b}$ is increasing in $\lambda$, so without loss of generality, we assume $\frac{1}{(\sqrt{2\delta}+\sqrt{1-\delta})^2}< \lambda<\frac{1}{1-\delta}$.
Let 
$$
f(x,y)=\frac{2x}{1-x}\cdot\frac{\delta\lambda y}{1-(1-\delta)\lambda y}
$$
and $h(x)=f(x,1/x).$
%Our first step is to 
%Our first step toward this will be to take a closer look at the power series expansion of $\left(\frac{2x}{1-x}\cdot\frac{\delta\lambda y}{1-(1-\delta)\lambda y}\right)$. 
%From now on we will simply assume that $y=1/x$. 
Let $x^*$ be a value of $x$ in $((1-\delta)\lambda,1)$ that minimizes $h(x).$ %(Note that a minimum exists on $((1-\delta)\lambda,1)$ since $\lim_{x \to (1-\delta)\lambda} h(x) = \lim_{x \to 1} h(x) = \infty$.)
%the value of this expression. 
By~\prettyref{lmm:lambdaThresholdLem}, $h(x^*)>1$. %$\left(\frac{2x_0}{1-x_0}\cdot\frac{\delta\lambda /x_0}{1-(1-\delta)\lambda /x_0}\right)>1$.
Next, define the finite series expansion of $f(x,y)$ as
%observe that %\julia{If the interval $I$ needs to satisfy $x_0 \in I \subset ((1-\delta)\lambda, 1)$, how can we guarantee $x_0 \in ((1-\delta)\lambda, 1)$ (i.e. how can we make sure such an interval exists)?}
\[
%\lim_{m\to\infty} 
f_m(x,y) \triangleq 2\sum_{i=1}^m x^i \cdot \sum_{j=1}^m \left((1-\delta)\lambda y \right)^{j-1} \delta\lambda y
=\frac{2\delta}{1-\delta} \sum_{i=1}^m x^i \cdot \sum_{j=1}^m \left((1-\delta)\lambda y \right)^{j} \]
and $h_m(x)=f_m(x,1/x).$
Note that as $m \to \infty,$ $h_m(x)$
 converges uniformly to 
 $h(x)$
 %\left(\frac{2x}{1-x}\cdot\frac{\delta\lambda y}{1-(1-\delta)\lambda y}\right)$ 
on any closed interval contained in $((1-\delta)\lambda,1)$. So, there exists $m_0$ such that $h_{m_0}(x)$ has a local minimum strictly greater than $1$ at some $x_0 \in ((1-\delta)\lambda, 1)$; that is, $h'_{m_0}(x_0)=0$ and $h_{m_0}(x_0)>1$. Note that 
\begin{align*}
h'_m(x) & = \frac{\partial}{ \partial x} f_m(x,y) \vert_{y=1/x}
+ \frac{\partial }{\partial y} f_m(x,y) \vert_{y=1/x} \cdot \frac{d(1/x)}{dx} \\
& = \frac{1}{x} \left( x \frac{\partial}{\partial x} - y \frac{\partial }{ \partial y} \right) f_m(x,y)\vert_{y=1/x} \\
& = \frac{2\delta}{x(1-\delta)} \sum_{i,j=1}^m (i-j) x^i \left((1-\delta)\lambda y \right)^{j} 
\vert_{y=1/x}. 
\end{align*}
% \julia{I think the first line should be
% \[\frac{\partial }{\partial x} f_m(x,y) \vert_{y=1/x}
% + \frac{\partial}{\partial y} f_m(x,y) \vert_{y=1/x} \cdot \frac{d(1/x)}{dx}.\]
% Should the second line above read $\frac{1}{x} \left( x \frac{\partial}{\partial x} - \frac{y}{x} \frac{\partial}{\partial y} \right) f_m(x,y)\vert_{y=1/x}$? In that case we may need to revisit the third line.}\colin{Wait, why would the second term on the second line have an extra $x$ in the denominator?} \julia{Never mind... I saw $\frac{d}{dx} \frac{1}{x}$ and was looking to see $-\frac{1}{x^2}$, but now I see that $\frac{y}{x} = \frac{1}{x^2}$.}
In particular, 
$h'_{m_0}(x_0)=0$ implies that
% $\left(\frac{d}{dx}+\frac{d (1/x)}{dx}\frac{d}{dy}\right)\left(2\sum_{i=1}^{m_0} x^i\sum_{j=1}^{m_0} ((1-\delta)\lambda y)^{j-1} \delta\lambda y\right)$ is $0$ at $x=x',y=1/x'$, which is equivalent to $\left(x\frac{d}{dx}-y \frac{d}{dy}\right)\left(2\sum_{i=1}^{m_0} x^i\sum_{j=1}^{m_0} ((1-\delta)\lambda y)^{j-1} \delta\lambda y\right)$ being $0$ at $x=x',y=1/x'$. 
\begin{align}
\sum_{i,j=1}^{m_0} (i-j) x^i \left((1-\delta)\lambda y \right)^{j}
\vert_{x=x_0,y=1/x_0} = 0. 
\label{eq:mean_same}
\end{align}
Now, consider picking a random $(\mathbf{I},\mathbf{J})$ with probability given by
\[\mathbb{P}[(\mathbf{I},\mathbf{J})=(i,j)]
= \frac{1}{h_{m_0}(x_0) } \frac{2\delta}{1-\delta} x^{i}((1-\delta)\lambda y)^{j}\vert_{x=x_0,y=1/x_0}  \]
for all $1\le i,j \le m_0$.
Then it follows from~\prettyref{eq:mean_same} that $\expect{\mathbf{I}}=\expect{\mathbf{J}}$.
That means that the exponents of $x$ and $y$ in $f_{m_0}(x,y)$ are in some sense balanced, but ultimately we want to prove the existence of a large coefficient on a single term with equal powers of $x$ and $y$. To this end, we consider a sequence of i.i.d.\ copies of $(\mathbf{I},\mathbf{J})$, that is, $(\mathbf{a}_1,\mathbf{b}_1), \ldots, (\mathbf{a}_\ell, \mathbf{b}_\ell) \iiddistr (\mathbf{I},\mathbf{J})$. 
Let $\mathbf{a}=\sum_{i=1}^\ell
\mathbf{a}_i$ and 
$\mathbf{b}=\sum_{i=1}^\ell
\mathbf{b}_i$.
By Chebyshev's inequality, 
with probability at least $1/2,$ 
$$
\left| \mathbf{a}- \ell  \mathbb{E}[\mathbf{I}] \right| < c\sqrt{\ell } \quad \text{ and } \quad \left| \mathbf{b}- \ell \mathbb{E}[\mathbf{J}] \right| < c\sqrt{\ell}, \quad \text{ where } c=2\sqrt{\max \left(\var[\mathbf{a}_1],\var[\mathbf{b}_1] \right)}.
$$
Let $m^*= \lceil \ell  \mathbb{E}[\mathbf{I}]+ c\sqrt{\ell}\rceil$.
Recall from~\prettyref{eq:c_a_b_step} that 
%that by definition~\prettyref{eq:def_c_a_b},
%\colin{No, $c_{a,b}$ was originally defined in the statement of lemma 2.2, and this version of its formula was originally referenced in the proof of that lemma.} \julia{Maybe we could add $c_{m^*, m^*} = (\lambda(1-\delta))^{m^*} \sum_{k=1}^\infty (2\delta/(1-\delta))^k \binom{m^*-1}{k-1} \binom{m^*-1}{k-1}$ as an intermediate step, which is a special case of Equation \eqref{eq:def_c_a_b}.}
\begin{align*}
c_{m^*, m^*}&=
(\lambda(1-\delta))^{m^*} \sum_{k=1}^\infty
 (2\delta/(1-\delta))^k  \sum_{\vec{a},\vec{b} \in \naturals_+^k:\|\vec{a}\|_1=m^*,\|\vec{b}\|_1=m^*} 1 \\
& \ge  (\lambda(1-\delta))^{m^*}  (2\delta/(1-\delta))^{\ell+1}
\sum_{a, b=1}^{m^*-1} \sum_{\vec{a},\vec{b} \in \naturals_+^\ell:\|\vec{a}\|_1=a,\|\vec{b}\|_1=b} 1 \\
& = (2\delta/(1-\delta))^{\ell+1} (\lambda(1-\delta))^{m^*} 
 \sum_{a, b=1}^{m^*-1} 
 \sum_{\vec{a},\vec{b} \in \naturals_+^\ell:\|\vec{a}\|_1=a,\|\vec{b}\|_1=b}  x^{m^*} y^{m^*} \vert_{x=x_0, y=1/x_0} \\
 & \ge 
 (2\delta/(1-\delta))^{\ell+1} (x\lambda(1-\delta)y)^{\lceil 2c\sqrt{\ell}\rceil } 
 \sum_{a, b = m^*-\lceil 2c\sqrt{\ell}\rceil}^{m^*-1} 
 \sum_{\vec{a},\vec{b} \in \naturals_+^\ell:\|\vec{a}\|_1=a,\|\vec{b}\|_1=b}  x^{a}  (\lambda(1-\delta)y)^b \vert_{x=x_0, y=1/x_0}\\
 & \ge \frac{2\delta}{1-\delta} \left(h_{m_0}(x_0) \right)^\ell (\lambda(1-\delta))^{\lceil 2c\sqrt{\ell}\rceil } 
 \cdot \prob{\left| \mathbf{a}- \ell  \mathbb{E}[\mathbf{I}] \right| < c\sqrt{\ell }, \left| \mathbf{b}- \ell \mathbb{E}[\mathbf{J}] \right| \le c\sqrt{\ell}}\\
 &\ge \frac{2\delta}{1-\delta} \left(h_{m_0}(x_0) \right)^\ell (\lambda(1-\delta))^{\lceil 2c\sqrt{\ell} \rceil} \cdot \frac{1}{2} ,
\end{align*}
where the first inequality holds by fixing $k=\ell+1$ and choosing $a_{\ell+1}=m^*-a$
and $b_{\ell+1}=m^*-b$;
5he second inequality holds by restricting the sum over $a,b$
to the range $a,b \ge m^*-2c\sqrt{\ell}$, and using the facts that $x_0<1,
\lambda(1-\delta)/x_0<1.$
Finally, since $h_{m_0}(x_0)>1$, by choosing $\ell$ sufficiently large, we have
$c_{m^*,m^*}>1.$

\end{proof}

% Later, we will show in~\prettyref{lmm:lower_bound_path} that for any $a, b \ge 1$, the expected number of $(a,b)$-paths starting at a given planted vertex converges to
% $c_{a,b}.$ Thus, together with~\prettyref{lmm:m-star}, we get that that  for $\lambda>\frac{1}{(\sqrt{2\delta}+\sqrt{1-\delta})^2},$ the expected number of $(m^\ast,m^\ast)$-paths from any given vertex  in the planted $2$-factor is greater than $1$. 

Later, in~\prettyref{lmm:lower_bound_path}, we show that for any $a, b \ge 1$, the expected number of $(a,b)$-paths starting at a given planted vertex is asymptotically $c_{a,b}$. Combining this with~\prettyref{lmm:m-star}, we conclude that when $\lambda > \frac{1}{(\sqrt{2\delta}+\sqrt{1-\delta})^2}$, the expected number of $(m^\ast,m^\ast)$-paths from any given planted vertex exceeds $1$.
This observation allows us to construct a suitable collection of trees, referred to as \emph{path trees}, as illustrated in Figure \ref{fig:path-trees}.  In particular, when $\delta=1$,  $m^*$ can be chosen as $1$, and each purple path in the path tree consists of one unplanted edge followed by one planted edge, thereby recovering the tree construction in~\cite{gaudio2025all}.  
%\nbr{introduce the notion of path layer somewhere}
\begin{figure}[h]
    \centering
    \includegraphics[scale=0.3]{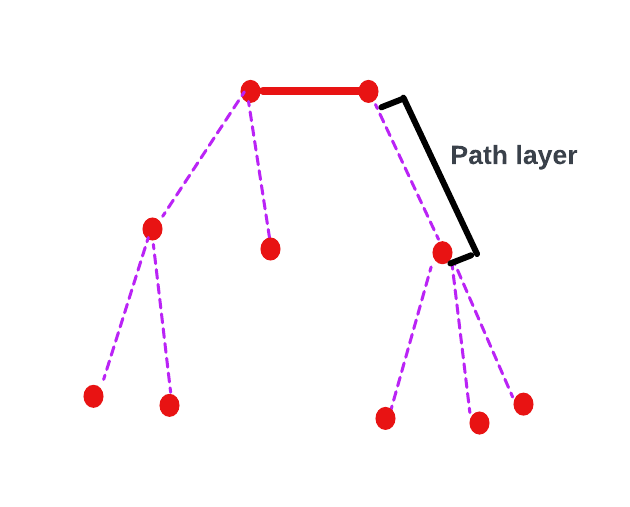}
    \caption{A schematic representation of a path tree with a depth of $2$. The red nodes are called ``hub'' nodes, and the dashed purple lines represent $(m^{\ast}, m^{\ast})$-paths, where $m^{\ast}$ is given in Lemma \ref{lmm:m-star}. A purple path along with the hub node below it is referred to as a ``path layer'' of the path tree.}
    \label{fig:path-trees}
\end{figure}

The construction of path trees is given in Algorithm \ref{alg:tree-construction2}.
The algorithm operates on a pre-selected set of \emph{available} vertices, owing to a pre-processing step that reserves edges for later use in linking trees together (Algorithm \ref{alg:matching}). The available vertices are precisely those that are not endpoints of reserved edges.
\begin{algorithm}[h]
\caption{Tree Construction} \label{alg:tree-construction2}
\begin{algorithmic}[1]
\Statex{\bfseries Input:} Graph $G_0 \cup H^{\ast}$ on $n$ vertices, available vertices $\mathcal{A}$ with $|\mathcal{A}|=(1-2\gamma) n$ for some $\gamma>0$, length $m^\ast$,  size parameter $\ell \in \mathbb{N}$%, and target number of trees $K$.
\Statex{\bfseries Output:} A set of two-sided trees where each side has at least $2\ell$ hub nodes. 

\smallskip
%\State Let $\gamma = \frac{n - |\mathcal{A}|}{2}$.
\State Set $\mathcal{T} = \emptyset$.
\For{$t \in \left\{1, 2, \dots, K := \frac{\gamma n}{\ell}  \right\}$} \label{line:K} %\colin{$\gamma$ was never defined.}
\State \label{line:fail} Select a random planted edge $(u_0,u_0')$ where $u_0,u_0'\in \mathcal{A}$. If no such edge exists, return $\texttt{FAIL}$. %\nbr{Does our analysis take into account the possible failure of this step, meaning that we are out of planted edges in the available vertices???} \julia{No, we missed this. I tried tracking down the same thing in the previous paper and also didn't see it addressed...}
\State Initialize $T$ to be a two-sided tree containing only the center edge $(u_0,u_0')$.
\State Remove $u_0$ and $u_0'$ from $\mathcal{A}$.
\State \label{line:right-1} (Grow the left tree rooted at $u_0$.) Initialize the leaf queue to be $\mathcal{L} \gets \{u_0\}$, and the cumulative size to be $s \gets 1$.
\While{$\mathcal{L} \neq \emptyset$ and $s < 2\ell$}
\State Let $u \gets \mathcal{L}.\texttt{pop}$. 
\State (Find the descendants of $u$.)\label{line:explore} Let $\mathcal{C}_u$ be the set of all vertices $v$ such that there is a 
non-shortcutted $(m^\ast,m^\ast)$-path from $u$ to $v$ with all of its vertices except $u$ in $\mathcal{A}$.

\State Add the path layer (the unique $(m^*,m^*)$-path) from $u$ to every vertex in $\mathcal{C}_u$ into the current tree $T$.

\State (Prune available vertices) \label{line:remove1} %\julia{I rephrased this to make it clear that $\mathcal{A}$ is not pruned iteratively.}
Let $\mathcal{A}_0$ be a copy of $\mathcal{A}$. Prune the set $\mathcal{A}$ by removing every vertex that is reachable from $u$ by a path of length at most $2m^{\ast}$ contained in $\mathcal{A}_0$.
%every vertex in $\mathcal{C}_u$ to $T$ along with an $(m^\ast,m^\ast)$ valid path from each of them to $v$ through $\mathcal{A}$. \jx{rephrase?}
%\colin{Note that this could fail to be a tree if there is a small cycle near $u$. However, that is uncommon enough that we can discard any "trees" for which this happens without meaningfully affecting the number we have.} \julia{Is the issue that paths may intersect?}\colin{Yes} \julia{potential fix: argue that the path intersection happens infrequently and thus few trees are affected}\colin{Right. If there is a small cycle near $u$ then we could have vertices reachable from $u$ by means of more than $1$ $(m^\ast,m^\ast)$-valid walk. Alternately, we can have vertices in $C_u$ that are actually less than $2m^\ast$ edges away from $u$, in which case the algorithm removes all of their neighbors from $\mathcal{A}$, in which case we cannot expand from them and our lemma does not apply. Our solution to this seems to be to argue that we can discard any "tree" for which this occurs and then bound how often that happens by bounding the number of small cycles.} \julia{I have added the failure condition. We should convert the above discussion into a remark/comment about the algorithm.}
\State Set $s \gets s + |\mathcal{C}_u|$, and update $\mathcal{L}$ as $\mathcal{L}.\texttt{push}(\mathcal{C}_u)$

 %Remove every vertex in the graph that is within $2m^\ast$ edges of $u$ along a path contained in $\mathcal{A}$ from $\mathcal{A}$. \jx{rephrase}?
\EndWhile \label{line:right-2}
\If{$s \geq 2\ell$}
\State\label{line:remove2}Grow the right tree rooted at $u_0'$ by following Lines \ref{line:right-1} to \ref{line:right-2} with $u_0$ replaced by $u_0'$.
\State If the right tree also reaches a size of at least $2\ell$, then let $T$ be the resulting two-sided tree, and set $\mathcal{T} \gets \mathcal{T} \cup \{T\}$. 
\EndIf
\EndFor
\State Return $\mathcal{T}$.
\end{algorithmic}
\end{algorithm}
\FloatBarrier

During the process of constructing trees, one must take care to ensure that any blue edges remain independent $\text{Bern}(\lambda/n)$ random variables. Line \ref{line:remove1} ensures that whenever the neighborhood of a hub $u$ is explored, that neighborhood will not be accessible to exploration from future hubs, and thus ensures the required independence of blue edges in future explorations. A difficulty arises when $u, v$ are hub nodes as in Figure \ref{fig:tree-problem}. In the situation described therein, the blue neighbors of $v$ would be removed by Line \ref{line:remove1}, and thus the required independence for exploration from $v$ would be lost. A similar issue arises when there are at least two paths of length $2m^{\ast}$ from $u$ to $v$ (even if only one is an $(m^{\ast}, m^{\ast}$-path). Therefore,  to avoid this issue, in  Line \ref{line:explore}, we only consider \emph{``non-shortcutted'' paths}, meaning that any other path connecting the same endpoints must be strictly longer. In other words, a path is ``shortcutted'' if there exists another path between its two endpoints that is of the same or shorter length.

Some restraint is required when building the trees; it turns out that we cannot simply take the number of hub nodes $\ell$ to be linear, in the hopes of building cycles from a few trees.
%arbitrarily large in the hopes of giving trees many opportunities to connect into cycles. 
The value of $\ell$ is limited due to the pruning step in Line \ref{line:remove1}. In particular, taking $\ell$ to be linear (or even super-constant) would remove too many vertices for every tree, leading to few trees being constructed. In other words, it is better to create many small trees rather than a few large trees.  %\colin{That is only a problem if $\ell$ is superconstant; this works fine for an arbitrarily large constant $\ell$.}

\begin{figure}
    \centering
    \includegraphics[scale=0.3]{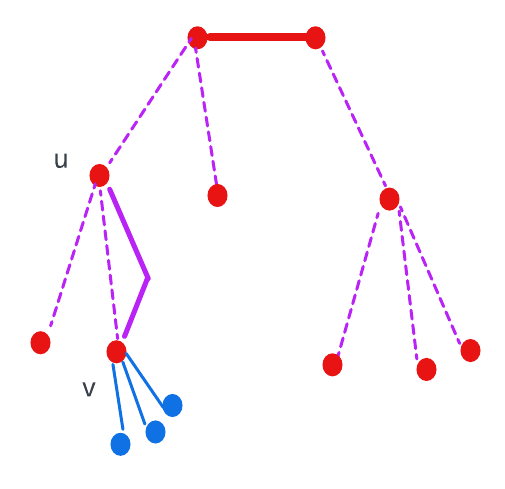}
    \caption{The solid purple line represents a path of length strictly less than $2m^{\ast}$.}
    \label{fig:tree-problem}
\end{figure}

\subsection{Analysis of tree construction}
\label{sec:tree_construction_analysis} 
In this subsection, we prove that~\prettyref{alg:tree-construction2} successfully builds $\Theta(n)$ many two-sided trees, conditioned on $H^*$ not containing too many cycles.

We first record some bounds on the moments of the number of $(a,b)$-paths, beginning with the expected number of shortcutted paths. These moment bounds will be used to show that the two-sided tree construction can be coupled to a supercritical branching process.

\begin{lemma}\label{lmm:shortcutted_paths}
Conditional on $H^*=h$ where $h$ has at most $(\log_2 n)^2$ cycles, given a fixed vertex $v_0$ and  constants $a,b \geq 1$ the expected number of shortcutted $(a,b)$-paths starting from $v_0$ is $O(\log^2(n)/n).$ 
\end{lemma}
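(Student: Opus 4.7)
The high-level strategy is as follows. Any shortcutted $(a,b)$-path $P$ from $v_0$ comes with a shortcut $P'$ (another path between the same endpoints of length at most $a+b$), and their union $S = P\cup P'$ is a subgraph of $G$ such that (i) $v_0\in V(S)$, (ii) $|E(S)|\le 2(a+b)=O(1)$, and (iii) $S$ contains at least one cycle (since $P\neq P'$ have the same endpoints). Hence it suffices to upper bound the expected number of such subgraphs rooted at $v_0$, and then sum over the $O(1)$ possible topological types.

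More precisely, I would enumerate $S$ by its topological type $\tau$, i.e.\ an abstract labeled graph with one vertex designated as $v_0$ and each edge tagged as either planted or unplanted. Since $|E(\tau)| \le 2(a+b)$, the number of types is a constant $C(a,b)$, so it suffices to bound $\mathbb{E}[N_\tau]$ for each $\tau$, where $N_\tau$ counts embeddings of $\tau$ into $G$ (with the marked vertex at $v_0$) whose planted edges lie in $H^*$ and whose unplanted edges are present in $G_0\setminus H^*$. Using that $\tau$ contains a cycle, we have $|V(\tau)|\le |E(\tau)|$, so a spanning tree $T_\tau$ of $\tau$ leaves at least one excess edge.

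I would then split into two cases based on the nature of the cycle in $\tau$. \textbf{Case A (planted cycle):} some cycle of $\tau$ is entirely planted, and so must embed as a cycle of $H^*$. Under the conditioning, $H^*$ has at most $(\log_2 n)^2$ cycles, and only those of length $\le 2(a+b)$ are relevant, giving at most $(\log_2 n)^2$ choices. Given the embedded planted cycle $C$, the subgraph $S$ must still contain $v_0$: either $v_0\in V(C)$, which happens only if $v_0$ lies on one of the $O((\log n)^2)$ short planted cycles, each of length $O(1)$ out of $n$ vertices, giving $O((\log n)^2/n)$; or $v_0$ is connected to $C$ via a tail that must contain at least one unplanted edge, because planted edges incident to $v_0$ can only extend along the $H^*$-cycle through $v_0$, which is distinct from $C$. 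This unplanted edge contributes a factor $\lambda/n$, again yielding $O((\log n)^2/n)$. \textbf{Case B (mixed cycle):} some cycle of $\tau$ contains an unplanted edge. Then we may pick $T_\tau$ so that an excess unplanted edge is excluded. Embedding $T_\tau$ gives at most $n^{u_t}\cdot 2^{r_t}$ ways (with $u_t+r_t=v-1$) paired with probability $(\lambda/n)^{u_t}$ for the tree's unplanted edges, and the extra excluded unplanted edge contributes an additional factor $\lambda/n$, yielding $O(1/n)$.

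Summing the two cases over the $O(1)$ topologies yields the desired bound $O((\log n)^2/n)$. The main obstacle is Case A: the $(\log n)^2$ term is essentially unavoidable because a short planted cycle of $H^*$ instantly produces shortcuts (via the two arcs), so one must carefully use the hypothesis that $H^*$ has few cycles \emph{and} the combinatorial observation that any planted extension from $v_0$ stays on $v_0$'s own $H^*$-cycle, forcing an unplanted edge when the shortcut-cycle $C$ is elsewhere.
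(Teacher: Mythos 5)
Your overall approach mirrors the paper's: take the union $S = P \cup P'$ of the path and its shortcut, note it contains a cycle, enumerate by topological type, and split on whether some cycle is all-planted (using the bound of $(\log_2 n)^2$ on cycles of $H^*$ in that case). The paper first extracts a \emph{minimal} side path so that the union is a unicyclic graph, then classifies by planted components (path vs.\ cycle components), whereas you work with $P\cup P'$ directly and a generic spanning-tree/excess-edge count. These are the same in spirit and both are workable.

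There is, however, a genuine flaw in your Case~A sub-case ``$v_0 \in V(C)$.'' You write that this ``happens only if $v_0$ lies on one of the $O((\log n)^2)$ short planted cycles, each of length $O(1)$ out of $n$ vertices, giving $O((\log n)^2/n)$'' --- but this is a probabilistic bound about a \emph{random} vertex on a short $H^*$-cycle, and under the conditioning of the lemma both $v_0$ and $H^*=h$ are fixed. Whether $v_0$ lies on a short cycle of $h$ is a $0/1$ event, not an $O((\log n)^2/n)$ probability; if $v_0$ does happen to lie on a short $h$-cycle, your argument as written gives no bound at all. What you should observe instead is that this sub-case is \emph{vacuous}: the first edge of the $(a,b)$-path $P$ out of $v_0$ is unplanted (by Definition~\ref{def:trail}), and $v_0$ has degree at most $2$ in $S = P \cup P'$ (it is an endpoint of each simple path), so $v_0$ is incident to at most one planted edge of $S$ and therefore cannot lie on an all-planted cycle $C\subseteq S$. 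This is exactly the observation the paper makes (``$v_0$ is not contained in $\mathcal C$, because $(v_0,v_1)$ is required to be unplanted''). Once you rule out $v_0\in V(C)$ this way, your remaining sub-case (tail from $v_0$ to $C$ must contain an unplanted edge whose $C$-side endpoint is already fixed) and your Case~B count correctly produce the $O((\log n)^2/n)$ and $O(1/n)$ bounds respectively, though the embedding count $n^{u_t}2^{r_t}$ in Case~B implicitly relies on infeasible topologies (planted branch vertices of degree $\ge 3$) contributing zero, which you should state.
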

\begin{proof}
Given an $(a,b)$-path $v_0,...,v_{a+b}$ and another path $v'_0=v_0,...,v'_\ell=v_{a+b}$ with $\ell\le a+b$, choose the smallest $i$ such that $v'_i \ne v_i$ and the smallest $j\ge i$ such that $v'_j\in\{v_0,...,v_{a+b}\}$. Then $v'_{i-1},...,v'_j$ is a path of length at most $a+b$ such that $v'_{i-1}, v'_j\in\{v_0,...,v_{a+b}\}$ and this path does not share any other vertices or edges with $v_0,...,v_{a+b}$. So, it suffices to bound the expected number of $(a,b)$-paths that have such a side path.

Let $G'$ be the graph consisting of all the vertices and edges in the paths  
$v_0,...,v_{a+b}$ and $v'_{i-1},...,v'_j$, where the vertex labels are to be specified later.  Note that there are at most $(a+b+1)^3$ possible choices of $i$, $j$, and the value of $j'$ for which $v'_j=v_{j'}$,  and at most $2^{2a+2b}$ options for which edges in $G'$ are planted. 
Given $i$, $j$, $j'$, and the choice for the set of planted edges in $G'$, let $G'_r$ denote the subgraph of $G'$ formed by deleting all the unplanted edges. We call a connected component of $G'_r$ (which could be a single vertex)  a \emph{planted} component of $G'$; let $c$ be the number of planted components of $G'$. To determine $G'$, it remains to label the vertices in all its planted components. Because $G'$ consists of a path with a side path, there is at most one cyclic planted component, and all the other planted components must be paths. We consider the following two cases separately, depending on whether there exists a cyclic planted component.

\ul{Case 1:} There is a cyclic planted component $\mathcal{C}$. In this case, $\mathcal{C}$ must be a cycle in $H^*$. Since $H^*$ contains at most $(\log_2 n)^2$ cycles, there are at most $(\log_2 n)^2$ different choices for $\mathcal{C}$ and at most $2(2a+2b)$ options for which of $\mathcal{C}$'s vertices are $v_{i-1}$ and $v_{j'}$. Each other planted component must be a path contained in a cycle of $H^*$. 
The length of the path has been pre-determined when we specify which edges in $G'$ are planted. Thus, to determine the planted path component, it remains to specify the starting vertex of the path and which direction to traverse in a cycle of $H^*$. Therefore, each planted component other than $\mathcal{C}$ has at most $2n$ choices.
Moreover, note that $v_0$ is not contained in $\mathcal{C}$, because $(v_0,v_1)$ is required to be unplanted by the definition of an $(a,b)$-path. Therefore, the planted component containing $v_0$ is a singleton. Since $v_0$ has been pre-fixed, there are at most $2$ different choices for the planted component containing $v_0$. 
%as the length of this component is already fixed, and it remains only to choose the direction to traverse in the . All the other planted components have at most $2n$ choices. This is because each planted component is a path in a cycle of $H^*$;  
Let $m_r$ and $m_b$ denote
the number of planted and unplanted edges in $G'$, respectively. Recalling that $c$ denotes the number of planted components in $G'$, we have
$$
m_r + m_b = |V(G')| = |V(G'_r)| = m_r + c-1,
$$
where the first equality holds because $G'$ contains a single cycle, and the last equality holds because all the connected components of $G'_r$ are paths except for one being a cycle. It follows that $m_b=c-1.$

\ul{Case 2:} All planted components are paths. Since $v_0$ has been pre-fixed, there are at most $2$ different choices for the planted component containing $v_0.$ All the other planted components have at most $2n$ choices. Moreover, 
 we have
$$
m_r + m_b = |V(G')| = |V(G'_r)| = m_r + c.
$$
It follows that $m_b=c.$

Since every unplanted edge exists in the observed graph with probability $\lambda/n$, combining the above two cases, we deduce that the expected number of $(a,b)$-paths that have such a side path is at most 
$$
(a+b+1)^3 \times 2^{2a+2b} \times
\sum_{c=1}^{2a+2b}
\left( 2(2a+2b)(\log_2 n)^2 \times  (2n)^{c-2} \times \left(\frac{\lambda}{n} \right)^{c-1} + 2\times (2n)^{c-1} \left(\frac{\lambda}{n} \right)^{c}\right) =O\left( \log^2(n)/n\right). 
$$

% there is $1$ option for $v_0$, at most $(\log_2 n)^2$ options for the cycle if the whole thing is planted, and at most $2n$ options for any other planted component of $G'$. Also, if $v_0$ is contained in the cycle then it cannot be entirely planted because $(v_0,v_1)$ is required to be unplanted by the definition of $(a,b)$-path. Furthermore, any unplanted edge exists in the graph with probability $\lambda/n$. If the entire cycle is planted then $G'$ has a number of unplanted edges that is $1$ fewer than its number of planted components, so the expected number of ways to choose its vertices such that it is in $G$ according to those specifications is $O(\log(n)/n)$. Otherwise, it has a number of unplanted edges equal to its number of planted components, so the expected number of choices of its vertices for which it exists in $G$ is $O(1/n)$. So, the expected number of $(a,b)$-paths starting at the designated vertex that have side paths of length at most $a+b$ is $O(\log(n)/n)$.
\end{proof}

The following lemma gives an almost matching lower bound to Lemma \ref{validWalkCountLem}, even when a sufficiently small fraction of vertices is excluded and we are restricting to non-shortcutted paths. 
\begin{lemma}\label{lmm:lower_bound_path}
% Let $G$ be a graph on $n$-vertices that is constructed as follows. First, $\lfloor \delta n\rfloor$ vertices are selected at random, and $H^*$ is a randomly selected $2$-factor on these vertices. Then, an edge is independently added between each pair of vertices that did not already have one with probability $\lambda/n$.
Let $G\sim \calG(n,\lambda,\delta)$ and  $H^*$ be the hidden planted $2$-factor on the set of $\delta n$ vertices.
%Let $m$ be a positive integer which may depend on $n$. 
Let $B$ be a subset of the vertices and $v \in B^c$ that are chosen in such a way that $|B|\le \zeta n$ for some $\zeta$, and conditional on $H^*, B, v$, the unplanted edges between vertices not in $B$ are still i.i.d.\ $\Bern(\lambda/n).$  Consider the subgraph of $G$ induced by $B^c$, denoted by $G[B^c]$.  
%Let $v$ denote any vertex not in $B$ that is chosen independent from $G[B^c]$.
Then conditional on $B$, $v$, and $H^*=h$ where $h$ has at most $(\log_2 n)^2$ cycles, the expected number of non-shortcutted $(a,b)$-paths 
in $G[B^c]$ starting from $v$  is
at least $\left( 1- \frac{a(a+1)\zeta}{\delta } - \frac{(b-1)\zeta}{1-\delta  } - o(1) \right)
c_{a,b}
$
for any fixed constants $a,b \ge 1,$ where $\frac{b-1}{1-\delta}=0$ when $b=1$ and $\delta=1.$
\end{lemma}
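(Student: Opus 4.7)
The plan is to adapt the enumerative argument in the proof of Lemma~\ref{validWalkCountLem} into a lower bound, and then verify that the additional requirements---vertices confined to $B^c$, non-shortcutted, and no repeated vertex---each cost only a controllable amount. Fix $k\ge 1$ and $\vec a,\vec b\in\naturals_+^k$ with $\|\vec a\|_1=a$ and $\|\vec b\|_1=b$. An $(\vec a,\vec b)$-trail from $v$ is determined by (i) a (start, direction) pair for each of the $k$ planted segments (a directed path of length $a_i$ in $H^*$) together with (ii) an ordered sequence of $b_i-1$ intermediate unplanted vertices for each of the $k$ unplanted segments, whose endpoints are fixed by $v$ and the surrounding planted segments. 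By hypothesis the $b$ unplanted edges in $G[B^c]$ are still independent $\Bern(\lambda/n)$ variables.

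For the $i$-th planted segment, an inclusion-exclusion bound shows that the number of (start, direction) pairs with all $a_i+1$ segment vertices in $B^c$ is at least $2\delta n - 2(a_i+1)|B\cap V(H^*)| \ge 2\delta n\bigl(1-(a_i+1)\zeta/\delta\bigr)$, since each bad planted vertex occupies at most $2(a_i+1)$ such pairs. For the $i$-th unplanted segment, the number of ordered $(b_i-1)$-tuples of distinct unplanted vertices in $B^c$ is at least $((1-\delta)n)^{b_i-1}\bigl(1-(b_i-1)\zeta/(1-\delta)-O(1/n)\bigr)$. Multiplying these counts with $(\lambda/n)^b$ and applying $\prod_i(1-x_i)\ge 1-\sum_i x_i$ with $\sum_i(a_i+1)=a+k$ and $\sum_i(b_i-1)=b-k$ yields
\[
\mathbb{E}\bigl[\#\,(\vec a,\vec b)\text{-trails from }v\text{ in }G[B^c]\bigr] \ge \Bigl(\frac{2\delta}{1-\delta}\Bigr)^{\!k}(\lambda(1-\delta))^b\Bigl(1-\frac{(a+k)\zeta}{\delta}-\frac{(b-k)\zeta}{1-\delta}-o(1)\Bigr).
\]
Summing over $k$ and the $\binom{a-1}{k-1}\binom{b-1}{k-1}$ compositions reproduces $c_{a,b}$ times a correction factor; since $1\le k\le a$ we have $a+k\le 2a\le a(a+1)$ and $b-k\le b-1$, so the correction is at least $1-a(a+1)\zeta/\delta-(b-1)\zeta/(1-\delta)-o(1)$.

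It remains to pass from trails to non-shortcutted paths. Trails with a repeated vertex contribute $O(1/n)\cdot c_{a,b}=o(1)$ in expectation, since for each of the $O((a+b)^2)$ ordered pairs of positions, forcing a collision removes a factor $\Omega(n)$ from the count. Shortcutted $(a,b)$-paths starting at $v$ contribute an additional $O(\log^2 n/n)=o(1)$ by~\prettyref{lmm:shortcutted_paths}. Subtracting these negligible terms yields the claimed bound. The main technical care lies in bookkeeping the $O(1/n)$ distinctness corrections and in the inclusion-exclusion step for planted segments; the boundary case $\delta=1$ is covered by the stated convention $(b-1)/(1-\delta)=0$, since $\delta=1$ forces $b_i=1$ for every $i$, so $b-k\equiv 0$ and the $(1-\delta)$-denominator is never actually exercised.
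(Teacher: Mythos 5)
Your plan follows the same enumerative lower-bound strategy as the paper, but there is a genuine gap in the step where you discard trails with repeated vertices. Your claim that ``forcing a collision removes a factor $\Omega(n)$ from the count'' is correct for collisions \emph{between} segments (and between an unplanted interior vertex and a planted segment), because those are genuinely independent $\Theta(n)$-sized choices. However, it is false for a collision \emph{within} a single planted segment: a (start, direction) pair on a cycle of $H^*$ of length $\le a_i$ deterministically produces a wrapping walk, and without further assumptions the number of such starts could be $\Theta(n)$ (e.g.\ if $H^*$ consisted entirely of short cycles). Your stated count $2\delta n - 2(a_i+1)|B\cap V(H^*)|$ therefore tacitly assumes that essentially every planted vertex admits two directed planted paths of length $a_i$, which requires invoking the hypothesis that $H^*$ has at most $(\log_2 n)^2$ cycles: that is precisely what caps the number of bad starting vertices at $O(a\,(\log_2 n)^2)=o(n)$. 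This is the crux the paper's proof handles explicitly (``there exist two distinct paths $\ldots$ unless $v''$ is contained in a cycle of length at most $\ell$''), and your proof never uses that conditioning.

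Beyond this, your proof differs from the paper's in a modest way: the paper bakes the mutual disjointness of the $k$ planted segments (and avoidance of $v$) into the counting step itself, extracting at least $2\delta n - 2a(\log_2 n)^2 - 2(k+\zeta n)(a+1) - 2a$ choices for each new segment given the earlier ones, whereas you count all unconstrained products and defer the disjointness bookkeeping to a single subtraction of repeated-vertex candidates at the end. Both routes are legitimate and arrive at the same $\left(1-a(a+1)\zeta/\delta-(b-1)\zeta/(1-\delta)-o(1)\right)c_{a,b}$ bound, but the deferred approach is exactly where your gap lives, because the ``collision removes $\Omega(n)$'' heuristic only covers cross-segment collisions, not deterministic wrap-arounds on short planted cycles. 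Finally, a smaller point: \prettyref{lmm:shortcutted_paths} is stated for paths in $G$, not $G[B^c]$, and the blue edges incident to $B$ are revealed; the paper bridges this by resampling the blue edges touching $B$ to form $G'$ and using $G[B^c]\subset G'$. Your application of that lemma should include this (short) argument.
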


% \begin{remark}
% In~\prettyref{lmm:lower_bound_path}, we assume that the starting vertex $v$ is a random vertex of $H^*$, to avoid the scenario where $v$ lies in a small cycle of $H^*$ or the neighbors of $v$ are in $B$, so that only a limited number of $(a,b)$-paths starting from $v$ exist. 
% \end{remark}
% \colin{Note to self: ammend the second statement in this lemma to allow there to be $m$ prohibited planted vertices chosen in such a way that for any set of prohibited vertices the probability distribution of the unplanted edges not incident to those vertices conditioned on those vertices being prohibited is the same as the original probability distribution of those edges. Then the expected number of $(a,b)$-valid paths from $v$ to $v'$ is $(c_{a,b}-o(1)-O(m/n))/\delta n$. I should probably make $v$ a random vertex to avoid the scenario where its neighbors are prohibited.}
\begin{proof}
% Note that $H^*$ is a random $2$-factor on a randomly selected set of $\lfloor \delta n\rfloor$ vertices. Also, $v'$ is a vertex of $H^*$ and
% $v$ is a random vertex of $H^*.$
In the following proof, we condition on $H^*=h$ where $h$ has at most $(\log_2 n)^2$ cycles. Given $\vec{a}, \vec{b} \in \naturals_+^k$, let $\calW^*_{\vec{a}, \vec{b}}(v)$ denote the set of $(\vec{a},\vec{b})$-paths 
starting from $v \notin B$ in the complete graph $K_n$ that do not go through any vertex in $B$ (where edges in the complete graph are red if they are contained in $H^*$, and they are blue otherwise). We will establish that
\begin{align}
\left| \calW^*_{\vec{a}, \vec{b}}(v)\right| \ge \left(1-\frac{(a+1)\zeta}{\delta}-o(1) \right)^k\left(1- \frac{\zeta}{1-\delta } - o(1)  \right)^{b-k}\cdot (2\delta n)^{k} \left((1-\delta)n \right)^{b-k}. \label{eq:W_a_b}
\end{align}
First, we enumerate sets of directed paths in $H^{\ast}$ which do not intersect with each other or $B$. 
%By Lemma \ref{factorCycleLem}, $H^*$ has at most $(\log_2 n)^2$ cycles with probability $1-o(1)$. We will assume that this holds for the rest of this proof. 
For any vertex $v''\in H^*$ and any $\ell$, there exist two distinct paths starting at $v''$ and consisting of $\ell$ planted edges, unless $v''$ is contained in a cycle of length at most $\ell$. Since there are at most $(\log_2 n)^2$ cycles in $H^*$, the total number of length-$\ell$ directed planted paths in $H^*$ is at least $2\left( \delta n-\ell (\log_2 n)^2\right)$. Also, given a path of length $\ell'$ in $H^*$, there are at most $2(\ell'+\ell+1)$ directed paths of length $\ell$ in $H^*$ that intersect it, and there are at most $2(\ell+1)$ directed paths of length $\ell$ in $H^*$ that go through any given point. So, for any positive $(a_1,\ldots,a_k)$ with sum $a$, any $k'\le k$ and any choice of paths of lengths $a_1, a_2, \ldots, a_{k'-1}$ in $H^*$, the number of directed paths of length $a_{k'}$ in $H^*$ that do not intersect $v$, any of the first $k'-1$ paths, or any vertex in $B$ is at least 
\begin{align*}
&2\delta n-2a_{k'} (\log_2 n)^2 -(|B|+1)\cdot 2(a_{k'}+1)-\sum_{i=1}^{k'-1} 2(a_{k'}+a_i+1)\\
&\geq 2\delta n-2a_{k'} (\log_2 n)^2-2(k'+\zeta n) ( a_{k'}+1)-2\sum_{i=1}^{k'-1} a_i\\
&\ge 2\delta n-2a (\log_2 n)^2-2(k+\zeta n) (a+1) -2a.
%\\
%&\ge 2\delta n-2a (\log_2 n)^2-2k(a+1)-2a-2(a+1)\zeta n.
\end{align*}
% Note that $H^*$ has at most $(\log_2 n)^2$ cycles in $H^*$. Thus, the cycles of $H^*$ with length at most $a$ contain at most $2a \log_2(n)$ vertices in total. Since $v$ is chosen uniformly at random, with probability at least $1-\frac{2a\log_2 (n)}{n}$, the cycle of $H^*$ containing $v$ has a length greater than $a$. 
% Assuming this high-probability event holds, 
Therefore, the number of ways to select directed paths of lengths $a_1, \dots, a_k$ in $H^*$, none of which intersect or contain $v$, is at least  
$$
\left(2\delta n-2a (\log_2 n)^2-2(k+\zeta n) (a+1) -2a \right)^{k}=\left(1-(a+1)\zeta /\delta-o(1) \right)^k\cdot (2\delta n)^{k}.
$$
% $$
% 2\left(2\delta n-2a\log_2(n)-2(a+1)^2-2(a+1)m\right)^{k-1}=(1-O(m/n)-o(1))\cdot 2(2\delta n)^{k-1}
% $$
% \jx{Here we change it to 
% $$
% \left(2\delta n-2a\log_2(n)-2(a+1)^2-2(a+1)m\right)^{k}=(1-O(m/n)-o(1))\cdot (2\delta n)^{k}
% $$} 
%different ways to choose a set of directed paths of length $a_i$ in $H^*$ for $1 \le i \le k$ such that none of these paths overlap or contain $v.$
Given such a set of paths and any $(b_1,...,b_k)$ with sum $b$, there are at least 
$$
\prod_{i=0}^{b-k-1}
(n-\delta n -\zeta n-i) 
%= \left( (1-\delta) n \right)^{b-k}
%\prod_{i=0}^{b-k-1}
%\left(1 - \frac{m+i}{(1-\delta) n} \right)
%
=
\left(1- \frac{\zeta}{1-\delta } - o(1)  \right)^{b-k} \left( (1-\delta) n \right)^{b-k}
$$
ways to choose an ordered set of $b-k$ vertices not in $H^*\cup B$. Putting $b_i-1$ of these vertices between the $i$th and $(i+1)$th planted paths for each $i$ always yields an $((a_1,...,a_k),(b_1,...,b_k))$-path starting from $v$  in the complete graph, thus establishing~\prettyref{eq:W_a_b}.

Each of these paths appears in $G$ with probability $(\lambda/n)^{b}$. 
% and every $((a_1,...,a_k),(b_1,...,b_k))$-valid potential path can be constructed in this way, so the expected number of $((a_1,...,a_k),(b_1,...,b_k))$-valid paths from $v$ to $v'$ is $(1-O(m/n)-o(1))\cdot (2\delta n)^{k-1} ((1-\delta)n)^{b-k}(\lambda/n)^b=(1-o(1))\cdot 2(2\delta/(1-\delta))^k((1-\delta)\lambda)^b/(\delta n)$.
For any given $a \ge 1$, there are $\binom{a-1}{k-1}$ distinct sequences of positive integers $(a_1,...,a_k)$ which sum to $a$, and similarly for $b$.
Thus, the expected number of $(a,b)$-paths from $v$ to $v'$ in $G$ avoiding $B$ is at least
\begin{align*}
&\sum_{k=1}^\infty \sum_{\vec{a},\vec{b} \in \naturals_+^k:\|\vec{a}\|_1=a,\|\vec{b}\|_1=b}  \left| \calW^*_{\vec{a}, \vec{b}}(v)\right| \cdot (\lambda/n)^{b} \\
& \ge \left(1- \frac{(a+1)\zeta }{\delta }-o(1) \right)^a \left(1- \frac{\zeta}{1-\delta} - o(1)  \right)^{b-1} \sum_{k=1}^\infty \binom{a-1}{k-1}\binom{b-1}{k-1} \cdot  (2\delta/(1-\delta))^k (\lambda(1-\delta))^b \\
% &=(1-O(m/n)-o(1))\sum_{k>0,(a_1,...,a_k),(b_1,...,b_k):\sum a_i=a,\sum b_i=b} \frac{1}{\delta n} (2\delta/(1-\delta))^k (\lambda(1-\delta))^b\\
& \ge 
\left( 1- \frac{a(a+1)\zeta}{\delta } - \frac{(b-1)\zeta }{1-\delta} - o(1) \right)
c_{a,b}.
%(1-(a+1)m/(\delta n)-o(1))^a\left(1-\frac{m/n}{1-\delta}\right)^{b}c_{a,b}.
%\\
%&=c_{a,b}-O(m/n)-o(1)
\end{align*}

Finally, we apply~\prettyref{lmm:shortcutted_paths} to upper-bound the
 expected number of shortcutted $(a,b)$-paths in $G[B^c]$. Let $G'$ be $G$ with all vertex pairs $\{ (u,u') \notin H^*: u \in B \text{ or } u' \in B\}$ resampled according to i.i.d. $\Bern(\lambda/n)$. By~\prettyref{lmm:shortcutted_paths}, the
 expected number of shortcutted $(a,b)$-paths starting from $v$ in $G'$ is $O(\log^2 (n)/n).$ Since $G[B^c]=G'[B^c] \subset G'$, the expected number of shortcutted $(a,b)$-paths starting from $v$ in $G[B^c]$ is $O(\log^2 (n)/n).$ 
%the expected number of shortcutted $(a,b)$-paths in $G[B^c]$ starting from $v$ is at most $O(\log(n)/n)$. 
% \colin{This part is a little bit tricky because this is intended to be used for the tree construction algorithm. As such, the choice of $v$ could be dependent on the edges in or connected to $B$ which means that the lemma does not bound the number of shortcutted $(a,b)$-paths in $G$ starting from $v$ in an obvious manner. Strictly speaking, I think that to apply the lemma you need to do something along the lines of letting $G'$ be $G$ with all unplanted edges in or from $B$ resampled and then argue that the expected number of shortcutted $(a,b)$-paths in $G[B^c]=G'[B^c]$ is at most equal to the number in $G'$ which can be bounded by the lemma or the like. On second thought, it would probably be simpler to say that we can assume that each pair of vertices at least one of which is in $B$ still have an unplanted edge between them with probability $\lambda/n$ because it does not matter.} \julia{Since we are bounding the number of shortcutted $(a,b)$ paths avoiding $B$, and unplanted edges outside of $B$ are independent of $B$, then I think it's clear that we can replace $B$ with anything (including a resampling of unplanted edges) and still obtain a valid upped bound on the number of shortcutted $(a,b)$ paths avoiding $B$.} 
As a consequence, 
the expected number of non-shortcutted $(a,b)$-paths in $G[B^c]$ is at least $$
%(1-(a+1)m/(\delta n)-o(1))^a\left(1-\frac{m/n}{1-\delta}\right)^{b}c_{a,b} - O(\log^2 (n)/n)=\left(1-(a+1)m/(\delta n)-o(1) \right)^a\left(1-\frac{m/n}{1-\delta}\right)^{b}c_{a,b}. 
\left( 1- \frac{a(a+1)\zeta}{\delta } - \frac{(b-1)\zeta }{1-\delta} - o(1) \right)
c_{a,b}- O(\log^2 (n)/n)
= \left( 1- \frac{a(a+1)\zeta}{\delta } - \frac{(b-1)\zeta }{1-\delta} - o(1) \right)
c_{a,b}.
$$
\end{proof}

The following lemma gives an upper bound on the variance of the number of $(a,b)$-paths, which will be used in the tree construction to prove the impossibility of almost exact recovery.
\begin{lemma}\label{lemma:variance-paths}
 Fix $a,b \in \mathbb{Z}_+$ and a vertex $v \in V(H^{\ast})$. 
%Enumerate all possible $(a,b)$-paths that start from $v$, letting $X_i$ be the indicator of the $i$th such path (that is, $X_i = 1$ if the $i$th enumerated path exists, and $X_i = 0$ otherwise). Let $X = \sum_i X_i$.
Let $X_{a,b}(v)$ denote the number of $(a,b)$-paths starting from $v$ in $G.$ Then $\mathbb{E}[X_{a,b}^2 \mid H^*] \leq C$ for some constant $C$ that depends only on $\lambda$, $a$, and $b$, and is independent of $n$. 
\end{lemma}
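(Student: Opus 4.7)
The plan is to expand $X_{a,b}(v)^2$ as a double sum over ordered pairs of $(a,b)$-paths from $v$ and then bound this sum by grouping pairs according to the isomorphism type of their union. Concretely, writing $X_{a,b}(v) = \sum_P \mathbf{1}\{P \subset G\}$ and using that the unplanted edges are conditionally independent $\mathrm{Bern}(\lambda/n)$ given $H^*$,
\[
\mathbb{E}\bigl[X_{a,b}(v)^2 \mid H^*\bigr] = \sum_{(P_1,P_2)} (\lambda/n)^{n_b(P_1,P_2)},
\]
where the sum is over ordered pairs of $(a,b)$-paths from $v$ in $K_n$ (with red/blue labels inherited from $H^*$), and $n_b(P_1,P_2) = |E_{\mathrm{blue}}(P_1) \cup E_{\mathrm{blue}}(P_2)|$ is the number of blue edges of $P_1 \cup P_2$. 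The red edges are deterministic once $H^*$ is given.

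Next, I would partition the pairs $(P_1,P_2)$ by the isomorphism type $T$ of the rooted red/blue-colored multigraph $\Gamma = P_1 \cup P_2$ together with the two marked traversals. Because $\Gamma$ has at most $2(a+b)+1$ vertices and $2(a+b)$ edges, the number of types is a constant $N(a,b)$ depending only on $a,b$. For each type $T$ I introduce bookkeeping quantities: $V_b(T)$, the set of vertices of $\Gamma$ with no incident red edge; $n_{rc}(T)$, the number of connected components of the red subgraph induced on $V(\Gamma) \setminus V_b(T)$; $n_b(T)$, the number of blue edges; and $c_r(T)$, $c_\Gamma(T)$, the cyclomatic numbers of the red subgraph and of $\Gamma$ respectively.

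I would then count the number of realizations of $T$ rooted at $v$ in $G$. Each non-cyclic red component must embed as a subpath of some cycle of $H^*$ (at most $2\delta n$ choices for a starting vertex and direction), each cyclic red component must coincide with a cycle of $H^*$ (at most $\delta n$ choices), and each vertex in $V_b(T)$ contributes at most $(1-\delta)n$ possible labels; fixing the root $v$ saves one factor of $n$. This bounds the number of realizations by $C_T \cdot n^{|V_b(T)| + n_{rc}(T) - 1}$ for some constant $C_T = C_T(\lambda,\delta,a,b)$. Combining with the probability $(\lambda/n)^{n_b(T)}$, I would apply Euler's formula to the connected graph $\Gamma$, using the identity that the number of red edges equals $|V(\Gamma)| - |V_b(T)| - n_{rc}(T) + c_r(T)$, to obtain
\[
|V_b(T)| + n_{rc}(T) - 1 = n_b(T) + c_r(T) - c_\Gamma(T) \le n_b(T),
\]
since every independent cycle of the red subgraph is an independent cycle of $\Gamma$. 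Hence each type contributes $O_{a,b,\lambda,\delta}(1)$ to the sum, and summing over the $N(a,b)$ types yields the lemma.

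The main obstacle I anticipate is the careful bookkeeping for red components: distinguishing open subpaths from closed cycles of $H^*$, handling the case when the root $v$ itself lies in $V_b(T)$ (since the first edge out of $v$ is blue, $v$ need not have any incident red edge in $\Gamma$), and verifying that the labelings correctly respect the constraint that red components must live in $H^*$. Since $a$ and $b$ are fixed constants, only finitely many such cases arise, and the Euler-type identity absorbs them uniformly, leaving only a combinatorial constant that depends on $a,b,\lambda,\delta$.
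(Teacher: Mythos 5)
Your proposal is correct, and it takes a genuinely different route from the paper's proof. The paper's argument is a direct pair-counting: it fixes one path $P$, and for each $e$ bounds the number of second paths $P'$ whose blue edges overlap $P$'s in exactly $e$ edges by an explicit enumeration (choose the $e$ shared blue edges, their positions and orientations in $P'$, the right endpoints of the remaining $b-e$ blue edges, and the directions of the $k$ red segments), yielding a bound of the form $C_{a,b}\, n^{b-e}$; combined with $|\mathcal{W}_{a,b}(v)| \le C'_{a,b} n^b$ and the probability $(\lambda/n)^{2b-e}$, everything collapses to a constant. You instead group pairs by the isomorphism type of the rooted union graph $\Gamma = P_1 \cup P_2$ and control the vertex-label degrees of freedom via the Euler-type identity $|V_b| + n_{rc} - 1 = n_b + c_r - c_\Gamma \le n_b$, which is the standard cyclomatic-number argument used in second-moment subgraph-count estimates. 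Both work; yours is more structural and robust (it would extend with minimal change to higher moments or other planted structures), while the paper's is more elementary and avoids the bookkeeping over graph types. Your argument also correctly handles the facts that the red subgraph, being a subgraph of the $2$-factor $H^*$, decomposes into paths and full cycles of $H^*$, and that $c_r \le c_\Gamma$ since the cycle space of a subgraph injects into that of the ambient graph; the constant $(1-\delta)n$ per blue vertex can safely be relaxed to $n$ (indeed $v \in V_b(T)$ is planted) since only the exponent matters for the Euler cancellation.
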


\begin{proof} 
%As we discussed, we also need to bound the variance, which will be used later in the tree construction.

Let $\calW_{a,b}(v)$ denote the set of $(a,b)$-paths that start from $v \in V(H^*)$ in the complete graph $K_n$ (where edges in the complete graph are red if they are contained in $H^*$, and they are blue otherwise). Then
$$
X_{a,b}=\sum_{P \in \calW_{a,b}(v)}  \indc{P \subset G}.
$$
% We have shown that $\expect{X_{a,b}} \ge c_{a,b}-o(1).$ We aim to show that 
% $\expect{X^2_{a,b}} \le C$ for some constant $C$ that depends only on $\lambda, a,b$ and is independent of $n.$
By definition, 
\begin{align*}
\expect{X^2_{a,b} \mid H^* }
= 
%\sum_{v, i}\sum_{w,j} 
% \prob{ P_{v,i} \subset G, P_{w,j} \subset G}  \\
\sum_{P, P' \in \calW_{a,b}(v) }
\prob{ P \subset G, P' \subset G \mid H^*}
 =\sum_{P, P' \in \calW_{a,b}(v)} 
 \left(\frac{\lambda}{n}\right)^{2b-|P_b \cap P_b'|} , 
% =\sum_{e=0}^b 
% \sum_{P, P' \in \calW_{a,b}(v): e(P_b\cap P_b')=e} \left(\frac{\lambda}{n}\right)^{2b-e}
%\le n^b n^k \binom{b-k}{v} \binom{n}{b-k-v(P^b_{v,i} \cap P^b_{v',i'})} (b-k)!
%\left(\frac{\lambda}{n}\right)^{2b-e(P^b_{v,i} \cap P^b_{v',i'})} 
\end{align*}
where $P_b$ denotes the blue (unplanted) segments of path $P$. 
%; $v(H)$ and $e(H)$
%denote the number of vertices and edges of $H,$ respectively. 
Fixing a path $P \in \calW_{a,b}(v)$,   the number of paths $P' \in \calW_{a,b}$ for which 
$|P_b \cap P_b'|=e$ is at most
%for which 
%$v(P^b_{v,i} \cap P^b_{v',i'})=\ell$ for some $ \ell$ as:
$$
\sum_{k>0,(a_1,...,a_k),(b_1,...,b_k):\sum a_i=a,\sum b_i=b} \binom{b}{e} \binom{b}{e} 2^e n^{b-e} 2^k,
$$
where the first $\binom{b}{e}$ counts the different choices of $e$ overlapping blue edges from $P$; the second $\binom{b}{e}
$ counts the different locations to put those $e$ overlapping blue edges when we list the blue edges of $P'$ from left to right starting from vertex $v$;  $2^e$ accounts for the different directions to put those $e$ overlapping blue edges; 
$n^{b-e}$ counts the vertex labels of the right endpoints of the additional blue edges of $P'$ after fixing the $e$ overlapping edges; finally, to further determine the red edges and the remaining vertex labels, we just need to follow the sequences $(a_1, \ldots, a_k)$
and $(b_1, \ldots, b_k)$, traverse a cycle of the $2$-factor $H^*$ for a direction, insert the red edges accordingly, and there are at most $2^k$ different directions to choose from. 
%$$
%\sum_{k>0,(a_1,...,a_k),(b_1,...,b_k):\sum a_i=a,\sum b_i=b} \binom{b+k}{\ell} \binom{n}{b-(\ell-c)} 2^k b! \le C_{a,b} n^{b-\ell+c},
%$$
% where $c$ is the number of components in $P^b_{v,i} \cap P^b_{v',i'}$, $\binom{b+k}{\ell}$ counts the number of choices for the vertices in 
% $P^b_{v,i} \cap P^b_{v',i'}$, $\binom{n}{b-(\ell-c)}2^k$ counts the choice of vertices in the blue segments of $P^b_{v',i'}$, and 
% $b!$ counts all the possible orderings of vertices in the blue segments. Note that in the above counting argument, there are at most $b-(\ell-c)$  ``free vertices'' in the blue segments. \julia{I am not following the meaning of ``free vertices.'' It looks like we need to sum over possible values of $c$ too.}

%Moreover, crucially  $e(P^b_{v,i} \cap P^b_{v',i'}) = \ell-c$. 
Moreover, recall from~\prettyref{eq:W_a_b_bound} that $|\calW_{a,b}(v)| \le C'_{a,b} n^{b}$ for some constant $C'$ that only depends on $a,b$. It follows that 
$$
\expect{X^2_{a,b} \mid H^*} 
%=
%\sum_{P \in \calW_{a,b}(v)}
%\sum_{e=0}^b  \left(\frac{\lambda}{n}\right)^{2b-e} 
%C''_{a, b} n^{b-e}
%\sum_{v, i}\sum_{w,j}  
% \left(\frac{\lambda}{n}\right)^{2b-e(P^b_{v,i} \cap P^b_{w,j})}  
\le C''_{a,b} 
\sum_{e=0}^{b} n^b n^{b-e} \left(\frac{\lambda}{n}\right)^{2b-e}
=C_{a,b,\lambda},
$$
where $C_{a,b,\lambda}$ is some constant that only depends on $a, b, \lambda. $
\end{proof}

We now demonstrate that sufficiently many trees are constructed. As a first step, we establish a lower bound on the number of available vertices throughout the execution of Algorithm \ref{alg:tree-construction2}.

\begin{lemma}\label{lmm:tree.facts2}
Let $t > 0$, $\gamma \in (0,1)$, and $\ell \in \mathbb{N}$ be constants. Conditioned on $H^*$, throughout the first $t$ iterations of tree construction in Algorithm \ref{alg:tree-construction2}, the number of available vertices satisfies $|\mathcal{A}| \geq n - 2\gamma n - 6\ell (2\lambda+4)^{2m^\ast}t$ with probability $1 - e^{-\Theta(t) }$.
\end{lemma}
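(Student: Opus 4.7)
The plan is to bound the per-iteration decrease of $|\mathcal{A}|$ and then apply a martingale concentration inequality over the $t$ iterations. In each iteration, $\mathcal{A}$ shrinks by (i) the two endpoints $u_0, u_0'$ of the selected planted edge, and (ii) the pruning in Line~\ref{line:remove1}, which for each popped hub $u$ removes all vertices reachable from $u$ within $\mathcal{A}_0$ by a path of length at most $2m^\ast$. Since the while loop halts once the cumulative hub count $s$ reaches $2\ell$, and each pop of a hub $u$ increases $s$ by $|\mathcal{C}_u|$, the number of popped hubs per side is at most $2\ell + \max_u |\mathcal{C}_u|$; across both sides of the tree, at most $4\ell + 2\max_u |\mathcal{C}_u|$ hubs contribute to pruning per iteration.

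Each popped hub $u$ contributes pruning bounded by $|B_{2m^\ast}(u)|$, the radius-$2m^\ast$ ball around $u$ in $G = G_0 \cup H^\ast$. Since every vertex has at most $2$ planted neighbors in $H^\ast$ and background neighbors distributed as $\mathrm{Bin}(n-1,\lambda/n)$ in $G_0$, the size $|B_{2m^\ast}(u)|$ is stochastically dominated by the total progeny of a branching process with offspring mean at most $2+\lambda$. Exponential moment bounds on this progeny together with a union bound over the $n$ vertices yield a global event of probability $1 - e^{-\Omega(n)}$ on which $|B_{2m^\ast}(u)| \le (2\lambda+4)^{2m^\ast}$ for every vertex $u$ simultaneously. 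On this event, $\max_u |\mathcal{C}_u| \le (2\lambda+4)^{2m^\ast}$ as well, so writing $X_i$ for the vertices removed in iteration $i$, we obtain the uniform bound $X_i \le (4\ell + 2(2\lambda+4)^{2m^\ast})(2\lambda+4)^{2m^\ast} + 2$, together with a conditional expectation bound $\mathbb{E}[X_i \mid \mathcal{F}_{i-1}] \le c_0 \ell(2\lambda+4)^{2m^\ast}$ for some $c_0 < 6$, provided $\ell$ is sufficiently large relative to $(2\lambda+4)^{2m^\ast}$.

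A bounded-increments martingale concentration inequality (Azuma--Hoeffding or Bernstein) applied to $\sum_{i=1}^t (X_i - \mathbb{E}[X_i \mid \mathcal{F}_{i-1}])$ then gives $\sum_{i=1}^t X_i \le 6\ell(2\lambda+4)^{2m^\ast} t$ with probability $1 - e^{-\Theta(t)}$, which combined with the initial $|\mathcal{A}| = (1-2\gamma)n$ and the monotonicity of $|\mathcal{A}|$ yields the claimed lower bound throughout the first $t$ iterations. The main technical obstacle is the uniform tail control of $|B_{2m^\ast}(u)|$: individual vertex degrees in $G_0$ have polylogarithmic tails, so a naive worst-case argument would give a bound scaling like $(\log n)^{2m^\ast}$ rather than the required constant $(2\lambda+4)^{2m^\ast}$. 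Obtaining the correct constant requires a careful tail estimate for the total progeny of the $\mathrm{Bin}(n-1,\lambda/n)$-offspring branching process; once this is in hand, the failure probability of the global event is $e^{-\Omega(n)}$ and is easily absorbed into $e^{-\Theta(t)}$ for all $t \le K = \gamma n/\ell$. A minor additional care is to apply Azuma to the truncated increments $X_i \wedge B$ rather than $X_i$ directly, so that bounded differences hold unconditionally and the global-event failure is folded into the final failure probability by a single union bound.
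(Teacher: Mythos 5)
There is a genuine gap in your argument, and it is the step you yourself flagged as the ``main technical obstacle.'' The claimed global event---that \emph{every} vertex $u$ simultaneously has $|B_{2m^\ast}(u)| \le (2\lambda+4)^{2m^\ast}$ with probability $1 - e^{-\Omega(n)}$---is false, and no amount of careful tail estimation will rescue it. The degree of a fixed vertex in $G_0 \sim \mathcal{G}(n,\lambda/n)$ is approximately $\mathrm{Poisson}(\lambda)$, so $\mathbb{P}\left(|B_{2m^\ast}(u)| > C\right)$ is a strictly positive constant for any fixed constant $C$. A union bound over $n$ vertices gives an upper bound of $\Theta(n)$ on the failure probability, which is vacuous; in fact the maximum degree in $G_0$ is $\Theta(\log n / \log\log n)$ with high probability, so the event you want fails with probability tending to $1$, not probability $e^{-\Omega(n)}$. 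Consequently there is no constant-level truncation threshold $B$ for which bounded-increments Azuma--Hoeffding gives $e^{-\Theta(t)}$; if you raise the truncation level to $\mathrm{polylog}(n)$ to make the global event hold, Azuma over $t = \Theta(n)$ iterations with increments of size $\mathrm{polylog}(n)$ yields a concentration scale too weak to recover the linear-in-$n$ deviation $6\ell(2\lambda+4)^{2m^\ast}t$ against the linear-in-$n$ total, so the $e^{-\Theta(t)}$ rate is unreachable along this route.

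The paper's proof sidesteps this entirely. It shows that the per-pruning removal count $X_i$ is stochastically dominated by $Y_i$, the total progeny through depth $2m^\ast$ of an i.i.d.\ branching process with offspring distribution $2+\mathrm{Bin}(n,\lambda/n)$, and then applies a Chernoff bound directly to $\sum_i Y_i$ by computing the MGF of $Y_1 = Z_{2m^\ast}$ via a recursion in the depth. The $Y_i$ are \emph{unbounded} but have a finite MGF in a small window $0 \le r \le \beta/[\alpha(\lambda+2)]^{2m^\ast}$, which is exactly what a Chernoff bound needs; no uniform worst-case bound on ball sizes is required. If you want to pursue a martingale route, you would have to replace Azuma with a concentration inequality that only assumes conditional exponential moments of the increments (effectively re-deriving the MGF bound), at which point you have reproduced the paper's argument. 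As written, your Step~3 is the load-bearing step and it does not hold, so the proposal is not a valid proof.
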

\begin{proof}
Recall that initially $|\mathcal{A}|=(1-2\gamma)n.$
Let $\mathcal{A}_i$ be the set of available vertices after the $i^{\text{th}}$ pruning stage when Line \ref{line:remove1} executes, which happens whenever 
we find the $(m^{\ast}, m^{\ast})$-paths from a given hub vertex. Here $i \in \{0, \dots, 4\ell t\}$, since each tree has two sides and each side executes at most $2\ell$ pruning stages. We let $\mathcal{A}_0$ contain all vertices apart from the reserved vertices, and let $\mathcal{A}_{i} = \mathcal{A}_T$ for all $i > T$ where $T$ is the (random) number of pruning stages. Let $X_i$ be the number of vertices removed during the $i^{\text{th}}$ pruning stage, so that $X_i = |\mathcal{A}_{i-1}| - |\mathcal{A}_i|$. In order to bound the $X_i$'s, we introduce a sequence of independent branching processes $B_i$ with offspring distribution $2 +\Binom(n,p\triangleq \lambda/n)$. Let $Y_i$ be the number of nodes among the first $2m^{\ast} + 1$ levels of the $i^{\text{th}}$ branching process $B_i$. We claim that $X_i$ is stochastically dominated by $Y_i$.

To prove the claim, we condition on the realization of $H^{\ast}$. If $(u,v)$ is not an edge in $H^{\ast}$, then we call $(u,v)$ a potential blue edge. We call a potential blue edge $(u,v)$ ``unseen''
after the $i^{\text{th}}$ pruning stage if at least one of its endpoints is available; that is, 
$|\{u,v\} \cap \mathcal{A}_{i}| \geq 1$. 
 Let $U_i$ be the set of unseen potential blue edges after the $i^{\text{th}}$ pruning stage. 
Equivalently, $U_i$ contains all potential blue edges outside of the graph induced by $[n] \setminus \mathcal{A}_{i}$. 
%Observe that running the $i^{\text{th}}$ pruning procedure from some hub $u$ inspects only the $2m^{\ast}$-neighborhood of $u$ within $\mathcal{A}_i$, which is excluded from from $\mathcal{A}_i$
Then conditioned on the graph induced by $[n] \setminus \mathcal{A}_{i}$, the edges within $U_i$ exist independently with probability $\lambda/n.$  For each $i \ge 1,$ $X_i$ is equal to the total number of nodes in the 
$2m^{\ast}$-neighborhood of a hub vertex $u$ at stage $i$ within $\mathcal{A}_{i-1}$, which is completely determined by $U_{i-1}$, Thus conditional 
on the graph induced by $[n] \setminus \mathcal{A}_{i-1}$,
the 
$2m^{\ast}$-neighborhood of $u$ within $\calA_{i-1}$ is stochastically dominated by the $i^{\text{th}}$ branching process $B_i$.

Having established the branching process domination, it suffices to bound $Y \triangleq \sum_{i=1}^{4\ell t } Y_i$, which we accomplish with a Chernoff bound strategy. For any $r,s > 0$, the Markov inequality yields
\begin{comment}
\begin{align}
\mathbb{P}\left(Y \geq 6 \ell t (\lambda + 2)^{2 m^{\ast}} \right) &= \mathbb{P}\left(e^{rY} \geq \exp\left(6r \ell t (\lambda + 2)^{2 m^{\ast}}\right) \right) \nonumber \\
&\leq \exp\left(-6r \ell t (\lambda + 2)^{2 m^{\ast}}\right) \mathbb{E}\left[e^{rY}\right] \nonumber \\
&=\exp\left(-6r \ell t (\lambda + 2)^{2 m^{\ast}}\right) \left(\mathbb{E}\left[e^{rY_1}\right]\right)^{4\ell t}. \label{eq:tree-Chernoff}
\end{align}
\end{comment}
\begin{align}
\mathbb{P}\left(Y \geq s \right) = \mathbb{P}\left(e^{rY} \geq e^{rs} \right) \leq e^{-rs} \mathbb{E}\left[e^{rY}\right] =e^{-rs} \left(\mathbb{E}\left[e^{rY_1}\right]\right)^{4\ell t}. \label{eq:tree-Chernoff}
\end{align}
To compute the MGF of $Y_1$, 
%we condition on the number of children of the root. 
let $Z_j$ be the total progeny of a depth-$j$ branching process with offspring distribution $2 + \Binom(n,p)$. We claim that $Z_{j+1}$ is equal in distribution to
%$Y_1=Z_{2m^*}$ 
% \julia{technically the below is a recurrence for $\mathbb{E}\left[e^{t Z_{2j}} \right]$ in terms of $\mathbb{E}\left[e^{t Z_{2j-1}} \right]$, so I guess we should make this a generic recurrence for $\mathbb{E}\left[e^{t Z_{j}} \right]$ in terms of $\mathbb{E}\left[e^{t Z_{j-1}} \right]$, and then point out that $\mathbb{E}\left[e^{tY_1}\right] = \mathbb{E}\left[e^{tZ_{2m^{\ast} }}\right]$.}
% $$
% \mathbb{E}\left[e^{tY_1}\right] = \mathbb{E}\left[e^{tZ_{2m^{\ast} }}\right] 
% $$
\[1+ \sum_{i=1}^{W} V_{j,i}, \quad V_{j,i} \iiddistr Z_j, \quad  W \sim 2+\Binom(n,p),\]
where $W$ is independent of the $V_{j,i}$'s.
\begin{comment}
\nbr{
$$
Z_{j+1} = 1+ \sum_{i=1}^{W} V_i, \quad V_i \iiddistr Z_j, \quad  W \sim 2+\Binom(n,p)
$$
Interpret $Z_j$ as the total number of offspring up to generation $j$, including the root.
}
\end{comment}
To see this, observe that we can find the number of offspring through level $j+1$ by adding up the number of descendants through level $j$ relative to each of the children of the root, and then adding $1$ for the root itself. Each offspring of the root has a number of descendants through level $j$ (including itself) with distribution $Z_j$.
Thus, the MGF of $Z_{j+1}$ satisfies the following recurrence relation:  
\begin{align} 
\mathbb{E}\left[e^{rZ_{j+1 }}\right] &= \mathbb{E}\left[e^{r \left(1 + \sum_{i=1}^W V_{j,i}\right)} \right]\\
&= e^r \expect{ \left(\mathbb{E}\left[e^{rZ_{j}} \right] \right)^W} \nonumber \\
%&=e^t \sum_{a=0}^{n} \left(\mathbb{E}\left[e^{tZ_{j}} \right] \right)^{2+a} \binom{n}{a} p^a (1-p)^{n-a} \nonumber \\
%&=e^t \left(\mathbb{E}\left[e^{tZ_{j}}\right] \right)^{2}\sum_{a=0}^{n} \left(\mathbb{E}\left[e^{tZ_{j}}\right] \right)^{a} \binom{n}{a} p^a (1-p)^{n-a} \nonumber \\
&=e^r\left(\mathbb{E}\left[e^{rZ_{j}} \right] \right)^{2}\left(1-p +p \cdot \mathbb{E}\left[e^{r Z_{j}} \right]\right)^n \nonumber \\
&\leq e^r \left(\mathbb{E}\left[e^{rZ_{j}}\right] \right)^{2} \left(\exp\left(-p + p \mathbb{E}\left[e^{r Z_{j}} \right] \right)\right)^n \nonumber \\
&= \left(\mathbb{E}\left[e^{rZ_{j}}\right] \right)^{2} \exp\left(r-\lambda + \lambda \mathbb{E}\left[e^{r Z_{j}} \right] \right). \label{eq:recurrence}
\end{align}
% \begin{align}
% \mathbb{E}\left[e^{tY_1}\right] &= \mathbb{E}\left[e^{tZ_{2m^{\ast} }}\right] \nonumber \\
% &=\sum_{j=0}^{n} \left(\mathbb{E}\left[e^{tZ_{2m^{\ast} -1}}\right] \right)^{2+j} \binom{n}{j} p^j (1-p)^{n-j} \nonumber \\
% &=\left(\mathbb{E}\left[e^{tZ_{2m^{\ast} -1}}\right] \right)^{2}\sum_{j=0}^{n} \left(\mathbb{E}\left[e^{tZ_{2m^{\ast} -1}}\right] \right)^{j} \binom{n}{j} p^j (1-p)^{n-j} \nonumber \\
% &=\left(\mathbb{E}\left[e^{tZ_{2m^{\ast} -1}}\right] \right)^{2}\left(1-p +p \cdot \mathbb{E}\left[e^{t Z_{2m^{\ast} -1}} \right]\right)^n \nonumber \\
% &\leq\left(\mathbb{E}\left[e^{tZ_{2m^{\ast} -1}}\right] \right)^{2} \left(\exp\left(-p + p \mathbb{E}\left[e^{t Z_{2m^{\ast} -1}} \right] \right)\right)^n \nonumber \\
% &=\left(\mathbb{E}\left[e^{tZ_{2m^{\ast} -1}}\right] \right)^{2} \exp\left(-\lambda + \lambda \mathbb{E}\left[e^{t Z_{2m^{\ast} -1}} \right] \right) \label{eq:recurrence}
% \end{align}
Fix any  $\beta>0$ such that $\alpha \triangleq \frac{e^{\beta}-1}{\beta} \geq \frac{3}{2}$. We prove via induction that 
\begin{equation}
\mathbb{E}\left[e^{r Z_j} \right]
\le \exp \left( [\alpha(\lambda+2)]^j r\right),  \quad
\forall j \in [2m^*], \forall 0 \le r \le \beta/[\alpha(\lambda+2)]^{2m^*}  \label{eq:inductive-hypothesis}.
\end{equation}
Since $Z_0 = 1$, the base case holds trivially. We next assume the inductive hypothesis \eqref{eq:inductive-hypothesis} for some $j \in \{1, 2, \dots, 2m^{\ast} - 1\}$. Using the recurrence relation \eqref{eq:recurrence}, we have for all $ 0 \le r \le \beta/[\alpha(\lambda+2)]^{2m^*} $,
\begin{align}
    \mathbb{E}\left[e^{rZ_{j+1}}\right] &\leq \exp \left\{2\left[\alpha (\lambda + 2) \right]^j r + r- \lambda + \lambda e^{[\alpha (\lambda + 2)]^j r} \right\} \nonumber\\
&\leq \exp \left\{2\left[\alpha (\lambda + 2) \right]^j r + r- \lambda + \lambda \left( 
1+ \alpha [\alpha (\lambda + 2)]^j r \right)\right\} \nonumber \\
& = \exp \left\{ [\alpha (\lambda + 2)]^j r 
 \left( 2+ \lambda \alpha \right) + r \right\}, \label{eq:recurrence-intermediate}
\end{align}
where the first inequality follows by plugging  the inductive hypothesis for $j$, and the second inequality holds due to $e^r \le 1+\alpha r$ for all $0 \le r \le \beta$ and the fact that $0 \le [\alpha (\lambda + 2)]^j r \le \beta$.

Recalling that $\alpha \geq \frac{3}{2}$, we have
\[ 2r(1-\alpha) [\alpha (\lambda + 2)]^j + r \leq 2r(1-\alpha)+r = r(3-2\alpha) \leq 0,\]
so that
\[ 2r(1-\alpha) [\alpha (\lambda + 2)]^j + r \leq 0.\]
Rearranging, the above inequality is equivalent to
\begin{align}
    [\alpha (\lambda + 2)]^j r 
 \left( 2+ \lambda \alpha \right) + r \leq [\alpha (\lambda + 2)]^{j+1} r \label{eq:recurrence-intermediate-2}.
\end{align}
Substituting \eqref{eq:recurrence-intermediate-2} into \eqref{eq:recurrence-intermediate}, we obtain
\[\mathbb{E}\left[e^{rZ_{j+1}}\right] \leq \exp\left\{[\alpha (\lambda + 2)]^{j+1} r \right\},\]
thus establishing \eqref{eq:inductive-hypothesis}.

Having established \eqref{eq:inductive-hypothesis} for all $j \in \{1,2, \dots, 2m^{\ast}\}$, we get that 
$$
\mathbb{E}\left[e^{rY_1}\right] = \mathbb{E}\left[e^{rZ_{2m^{\ast} }}\right]
\leq \exp \left( [\alpha(\lambda+2)]^{2m^{\ast}} r\right), ~~~\forall 0 \le r \le \beta/[\alpha(\lambda+2)]^{2m^*}.  
$$
Substituting into \eqref{eq:tree-Chernoff}, we get that 
\begin{align*}
\mathbb{P}\left(Y \geq 6 \ell t [\alpha(\lambda + 2)]^{2 m^{\ast}} \right) &\leq \exp\left(-6r \ell t [\alpha(\lambda + 2)]^{2 m^{\ast}} + 4\ell t  [\alpha (\lambda + 2)]^{2m^{\ast}} r \right)\\
&=\exp\left(-2r \ell t [\alpha(\lambda + 2)]^{2 m^{\ast}} \right) = \exp \left( -2 \ell t \beta\right),
\end{align*}
where the last equality holds by choosing $r=\beta/[\alpha(\lambda+2)]^{2m^*}$. Finally, we arrive at the desired conclusion by picking $\beta=1$ and noting that  $\alpha\le 2.$
\end{proof}

Next, we will show that Algorithm \ref{alg:tree-construction2} results in enough trees for the purposes of creating balanced cycles. 
\begin{lemma}\label{lemma:enough-trees2}
Let $\delta, \lambda > 0$ satisfy $\lambda>\frac{1}{(\sqrt{2\delta}+\sqrt{1-\delta})^2}$ and condition on $H^*$ containing at most $(\log_2 n)^2$ cycles. Let $m^{\ast}$ be the value from Lemma \ref{lmm:m-star} with $c_{m^*, m^*} >1$, where $m^*=1$ when $\delta=1.$
%Suppose $G\sim \calG(n,m=\delta n, k=2)$ is the graph generated from the planted partial $2$-factor model and $H^*$ is the hidden planted $2$-factor on the set of $\delta n$ vertices.
Let $\gamma > 0$ be a small constant such that 
\begin{align}
\zeta \triangleq 2 \gamma + 6 \gamma(2\lambda + 4)^{2m^*} < \frac{c_{m^*, m^*} - 1 }{2 c_{m^*, m^*} } \cdot  \frac{1}{m^*(m^*+1)/\delta + (m^*-1)/(1-\delta)} . \label{eq:zeta}
\end{align}

% \dana{Here instead of solving for $\zeta$, can we just say we choose $\gamma$ small enough such that
% \[
% \left(1-\frac{2(m^*+1)\zeta}{\delta}\right)^{m^*}\left(1-\frac{\zeta}{1-\delta}\right)^{m^*} > 1- \frac{c_{m^*,m^*}-1}{2c_{m^*,m^*}}.
% \]
% }

Then  for all $\ell \in \mathbb{N}$,
%and $\ell\in\mathbb{N}$ there exist $m^\ast$ and $\gamma$ such that the following holds. 
% \julia{replace description of $G$ with our notation for the model} Let $G$ be a graph on $n$ vertices that is constructed as follows. First, $\lfloor \delta n\rfloor$ vertices are selected at random, and $H^*$ is a randomly selected $2$-factor on these vertices.
%, and  $H^*$ is the hidden planted $2$-factor on the set of $\delta n$ vertices.
%Then, an edge is independently added between each pair of vertices that did not already have one with probability $\lambda/n$. 
with probability $1-e^{-\Omega(n)}$, Algorithm \ref{alg:tree-construction2} results in at least $K_1 = C' K$ two-sided trees, where $K = \frac{\gamma n}{\ell}$ as in Line \ref{line:K}, and $C' > 0$ is a function of $\delta$ and $\lambda$. Specifically, $C' = p^2/2$ with $p = \frac{\mu^2-\mu}{\mu^2-\mu+ C +1/4}$, where $\mu=(c_{m^*,m^*}+1)/2$ and $C$ is the constant from Lemma~\ref{lemma:variance-paths}.
\end{lemma}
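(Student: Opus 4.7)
The plan is to reduce each iteration of the outer loop of Algorithm~\ref{alg:tree-construction2} to a supercritical Galton-Watson branching process, lower bound the probability that each side of a tree reaches cumulative size $2\ell$ by $p$, and then apply a Chernoff bound over the $K$ iterations. First I would invoke Lemma~\ref{lmm:tree.facts2} with $t=K=\gamma n/\ell$ to guarantee that $|\mathcal{A}|\geq (1-\zeta)n$ throughout all $K$ iterations, where $\zeta=2\gamma+6\gamma(2\lambda+4)^{2m^{\ast}}$; this holds with probability $1-e^{-\Omega(n)}$, and I would condition on this event for the rest of the argument.

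Whenever we explore a hub $u$, set $B=[n]\setminus\mathcal{A}$, so $|B|\leq\zeta n$. Lemma~\ref{lmm:lower_bound_path} with $a=b=m^{\ast}$ then shows that the expected number of non-shortcutted $(m^{\ast},m^{\ast})$-paths from $u$ in $G[B^c]$ is at least
$$\left(1-\frac{m^{\ast}(m^{\ast}+1)\zeta}{\delta}-\frac{(m^{\ast}-1)\zeta}{1-\delta}-o(1)\right)c_{m^{\ast},m^{\ast}}\;\geq\;\mu:=\frac{c_{m^{\ast},m^{\ast}}+1}{2}\;>\;1,$$
where the final inequality uses precisely the condition on $\zeta$ imposed in~\eqref{eq:zeta}. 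Simultaneously, Lemma~\ref{lemma:variance-paths} upper bounds the second moment of this count by a constant $C$ that depends only on $\lambda$ and $m^{\ast}$. Because Line~\ref{line:explore} admits only non-shortcutted paths and Line~\ref{line:remove1} prunes the entire $2m^{\ast}$-ball of every previous hub, the blue edges used when expanding any later hub are conditionally independent $\Bern(\lambda/n)$ random variables given the past. Hence the sequence of hub expansions stochastically dominates a Galton-Watson tree with offspring mean at least $\mu>1$ and offspring second moment at most $C$. A standard second-moment / Paley-Zygmund computation applied to the first two generations of this process then yields a lower bound of $p=(\mu^2-\mu)/(\mu^2-\mu+C+1/4)$ for the probability that one side reaches cumulative size $2\ell$.

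Because the left and right sides of a tree draw on disjoint, freshly exposed neighborhoods, their success events are dominated by two independent copies of the tree-survival event of the previous paragraph, giving per-iteration success probability at least $p^2$. Consequently the number of successful trees $K_1$ stochastically dominates a $\Binom(K,p^2)$ random variable, and a Chernoff bound yields $\mathbb{P}(K_1\geq p^2 K/2)\geq 1-e^{-\Omega(K)}=1-e^{-\Omega(n)}$, which matches the claim with $C'=p^2/2$. I expect the main obstacle to be the branching-process coupling in the middle paragraph: one must verify that after many prior explorations (which have removed up to $\zeta n$ vertices and revealed a linear number of edges) the blue edges inside the next hub's $2m^{\ast}$-ball are indeed conditionally fresh $\Bern(\lambda/n)$ variables, and that the $\zeta$-linear slack in Lemma~\ref{lmm:lower_bound_path} still leaves strict supercriticality $\mu>1$. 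Balancing these two requirements is precisely what pins down the quantitative form of~\eqref{eq:zeta}.
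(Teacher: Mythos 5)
Your overall skeleton—reduce each hub-expansion to a history-dependent supercritical branching step via Lemmas~\ref{lmm:lower_bound_path} and~\ref{lemma:variance-paths}, lower bound the per-side survival probability by $p$, and then apply a Chernoff/coupling argument over the $K$ iterations—is the right one and matches the paper. You also correctly identify that the main obstacle is justifying fresh-Bernoulli conditioning after many prior explorations.

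However, your proposed resolution of that obstacle—``condition on the event $E$ that $|\mathcal{A}|\geq(1-\zeta)n$ throughout''—is precisely the step that does not work as written. $E$ is a global event about the entire trajectory of the algorithm; conditioning on it distorts the per-step conditional distributions, so you can no longer invoke Lemma~\ref{lmm:lower_bound_path}'s lower bound of $\mu$ on the expected number of $(m^*,m^*)$-paths (which is proved unconditionally given the history and the current $B$), nor the second-moment bound of Lemma~\ref{lemma:variance-paths}, under the conditioned measure. Informally, conditioning on ``few vertices were removed overall'' can bias the offspring counts downward. The paper sidesteps this by a coupling to an \emph{auxiliary} process that mimics the true one as long as $|\mathcal{A}|\geq(1-\zeta)n$, and whenever that fails, substitutes an artificial offspring variable with mean exactly $\mu$ and variance at most $C$; the branching lower bound and Chernoff bound are then applied to the auxiliary process unconditionally, with $\mathbb{P}(E^c)$ added as a separate error term. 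A second, smaller issue: you describe the formula $p=(\mu^2-\mu)/(\mu^2-\mu+C+1/4)$ as following from ``a standard second-moment / Paley--Zygmund computation applied to the first two generations,'' but the $+1/4$ term arises from a dedicated lemma in the paper (Lemma~\ref{lmm:branching}) on history-dependent branching processes; it is proved by constructing a coupled process with offspring mean exactly $\mu$ via a mass-shifting argument (Proposition~\ref{proposition:distribution-shift}), not by truncating to two generations. Finally, the reduction to $\Binom(K,p^2)$ similarly requires a per-iteration coupling conditional on the filtration $\mathcal{F}_{2t-2}$, not unconditional independence across trees.
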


\begin{proof}%[Proof of Lemma \ref{lemma:enough-trees2}]
Algorithm~\ref{alg:tree-construction2} iterates through $t=1,\ldots K$; at iteration $t$, it successfully outputs a two-sided tree if both sides of the tree grow to contain at least $2\ell$ hub nodes. Ideally, we would like to argue that at iteration $t$, conditional on the history of the algorithm, the probability of success is lower-bounded by some constant, hence a constant proportion of the $K$ iterations succeed with high probability. However, this argument does not go through given any history of the algorithm. In particular, if the algorithm explores too many vertices before the $t^{\text{th}}$ iteration, there may not be enough remaining vertices to grow large enough trees. To resolve this, we use Proposition~\ref{lmm:tree.facts2} to argue that with high probability, there will be enough vertices available for the algorithm to explore throughout the entire construction. Below is the precise argument.

Let $E$ be the event that throughout all $K=\gamma n/\ell$ iterations of Algorithm~\ref{alg:tree-construction2}, the number of available vertices is at least $(1-\zeta) n$. Since by assumption $1-\zeta > 1 - \frac{\delta}{2}$, the event $E$ implies that there exists at least one planted edge among the available vertices and hence the $\texttt{FAIL}$ condition in Line \ref{line:fail} is not triggered.
%with $c = 1-2\gamma - 6\gamma(2\lambda+4)^{2m^*}$. \nbr{we need this $c \ge 1-\delta/2$, so that there exists at least one planted edge among the available vertices. This can be achieved by setting $\gamma$ to be sufficiently small. } 
By Proposition~\ref{lmm:tree.facts2}, $\mathbb{P}(E)=1-e^{-\Theta(n)}$.

Let $S_{2t-1}$ (resp. $S_{2t}$) denote the events that the left (resp. right) side of the $t^{\text{th}}$ two-sided tree grows to contain at least $2\ell$ hub nodes. We couple the growth on each side of this two-sided tree with an auxiliary tree construction process which mimics the true one until the event $E$ fails. More precisely, whenever we are about to add the offspring to a vertex in the tree, we check whether the number of available vertices is at least $(1-\zeta)n$. If so, we directly copy the number of offspring to the auxiliary process. If not, the number of offspring added to the auxiliary process is instead sampled  from any distribution supported on $\mathbb{N}$ with mean $\mu$ and variance at most $C$, where $\mu=(c_{m^*,m^*}+1)/2$ and $C$ is the constant from Lemma~\ref{lemma:variance-paths}. Note that $\mu$ and $C$ only depend on $\delta$ and $\lambda$.

%[TODO: plug in constants from lemmas]. 
%\nbr{This is indeed what we want in the end, but I do not think this is how we should couple   to the auxiliary process. Instead, whenever we are about to add the offspring to a vertex in the tree, we check whether the number of available vertices is at least $cn.$ If yes, we directly copy the number of offspring to the auxiliary process. If not, the number of offspring added to the auxiliary process is instead sampled  from any discrete distribution with mean $\mu$ and variance at most $C$.  } and on $E^c$, its offspring distribution follows any arbitrary distribution with mean $\mu$ and variance $C$, where $\mu$ is the constant from Lemma~\ref{lmm:lower_bound_path} and $C$ is from Lemma~\ref{lemma:variance-paths} [TODO: plug in constants from lemmas]. 
Let $\tilde{S}_{2t-1}$ and $\tilde{S}_{2t}$ denote the corresponding events for the coupling process. We have
\begin{align}
\mathbb{P}\left\{\sum_{t\leq K} \mathbbm{1}\{S_{2t-1} \cap S_{2t}\} < K_1\right\}
\nonumber &\leq  \mathbb{P}\left(\left\{\sum_{t\leq K} \mathbbm{1}\{\tilde{S}_{2t-1} \cap \tilde{S}_{2t}\} < K_1\right\}\cap E\right) + \mathbb{P}(E^c)\\
\label{eq:prob.many.trees} &\leq \mathbb{P}\left(\sum_{t\leq K} \mathbbm{1}\{\tilde{S}_{2t-1} \cap \tilde{S}_{2t}\} < K_1\right) + \mathbb{P}(E^c).
\end{align}
The first inequality holds because on event $E$, $S=\tilde{S}.$
Next, we lower-bound the survival probability of the auxiliary trees given the history of the algorithm. For simplicity, let us first focus on the growth of the left tree. Let $\mathcal{F}_{2t-2}$ denote the history of the auxiliary trees up to iteration $t$. We will show that $\mathbb{P}(\tilde{S}_{2t-1}|\mathcal{F}_{2t-2})\geq p$, where $p = \frac{\mu^2-\mu}{\mu^2-\mu+C+1/4}$. 

 To see this, denote $Z_i$ as the number of offspring of the $i^{th}$ node in breadth-first order in the left auxiliary tree. Let $\mathcal{F}_{2t-2,i-1}$ be the union of $\mathcal{F}_{2t-2}$ and the history of the auxiliary process up to before $Z_i$ is generated. Let $E_{i}$ be the event that there are at least $cn$ available vertices right before $Z_i$ is generated. Then 
\[
\mathbb{E}\left(Z_i | \mathcal{F}_{2t-2,i-1}\right) \geq \min\left\{\mathbb{E}\left(Z_i | \mathcal{F}_{2t-2,i-1}, E_i\right), \mathbb{E}\left(Z_i | \mathcal{F}_{2t-2,i-1}, E_i^c\right)\right\}.
\]
On $E_i$, the growth of the auxiliary tree and the original tree coincide exactly up to (and including) $Z_i$. By Lemma~\ref{lmm:lower_bound_path}, we have that conditional on $\mathcal{F}_{2t-2,i-1}$ and $E_i$, the expected number of non-shortcutted $(m^*,m^*)$-paths that originate from the $i^{\text{th}}$ hub node and only contain available vertices is at least 
\[ 
\left( 1- \frac{m^*(m^*+1)\zeta}{\delta } - \frac{(m^*-1)\zeta}{1-\delta} - o(1) \right) c_{m^*,m^*}.
%\left[1-(m^*+1)\zeta/\delta-o(1)\right]^{m^*}c_{m^*,m^*}.
\] 
%Recall that $m^*$ is chosen from Lemma~\ref{lmm:m-star} so that $c_{m^*,m^*}>1$, and that $c=1-2\gamma-6\gamma(2\lambda+4)^{2m^*}$.
By the choice of $\zeta$ as per~\prettyref{eq:zeta}, we have
$$
\left( 1- \frac{m^*(m^*+1)\zeta}{\delta } - \frac{(m^*-1)\zeta}{1-\delta}  \right) c_{m^*,m^*}
> \left[1 - \frac{1}{2} \cdot \frac{c_{m^*, m^*} -1}{c_{m^*, m^*}} \right]c_{m^*, m^*} = (c_{m^*,m^*}+1)/2 = \mu,
$$
% \[\left[1-(m^*+1)\zeta/\delta \right]^{m^*}c_{m^*,m^*}\ge \left[1-m^*(m^*+1)\zeta/\delta \right]c_{m^*,m^*} > \left[1 - \frac{1}{2} \cdot \frac{c_{m^*, m^*} -1}{c_{m^*, m^*}} \right]c_{m^*, m^*} = (c_{m^*,m^*}+1)/2 = \mu,\] 
which shows that $\mathbb{E}\left(Z_i | \mathcal{F}_{2t-2,i-1}, E_i\right)\geq \mu$ for $n$ sufficiently large.
On the other hand, $\mathbb{E}\left(Z_i | \mathcal{F}_{2t-2,i-1}, E_i^c\right)\geq \mu$ from the definition of the auxiliary process. Thus $\mathbb{E}\left(Z_i | \mathcal{F}_{2t-2,i-1}\right) \geq \mu$ for all $i$. 

Similarly, we can apply Lemma~\ref{lemma:variance-paths} to bound the second moment as follows. 
\[
\mathbb{E}\left((Z_i)^2 | \mathcal{F}_{2t-2,i-1}\right) \leq \max\left\{\mathbb{E}\left((Z_i)^2 | \mathcal{F}_{2t-2,i-1}, E_i\right), \mathbb{E}\left((Z_i)^2 | \mathcal{F}_{2t-2,i-1}, E_i^c\right)\right\} \leq C.
\]
The first and second moment control allows us to apply Lemma~\ref{lmm:branching}, which lower-bounds the survival probability of such a history-dependent branching process. The lemma ensures that
\[
\prob{\tilde{S}_{2t-1}|\mathcal{F}_{2t-2}} \geq p=\frac{\mu^2-\mu}{\mu^2-\mu+ C +1/4}.
\]
The same argument applies to the growth of the right auxiliary tree, so that
$
\prob{\tilde{S}_{2t}|\mathcal{F}_{2t-1}} \geq p,
$
where $\mathcal{F}_{2t-1}$ is the union of $\mathcal{F}_{2t-2}$ and growth of the left auxiliary tree (at iteration $t$). Thus
\begin{equation}
\label{eq:success.prob.t.tree}
\prob{\tilde{S}_{2t-1}\cap \tilde{S}_{2t}|\mathcal{F}_{2t-2}} = \prob{\tilde{S}_{2t-1}|\mathcal{F}_{2t-2}}\prob{\tilde{S}_{2t}|\mathcal{F}_{2t-1}} \geq p^2.
\end{equation}
From here, we can bound $\mathbb{P}(\sum_{i\leq K} \mathbbm{1}\{\tilde{S}_{2t-1}\} \mathbbm{1}\{\tilde{S}_{2t}\}<K_1)$ by coupling $\{\mathbbm{1}\{\tilde{S}_{2t-1}\cap \tilde{S}_{2t}\}\}_{t\leq K}$ with a sequence $\{X_t\}_{t\leq K}$ of i.i.d. $\text{Bern}(p^2)$ random variables. In more detail, when $\mathbbm{1}\{\tilde{S}_{2t-1} \cap \tilde{S}_{2t}\}=0$, let $X_t=0$; when $\mathbbm{1}\{\tilde{S}_{2t-1} \cap \tilde{S}_{2t}\}=1$, draw $X_t$ from the Bernoulli distribution with success probability $p^2/\mathbb{P}(\tilde{S}_{2t-1} \cap \tilde{S}_{2t-2}|\mathcal{F}_{2t})$ (Note that by~\eqref{eq:success.prob.t.tree} this is a valid probability). By construction, we have $X_t\leq \mathbbm{1}\{\tilde{S}_{2t-1} \cap \tilde{S}_{2t}\}$ for all $t$, and
\[
\mathbb{P}(X_t=1|X_1,\ldots,X_{t-1}) = \mathbb{P}\left(\tilde{S}_{2t-1} \cap \tilde{S}_{2t}|\mathcal{F}_{2t-2}\right)\cdot \frac{p^2}{\mathbb{P}\left(\tilde{S}_{2t-1} \cap \tilde{S}_{2t}|\mathcal{F}_{2t-2}\right)} = p^2.
\]
Thus the $X_t$'s are distributed i.i.d. $\text{Bern}(p^2)$, and
\[
\mathbb{P}\left(\sum_{t\leq K} \mathbbm{1}\{\tilde{S}_{2t-1} \cap \tilde{S}_{2t}\} < K_1\right) \leq \mathbb{P}\left(\sum_{t\leq K}X_t<K_1\right) = e^{-\Omega(n)}
\]
by choosing $K_1=C'K$ with constant $C'=p^2/2$ only depending on $\delta$ and $\lambda$. Combined with~\eqref{eq:prob.many.trees}, and the fact that $\mathbb{P}(E^c)=e^{-\Theta(n)}$, we have shown that
\[
\mathbb{P}\left\{\sum_{t\leq K} \mathbbm{1}\{S_{2t-1} \cap S_{2t}\} < K_1\right\} = e^{-\Theta(n)}.
\]
In other words, with probability $1-e^{-\Omega(n)}$, Algorithm~\ref{alg:tree-construction2} yields at least $K_1$ two-sided trees.

\end{proof}

\subsection{Cycle construction via sprinkling}\label{sec:sprinkling}

We now give the details of the preprocessing procedure, which reserves edges and creates the set of available vertices. The reserved edges will be used for the purposes of ``sprinkling'' to connect trees into cycles.

\begin{algorithm}[h]
\caption{Reserve Edges}\label{alg:matching}
\begin{algorithmic}[1]
\Statex{\bfseries Input:} Graph $H^*$ on $n$ vertices, $\gamma \in (0, \delta/5]$ 
\Statex{\bfseries Output:} A set $E^\ast$ of $\gamma n$ vertex-disjoint red edges of $G$, available vertices $\mathcal{A}$%, and full-branching vertices $\mathcal{F}$ 
\State Let $E^{\ast} = \emptyset$, $S = E(H^*)$.\;
\For{$i \in \{1, 2, \dots, \gamma n\}$}
        \State Choose an arbitrary edge $e$ from $S$ and add it to $E^{\ast} $.
        \State \label{line:remove-neighbors} Remove $e$ and all edges at distance at most $2$ from $S$ (that is, edges which have an endpoint that is connected by an edge to an endpoint of $e$).
\EndFor
\State Let $V_1$ be the set of endpoints of edges in $E^{\ast}$. %Let $V_2$ be the set of vertices  adjacent to a vertex in $V_1$ via a red edge.
Let $\mathcal{A} = [n] \setminus V_1$ be the set of \emph{available} vertices. %Let $\mathcal{F} = [n] \setminus (V_1 \cup V_2)$ be the set of \emph{full-branching} vertices.
\end{algorithmic}
\end{algorithm}
\FloatBarrier

Since Step 4 of \prettyref{alg:matching} removes at most five edges, the final set $E^*$ contains $\gamma n$ edges in total, provided that $\gamma \leq \delta/5$. Note that any set of parameters satisfying the conditions of Lemma \ref{lemma:enough-trees2} will always have $\gamma \leq \delta/5$, and will thus be suitable for this algorithm.

%We next repeat the sprinkling argument to show that there is an exponentially large number of long, balanced cycles with probability $1-o(1/n)$.

\begin{algorithm}
\caption{Cycle Construction} \label{alg:cycle-construction2}
\begin{algorithmic}[1]
\Statex{\bfseries Input:} Graph $G$ on $n$ vertices with red subgraph $H^{\ast}$ on $\delta n$ vertices, reserved edge parameter $\gamma$, path length parameter $m \in \mathbb{N}$, tree size parameter $\ell \in \mathbb{N}$, and degree parameter $d$
\Statex{\bfseries Output:} A set of alternating cycles $\mathcal{C}$ on $G$

\smallskip
\State %Set $\gamma = \tfrac{\epsilon}{4(1+\epsilon)}$. \julia{revisit $\gamma$} 
Apply Algorithm \ref{alg:matching} to input $(H^{\ast}, \gamma)$, obtaining the set of $\gamma n$ reserved edges $E^{\ast}$ and the set of $(1-2\gamma) n$ available vertices $\mathcal{A}$. We assume $|E^*|$ is even.
\State \label{line:make-trees} Let $\mathcal{T} = (L_i, R_i)_{i=1}^{K_1}$ be the output of Algorithm \ref{alg:tree-construction2} on input $(G, %E^{\ast}, 
\mathcal{A}, m, \ell)$.
\State Randomly partition $E^{\ast}$ into two equally-sized sets $(E^{\ast}_L, E^{\ast}_R)$. %\nbr{Shall we assume $|E^*|$ is even? } 
For each $(u,v) \in E^{\ast}$ with $u < v$, designate $u$ as the ``tree-facing'' vertex and designate $v$ as the ``linking'' vertex. 
\State Initialize an empty (bipartite) graph $\overline{G}$.
\For{$i \in [K_1]$}
\If{$L_i$ is blue-connected to at least $d$ unmarked tree-facing endpoints among $E_L^{\ast}$ and the same is true for $R_i$ with respect to $E_R^{\ast}$}\label{step:collision}
\State Let the first $d$ of these edges be denoted $\mathcal{E}(L_i) \subset E_L^{\ast}$ and $\mathcal{E}(R_i) \subset E_R^{\ast}$. 
\State Mark all edges among $\mathcal{E}(L_i) \cup \mathcal{E}(R_i)$.
%Mark $d$ of the edges in both $E_L^{\ast}$ and $E_R^{\ast}$, and associate them with $L_i$ and $R_i$ respectively, denoting the sets as $\mathcal{E}(L_i)$ and $\mathcal{E}(R_i)$.
\State Include $i$ as a vertex on both sides of $\overline{G}$, and connect them by a red edge.
\EndIf
\EndFor
\For{$i \in [K_1]$}
\For{$j \in [K_1]$}
\If{both $i$ and $j$ are vertices in $\overline{G}$, and some linking endpoint in $\mathcal{E}(L_i)$ is connected to some linking endpoint in $\mathcal{E}(R_j)$ by a blue edge}
\State Connect $i$ and $j$ by a blue edge in $\overline{G}$.
\EndIf
\EndFor
\EndFor
\item Find the set of alternating cycles in $\overline{G}$, and return their preimage in $G$. %\julia{Maybe a figure would help with explaining how a cycle in $G$ with $(m^{\ast}, m^{\ast})$ segments transforms into an alternating cycle in $\overline{G}$.}
\end{algorithmic}
\end{algorithm}

%\jx{Isn't the sprinkling part the same as before?} \julia{Essentially, yes. The difference is that the number of trees is $K_1 = C' K$, where $K$ is the number of tree attempts and $C'$ is an unknown constant, whereas in the other proof the number of trees was known. That's why I gave a brief proof which demonstrates that the constants can again be manipulated to yield enough cycles.}

%By following similar steps as the proof of Lemma \ref{lemma:cycle-construction} with $k = 2$, we obtain the following result. 

Algorithm \ref{alg:cycle-construction2} connects the trees into cycles via reserved edges, similarly to Algorithm 3 in \cite{gaudio2025all}. At a high level, a cycle is formed by linking some right tree $R_{i_1}$ to a left tree $L_{i_2}$, then linking $R_{i_2}$ to $L_{i_3}$, and so on, back to $L_{i_1}$. In order to retain some independence for the purpose of linking, trees are connected to each other via a ``five edge construction:'' a hub node in $R_{i}$ connects to a reserved red edge in $E_R^{\ast}$ while a hub node in $L_{j}$ connects to a red edge in $E_L^{\ast}$; in turn, the red edges are connected by a blue edge (see Figure \ref{fig:five-edge-construction}). By design, the endpoints of the reserved edges can only be connected by blue edges, due to Algorithm \ref{alg:matching}, Line \ref{line:remove-neighbors}. %\colin{that line ensures that no two reserved edges share a vertex but it does not appear to prevent them from having a planted edge between them. Admittedly, one possible solution would be to modify it to remove all edges within distance two of the selected edge from $S$. At that point, step $4$ could remove up to $5$ edges from $S$ so we would need $\gamma\le\delta/5$.} 
Since the five edges comprise three blue edges and two red edges, and each two-sided tree has a central red edge, the overall construction preserves the balancedness of blue and red edges.
\begin{figure}[h]
    \centering
    \includegraphics[width=0.5\linewidth]{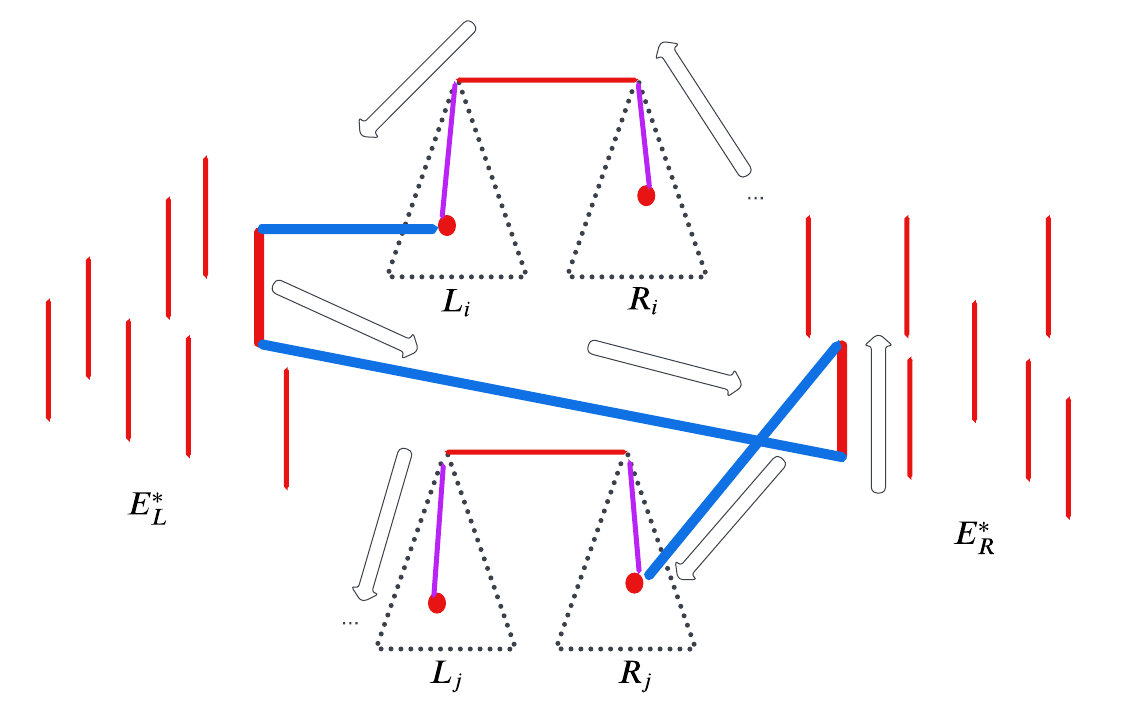}
    \caption{Five edge construction (reproduced from \cite{gaudio2025all}).}
    \label{fig:five-edge-construction}
\end{figure}

We finally show that Algorithm \ref{alg:cycle-construction2} produces many long, balanced cycles; Lemma \ref{lmm:Mbad-delta} then follows directly by observing that every balanced cycle of length $t$ corresponds to a distinct $2$-factor $H$ on $\delta n$ edges with $|H\Delta H^*|=t.$ 
\begin{lemma}\label{lemma:cycle-construction2}
%Let $G \sim \text{\textcolor{red}{input notation}}$. 
Let $0<\delta\le 1$, $\lambda>\frac{1}{(\sqrt{2\delta}+\sqrt{1-\delta})^2}$, and condition on $H^*$ containing at most $(\log_2 n)^2$ cycles.
%Let $\epsilon >0$ be such that $2\lambda \geq 1+\epsilon$ \nbr{This can be problematic, as the threshold is not monotone with $\delta$. The minimum threshold is achieved when $\delta=2/3$ at which $\lambda>1/3$}, and recall that $\gamma = \tfrac{\epsilon}{4(1+\epsilon)}$. 
Let $\mathcal{C}$ be the output of Algorithm \ref{alg:cycle-construction2} on input $(G, \gamma, m, \ell, d)$, where $\gamma > 0$ satisfies the requirements in Lemma \ref{lemma:enough-trees2}, the value $m = m^{\ast}$ is as defined in Lemma \ref{lmm:m-star}, $\ell =  \frac{2^{14} \log(32 e)}{\lambda^2 \gamma^2} \alpha$, and $d = \frac{2^{11} \log (32 e)}{\lambda \gamma} \alpha$, where $\alpha \ge 1$ is a sufficiently large constant. Then there exist constants $c_1, c_2> 0$ such that $\mathcal{C}$ contains at least $e^{c_1 n}$ balanced cycles of length at least $c_2 n$, with probability $1 - e^{-\Omega(n)}$.
\end{lemma}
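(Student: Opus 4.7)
The plan is to follow the three-stage strategy of \cite{gaudio2025all}: (i) condition on Algorithm~\ref{alg:tree-construction2} producing $K_1=\Theta(n)$ two-sided trees, each with at least $2\ell$ hub vertices per side, which happens with probability $1-e^{-\Omega(n)}$ by Lemma~\ref{lemma:enough-trees2}; (ii) show that a constant fraction of these trees survive the matching step at Line~\ref{step:collision} of Algorithm~\ref{alg:cycle-construction2} and thus appear on both sides of the auxiliary bipartite graph $\overline{G}$; and (iii) extract $e^{c_1 n}$ distinct alternating cycles in $\overline{G}$ of length $\Theta(n)$ and lift each to a balanced cycle in $G$ via the five-edge construction.

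For stage (ii), the key observation is that all trees were grown inside the available set $\mathcal{A}$, whereas the tree-facing endpoints of the reserved edges lie in $\mathcal{A}^c$. Consequently, the unplanted edges between hub vertices and tree-facing endpoints are i.i.d.\ $\mathrm{Bern}(\lambda/n)$ and independent of the tree construction. Each $L_i$ therefore has at least $2\ell\cdot|E_L^{\ast}|\cdot(\lambda/n)\geq \ell\lambda\gamma$ blue connections to $E_L^{\ast}$ in expectation, which for the prescribed $\ell=\frac{2^{14}\log(32e)}{\lambda^2\gamma^2}\alpha$ is a large constant multiple of $d$. Since $2dK_1=O(n/\alpha)\ll\gamma n/2$ reserved endpoints are marked in total, the ``unmarked'' qualification loses only an $O(1/\alpha)$ fraction of candidates, and a Chernoff bound yields at least $d$ unmarked blue connections for each $L_i$ with probability $\geq 1-1/(32e)$; the same holds for $R_i$. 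Processing trees sequentially and dominating the per-tree survival indicators by independent $\mathrm{Bern}(1-1/(16e))$ variables, standard concentration then gives $|\mathcal{L}|,|\mathcal{R}|\geq K_2:=K_1/2$ with probability $1-e^{-\Omega(n)}$.

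For stage (iii), conditional on the surviving trees and their assigned reserved-edge sets $\mathcal{E}(L_i),\mathcal{E}(R_i)$, the blue edge of $\overline{G}$ between $i\in\mathcal{L}$ and $j\in\mathcal{R}$ depends only on the $d^2$ potential blue edges between the linking endpoints of $\mathcal{E}(L_i)$ and $\mathcal{E}(R_j)$. Because Algorithm~\ref{alg:matching} forces reserved edges to lie at pairwise distance at least $3$ in $H^{\ast}$ and each reserved edge is used by at most one tree per side, these $d^2$ edge slots are disjoint across $(i,j)$, so the blue edges of $\overline{G}$ are mutually independent $\mathrm{Bern}(p_{\overline{G}})$ with $p_{\overline{G}}=1-(1-\lambda/n)^{d^2}\geq (1-o(1))\lambda d^2/n$ and expected degree $K_2 p_{\overline{G}}=\Omega(\alpha)$. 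Taking $\alpha$ sufficiently large places $\overline{G}$ deep in the supercritical regime of a bipartite Erd\H{o}s--R\'enyi graph, and a standard permanent/second-moment cycle-counting argument (analogous to the one in \cite{gaudio2025all}) provides at least $e^{c_1' K_2}$ distinct alternating cycles of length $\geq c_2' K_2$. Each such alternating cycle pulls back to a cycle in $G$ whose edges split into contributions from tree root-to-hub paths (each contributing one more planted than unplanted edge) and from five-edge links (each contributing one more unplanted than planted edge), so the lifted cycle is balanced by design and of linear length. Distinct alternating cycles in $\overline{G}$ produce distinct balanced cycles in $G$, yielding the claimed $e^{c_1 n}$ balanced cycles of length $\geq c_2 n$.

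The main obstacle is the cycle-counting step in stage (iii): one must verify that a bipartite graph with $\Theta(n)$ vertices per side, a perfect red matching, and independent random blue edges of mean degree $\Omega(\alpha)$ contains exponentially many distinct long alternating cycles. This is the same calculation underlying the analogous step in \cite{gaudio2025all} and goes through with our parameters once $\alpha$ is taken large enough to push the effective branching number comfortably above $1$; the remaining technicalities (sequential exposure and Bernoulli domination in stage (ii), and the translation of alternating-cycle counts in $\overline{G}$ to balanced-cycle counts in $G$) are essentially bookkeeping.
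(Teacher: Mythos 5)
Your proposal follows the paper's proof strategy essentially step for step: invoke Lemma~\ref{lemma:enough-trees2} to get $\Theta(n)$ two-sided trees, argue a constant fraction are well-connected to the reserved edge sets, couple the auxiliary bipartite graph $\overline{G}$ to a bi-colored Erd\H{o}s--R\'enyi graph with a red perfect matching, and count long alternating cycles before lifting back to balanced cycles in $G$ via the five-edge construction. The one thing you treat as ``standard bookkeeping'' --- extracting $e^{\Omega(n)}$ long alternating cycles from $\overline{G}$ --- is not re-derived in the paper either, but delegated to Lemma~\ref{lemma:bipartite-ER} (Lemma~7 of \cite{Ding2023}), so the real content of stage (iii) is exactly what you identify, namely verifying the $\Omega(1)$ blue mean-degree hypothesis. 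A small bookkeeping slip: the claim $2dK_1 = O(n/\alpha)$ is wrong, because $d/\ell = \lambda\gamma/8$ carries no $\alpha$-dependence, so $2dK_1 = 2C'\gamma n\,(d/\ell) = \Theta(\gamma^2 n)$ with no decay in $\alpha$; the paper gets the intended $O(n/\alpha)$ by observing that it suffices to track only the first $cn$ successful collisions, so the number of marked endpoints is at most $cnd = \gamma n/(4\alpha)$ since $c = 512\log(32e)/(\lambda d^2) = \Theta(\alpha^{-2})$ while $d = \Theta(\alpha)$. Your conclusion survives anyway for small $\gamma$, but the cleaner accounting is to cap the number of processed trees at $cn$.
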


Lemma \ref{lemma:cycle-construction2} requires the following result of \cite{Ding2023}.
\begin{lemma}\cite[Lemma 7]{Ding2023}\label{lemma:bipartite-ER} 
Let $G$ be a bi-colored bipartite graph on $[k] \times [k]'$ whose $k$ red edges are defined by a perfect matching, and blue edges are generated from a bipartite Erd\H{o}s--R\'enyi graph with edge probability $\frac{D}{k}$. If $k \geq 525$ and $D \geq 256 \log(32 e)$, then with probability at least $1-\exp\left(-\frac{Dk}{2^{14}}\right)$, $G$ contains $\exp\left(\frac{k}{20}\right)$ distinct alternating cycles of length at least $\frac{3k}{4}$.
\end{lemma}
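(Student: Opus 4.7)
The plan is to reduce the statement to Lemma \ref{lemma:bipartite-ER} applied to the auxiliary bipartite graph $\overline{G}$ built by Algorithm \ref{alg:cycle-construction2}. Lemma \ref{lemma:enough-trees2} furnishes $K_1 = C' \gamma n / \ell$ two-sided trees with probability $1 - e^{-\Omega(n)}$, each side containing at least $2\ell$ hub nodes. A key observation is that Algorithm \ref{alg:tree-construction2} restricts all of its exploration to the available set $\mathcal{A} = [n] \setminus V_1$, so none of the blue (unplanted) edges incident to the reserved-edge endpoints $V_1$ have been queried; conditional on the constructed trees, those edges remain i.i.d.\ $\mathrm{Bern}(\lambda/n)$.

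Next, control the marking stage. For a fixed tree index $i$, the number of blue edges between the $2\ell$ hubs of $L_i$ and the currently-unmarked tree-facing endpoints of $E_L^\ast$ is a sum of independent Bernoullis with mean approximately $2\ell \cdot (\gamma n / 2) \cdot (\lambda / n) = \ell \gamma \lambda = 8 d$, by the chosen values of $\ell$ and $d$. Since each registered tree consumes at most $d$ tree-facing endpoints on each side, the total number of markings is at most $K_1 d = \Theta(\lambda \gamma^2 n)$, which is a small constant fraction of $|E_L^\ast| = \gamma n / 2$ for $\gamma$ small enough; hence at least half of the tree-facing endpoints remain unmarked throughout. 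A Chernoff estimate and a union bound over $K_1 = O(n)$ iterations then show that some $K_2 = p K_1 = \Theta(n)$ trees are registered on both sides (i.e., pass the test on Line~\ref{step:collision}) with probability $1 - e^{-\Omega(n)}$.

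At this point $\overline{G}$ is a bi-colored bipartite graph on $[K_2] \times [K_2]$ with a perfect red matching. For each pair of registered indices $(i,j)$, the blue edge in $\overline{G}$ exists iff some pair in $\mathcal{E}(L_i) \times \mathcal{E}(R_j)$ is connected by a blue edge. Because reserved edges are pairwise non-adjacent (Algorithm \ref{alg:matching}, Line~\ref{line:remove-neighbors}) and the sets $\mathcal{E}(\cdot)$ are disjoint across trees on the same side, the $K_2^2 \cdot d^2$ candidate blue connections correspond to distinct edges of the complete graph and are therefore jointly independent. Each pair $(i,j)$ is thus present with probability $q \geq 1 - (1 - \lambda/n)^{d^2} \geq d^2 \lambda / (2n)$, yielding $D := q K_2 \geq 2^8 \lambda \gamma \log(32 e)\, \alpha \geq 256 \log(32 e)$ once $\alpha$ is large enough. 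Lemma \ref{lemma:bipartite-ER} then produces $\exp(K_2 / 20) = e^{\Omega(n)}$ distinct alternating cycles in $\overline{G}$ of length at least $3 K_2 / 4$. Each such cycle lifts to a distinct cycle in $G$ obtained by concatenating the central red edge of each participating tree, the two root-to-hub tree-paths (which are balanced, since they are concatenations of $(m^\ast, m^\ast)$-path layers), and the five-edge constructions of Figure \ref{fig:five-edge-construction} (contributing $3$ blue and $2$ red edges each); the red/blue counts balance, and the total length is $\Omega(K_2 m^\ast) = \Omega(n)$.

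The principal obstacle is disentangling the randomness so that the above independence claims are clean. The blue edges must be partitioned into three mutually exclusive groups revealed sequentially: edges inside $\mathcal{A}$ (used by tree construction), hub-to-tree-facing-endpoint edges (used by the marking step), and linking-to-linking edges (used to form blue edges of $\overline{G}$). The reserve step of Algorithm \ref{alg:matching} and the disjointness of the sets $\mathcal{E}(\cdot)$ are precisely what make this decomposition valid, and simultaneously ensure that the $d$ reserved edges chosen for one tree cannot collide with those of another. Provided this bookkeeping goes through, the lemma follows immediately from the four steps above.
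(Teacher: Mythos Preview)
Your proposal does not address the stated lemma. Lemma~\ref{lemma:bipartite-ER} is an imported result, cited verbatim from \cite{Ding2023}; the paper gives no proof of it, nor does your write-up. What you have sketched is instead a proof of Lemma~\ref{lemma:cycle-construction2}, which \emph{applies} Lemma~\ref{lemma:bipartite-ER} as a black box to the auxiliary bipartite graph $\overline{G}$. Reducing to Lemma~\ref{lemma:bipartite-ER} cannot constitute a proof of Lemma~\ref{lemma:bipartite-ER} itself.

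For what it is worth, as a sketch of Lemma~\ref{lemma:cycle-construction2} your outline tracks the paper's argument closely: invoke Lemma~\ref{lemma:enough-trees2} to obtain $\Theta(n)$ two-sided trees, use a binomial/Chernoff argument to show a constant fraction of them acquire $d$ unmarked tree-facing endpoints on each side, couple the resulting bipartite structure to the setting of Lemma~\ref{lemma:bipartite-ER}, and lift alternating cycles in $\overline{G}$ back to balanced cycles in $G$ via the five-edge construction. The paper's proof uses the median-of-binomial argument rather than a Chernoff bound for the ``well-connected'' step and verifies the parameter choices explicitly, but the structure is the same. If your goal was Lemma~\ref{lemma:cycle-construction2}, you are on the right track; if it was Lemma~\ref{lemma:bipartite-ER}, you would need to go to \cite{Ding2023} and reproduce the combinatorial argument there, which is entirely absent from your proposal.
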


\begin{proof}[Proof of Lemma \ref{lemma:cycle-construction2}] The proof is analogous to the proof of \cite[Lemma C.6]{gaudio2025all}.
By Lemma \ref{lemma:enough-trees2}, at least $K_1 = C' K = C' \frac{\gamma n}{\ell}$ two-sided trees $\{(L_i, R_i)\}_{i=1}^{K_1}$ are produced, with probability $1-e^{-\Omega(n)}$. If $L_i, R_j$ each connect to at least $d$ edges among $E_L^{\ast}, E_R^{\ast}$ respectively, then $L_i$ and $R_j$ connect in the five-edge construction with probability at least 
\[1 - \left(1 - \frac{\lambda}{n}\right)^{d^2} \geq \frac{\lambda d^2}{2n}.\]%\colin{Wait, what if some of the planted edges that $L_i$ and $R_j$ are connected to have planted edges between them? Then they would not be able to have unplanted edges between them.} \julia{By design, reserved edges cannot have red  edges between them. I added a clarification above.}

Let $c = \frac{512 \log (32e)}{\lambda d^2}$. We want to show that at least  $cn$ trees are such that both sides are connected to at least $d$ tree-facing endpoints (and we call such trees ``well-connected''). We use the fact that when identifying the tree-facing neighbors of a tree $(L_i, R_i)$ with $i \in [K_1]$, there are at least $\frac{\gamma n}{2}  - cnd$ tree-facing vertices that are not yet connected to a tree, as long as we have not yet found $cn$ well-connected trees. By our choice of $d$ and $\ell$, we have
\begin{align*}
d = \frac{2^{11} \log (32 e)}{\lambda \gamma} \alpha = \frac{\ell \lambda}{2} \cdot \frac{\gamma}{4}
%= \frac{\ell \lambda}{2}\left(\frac{\gamma}{2} - \frac{512 \log (32e)}{\lambda} \cdot \frac{\lambda \gamma}{2^{11} \log (32 e)} \right)\\
 \le \frac{\ell \lambda}{2}\left(\frac{\gamma}{2} - \frac{512 \log (32e)}{\lambda d} \right)
%&= \frac{\ell \lambda}{2}\left( \frac{\gamma}{2} - cd\right)\\
= \frac{1}{2} \cdot \frac{\lambda}{n} \cdot \ell \cdot \left(\frac{\gamma n}{2} - cnd \right)
\end{align*}
Thus, $d$ is upper-bounded by half of the \emph{expected} number of tree-facing neighbors of $L_i$ or $R_i$, and the number of tree-facing neighbors of a given tree stochastically dominates the $\text{Bin}\left(\ell\left(\frac{\gamma n}{2} - cnd \right), \lambda/n\right)$ distribution, which in turn dominates the $\text{Bin}(2dn/\lambda, \lambda/n)$  distribution. Since the mean and median of any binomial random variable differ by at most $1$, we have that the median of the $\text{Bin}(2dn/\lambda, \lambda/n)$ distribution is at least $2d - 1 \geq d$. Thus, the probability that $L_i$ is connected to at least $d$ tree-facing endpoints is at least $1/2$, and the same is true for $R_i$. It follows that a particular tree is connected on both sides to $d$ tree-facing vertices with probability at least $\left(\frac{1}{2}\right)^2$.

\begin{comment}
By ensuring that
\begin{equation}
d \leq \frac{\ell}{2}\left(\frac{\gamma n}{2} - cnd \right) \frac{\lambda}{n},  \label{eq:requirement-1b}  \end{equation}
we ensure that $d$ is smaller than the expected number of tree-facing vertices connected to a tree. Therefore, the expected number of tree-facing right (resp. left) reserved edges that a right (resp. left) tree is connected to is greater than $d$. It follows that a particular tree is connected on both sides to $d$ tree-facing vertices with probability at least $\left(\frac{1}{2}\right)^2$.\colin{How do you get to this?} \julia{TODO: explain binomial argument using the fact that the mean and median differ by at most $1$.} 
\end{comment}

Due to independence across trees and the Chernoff bound, we see that for any constant $\eta > 0$, there are at least $(1-\eta) \frac{K_1}{4}$ trees which are connected to at least $d$ tree-facing endpoints on both sides, with probability $1-e^{-\Omega(n)}$. Since both $d$ and $\ell$ are linear in $\alpha$, for sufficiently large constant $\alpha$,
\begin{align*}
    cn &= \frac{512 \log (32e)}{\lambda d^2} n \leq \frac{1-\eta}{8} \left(\frac{\mu^2-\mu}{\mu^2-\mu+ C +1/4}\right)^2 \cdot \frac{\gamma n}{\ell} = (1 - \eta) \frac{K_1}{4},
\end{align*}
where $\mu=(c_{m^*,m^*}+1)/2$ and $C$ is the constant from Lemma~\ref{lemma:variance-paths}. It follows that with probability $1-e^{-\Omega(n)}$, there are at least $cn$ trees which are connected to at least $d$ tree-facing endpoints on both sides.

Now we can couple to a bi-colored bipartite graph with $cn$ vertices on each side, representing $cn$ two-sided trees. The bipartite graph contains a perfect red matching, as well as independent blue edges which exist with probability $\frac{\lambda d^2}{2n}$. To apply Lemma \ref{lemma:bipartite-ER}, observe that the expected number of blue edges which are adjacent to a given vertex in the bipartite graph is $\frac{\lambda d^2}{2n} \cdot cn = 256 \log (32e).$ The conclusion follows from Lemma \ref{lemma:bipartite-ER}.
\end{proof}

%\julia{more results needed:}
%\begin{itemize}
    %\item why a long cycle means there is a planted $2$-cycle which differs a lot from $H^{\ast}$. \julia{lemma: } XORing any such loop with the planted $2$-factor yields a $2$-factor on $\delta n$-vertices.  
    %\item implication of cycle construction on lower bound on error \julia{lemma: } There exists some constant $c$ such that the expected number of $2$-factors within $cn$ edges of the planted $2$-factor is exponential in $n$ with a smaller base, so any algorithm attempting to recover a planted $2$-factor on $\delta n$ vertices gets at least $cn$ edges wrong with probability $1-o(1/n)$.
    %\item reduction from planted $2$-cycle \julia{lemma: } If there were an algorithm that could recover a planted $\delta n$-cycle, it would also suceed at recovering a planted $2$-factor on $\delta n$ vertices if it happened to be a cycle, so no such algorithm can exist.
%\end{itemize}

\FloatBarrier
\section{Computationally efficient estimators}\label{sec:algorithm}

%\colin{This subsection is currently a sketch of my approach to efficiently finding a planted $m$ cycle or planted $2$-factor on $m$-vertices}

In this section, we design a computationally efficient estimator that attains the information-theoretic threshold for almost exact recovery. The estimator is given in Algorithm~\ref{alg:search}. Instead of searching for all cycles (which is computationally intractable), the algorithm builds a subgraph from short trails, combining or trimming them in a greedy manner to make the output subgraph as large as possible.

\begin{theorem}\label{thm:alg}
Consider the planted cycles model $\calG(n,\lambda,\delta)$ conditional on $H^*=h$, where $\delta \in (0,1]$. If $\lambda < \frac{1}{(\sqrt{2\delta} + \sqrt{1-\delta})^2}$, then Algorithm~\ref{alg:search}
outputs a set  $H$of edges such that $\expect{|H\Delta H^*|}=o(n)$, achieving almost exact recovery. Moreover, Algorithm \ref{alg:search}  runs in time $O\left(n^{3 + \log(2 +\lambda)} \right)$ with high probability.  
\end{theorem}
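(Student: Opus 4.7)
My plan is to analyze Algorithm~\ref{alg:search} in three stages: establish the existence of a ``good'' trail whenever $H$ is still far from $H^*$, bound the total number of iterations and the work per iteration, and finally invoke the generating function bound of Section~\ref{sec:achievability} to conclude almost exact recovery. Throughout, I will maintain the invariant that the current subgraph $H$ is a vertex-disjoint union of cycles and paths, and that the number of degree-$1$ vertices in $H$ remains $o(n)$.

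The main technical step is an existence lemma for good trails. Suppose the algorithm has not yet halted and the current $H$ satisfies $|H| < \delta n - n/\sqrt{\log n}$. I would examine $H^* \Delta H$: since $H$ is a disjoint union of cycles and paths, a straightforward extension of Lemma~\ref{lemma:difference-graph} shows that $H^* \Delta H$ decomposes into a disjoint union of trails whose edges alternate between planted and unplanted at every vertex in $V(H^*) \cap V(H)$. Because $|H^*| - |H|$ is positive, at least one of these trails, call it $T$, contains strictly more planted than unplanted edges. If $T$ itself has length less than $\log n$, taking $P = T$ strictly increases $|H|$ and by the alternation property introduces no new degree-$1$ vertex, so $P$ is a type-(a) good trail. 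Otherwise $T$ is long; I would walk along $T$ and track the running surplus (planted minus unplanted edges). Since the overall surplus along $T$ is positive, a prefix / subinterval argument shows that either a prefix of length less than $\log n$ already has surplus $\geq \sqrt{\log n}$, or some inner segment of length $<\log n$ achieves such a surplus; this segment is a type-(b) good trail whose endpoints contribute at most two new degree-$1$ vertices.

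For correctness, type-(a) steps increase $|H|$ by at least $1$ without creating degree-$1$ vertices, while type-(b) steps increase $|H|$ by at least $\sqrt{\log n}$ and add at most two degree-$1$ vertices. Hence at termination the number of degree-$1$ vertices is $O(n/\sqrt{\log n}) = o(n)$ and $|H| \geq \delta n - n/\sqrt{\log n}$. To bound $|H \Delta H^*|$, I would complete each remaining path in $H$ to a cycle by inserting auxiliary edges (not necessarily in $G$), producing a genuine $2$-factor $\tilde H$ on $\delta n$ vertices with $|\tilde H \Delta H| = o(n)$. Applying the generating function bound from the proof of Theorem~\ref{thm:possibility_almost_exact_partial_2_factor} to $\tilde H$ gives $\Expect[|\tilde H \Delta H^*|] = O(1)$, and therefore $\Expect[|H \Delta H^*|] \leq \Expect[|\tilde H \Delta H^*|] + o(n) = o(n)$, establishing almost exact recovery.

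For the running time, I would bound the total number of trails of length less than $\log n$ in $G$. Since each vertex has exactly $2$ planted incident edges and a $\mathrm{Bin}(n-1,\lambda/n)$ number of unplanted ones, a first moment computation analogous to Lemma~\ref{validWalkCountLem} gives that the expected number of such trails starting from any fixed vertex is $O((2+\lambda)^{\log n}) = O(n^{\log(2+\lambda)})$, so with high probability the total number of candidate trails in $G$ is $O(n^{1+\log(2+\lambda)})$. Each iteration scans these trails and tests each one for being good in time $O(\log n)$, and the progress guarantees above ensure at most $O(n)$ iterations total. The main obstacle I expect is a careful verification that the type-(b) extraction from a long alternating trail is always possible with the stated surplus, which amounts to a discrete intermediate-value-type argument on the surplus function; handling the boundary case where the surplus along $T$ is only $1$ requires the segment extraction to cope with small totals, but since long trails contribute many edges, this can be absorbed into the termination criterion. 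Combining the per-iteration cost with the iteration count yields the claimed runtime $O(n^{3+\log(2+\lambda)})$ with high probability.
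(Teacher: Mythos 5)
Your proposal tracks the paper's structure for the first and third stages (trail decomposition, existence of good trails, iteration/per-step cost count), but the key step where you convert the final $H$ into a statistical bound on $\expect{|H\Delta H^*|}$ has a real gap.

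You propose to complete the leftover paths in $H$ to a $2$-factor $\tilde H$ by inserting auxiliary edges ``not necessarily in $G$'' and then invoke Theorem~\ref{thm:possibility_almost_exact_partial_2_factor} to get $\expect{|\tilde H \Delta H^*|}=O(1)$. This step is invalid. Theorem~\ref{thm:possibility_almost_exact_partial_2_factor} (and its whole proof) applies only to $2$-factors that are subgraphs of $G$: the decomposition of $\tilde H \Delta H^*$ into $(a,b)$-circuits is bounded via $\Gamma$, which counts circuits \emph{present in $G$}, and the whole generating-function machinery rests on each unplanted edge being a $\mathrm{Bern}(\lambda/n)$ indicator. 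Once $\tilde H$ contains auxiliary edges that are deterministically present (and not in $G$), the circuits of $\tilde H\Delta H^*$ through those edges are not circuits of $G$ and are not captured by $\Gamma$, so the bound $\sum_i \ex(C_i)\le \Gamma$ simply fails. The paper avoids this by never completing $H$ to a $2$-factor; instead (Lemma~\ref{lmm:alg.ae.recovery}) it decomposes $H\Delta H^*$ directly into circuits plus at most $M=o(n)$ open trails, bounds the circuit contribution by $\Gamma = O(1)$ exactly as before, and handles the open-trail contribution with a new truncation argument: it writes $\sum_{i\in I^c}\ex(P_i)\le\sum_{\ell\ge 0}\min\{M,N_\ell\}$ where $N_\ell$ is the number of $(a,b)$-trails in $G$ with excess $>\ell$, shows $\expect{N_\ell}\le c_1 c_2^\ell n$ with $c_2<1$ via the generating function, and then uses the $\min$ structure plus $\expect{M}=o(n)$ to conclude $o(n)$. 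Your auxiliary-edge shortcut genuinely replaces that argument and does not hold; you would need to prove something like Lemma~\ref{lmm:alg.ae.recovery} from scratch.

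A secondary, smaller concern is your type-(b) extraction. You look for a segment of surplus $\ge\sqrt{\log n}$ \emph{inside a single long trail $T$}. But $T$ might have total surplus as small as $1$ even when it has $\gg\log n$ edges, so no such segment need exist within $T$. You flag this and suggest it ``can be absorbed into the termination criterion,'' but that is not enough: the paper's Lemma~\ref{lmm:alg.Hgrows} uses a global count (in Case~2, all short trails have nonpositive surplus, so the \emph{total} surplus over all long trails is at least $|H^*|-|H|\gtrsim n/\sqrt{\log n}$, and chopping all long trails into $\Theta(\log n)$-length pieces then forces at least one piece to have surplus $\ge 2\sqrt{\log n}$ by a counting contradiction). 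Your per-trail intermediate-value argument needs to be replaced by this aggregate argument; otherwise there is no guarantee the algorithm keeps making $\sqrt{\log n}$ progress before $|H|\ge\delta n - O(n/\sqrt{\log n})$.
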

We prove the computational and statistical guarantees of Algorithm~\ref{alg:search} in the following Lemmas~\ref{lmm:decomposition_new}--\ref{lmm:poly.time}. In particular, \prettyref{thm:alg} follows by combining~\prettyref{lmm:alg.ae.recovery}, \prettyref{lmm:alg.Hgrows},
and~\prettyref{lmm:poly.time}. By design, the algorithm returns a subgraph  $H$ which consists of cycles and possibly a small number of paths. In Lemma~\ref{lmm:decomposition_new}, we first generalize Lemma~\ref{lemma:difference-graph} to show that the difference graph $H\Delta H^*$ can be decomposed into a collection of edge-disjoint trails with the desired alternating properties at vertices in $V(H)\cap V(H^*).$
Building upon such a decomposition, in Lemma~\ref{lmm:alg.ae.recovery}, we show that any such subgraph $H$ attains almost exact recovery of the true planted cycles $H^*$ whenever information-theoretically possible, as long as $H$ is nearly as large as $H^*$ and does not contain too many paths. We then prove in Lemma~\ref{lmm:alg.Hgrows} that Algorithm~\ref{alg:search} will indeed not output a large number of paths, and that through its greedy updates, the size of the output subgraph must be at least almost as large as $H^*$. Lemmas~\ref{lmm:alg.ae.recovery},~\ref{lmm:alg.Hgrows} combined imply that Algorithm~\ref{alg:search} attains the information-theoretic threshold for almost exact recovery. Finally, in Lemma~\ref{lmm:poly.time}, we prove that with high probability, Algorithm~\ref{alg:search} runs in polynomial time. 

Before stating and proving the lemmas, below is a brief description of Algorithm~\ref{alg:search} and the intuition behind its design. From an empty subgraph $H$, the algorithm searches through the collection of short trails to find ``good'' candidates $P$ to update $H$ to $H\Delta P$ (i.e. the XOR operation),  while making sure that the updated $H$ remains a collection of vertex-disjoint cycles and paths. That is, no vertex has degree larger than $2$ after the update. To clarify what qualifies a trail $P$ to be ``good,'' recall that the algorithm aims to construct an $H$ that 1) contains many edges, and 2) contains few paths (i.e. few vertices of degree $1$). Accordingly, two types of trails are considered ``good'':

\begin{enumerate}
\item ``Cost-free'' trails, which when XOR'ed onto $H$, strictly increases $|H|$ (number of edges) without increasing the number of degree-$1$ vertices.

\item ``Cost-effective'' trails, which when XOR'ed onto $H$, increase the number of degree-$1$ vertices by at most $2$, but increase $|H|$ by at least $\sqrt{\log n}$.
\end{enumerate}

In each iteration, the greedy algorithm invokes subroutines A and B to search for ``good'' trails of these two types. The algorithm terminates only when neither subroutine is able to find a suitable trail to update $H$.

\definecolor{ao(english)}{rgb}{0.0, 0.5, 0.0}

\begin{algorithm}[]
\caption{Approximate algorithm for finding a disjoint union of cycles }\label{alg:search}
\begin{algorithmic}[1]
\Statex{\bfseries Input:} A graph $G$
\Statex{\bfseries Output:} 
A vertex-disjoint union of cycles and paths, 
%\jx{clarify this is because degree is at most 2} 
$H$
\smallskip
\State Let $n$ be the number of vertices in $G$.
\State Let $H=\emptyset$, boolean $can\_grow=True$.
\State Let $S$ be the set of all trails (closed or open) in $G$  of length less than $\log n$.
%\julia{trails of length less than $\epsilon \log n$? (where $\epsilon$ is fixed for the search procedure to be polynomial}
%\dana{Paths or trails? In the decomposition I think we might revisit vertices?}\colin{We should probably make it trails.}
\While{$can\_grow$}
%\jx{Think about how to optimize the presentation of the two subroutines below}
 \Statex{\;\;\;\; \textcolor{ao(english)}{
 \emph{// Subroutine A: search for ``cost-free'' candidate trails}}}
\For{$P\in S$} %\colin{Actually, I might want to repeat this whole thing until the algorithm can no longer find a way to increase the size of $H$ without increasing its number of degree $1$ vertices.} \dana{Maybe we can replace the while condition to ``while the last iteration updates $H$".}
%\Comment{\textcolor{blue}{Subroutine A: search for ``cost-free" candidate paths}}
\If{$|H\Delta P|>|H|$, there is no vertex of degree greater than $2$ in $H\Delta P$, and there are at least as many degree $1$ vertices in $H$ as in $H\Delta P$,} \label{alg.step.find.short.P}
\State Set $H=H\Delta P$
\EndIf
\EndFor
\Statex{\;\;\;\;\textcolor{ao(english)}{\emph{// Subroutine B: search for ``cost-effective'' candidate trails}}}
\State Find $P\in S$ such that $H\Delta P$ has no vertex of degree greater than $2$ maximizing the cardinality of $H\Delta P$ \label{alg.step.find.long.P}%\dana{Do we need to ensure that $P$ increases the number of degree $1$ vertices by at most 2?}\colin{$P$ is either a trail or a loop, and the only way a vertex that had even degree in $H$ can have odd degree in $H\Delta P$ is if it is an endpoint of $P$.}
\If{Step~\ref{alg.step.find.long.P} finds a $P$ and $|H\Delta P|\geq |H|+\sqrt{\log n}$}
\State Set $H=H\Delta P$
%\Else\phantom{xxx}
%\State \Return $H$
\EndIf
\If{$H$ was not updated in lines 5-12}
\State Set $can\_grow = False$.
\EndIf
\EndWhile
\State \Return $H$
\end{algorithmic}
\end{algorithm}
%\colin{I could add more parameters to the algorithm, but I do not think it needs them. As long as $\lambda$ is below the threshold it should find a reasonable approximation of the largest $2$-factor in the graph, and it does not need to know anything about the model parameters in order to do so.}

First, we generalize Lemma~\ref{lemma:difference-graph} and prove that the difference graph $H \Delta H^*$ admits a trail decomposition such that each trail alternates between red (planted) and blue (unplanted) edges when it goes in and out of a vertex in $V(H) \cap V(H^*).$ 
\begin{lemma}\label{lmm:decomposition_new}
Let $H^{\ast}$ be a planted $2$-factor on $m$ vertices, and let $H$ be a graph where every vertex has degree at most $2$ and there are $2m$ vertices of degree $1$ in $H$. Then
\begin{itemize}
    \item The difference graph $H^{\ast} \Delta H$ is a graph where every vertex has degree at most $4$. Every degree-$4$ vertex has two incident red (planted) edges and two incident blue (unplanted) edges, and every degree-$3$ vertex has two incident red (planted) edges and one incident blue (unplanted) edge.
    \item %We can decompose $H^{\ast} \Delta H$ into edge-disjoint circuits with the following property: if $v$ is a vertex which belongs to both $H^{\ast}$ and $H$, and there is no $u \in [n]$ such that $(u,v) \in H^{\ast} \cap H$ (i.e., $v$ is not an endpoint of a common edge), then in the circuit decomposition $v$ lies between a planted and an unplanted edge.
    Furthermore, $H^{\ast} \Delta H$ can be decomposed into a collection of edge-disjoint  trails such that each trail must alternate between  red and blue edges whenever it goes in and out of a vertex in $V(H^*)\cap V(H)$, and there are exactly $m$ open trails whose endpoints must have degree 1 in $H$. 
    
    %a vertex goes in and out of  alternate between red and blue edges at vertices in $V(H^*)\cap V(H)$ and where there are exactly $m$ (open) trails.  
\end{itemize}
\end{lemma}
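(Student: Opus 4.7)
The plan is to proceed in two stages, mirroring the structure of the lemma.

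For the first part, I would perform a vertex-by-vertex case analysis based on $\deg_{H^*}(v) \in \{0, 2\}$ and $\deg_H(v) \in \{0, 1, 2\}$. The identity $\deg_{H^* \Delta H}(v) = \deg_{H^*}(v) + \deg_H(v) - 2 |\{e \in H^* \cap H : v \in e\}|$ immediately gives $\deg_{H^* \Delta H}(v) \le 4$ in all cases. A degree-$4$ vertex forces $\deg_{H^*}(v) = \deg_H(v) = 2$ with no shared edges, hence two red and two blue incident edges. A degree-$3$ vertex forces $\deg_{H^*}(v) = 2$, $\deg_H(v) = 1$, and the single $H$-edge not contained in $H^*$, hence two red and one blue incident edges.

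For the trail decomposition, the key observation is that the odd-degree vertices of $H^* \Delta H$ are exactly the $2m$ vertices with $\deg_H(v) = 1$, since $\deg_{H^*}(v)$ is always even. Within each connected component the number of odd-degree vertices is even by the handshake lemma, so I would augment $H^* \Delta H$ by adding a ``green'' perfect matching on its odd-degree vertices inside each component. This renders every vertex even-degree while introducing exactly $m$ auxiliary green edges in total. I would then invoke \cite[Theorem 1]{kotzig1968moves}, the same tool used in the proof of Lemma~\ref{lemma:difference-graph}, separately on each component of the augmented graph. At vertices in $V(H^*) \cap V(H)$, incident edges are labeled by red/blue/green in the natural way; at other vertices one is free to choose labels so as to satisfy Kotzig's condition $|Q_{v,i}| \le \deg(v)/2$. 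Checking each possible degree profile (the only borderline cases are 2R+2B and 2R+1B+1G, where the maximum label class has size exactly $\deg(v)/2$) shows that the condition is met throughout, so Kotzig's theorem yields a label-alternating Eulerian circuit in each component.

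Removing the green edges from each of these Eulerian circuits then yields the desired trail decomposition. Each of the $2m$ odd-degree vertices is incident to exactly one green edge, so after the deletion each such vertex becomes the endpoint of exactly one open trail, giving $m$ open trails whose endpoints coincide with the $2m$ degree-$1$ vertices of $H$. To verify the alternation property at a vertex $v \in V(H^*) \cap V(H)$, I would examine each degree profile: at degree-$4$ vertices the 2R+2B labeling forces both pass-throughs to be red--blue; at degree-$3$ vertices the 2R+1B+1G labeling forces the two pass-throughs to be of types red--blue and red--green, so that after deleting the green edge one pass-through remains a clean red--blue alternation while the other becomes a red endpoint of an open trail; at degree-$2$ shared vertices, the 1R+1B labeling gives the required alternation trivially. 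The main subtlety of the argument, and where I expect the bookkeeping to be the most delicate, is precisely in checking that removing the auxiliary green edges preserves red/blue alternation at shared vertices without introducing unwanted merges of trails or creating trail endpoints at interior vertices, which is exactly what this last case analysis is designed to establish.
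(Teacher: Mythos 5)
Your proof is correct, but it takes a genuinely different route from the paper's. For the second part, the paper avoids Kotzig's theorem entirely: it transforms $H^* \Delta H$ by \emph{splitting} each vertex of degree at least $3$ into two copies, one copy receiving exactly one red and one blue incident edge and the other copy the rest (so a degree-$4$ vertex becomes two degree-$2$ copies each with profile 1R+1B, and a degree-$3$ vertex becomes one degree-$2$ copy 1R+1B and one degree-$1$ copy 1R). After this transformation the maximum degree is $2$, so the graph decomposes trivially into vertex-disjoint cycles and $m$ paths; merging the copies back yields the trail decomposition, and the $1$R$+1$B profile at the degree-$2$ copies of shared vertices directly enforces the red--blue alternation without any appeal to a theorem. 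Your approach instead augments $H^*\Delta H$ with an auxiliary green perfect matching on its $2m$ odd-degree vertices (which you correctly identify as exactly the vertices with $\deg_H(v)=1$), invokes Kotzig's theorem on the even-degree augmented components, and deletes the green edges. The reasoning is sound and the labeling and case analysis you carry out is the right bookkeeping. What each buys: the paper's vertex-splitting is entirely elementary and self-contained, making the alternation a built-in consequence of the construction; your route reuses the same external tool already deployed for Lemma~\ref{lemma:difference-graph}, at the cost of one extra subtlety you should flag---the green matching can create parallel edges when two matched odd-degree vertices are already adjacent, so you need either a multigraph version of Kotzig's theorem or a small fix such as subdividing green edges.
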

\begin{proof}
The first part of the statement readily follows from the fact that every vertex in $H^*$ has degree $2$ and every vertex in $H$ has degree at most $2.$ Moreover, a vertex  has degree $1$ or $3$ in $H^*\Delta H$ if and only if it has degree $1$ in $H$. Thus, there are $2m$ vertices of degree $1$ or $3$ in $H^* \Delta H$. 

To prove the second part of the statement, we transform the difference graph $H\Delta H^*$ by splitting the vertices of degree at least $3$ as follows. If vertex $v$ has degree at least $3$, we split it into two copies, and split the incident edges to $v$ such that one red edge and one blue edge are incident to one copy, while the rest are incident to the other copy. After the transformation, every vertex in $H \Delta H^*$ has degree at most $2$, and there are $2m$ vertices of degree $1$, which exactly correspond to the $2m$ vertices of degree $1$ in $H.$
Thus, the difference graph can be decomposed into a collection of edge-disjoint trails, including exactly $m$ open trails whose endpoints must have degree $1$ in $H.$ 
Moreover, whenever a trail goes in and out of a vertex in $V(H^*) \cap V(H)$, it must alternate between red and blue edges. To get the trail decomposition for the original $H\Delta H^*$, we simply merge the copies of vertices.  

% similar to Lemma~\ref{lemma:difference-graph}, we want to invoke the general circuit decomposition result for an even graph. However, $H\Delta H^*$ is not even. Thus, to transform $H\Delta H^*$  to an even graph, we split the vertices with odd degrees as follows. If vertex $v$ has degree $1$ and the incident edge is planted (resp.\ unplanted), we split it to $v$ and $v'$ connected by an unplanted (resp.\ planted) edge. If vertex $v$ has degree $2$, we split it to $v$ and $v'$ connected by an unplanted edge. 
\end{proof}

Building upon the above trail decomposition, we show that any $H$ consisting of cycles and $o(n)$ paths with a total of $\delta n- o(n)$ edges achieves almost exact recovery of $H^*$.

\begin{lemma}\label{lmm:alg.ae.recovery}
Let $\delta\in (0,1]$, $0<\lambda<\frac{1}{(\sqrt{2\delta}+\sqrt{1-\delta})^2}$,  and $G\sim \calG(n,\lambda,\delta)$ conditional on $H^*=h$. Let $H$ be a (possibly random) subgraph of $G$ such that every vertex in $H$ has degree at most $2$, $\mathbb{E}[|H|]=\delta n-o(n)$, and the expected number of degree-$1$ vertices in $H$ is $o(n)$. Then $\mathbb{E}[|H^* \Delta H| ]=o(n)$. %\colin{It is possible that I need to tighten some of this.}
\end{lemma}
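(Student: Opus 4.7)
The plan is to adapt the generating-function machinery of Theorem~\ref{thm:possibility_almost_exact_partial_2_factor}, using the trail decomposition of Lemma~\ref{lmm:decomposition_new} and treating the open trails arising from the degree-$1$ vertices of $H$ separately. First I would apply Lemma~\ref{lmm:decomposition_new} to express $H^{*}\Delta H$ as an edge-disjoint union of closed alternating trails together with exactly $D/2$ open alternating trails, where $D$ is the number of degree-$1$ vertices of $H$; by assumption $\mathbb{E}[D]=o(n)$. For each trail $T$ of type $(a_{T},b_{T})$ I would set $\mathrm{ex}(T)=b_{T}-(1/2-\epsilon)(a_{T}+b_{T})$ for a small $\epsilon>0$ chosen as in Theorem~\ref{thm:possibility_almost_exact_partial_2_factor}, and decompose $\Gamma=\sum_{T}\max(\mathrm{ex}(T),0)=\Gamma_{\mathrm{closed}}+\Gamma_{\mathrm{open}}$. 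Mirroring the calculation from that proof but accounting for $|H^{*}|\ne|H|$, I would obtain
\[
|H^{*}\Delta H|=2|H\setminus H^{*}|+(\delta n-|H|)\leq 2\Gamma+(1-2\epsilon)|H^{*}\Delta H|+(\delta n-|H|),
\]
and hence $|H^{*}\Delta H|\leq \Gamma/\epsilon+(\delta n-|H|)^{+}/(2\epsilon)$ (the inequality remains valid, and in fact tighter, when $|H|>\delta n$). Taking expectations, the task reduces to proving $\mathbb{E}[\Gamma]=o(n)$.

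The closed part $\mathbb{E}[\Gamma_{\mathrm{closed}}]=O(1)$ follows verbatim from the generating-function analysis of Theorem~\ref{thm:possibility_almost_exact_partial_2_factor}: the argument there in fact bounds the expected excess summed over all alternating $(a,b)$-circuits present in $G$, a quantity depending only on $G$ and dominating $\Gamma_{\mathrm{closed}}$ for any admissible $H$. The real work is therefore to show $\mathbb{E}[\Gamma_{\mathrm{open}}]=o(n)$.

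For this, let $X_{v}$ denote the total positive excess $\sum_{T}\max(\mathrm{ex}(T),0)$ summed over all alternating open trails in $G$ that start at $v$. An open-trail analogue of Lemma~\ref{validWalkCountLem} (summing the per-endpoint expected counts over all endpoints $v'$), combined with the generating function $g(x,y)$ evaluated at the point supplied by Lemma~\ref{lmm:lambdaThresholdLem}, yields $\mathbb{E}[X_{v}]=O(1)$ uniformly in $v$. Since every open trail in the decomposition has its starting endpoint at a degree-$1$ vertex of $H$, we have $\Gamma_{\mathrm{open}}\leq\sum_{v:\deg_{H}(v)=1}X_{v}$, and two applications of Cauchy--Schwarz give
\[
\mathbb{E}[\Gamma_{\mathrm{open}}]\leq\sqrt{\mathbb{E}[D]}\cdot\sqrt{\mathbb{E}\!\left[\sum_{v}X_{v}^{2}\right]}\leq\sqrt{o(n)\cdot n\cdot\max_{v}\mathbb{E}[X_{v}^{2}]}=o(n),
\]
provided $\max_{v}\mathbb{E}[X_{v}^{2}]=O(1)$. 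This second-moment estimate can be established by extending the generating-function computation to pairs of overlapping trails, in the spirit of Lemma~\ref{lemma:variance-paths}.

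The principal obstacle I expect is the coupling between $H$ and $G$: both $\mathbf{1}[\deg_{H}(v)=1]$ and $X_{v}$ are functions of $G$, so one cannot directly factor $\mathbb{E}[\mathbf{1}[\deg_{H}(v)=1]\cdot X_{v}]$, and a naive union bound over degree-$1$ vertices would only yield $\mathbb{E}[\Gamma_{\mathrm{open}}]=O(n)$. The Cauchy--Schwarz route above sidesteps this dependence at the cost of a uniform-in-$v$ second-moment bound on $X_{v}$, which in turn demands a careful pair-of-trails enumeration mirroring Lemma~\ref{lemma:variance-paths}. A cleaner alternative I would consider is to first complete $H$ to a max-degree-$2$ subgraph $H_{1}=H\cup M$ by pairing up the degree-$1$ vertices via a phantom matching $M$ of size $D/2$; then $H^{*}\Delta H_{1}$ has only closed-trail components, $|H^{*}\Delta H|\leq|H^{*}\Delta H_{1}|+|M|=|H^{*}\Delta H_{1}|+o(n)$, and one need only extend the circuit generating-function bound to the slightly enlarged edge set $E(G)\cup M$.
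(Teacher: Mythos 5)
Your overall framework matches the paper's: decompose $H\Delta H^{*}$ via Lemma~\ref{lmm:decomposition_new}, split into closed and open trails, bound the excess. The closed part and the arithmetic rearrangement to $|H^{*}\Delta H|\le \Gamma/\epsilon + (\delta n -|H|)/(2\epsilon)$ are exactly as in the paper. The divergence is in how the open part $\Gamma_{\mathrm{open}}$ is controlled, and here your proposal has a genuine gap.

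The paper's route is purely first-moment. Writing $\sum_{i\in I^{c}}\ex(P_i) \le \sum_{\ell\ge 0}\sum_{i}\indc{\ex(P_i')>\ell} \le \sum_{\ell\ge 0}\min\{M,N_{\ell}\}$ where $M$ is the number of processed open trails and $N_{\ell}$ counts all $(a,b)$-trails in $G$ with excess $>\ell$, and then applying Jensen for $\min$, the bound $\mathbb{E}[\Gamma_{\mathrm{open}}]\le \sum_{\ell}\min\{\mathbb{E}[M],\mathbb{E}[N_{\ell}]\}$ needs only $\mathbb{E}[N_{\ell}]\le c_{1}c_{2}^{\ell}n$, a quantity already extracted from the generating function. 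Your Cauchy--Schwarz route instead needs $\max_{v}\mathbb{E}[X_{v}^{2}]=O(1)$, which you assert but do not verify, and which is not a trivial extension of Lemma~\ref{lemma:variance-paths}. That lemma only gives $\mathbb{E}[X_{a,b}^{2}]\le C_{a,b,\lambda}$ for a \emph{fixed} $(a,b)$ with a constant growing in $a,b$, whereas you need finiteness of $\sum_{a,b,a',b'}w_{a,b}w_{a',b'}\mathbb{E}[Y_{a,b}(v)Y_{a',b'}(v)]$ after summing over all $(a,b)$. The natural reductions fail: Cauchy--Schwarz or AM--GM across the $(a,b)$ indices produce a stray factor $\sum_{a,b}w_{a,b}=\infty$ (the weights are positive on an entire wedge and grow linearly), so one must instead compute the cross-moments $\mathbb{E}[Y_{a,b}Y_{a',b'}]$ directly and argue the overlap corrections are controlled. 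That is a real piece of work, not a routine adaptation. Your alternative sketch via a phantom matching $M$ has a similar issue in disguise: circuits in $H^{*}\Delta H_{1}$ that traverse a phantom edge are not circuits in $G$, so they are not counted by the quantity $\Gamma$ that Theorem~\ref{thm:possibility_almost_exact_partial_2_factor} bounds; handling them is essentially equivalent to the open-trail problem you were trying to avoid. One further, more minor, omission: the open trails produced by Lemma~\ref{lmm:decomposition_new} need not be $(a,b)$-trails in the sense of Definition~\ref{def:trail} (they may start with a planted segment), so to invoke Lemma~\ref{validWalkCountLem} you must first trim or split them; the paper does exactly this, noting that trimming a leading all-red segment only increases the excess.

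In short, the skeleton is right, but the replacement of the paper's $\min$-and-Jensen step by a second-moment/Cauchy--Schwarz step creates a nontrivial unproven claim, and the fallback phantom-matching idea is circular. If you want to pursue your route, you would need a direct pair-of-trails generating-function estimate showing $\mathbb{E}[X_{v}^{2}]=O(1)$; the paper's $\min$ trick is worth internalizing precisely because it avoids ever needing such a bound.
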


\begin{proof}
%\julia{Can we prove this directly by appealing to \prettyref{thm:possibility_almost_exact_partial_2_factor}? Let $H_1, H_2, \dots, H_m$ be the $2$-factors of $H$ on $\delta n$ vertices in $G$. We know from \prettyref{thm:possibility_almost_exact_partial_2_factor} that there exists $C$ independent of $n$ such that for all $i \in [m]$, 
%\[\mathbb{E}[|H^* \Delta H_i|] \leq C.\] Can we argue that the $H$ in this lemma statement is within $o(n)$ of some $H_i$? I think Markov's inequality gives that $H$ is within $o(n)$ of some $H_i$ with probability $1-o(1)$. In expectation $H$ is within $o(n) + n \cdot o(1) = o(n)$}

The proof is similar to \prettyref{thm:possibility_almost_exact_partial_2_factor}, in that it first decomposes $H\Delta H^*$ into  trails (closed or open), and then argues that not many long trails can have more than $(1/2-\epsilon)$ fraction of their edges unplanted. That allows us to bound the number of unplanted edges in $H\Delta H^*$, which in turn yields a bound on the total number of edges in $H\Delta H^*$, since $H\Delta H^*$ is almost balanced under the assumptions on $H$.

Formally, we start by applying Lemma~\ref{lmm:decomposition_new} to decompose $H\Delta H^*$ into a collection of trails
$\{P_i\}_{1\leq i\leq m}$ with $a_i$ planted and $b_i$ unplanted edges respectively, such that the number of open trails is exactly equal to half of the number of degree-1 vertices in $H$, and each trail alternates between red and blue edges whenever it goes in and out of a vertex in $V(H) \cap V(H^*).$

For a trail $P$ with $a$ planted edges and $b$ unplanted edges, define the excess unplanted edges of $P$ to be $\ex(P)=b-(1/2-\epsilon)(a+b)$, where $\epsilon$ is a constant to be specified later. On one hand, we have
\[
|H\backslash H^*| = \sum_{i=1}^m b_i 
= \sum_{i=1}^m \ex(P_i) + \left(\frac{1}{2}-\epsilon\right)|H\Delta H^*|.
\]
On the other hand,
\[
|H\Delta H^*| = |H\backslash H^*| + |H^*\backslash H| = 2|H\backslash H^*| + \left(
 |H^*| - |H|\right),
\]
where the last equality is because $|H^*\backslash H|-|H\backslash H^*| = |H^*| - |H|$. Combining the two displays above gives
\[\epsilon |H \Delta H^{\ast}| = \sum_{i=1}^m \ex(P_i) - |H \backslash H^{\ast}| + \frac{1}{2}|H \Delta H^{\ast}| = \sum_{i=1}^m \ex(P_i) + \frac{1}{2} \left(|H^{\ast}| - |H| \right) \]
so that
\[
|H\Delta H^*| = \frac{1}{\epsilon}\left(\sum_{i=1}^m \ex(P_i) + \frac{1}{2}\left(
 |H^*| - |H|\right)\right).
\]
By assumption, $\mathbb{E}[|H^*|-|H| ]=\delta n - \mathbb{E}[|H| ]=o(n)$. We only need to bound the expected number of excess unplanted edges; namely, $\mathbb{E}[\sum_{i\leq m} \ex(P_i) ]$. Let $I\subset [m]$ index those trails $P_i$ that are closed; \ie, those which are circuits. Then $\mathbb{E}[\sum_{i\in I}\ex(P_i) ]$ can be bounded in the same way as in the proof of \prettyref{thm:possibility_almost_exact_partial_2_factor}. To recap, we have
$$
\sum_{i\in I} \ex(P_i) \le \sum_{C\subset G} \ex(C)\indc{\ex(C)>0} \triangleq \Gamma,
$$
where we sum over all $(a,b)$-circuits $C$ in $G$ with $a\ge 0$ and $b \ge 1. $ Moreover, in the proof of \prettyref{thm:possibility_almost_exact_partial_2_factor}, we have shown that 
$\expect{\Gamma } \le C 
$
for some constant $C$ independent of $n.$
Therefore, 
$$
\expect{\sum_{i\in I}\ex(P_i) }
\le \expect{\Gamma } \le C. 
$$

The same argument does not work for those $P_i$ that are open, since open trails start and end at two different vertices, so that $G$ may contain $\Theta(n)$ open trails with large excess. For a tighter analysis, we need to utilize the assumption that there are not many degree-$1$ vertices in $H$. 

For each $i \in I^c$, if $P_i$ both starts and ends with an all-red segment, then we remove the initial red segment; if $P_i$ both starts and ends with an all-blue segment, then we split $P_i$ into two trails, where the last segment becomes a new trail by itself. Let $P'_1$, ..., $P'_{M}$ denote the resulting collection of trails for some $M$. 
Crucially, each $P'_i$ (reversing the traversal direction if necessary) must start with unplanted edges and end with planted edges if there are any; thus, it is a valid $(a,b)$-trail for some $a\ge 0$ and $b \ge 1$. Moreover, since each $P'_i$ contains at least one distinct degree-1 vertex in $H$, $M$ is upper bounded by the number of degree-$1$ vertices in $H$ and hence $\expect{M }=o(n). $
%Clearly, 
%$$
%\sum_{i\in I^c}ex(P_i) \le  \sum_{i=1}^M ex(P'_i) \triangleq \Gamma'.
%$$
% we first remove the initial red segment if it both starts and ends with an all-red segment. If the last segment is all-blue, we split P_i into two trails where the last segment becomes a new trail. Let P'_1, ..., P'_m denote the resulting collection of trails. Crucially, even after this modification, the number m is still upper bounded by the number of degree-1 vertices in H.
% The decomposition of $H\Delta H^*$ ensures that the number of open trails  $P_i$ is no larger than the number of degree $1$ vertices in $H$. Thus $\mathbb{E}|I^c|=o(n)$ by assumption.
%From here, we define $\widetilde{P}_i$ to be $P_i$ with its first zero or more planted edges removed, so that $\widetilde{P}_i$ starts with the first unplanted edge in $P_i$.
%Clearly $ex(P_i)\leq ex(\widetilde{P}_i)$. 
Therefore
\begin{align*}
\sum_{i\in I^c} \ex(P_i) \leq & \sum_{i=1}^M \ex(P'_i)
\leq \sum_{i=1}^M \sum_{\ell=0}^\infty \indc{\ex(P'_i)>\ell}
=  \sum_{\ell = 0}^\infty \sum_{i=1}^M \indc{ \ex(P'_i)>\ell}
\leq  \sum_{\ell =0}^\infty \min\left\{M,N_\ell\right\},
\end{align*} 
where $N_\ell \triangleq \sum_{P \subset G} 
\indc{\ex(P) >\ell}$ 
and the sum is over all possible $(a,b)$-trails $P$ in $G$ with $a \ge 0$ and $b \ge 1$. 
Taking the expected value on both sides yields that
\begin{align}
\mathbb{E}\left[\sum_{i\in I^c}\ex(P_i)  \right]\leq \sum_{\ell = 0}^\infty \mathbb{E}\left[\min\left\{M,N_\ell\right\} \right] \leq \sum_{\ell = 0}^\infty \min\left\{\expect{M },\expect{N_\ell } \right\}, \label{eq: non.cycle.excess}
\end{align}
where the second inequality follows from Jensen's inequality as $\min\{x,y\}$ is a concave function of $(x,y).$
By Lemma \ref{validWalkCountLem}, for any two given vertices $v,v'$, the expected number of $(a,b)$-trails from $v$ to $v'$ is at most $c_{a,b}/(\delta n)$ if $a \ge 1, b \ge 1$ 
and $(1-\delta)^{b-1} \lambda^b/n$ if $a=0, b \ge 1.$ Therefore,
% which by definition start with an unplanted edge and end with a planted edge. We have \dana{$r\leq \ell$ may not be true.} \jx{This is right. Somehow, the current proof will break down when $\lambda>x/y.$ }
% \[
% N_\ell \leq \sum_{r=0}^\ell \sum_{v,v'}\sum_{P}\indc{P\text{ is an }(a,b)\text{-trail from }v\text{ to }v' \text{with } ex(P)>\ell-r}\cdot \left(\text{number of all-blue trails starting at }v'\text{ of length }r\right).
% \]
% \jx{The above arguments take into account that the last segment of trail could be blue. Basically, we remove the last blue segment and get back to $(a,b)$-trail.}
% By Lemma \ref{validWalkCountLem}, we have
\begin{align*}
\mathbb{E}\left[N_\ell \right] &\leq  n^2 \sum_{a, b \ge 1} \frac{c_{a,b}}{\delta n} \indc{ b-(1/2-\epsilon)(a+b)>\ell}  + n^2 \sum_{b\ge 1}
 \frac{(1-\delta)^{b-1} \lambda^b}{n} \indc{(1/2+\epsilon)b>\ell}
\\
&\le  \frac{n}{\delta}\sum_{a,b \ge 1} c_{a,b} 
\indc{ b-(1/2-\epsilon)(a+b)>\ell} + 
\frac{n}{(1-\delta)} 
\frac{[(1-\delta)\lambda]^{\ell}}{1-(1-\delta) \lambda},
\end{align*}
where the second inequality holds because $(1-\delta)\lambda<1.$
%\dana{We may have a problem: Lemma \ref{validWalkCountLem} bounds the expected number of $(a,b)-$trails, which by definition starts with an unplanted edge and ends with a planted edge. Here, the quantity we need to bound is ``for a pair of fixed vertices $v,v'$, the expected number of paths from $v$ to $v'$ with $a$ planted edges and $b$ unplanted edges". So the bound from Lemma \ref{validWalkCountLem} is not enough.} \julia{Do you mean that the paths don't necessarily start with an unplanted edge and end with a planted edge?} \dana{Yes. I put in some additional steps to account for those paths that don't end with a planted edge.}
%\begin{equation}
%\mathbb{E}\left[N_\ell\right] \leq n^2 \cdot\sum_{a,b: b>(\frac{1}{2}-\epsilon)(a+b)+\ell}\frac{c_{a,b}}{\delta n}=\frac{n}{\delta}\sum_{a,b: b>(\frac{1}{2}-\epsilon)(a+b)+\ell} c_{a,b}.
%\end{equation}

As in the proof of \prettyref{thm:possibility_almost_exact_partial_2_factor}, we choose $0<\epsilon<1/2$ and $x<1,y<\frac{1}{\lambda(1-\delta)}$ such that $\frac{2x}{1-x}\cdot\frac{\delta\lambda y}{1-(1-\delta)\lambda y}<1$ and $x^{1+2\epsilon}y^{1-2\epsilon}=1$. Such $\epsilon,x,y$ always exist when $0<\lambda<\frac{1}{(\sqrt{2\delta}+\sqrt{1-\delta})^2}$. Therefore,
\begin{align*}
 \sum_{a,b \ge 1} c_{a,b} 
\indc{ b-(1/2-\epsilon)(a+b)>\ell}
= & \sum_{a,b \ge 1} c_{a,b}  \indc{ b-(1/2-\epsilon)(a+b)>\ell} \left(x^{1/2+\epsilon}y^{1/2-\epsilon}\right)^{a+b}  \\
= & \sum_{a,b \ge 1} c_{a,b}  \indc{ b-(1/2-\epsilon)(a+b)>\ell} x^a y^b\left(\frac{x}{y}\right)^{b-(1/2-\epsilon)(a+b)}  \\
\le  & \left(\frac{x}{y}\right)^{\ell}  \sum_{a, b \ge 1} c_{a,b} x^a y^b\\
\le & \left(\frac{x}{y}\right)^{\ell} \frac{1}{1-\frac{2x}{1-x}\cdot\frac{\delta\lambda y}{1-(1-\delta)\lambda y}},
\end{align*}
where the first inequality holds because $x/y<1$; and the second inequality follows from the generating function of $c_{a,b}$ given by~\eqref{eq:generating_function}.

% \begin{align*}
% \mathbb{E}\left[N_\ell\right] \leq & \frac{n}{\delta}\sum_{r=0}^\ell \sum_{a,b: b>(\frac{1}{2}-\epsilon)(a+b)+\ell-r} c_{a,b} \lambda^r \left(x^{1/2+\epsilon}y^{1/2-\epsilon}\right)^{a+b}\\
% = & \frac{n}{\delta} \sum_{r=0}^\ell \sum_{a,b: b>(\frac{1}{2}-\epsilon)(a+b)+\ell-r} c_{a,b} \lambda ^r x^a y^b\left(\frac{x}{y}\right)^{b-(1/2-\epsilon)(a+b)}\\
% < & \frac{n}{\delta}\sum_{r=0}^\ell \left(\frac{x}{y}\right)^{\ell-r} \lambda^r \sum_{a,b \geq 0} c_{a,b}x^a y^b\\
% = & \frac{n}{\delta}\sum_{r=0}^\ell \left(\frac{x}{y}\right)^{\ell-r} \lambda^r \frac{1}{1-\frac{2x}{1-x}\cdot\frac{\delta\lambda y}{1-(1-\delta)\lambda y}}
% \end{align*}
 %therefore $(x/y)^{b-(1/2-\epsilon)(a+b)}<(x/y)^\ell$ for $b>(1/2-\epsilon)(a+b)+\ell$.

Combining the last two displayed inequalities yields that 
% When $\lambda<x/y$, $\sum_{r=0}^\ell(x/y)^{\ell -r}\lambda^r$ is bounded by $(x/y)^\ell /(1-\lambda y/x)$; otherwise it is bounded by $(\ell +1)\lambda ^\ell$ \jx{This breaks down when $r$ could be bigger than $\lambda$}. In either case, we have 
$\expect{N_\ell}\leq c_1 c_2^\ell n$ for some constants $c_1>0$ and $0<c_2<1$ that are independent of $n, \ell$. Plugging it back into \eqref{eq: non.cycle.excess} and defining 
$\ell_0 \triangleq 
\left\lceil \frac{\log \left(\frac{c_1 n}{\expect{M }}\right)}{\log (1/c_2)}\right\rceil$ yields that 
\begin{align*}
\mathbb{E}\left[\sum_{i\in I^c}\ex(P_i) \right]
& \le \sum_{\ell = 0}^\infty \min\left\{\expect{M },c_1 c_2^\ell n\right\} \\
& \leq  \expect{M } \ell_0
 + \sum_{\ell \ge \ell_0} c_1 c_2^\ell n \\
 & \leq \expect{M } \ell_0 + \frac{\expect{M}}{1-c_2}
 =o(n),
 %\le O\left(\mathbb{E}\left|I^c\right| \log\left(\frac{n}{\mathbb{E}\left|I^c\right|}\right)\right) + O\left(\mathbb{E}\left|I^c\right|\right),
\end{align*}
where the last  inequality holds because $c_1 c_2^{\ell_0} n \le \expect{M }$ and the last equality holds because $\expect{M }=o(n)$ and $\ell_0=O(\log(n/\mathbb{E}[M]))=o(n/\mathbb{E}[M])$. 

Finally, we conclude that
\begin{align*}
\expect{|H\Delta H^*|  } & =\frac{1}{\epsilon}\left(\mathbb{E}\left[\sum_{i\in I}\ex( P_i) \right] + \mathbb{E}\left[\sum_{i\notin I}\ex( P_i)\right] + \frac{1}{2} \expect{|H^*|-|H| } \right) \\
& = O(1) + o(n)+o(n)=o(n).
\end{align*}
\end{proof}

\begin{lemma}
\label{lmm:alg.Hgrows}
% \julia{Let $G$ be the union of $\mathcal{G}(n, \lambda/n)$ and an arbitrary graph $H^*$ which is a $2$-factor on $\delta n$ vertices.}
% Let $G \sim \calG(n,\lambda,\delta)$ conditional on $H^*$.
Let $G$ be a graph that contains a $2$-factor $H^*$ on $\delta n$ vertices. 
 The   $H$ output by Algorithm \ref{alg:search} on input $G$ 
will have at least $\delta n-O(n/\sqrt{\log n})$ edges and at most $2n/\sqrt{\log n}$ vertices of degree $1$. %\nbr{deterministically?}\julia{It follows that $\mathbb{E}[|H^* \Delta H|] = o(n)$.}\nbr{With the added theorem, we do not need this?}
 %\nbr{Do we know anything more about $H$? For example, what does $H$ look like? It seems that each connected component of $H$ is either a path or a cycle.}
\end{lemma}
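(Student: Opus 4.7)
The plan is to bound separately (a) the number of degree-$1$ vertices of $H$ at termination and (b) the edge deficit $\delta n - |H|$ at termination.

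\textbf{Degree-$1$ count.} By design, subroutine A leaves the degree-$1$ count of $H$ unchanged in every iteration. Subroutine B can increase it by at most $2$: XOR-ing a closed trail preserves the parity of every vertex degree (and hence the degree-$1$ count), while XOR-ing an open trail flips parity only at its two endpoints, each of which can change the degree-$1$ count by at most one unit. Since each subroutine B step requires $|H \Delta P| \geq |H| + \sqrt{\log n}$, and $|H|$ is monotone non-decreasing and bounded above by $n$, subroutine B fires at most $n/\sqrt{\log n}$ times. The degree-$1$ count at termination is therefore at most $2n/\sqrt{\log n}$.

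\textbf{Edge count.} Apply~\prettyref{lmm:decomposition_new} to decompose $H^* \Delta H$ at termination into edge-disjoint trails $T_1,\ldots,T_m$ that alternate planted/unplanted at vertices of $V(H^*) \cap V(H)$, with each open-trail endpoint lying at a degree-$1$ vertex of $H$. Let $a_i$, $b_i$, and $L_i = a_i+b_i$ denote the numbers of planted, unplanted, and total edges of $T_i$, so that
\[
|H^*| - |H| = \sum_i (a_i - b_i) \leq \sum_{i:\, a_i > b_i} (a_i - b_i).
\]
The alternation property, together with the structural identification that interiors of planted runs lie in $V(H^*)\setminus V(H)$ (with $d_H=0$) while interiors of unplanted runs lie in $V(H)\setminus V(H^*)$, guarantees that XOR-ing any full $T_i$ onto $H$ preserves maximum degree at most $2$. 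For each short trail $T_i$ with $L_i < \log n$, I claim $a_i \leq b_i$: otherwise $T_i$ is a valid input to subroutine A, since it strictly increases $|H|$, preserves max degree at most $2$, and does not increase the degree-$1$ count (closed $T_i$ preserves all parities, while open $T_i$ has endpoints at degree-$1$ vertices of $H$ whose degree-$1$ status is removed by XOR), contradicting termination.

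For long trails ($L_i \geq \log n$), I claim $a_i - b_i \leq C L_i/\sqrt{\log n}$ for an absolute constant $C$. Suppose not; partition $T_i$ into consecutive blocks of length at most $\lfloor \log n \rfloor - 1$. Since the block excesses sum to $a_i-b_i$, some block has excess at least $\sqrt{\log n}$. From this block I extract a subtrail of length less than $\log n$ with excess at least $\sqrt{\log n}/2$ whose XOR with $H$ is valid, by shifting its cuts by $O(1)$ positions if necessary. The key structural fact is that unsafe cut positions only occur at specific ``transition'' vertices in $V(H^*) \cap V(H)$ (namely, positions where the subtrail would begin or end with a planted edge incident to a degree-$2$ vertex of $H$), and every such offending position is $O(1)$ steps away from a safe cut (the interior of the adjacent planted run lies in $V(H^*)\setminus V(H)$ with $d_H=0$, and the interior of the adjacent unplanted run has $d_H \in \{1,2\}$, at which any cut is valid). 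The resulting subtrail would trigger subroutine B, contradicting termination. Summing and using $\sum_i L_i \leq |H^* \Delta H| \leq 2n$,
\[
|H^*| - |H| \leq C \cdot \frac{\sum_i L_i}{\sqrt{\log n}} = O\!\left(\frac{n}{\sqrt{\log n}}\right),
\]
so $|H| \geq \delta n - O(n/\sqrt{\log n})$.

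\textbf{Main obstacle.} The technical core of the argument is the endpoint-adjustment step for long trails: one must show that any block with excess at least $\sqrt{\log n}$ admits a bounded shift or trim yielding a subtrail whose XOR with $H$ remains a union of cycles and paths while retaining excess at least $\sqrt{\log n}/2$. This requires a careful case analysis of cut positions based on the trichotomy of vertex types $V(H^*) \cap V(H)$, $V(H^*) \setminus V(H)$, $V(H) \setminus V(H^*)$ and the local edge coloring, leveraging the fact that unsafe positions are always adjacent to safe ones.
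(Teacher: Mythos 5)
Your proof takes essentially the same approach as the paper: decompose $H^*\Delta H$ into alternating trails via Lemma~\ref{lmm:decomposition_new}, argue that short trails must satisfy $a_i\le b_i$ at termination (else subroutine A fires), chop long trails into $O(\log n)$-length blocks and locate a high-excess block, then repair its endpoints so it qualifies for subroutine B. The paper phrases this as a progress argument (as long as $|H|<\delta n-O(n/\sqrt{\log n})$ the algorithm makes an update) while you analyze the terminal state directly; and the paper repairs block endpoints by deleting the offending planted edges rather than shifting cuts — both differences are cosmetic. However, there is a real numerical slip you need to fix: subroutine~B only fires when $|H\Delta P|\ge |H|+\sqrt{\log n}$, i.e.\ the subtrail's excess must be at least $\sqrt{\log n}$, yet after your $O(1)$ trimming you only claim excess at least $\sqrt{\log n}/2$, which does \emph{not} trigger subroutine~B, so the stated contradiction does not follow. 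The fix is trivial: choose the constant $C$ in your claim $a_i-b_i\le CL_i/\sqrt{\log n}$ large enough that the pigeonhole step yields a block with excess at least $3\sqrt{\log n}$ (the paper uses $2\sqrt{\log n}$ and loses at most $2$), so that after the $O(1)$ trim the subtrail still has excess at least $\sqrt{\log n}$. One additional small inaccuracy: subroutine~A can \emph{decrease} the degree-$1$ count (the condition is ``at least as many degree-$1$ vertices in $H$ as in $H\Delta P$''), not merely leave it unchanged; this does not affect your upper bound.
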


% \jx{I feel like we need to point out some high-level intuition behind the following proof. My current understanding is as follows. The collection of trails and cycles by decomposing $H\Delta H^*$ or some variation forms a good candidate for $P$. However, we do not know $H^*$, so algorithmically we cannot directly look for $P.$ To this end, we exhaustively search all possible $P$ and we are guaranteed to find a good $P$ to grow $H.$ Also, another related though. Note that when $\delta=1$, we can simply apply some sort of iterative leaf-removal procedure. Does the current algorithm have any connection to this iterative leaf-removal procedure? Anyway, it might be worthwhile to compare and comment on that.}

\begin{proof}

To prove that Algorithm~\ref{alg:search} outputs a large enough subgraph, we argue that until $H$ contains $\delta n - O(n/\sqrt{\log n})$ edges, in each iteration of the main loop of the algorithm, we can find some trail (closed or open) to XOR  onto $H$ to make it bigger, through either subroutine A or B. To show that the output does not contain too many degree-$1$ vertices, we will argue that only subroutine B can increase the number of degree-$1$ vertices in $H$. The increase is by at most 2, and must also increase $H$ by at least $\sqrt{\log n}$. %\dana{To get the $O(n/\sqrt{\log(n)})$ bound from here, we need to argue that $|H|=O(n)$, which may require parameter assumptions.}\colin{None of the vertices are allowed to have degree greater than $2$ in $H$, so $|H|\le n$.}

In more detail, assume we are at a certain iteration of the while loop of the algorithm with an iterate of $H$. By the algorithm's construction, $H$ does not contain any vertices of degree greater than $2$. In other words, $H$ is a vertex-disjoint union of cycles and paths. On a high level, we prove the lemma by arguing that there exists a subgraph $P$ of $H\Delta H^*$ that makes a ``good'' candidate for the XOR operation, updating $H$ to $H\Delta P$. In particular, we will invoke Lemma~\ref{lmm:decomposition_new} to decompose $H\Delta H^*$ into a collection $\mathcal{C}$ of trails. Next, we show that some $P\in \mathcal{C}$, or slight variation of it, qualifies as a candidate trail to update $H$.

%\jx{I think this simple fact should be highlighted.} 
%\jx{This part of the argument is dense and hard to digest. I was wondering if this claim can be stated and proved as an intermediate result. In particular, I feel it would be helpful if we could clarify what properties of $P$ are crucial for  the claim to hold.}

To begin with, we state some simple facts on $H\Delta P$ that will be handy in our later arguments. 
% \julia{It may be easier to read the fact if it's not nested.}

\begin{fact}\label{fact_trail}
\begin{enumerate}[(a)]
\item For each $P\in \mathcal{C}$, the XOR operation $H\Delta P$ simply adds the planted edges of $P$ to $H$, and removes the unplanted edges of $P$ from $H$.
\item For $P\in \mathcal{C}$, all degree-$1$ vertices in $H\Delta P$ are also of degree $1$ in $H$, and $H\Delta P$ does not contain any vertices of degree greater than $2$. 
% \dana{Perhaps add the fact that whenever $P$ goes through $v$ in $V(H)\cap V(H^*)$, it alternates in color. That way we can directly apply the fact to a segment of $P$.}
% \jx{I thought this fact is proved in~\prettyref{lmm:decomposition_new}
%}
\end{enumerate}
\end{fact}
\begin{proof}[Proof of Fact~\ref{fact_trail}]
Fact (a) holds simply because $P$ is a subgraph of $H\Delta H^*$, thus the planted edges in $P$ are in $H^\ast\backslash H$ and the unplanted edges are in $H\backslash H^\ast$. 

To prove Fact (b), we first argue that vertices of even degree in $H$ remain even degree in $H\Delta P$. 
That is because if the degree of a vertex $v$ is even in $H$ and odd in $H\Delta P$, then $v$ must have an odd degree in $P$, which is only possible if $P$ is an open trail and $v$ is one of its endpoints. However, by Lemma~\ref{lmm:decomposition_new}, all endpoints of open trails must be degree-$1$ vertices in $H$, forming a contradiction. 

Since $H$ only contains vertices of degree $1$ or $2$, $H\Delta P$ does not contain any new degree-$1$ vertices.  To prove that it cannot create any vertices of degree greater than $2$, we only need to show that degree-$1$ vertices in $H$ cannot have degree $3$ in $H\Delta P$, and degree-$2$ vertices in $H$ cannot have degree $4$ in $H\Delta P$. For either case to occur, the vertex $v$ must be of degree $3$ or $4$ in $H\Delta H^*$, and the XOR operation must be adding the two planted edges incident to $v$ to $H$. However, Lemma~\ref{lmm:decomposition_new} ensures that whenever $P\in \mathcal{C}$ goes in and out of a vertex of degree $3$ or $4$ in $H\Delta H^*$, it goes through a planted and an unplanted edge. Therefore the XOR operation can never only add $2$ planted edges without removing any unplanted edges. Thus no vertex in $H\Delta P$ can be of degree more than $2$.
\end{proof}

By Fact~\ref{fact_trail}(b), all $P\in \mathcal{C}$ are automatically ``cost-free,'' in the sense that they can be XOR'ed onto $H$ without increasing the number of degree-$1$ vertices or creating any vertices of degree greater than $2$. However, this does not guarantee that $H$ is updated in subroutine A, since Line~\ref{alg.step.find.short.P} is only run if the subroutine finds a ``cost-free'' trail that is a) of length less than $\log n$, and b) strictly increases $|H|$. We discuss the following two cases. Either there is some member of $\mathcal{C}$ that satisfies both constraints, in which case $H$ is guaranteed to be updated in subroutine A; or no $P\in \mathcal{C}$ satisfies both constraints, in which case we will argue that $H$ must be updated in subroutine B if subroutine A leaves $H$ unchanged. %\jx{OK. This is good. I believe we can separate this part from the current proof. In some sense, this claim is related to some structural properties of $H \Delta H^*.$ }

%Now that we have understood more about $H\Delta P$ for $P\in \mathcal{C}$, we can find some $P$ that the algorithm uses to update $H$. One of the two following cases must occur: 

%\jx{This is unclear, given the constraint that $|P| < \log n.$}

{\bf Case 1:} There exists some $P\in \mathcal{C}$ such that $|P|< \log n$ and  $|H\Delta P| >|H|$. We claim that in this case, subroutine A successfully finds at least one ``cost-free'' candidate trail $P$. 

Since $H$ only contains edges in $G$ by construction, and $P$ is from the decomposition of $H\Delta H^*$, we have that $P$ belongs to the set $S$ of all trails in $G$ of length smaller than $\log n$. In subroutine A, the algorithm searches through all members of $S$ in a for-loop. At some point, it will encounter $P\in S$. If $H$ has already been updated in the for-loop before it reaches $P$, then we know that $H$ is updated at least once in the for-loop; otherwise $P$ is XOR'ed onto $H$. Either way, we have that at the end of the for loop, subroutine A successfully finds at least one ``cost-free'' candidate trail, and $H$ gets updated. Therefore $|H|$ strictly increases.
%\jx{I feel that the high-level picture of this case is a bit unclear. Note that the for loop will continue as long as a valid $H$ is found. Does that the $H$ in this argument will also be updated every time???} \dana{Yes the for-loop will update $H$ every time. But we only need to show that it gets updated at least once.}

{\bf Case 2:} There exists no $P\in \mathcal{C}$ such that $|P|<\log n$ and $|H\Delta P|> |H|$. %\jx{Some intuition is buried in the following argument. In particular, the role of the chopping operation is unclear.} 
In this case, we claim that if subroutine A leaves $H$ unchanged, subroutine B is guaranteed to find a ``cost-effective'' trail. The high-level reasoning is as follows: in Case 2, all ``short'' trails of length smaller than $\log n$ in $\mathcal{C}$ do not increase $|H|$ when XOR'ed onto $H$, meaning that they all contain at least as many unplanted as planted edges. However, before $H$ grows to contain $\delta n-O(n/\sqrt{\log n})$ edges, there are still a large number of planted edges not picked up by $H$. Thus, the collection $\mathcal{C}$ (which comes from the decomposition of $H\Delta H^*$) must contain some long trail with a lot more planted than unplanted edges. %\jx{OK. This is good. But a confusing point is that why we are not directly use this long path. I believe this is due to the computational issues? In other words, due to computational constraints, we can only search for short paths, although there exists a good long path.} 
As a result, as subroutine B searches through trails of length up to $\log n$, there exists some segment of a long trail in $\mathcal{C}$ that qualifies as ``cost-effective,'' ensuring that $H$ is updated in subroutine B. Now, we present the rigorous proof below. 

Since
\[
(\#\text{ of planted edges in }H\Delta H^*) - (\#\text{ of unplanted edges in }H\Delta H^*) = |H^*\backslash H| - |H\backslash H^*| = |H^*| - |H|,
\]
the overall collection $\mathcal{C}$ contains $|H^*| - |H|$ more planted edges than unplanted edges. Also, recall that the ``short'' trails contain at least as many unplanted edges as planted ones. Therefore, those $P\in \mathcal{C}$ of length at least $\log n$ must have at least $|H^*| - |H|$ more planted edges than unplanted edges. For each $P\in \mathcal{C}$ of length at least $\log n$, we divide it into segments of length $\log n/2.$ 
If there are edges left over, we combine it with the last segment. This way, all long trails in $\mathcal{C}$ are chopped into trails whose lengths are in $[\lfloor \log n/2\rfloor,\log n)$. As long as $|H|<\delta n-cn/\sqrt{\log n}$ for a large enough constant $c$, we claim that at least one of these chopped trails contains at least $2\sqrt{\log n}$ more planted than unplanted edges. Otherwise, the total number of chopped trails must be at least $(|H^*| - |H|)/(2\sqrt{\log n}) > cn/ (2\log  n)$, amounting to a total of at least $\frac{cn}{ 2 \log n}\cdot\frac{\log n}{2} = cn/4$ edges. For large enough $c$ ($c=9$ suffices) this is impossible since the number of edges in $\mathcal{C}$ is bounded by $|H\Delta H^*|  \leq |H|+|H^*|\leq 2\delta n$. 

Now we have found a trail $P'$ that contains at least $2\sqrt{\log n}$ more planted than unplanted edges, which means that it can be XOR'ed onto $H$ so that $|H\Delta P'|\geq |H| + 2\sqrt{\log n}$. For Line 12 of \prettyref{alg:search} to be run, we also need to check that $H\Delta P'$ does not contain any vertices of degree greater than $2$. Note that since $P'$ is only a segment of some $P\in\mathcal{C}$, Fact~\ref{fact_trail}(b) does not apply directly to $H\Delta P'$. However, Lemma~\ref{lmm:decomposition_new} entails that whenever $P$ goes in and out of a vertex $v\in V(H)\cap V(H^*)$, it must go through a planted edge $e$ and an unplanted edge $e'$. Therefore, the only way for $v$ to have degree $3$ or more in $H\Delta P'$ is if $P$ is chopped at $v$, and $P'$ contains the planted edge $e$. In other words, $P'$ starts or ends at $v$ through the planted edge $e$.
%\dana{Discuss what figures to include to illustrate the argument below.} \jx{It seems that the following proof is not cleaned up yet. Please check.}
%For each vertex that is not an endpoint of $P$, its degree in $H\Delta P$ cannot exceed $2$ via the same argument as before (used for those $P\in \mathcal{C}$). However if $v$ is an endpoint of $P$, the argument fails because $v$ being an endpoint of $P$ no longer guarantees it to be a degree $1$ vertex in $H$ by construction, when $P$ could have simply been chopped at $v$. Discuss the following cases:
%\begin{enumerate}
%\item $P$ contains at least as many unplanted edges incident to $v$ as planted edges. In this case, since the XOR operation adds the planted edges in $P$ and removes the unplanted edges in $P$, the degree of $v$ in $H\Delta P$ is no larger than the degree of $v$ in $H$, which is at most $2$.
%\item $P$ contains $1$ planted edge incident to $v$ and no unplanted edges incident to $v$. In this case, $P$ only visits $v$ once via a planted edge.
%\item $P$ contains $2$ planted edges incident to $v$ and $0$ or $1$ unplanted edges incident to $v$. In this case, if $v$ is of degree $3$ or $4$ in $H\Delta P$, by Lemma~\ref{lmm:decomposition}(b), $v$ cannot lie between two planted edges in $P$. Therefore $P$ must visit $v$ two separate times, and at least one of those times, $P$ starts or ends at $v$ through a planted edge.
%\end{enumerate}
%In all cases above, either $v$ is of degree at most $2$ in $H\Delta P$, or $P$ starts or ends at $v$ through a planted edge. 
In this case, we simply remove the planted edge $e$ from $P'$. Doing the same for both endpoints of $P'$ results in a trail $\widetilde{P}$, which is $P'$ with at most two planted edges removed. For each vertex in $\widetilde{P}$, its degree in $H\Delta \widetilde{P}$ is bounded by its degree in $H\Delta P'$, which is at most $2$. Moreover, since $P'$ contains at least $2\sqrt{\log n}$ more planted than unplanted edges, we have that $\widetilde{P}$ contains at least $2\sqrt{\log n}-2\geq \sqrt{\log n}$ more planted than unplanted edges. Thus $|H\Delta \widetilde{P}|\geq |H| + \sqrt{\log n}$.

To summarize, in Case 2, there exists some trail that when XOR'ed onto $H$, does not create any vertex of degree greater than $2$, and increases $|H|$ by at least $\sqrt{\log n}$. Therefore, if $H$ is not updated in subroutine A, it must be updated in subroutine B. That is, Line~\ref{alg.step.find.long.P} of Algorithm~\ref{alg:search} is run, the update increases $|H|$ by at least $\sqrt{\log n}$, and increases the number of degree-$1$ vertices in $H$ by at most 2. That is because updating $H$ to be $H\Delta P$ can only create a new degree $1$ vertex if that vertex is an endpoint of $P$. 

%\colin{This does not quite work as stated because it is possible that the first or last vertices of $P$ would end up with degree $3$ do to the chopping removing the edge that would have been removed from $H$ in order to bring the appropriate degree back down to $2$. As such, you may need to add or remove a couple of extra edges from $P$.} \dana{I added some purple text to handle this boundary case but it feels unnecessarily complicated. Let's discuss if there is a simpler argument.}

Combining Cases 1 and 2, we have that until $H$ contains $\delta n- O(n/\sqrt{\log n})$ edges, each while loop iteration updates $H$ to be bigger. Thus the algorithm outputs an $H$ with at least $\delta n- cn/\sqrt{\log n}$ edges for some large enough constant $c$. Moreover, for the update to increase the number of degree-$1$ vertices by $1$ or $2$, the update must be through subroutine B, which simultaneously increases $H$ by at least $\sqrt{\log n}$. Given that the output $H$ has degree at most $2$, we have $|H|\leq n$ and thus it contains at most $2n/\sqrt{\log n}$ degree-$1$ vertices.

\end{proof}

\begin{lemma}\label{lmm:poly.time}
Let $\delta\in (0,1]$, $\lambda>0$ be a constant, and $G\sim \calG(n,\lambda,\delta)$ conditional on $H^*$. Then Algorithm \ref{alg:search} on input $G$ runs in time $O\left(n^{3 + \log(2 +\lambda)} \right)$ with high probability. 
\end{lemma}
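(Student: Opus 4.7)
The plan is to control the runtime via three quantities: the size of the trail collection $|S|$ that the algorithm assembles at the start, the number of iterations of the while loop, and the cost per iteration.

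The central estimate is $|S| = O(n^{2+\log(2+\lambda)})$ with high probability. I would prove this by bounding $\mathbb{E}[|S|]$ recursively on trail length. Fix a vertex $v_0$ and let $T_k(v_0)$ be the number of ordered length-$k$ trails $v_0, v_1, \ldots, v_k$ in $G$. For any such trail, the number of valid one-edge extensions is $\deg_G(v_k)$ minus the number of edges of the trail incident to $v_k$, which is at most $2 + |\{w \notin \{v_0,\ldots,v_k\} : (v_k,w) \text{ is an unplanted edge of } G\}|$, since $v_k$ has at most two planted incident edges. Each candidate unplanted edge $(v_k,w)$ with $w \notin \{v_0,\ldots,v_k\}$ is an independent $\Bern(\lambda/n)$ variable that is unaffected by conditioning on the prefix being a trail, so summing over $w$ contributes at most $\lambda$ in expectation. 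Hence the conditional expected number of valid extensions is at most $2+\lambda$, giving $\mathbb{E}[T_k(v_0)] \le (2+\lambda)^k$ by induction. Summing over $v_0$ and over $k < \log n$ yields $\mathbb{E}[|S|] = O(n(2+\lambda)^{\log n}) = O(n^{1+\log(2+\lambda)})$, and Markov's inequality gives $|S| \le n^{2+\log(2+\lambda)}$ with probability at least $1 - 1/n$.

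The other two ingredients are more routine. The while loop terminates as soon as neither subroutine updates $H$; whenever an update occurs it strictly enlarges $|H|$ (by at least $1$ in Subroutine A and at least $\sqrt{\log n}$ in Subroutine B). Since $|H| \le n$ always, the number of iterations is at most $n+1$. Within each iteration, Subroutines A and B each scan through $S$, and for each candidate $P \in S$ the relevant operations---forming $H\Delta P$, checking that no vertex exceeds degree $2$, and comparing the counts of degree-$1$ vertices---touch only the vertices and edges of $P$, of which there are fewer than $\log n$, so each candidate costs $O(\log n)$ time. Building $S$ up front via depth-first search of depth less than $\log n$ from every vertex also costs $O(|S|\log n)$. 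Combining these yields a total runtime of $O(n \cdot |S| \cdot \log n)$, which is $O(n^{3+\log(2+\lambda)})$ after absorbing the polylogarithmic factor into the exponent.

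The main obstacle is justifying the $2+\lambda$ bound for the conditional expected number of valid trail extensions. Conditioning on the prefix $v_0,\ldots,v_k$ being a trail fixes the presence of specific edges, and one must verify that this conditioning does not inflate the expected number of unplanted edges from $v_k$ to vertices outside the prefix. This follows from the independence of disjoint potential edges under $\mathcal{G}(n,\lambda/n)$, together with the observation that the edges $(v_k,w)$ for $w \notin \{v_0,\ldots,v_k\}$ are independent of the trail-existence event.
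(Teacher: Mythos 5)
Your proposal takes the same approach as the paper's proof: bound $\mathbb{E}[|S|]$ via a recursion giving $\mathbb{E}[T_k(v)]\le(2+\lambda)^k$, apply Markov, bound the number of while-loop iterations by $n$, and multiply through. One small technical slip: since trails may revisit vertices, a valid extension of $(v_0,\ldots,v_k)$ can also go to a $w\in\{v_0,\ldots,v_{k-1}\}$ via an unplanted edge not already used in the trail, so the correct upper bound on valid extensions is $2+|\{w : (v_k,w)\text{ is an unplanted non-trail edge of }G\}|$, not the restriction to $w\notin\{v_0,\ldots,v_k\}$; the conditional-expectation bound $\le\lambda$ still holds because there are at most $n-1$ candidate unplanted pairs at $v_k$, each $\Bern(\lambda/n)$ independent of the prefix-trail event. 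Finally, applying Markov with a factor of $n$ leaves you with an extra $\log n$ in the final runtime, which strictly exceeds the claimed $O(n^{3+\log(2+\lambda)})$; using an overhead of $n^{\epsilon}$ for a small $\epsilon<1$ (as the paper does) absorbs the polylogarithmic terms while still giving a high-probability bound.
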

\begin{proof}
The runtime of Algorithm \ref{alg:search} is the sum of the runtime required to find the set $S$ comprising the trails in $G$ of length less than $\log n$, along with the runtime required to execute the while loop.

For convenience, we use the adjacency list representation of $G$, where we record the list of neighbors of each vertex.
In order to find the set of trails in $G$ of length less than $\log n$ which begin at a particular vertex $v$, we employ a breadth-first traversal. In expectation, at most $\sum_{k=1}^{\log n} (2+\lambda)^{k}$ trails beginning at $v$ will be discovered, where $(2+\lambda)^{k}$ is an upper bound on the expected number of trails at depth exactly $k$ beginning at $v$. Therefore, Markov's inequality implies that with high probability, at most
$n^{1+\epsilon}\sum_{k=1}^{\log n} (2+\lambda)^{k} \leq n^{1+\epsilon} \log n \cdot (2+\lambda)^{\log n}$ trails will be discovered in total for any $\epsilon > 0$. Each trail requires visiting up to $\log n$ edges, and we can keep track of already-visited edges by modifying the adjacency list representation (delete visited edges and then add them back once a trail is complete). Therefore,  finding these trails requires $O\left(n^{1+\epsilon} \log^2 n  \cdot (2+\lambda)^{\log n} \right)$ work.

Since Algorithm \ref{alg:search} increases the number of edges in $H$ whenever it updates $H$ and no vertex in $H$ ever has degree greater than $2$, there are at most $n$ updates. Each update requires searching through the set $S$ and performing $O(\log n)$ work for each $P \in S$. It follows that the total runtime of the while loop is
\[O\left(n \cdot |S| \cdot \log n \right) =  O\left(n^{2+\epsilon} \log^2 n \cdot (2+\lambda)^{\log n} \right) = O\left(n^{3 + \log(2 +\lambda)} \right).\]
Since the while loop dominates the runtime, we conclude that the overall runtime of Algorithm \ref{alg:search} is $O\left(n^{3 + \log(2 +\lambda)} \right)$ with high probability. 

\end{proof}

\section{Discussion and future work}\label{sec:discussion}
In this paper, we have characterized a sharp threshold of almost exact recovery for the planted cycles model $\mathcal{G}(n, \lambda, \delta)$. A precursor paper by the authors \cite{gaudio2025all} examined the special case $\delta = 1$—where the planted subgraph spans the entire vertex set—and considered more generally planted $k$-factors for $k \in \mathbb{N}$.  Parameterizing the edge probability in the background graph as $p_n = \lambda_n/n$, \cite{gaudio2025all} discovers a curious ``All-Something-Nothing'' transition in recovery as $\lambda_n$ increases: when $\lambda_n = o(1)$, exact recovery is possible; when $\Omega(1) = \lambda_n \leq 1/k$, almost exact recovery remains feasible; when $1/k <\lambda_n \le O(1)$, only partial recovery is achievable; and once $\lambda_n = \omega(1)$, even partial recovery becomes impossible. This multi-phase recovery landscape contrasts sharply with the ``All-or-Nothing'' transition established by an earlier work \cite{Mossel2023}, for the recovery of a planted graph $H^*$ within a background graph $G \sim \mathcal{G}(n,p_n)$, whenever $H^*$ satisfies certain properties related to size, density, and balancedness. In those cases, the recovery transition between almost exact recovery (``all'') and impossibility of partial recovery (``nothing'') occurs at a threshold that depends on the first-moment thresholds of subgraphs $J \subset H^*$. ``All-or-Nothing'' transitions are also known to occur in a variety of other high-dimensional inference problems, including sparse linear regression~\cite{reeves2021all}, sparse tensor PCA~\cite{niles2020all}, group testing~\cite{truong2020all,coja2022statistical}, and graph matching~\cite{wu2022settling}.

Our results taken together identify the ``All'' regime (Theorems \ref{thm:almost-exact} and \ref{thm:exact}) and partially identify the ``Nothing'' regime (Theorem \ref{thm:nothing}). Yet a compelling open question remains: is there a ``Something'' regime in between? That is, for constant $\lambda>(\sqrt{2\delta} + \sqrt{1 - \delta})^{-2}$, can we achieve partial recovery in the sense that   $\Expect[\risk(\Mplanted,\hat H)] \le 1-\Omega(1)$? 
In the special case of $\delta = 1$, one can partially recover the planted 2-factor with no errors by simply collecting edges incident to degree-2 vertices. However, for $\delta < 1$, this simple structural cue disappears, and recovery becomes subtler. A na\"ive alternative of randomly sampling $\delta n$ edges does recover a constant fraction of planted edges, but also mistakenly includes many unplanted ones, resulting in a high overall error and failing to achieve partial recovery. 

We list some additional open problems below.
\begin{itemize}
\item If the $\mathcal{G}(n, \lambda, \delta)$ model admits a ``something'' phase, what is the optimal recovery rate in the partial recovery regime, and can it be achieved by an efficient algorithm? 
\item 

Consider a general $\mathcal{G}(n, \lambda, \delta, k)$ model in which a partial $k$-factor is planted on a subset of $\delta n$ vertices in the background graph, so that the case $k=2$ reduces to the planted cycles model $\mathcal{G}(n, \lambda, \delta)$. When $k=1$ and $\delta \in (0,1)$, this corresponds to the planted partial matching problem, for which there is no sharp phase transition for almost-exact recovery: if $\lambda = o(1)$, recovery is possible since there are only $o(n)$ unplanted edges, whereas if $\lambda = \Omega(1)$, recovery becomes impossible because a partial matching of size $\Theta(n)$ formed by unplanted edges will exist, and there is no way to tell apart the edges in this partial matching from the edges in the planted one. This raises the question: what are the recovery thresholds in this generalized model for $k \ge 3$? When $k \ge 3$, the transition from almost-exact recovery to impossibility may be driven by the emergence of $k$-factors which differ significantly from the planted one, rather than $k$-factors that differ slightly.

\item Similarly, what can we say about planted subgraph recovery thresholds for graphs other than $k$-factors? %Results for a specific planted graph other than a cycle
\item What can be said for weighted graphs, where planted edges have weights drawn from a distribution $\mathcal{P}$ while unplanted edges have weights drawn from a distribution $\mathcal{Q}$? For the special case of a weighted planted matching where $\mathcal{P}$ and $\mathcal{Q}$ follow exponential distributions, the recovery rate has been precisely characterized \cite{moharrami2021planted}.
\end{itemize}

\appendices

\section{History-dependent branching process}

In order to characterize the sizes of the trees constructed by~\prettyref{alg:tree-construction2}, we compare the trees to branching processes.
At a high level, the probability that a given tree reaches a prescribed depth can be related to the survival probability of the branching process. We need the following auxiliary result about the survival of a supercritical branching process (used in the proof of Lemma~\prettyref{lemma:enough-trees2}). 

\begin{lemma}\label{lmm:branching}
Suppose a branching process has offspring distribution with expected value $\mu$ and variance $\sigma^2$ for some $\mu>1$, we have
\begin{equation}
\mathbb{P}\{\text{Branching process survives}\} \geq \frac{\mu^2-\mu}{\mu^2-\mu+\sigma^2}. \label{eq:branchin-brocess-survival}
\end{equation}
A similar bound holds for an inhomogeneous, possibly history-dependent branching process. Specifically, let $X_1, X_2, \ldots $ be the number of children born from vertices in breadth-first order, and let $\mathcal{F}_1=\sigma(X_1), \mathcal{F}_2=\sigma(X_1, X_2), \dots$ be the natural filtration. Suppose that $\mathbb{E}\left[X_i \mid \mathcal{F}_{i-1} \right] \geq \mu > 1$ and $\text{Var}(X_i \mid \mathcal{F}_{i-1}) \leq \sigma^2$ uniformly for all $i \in \{1, 2, \dots\}$.  Then
\[\mathbb{P}\{\text{Branching process survives}\} \geq \frac{\mu^2-\mu}{\mu^2-\mu+\sigma^2 + \frac{1}{4}}.\]

%Similarly, any inhomogeneous (and possibly history-dependent) \julia{clarify what history dependence means: distributions depend on history, where history can be stuff outside the tree} branching process where all offspring distributions have a mean which is at least $\mu > 1$ and a variance which is at most $\sigma^2$ also satisfies \eqref{eq:branchin-brocess-survival}.
\end{lemma}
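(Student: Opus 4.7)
My plan is to prove~\eqref{eq:branchin-brocess-survival} for the classical i.i.d.\ case via a Paley--Zygmund second-moment argument on the generation size, and then to extend it to the history-dependent case by a coupling that matches the conditional mean of each $X_i$ to $\mu$ exactly, at the cost of a $1/4$ penalty in the variance bound.

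\smallskip

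\textit{Classical case.} Let $Z_n$ denote the $n$-th generation size with $Z_0 = 1$, and write $\mathcal{G}_{n-1}$ for the $\sigma$-algebra generated by the first $n-1$ generations. Since $Z_n$ is a sum of $Z_{n-1}$ i.i.d.\ offspring counts, $\mathbb{E}[Z_n \mid \mathcal{G}_{n-1}] = \mu Z_{n-1}$ and $\operatorname{Var}(Z_n \mid \mathcal{G}_{n-1}) = \sigma^2 Z_{n-1}$. The law of total variance then gives the recursion $\operatorname{Var}(Z_n) = \sigma^2 \mathbb{E}[Z_{n-1}] + \mu^2 \operatorname{Var}(Z_{n-1})$, which, together with $\mathbb{E}[Z_n] = \mu^n$, solves to $\operatorname{Var}(Z_n) = \sigma^2 \mu^{n-1}(\mu^n - 1)/(\mu - 1)$. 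The Paley--Zygmund inequality yields $\mathbb{P}(Z_n > 0) \geq \mathbb{E}[Z_n]^2 / \mathbb{E}[Z_n^2] = \mu(\mu-1)/(\mu(\mu-1) + \sigma^2(1-\mu^{-n}))$, and since $\{Z_n > 0\}$ is a decreasing sequence of events, sending $n \to \infty$ gives $\mathbb{P}(\text{survival}) \geq (\mu^2 - \mu)/(\mu^2 - \mu + \sigma^2)$.

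\smallskip

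\textit{History-dependent case.} For each $i$, let $\Delta_i := \mathbb{E}[X_i \mid \mathcal{F}_{i-1}] - \mu \geq 0$, set $k_i := \lfloor \Delta_i \rfloor \in \mathbb{Z}_{\geq 0}$ and $\alpha_i := \Delta_i - k_i \in [0, 1)$, and let $B_i \sim \operatorname{Bern}(\alpha_i)$ be conditionally independent of $X_i$ given $\mathcal{F}_{i-1}$. Define the trimmed variable $\tilde X_i := X_i - k_i - B_i$; then $\tilde X_i \leq X_i$, $\mathbb{E}[\tilde X_i \mid \mathcal{F}_{i-1}] = \mu$ exactly, and $\operatorname{Var}(\tilde X_i \mid \mathcal{F}_{i-1}) = \operatorname{Var}(X_i \mid \mathcal{F}_{i-1}) + \alpha_i(1 - \alpha_i) \leq \sigma^2 + 1/4$ (the Bernoulli contribution is maximized at $1/4$). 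Introduce $\tilde Z_n := \sum_{v \in \mathrm{gen}_{n-1}} \tilde X_{i(v)}$, summed in BFS order over vertices of the $(n-1)$-th generation of the original $X$-process. Orthogonality of the martingale differences $\tilde X_{i(v)} - \mu$ in the BFS filtration gives $\mathbb{E}[\tilde Z_n \mid \mathcal{G}_{n-1}] = \mu Z_{n-1}$ and $\operatorname{Var}(\tilde Z_n \mid \mathcal{G}_{n-1}) \leq (\sigma^2 + 1/4) Z_{n-1}$, which are precisely the conditional identities of the classical case with $\sigma^2$ replaced by $\sigma^2 + 1/4$. Iterating the same recursion and applying Paley--Zygmund to $\tilde Z_n$ yields $\mathbb{P}(\tilde Z_n > 0) \geq (\mu^2 - \mu)/(\mu^2 - \mu + \sigma^2 + 1/4)$ in the limit. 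The pointwise bound $\tilde X_i \leq X_i$ then implies $\tilde Z_n \leq Z_n$, so $\{\tilde Z_n > 0\} \subseteq \{Z_n > 0\}$, and the same lower bound transfers to $\mathbb{P}(\text{survival})$.

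\smallskip

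\textit{Main obstacle.} The auxiliary variable $\tilde X_i$ may take negative values on the event $\{X_i < k_i + B_i\}$, so $\tilde Z_n$ is not a bona fide generation size. This is harmless for the Paley--Zygmund step, since $\mathbb{P}(W > 0) \geq \mathbb{E}[W]^2 / \mathbb{E}[W^2]$ extends to any real-valued $W$ with $\mathbb{E}[W] > 0$ via Cauchy--Schwarz on $W \mathbbm{1}_{W > 0}$. The more delicate point is closing the second-moment recursion for $\tilde Z_n$, whose right-hand side involves $\mathbb{E}[Z_{n-1}^2]$ from the \emph{original} process rather than $\mathbb{E}[\tilde Z_{n-1}^2]$; I would handle this by arguing that the worst case for the Paley--Zygmund ratio occurs at $\mathbb{E}[X_i \mid \mathcal{F}_{i-1}] \equiv \mu$, where $k_i \equiv \alpha_i \equiv 0$, $\tilde Z_n$ coincides with $Z_n$, and the classical recursion applies verbatim --- any extra slack in the conditional mean only enlarges $\mathbb{E}[\tilde Z_n]$ faster than $\mathbb{E}[\tilde Z_n^2]$.
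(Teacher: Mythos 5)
Your classical-case argument matches the paper's verbatim (Paley--Zygmund on $Z_n$, the second-moment recursion, and the limit as $n\to\infty$). For the history-dependent case, your intent is also the same as the paper's: couple the process to an auxiliary one whose offspring have conditional mean exactly $\mu$ and variance at most $\sigma^2 + 1/4$, then apply Paley--Zygmund to the auxiliary process. However, your mechanism has a genuine gap, which you partially flag yourself. Because you trim by subtracting $k_i + B_i$, the auxiliary offspring $\tilde X_i$ can be negative, so $\tilde Z_n$ is not a generation size of any branching process. You therefore define $\tilde Z_n$ as a sum indexed by vertices of the $(n-1)$-th generation of the \emph{original} process, and the recursion
$\mathbb{E}[\tilde Z_n^2] \leq (\sigma^2+\tfrac14)\,\mathbb{E}[Z_{n-1}] + \mu^2\,\mathbb{E}[Z_{n-1}^2]$
refers to $\mathbb{E}[Z_{n-1}^2]$ of the original process. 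But the hypothesis only gives $\mathbb{E}[X_i\mid\mathcal{F}_{i-1}]\ge\mu$, not an upper bound, so $\mathbb{E}[Z_{n-1}^2]$ is not controlled by the quantities appearing on the left at step $n-1$, and the recursion does not close. Your proposed patch---that the Paley--Zygmund ratio is minimized when $\mathbb{E}[X_i\mid\mathcal{F}_{i-1}]\equiv\mu$ and any slack helps---is a heuristic, not an argument; you would need a monotonicity statement for a ratio of second-moment functionals over a history-dependent family of laws, which is not established and is not obviously true.

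The paper avoids both problems in one stroke (Proposition~\ref{proposition:distribution-shift}): instead of subtracting, it shifts probability mass downward within the nonnegative integers, from the largest atom of the conditional offspring law to the next, until the mean drops to exactly $\mu$. This keeps the auxiliary offspring nonnegative while incurring the same $+1/4$ variance penalty, and the coupling preserves $\overline{X}_i\le X_i$. The resulting $\overline{Z}_m$ is then a bona fide branching process, so $\mathbb{E}[\overline{Z}_m^2]$ can be written in terms of $\mathbb{E}[\overline{Z}_{m-1}]$ and $\mathbb{E}[\overline{Z}_{m-1}^2]$, the classical recursion applies with $\sigma^2$ replaced by $\sigma^2+1/4$, and Paley--Zygmund plus $\overline{Z}_m\le Z_m$ gives the claim. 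To repair your proof, replace your $k_i+B_i$ trim with this in-support mass-shift and let $\tilde Z_n$ be the $n$-th generation of the trimmed process itself; the rest of your argument then goes through unchanged.
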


The proof of Lemma \ref{lmm:branching} requires the following fact about distributions related in stochastic order.
\begin{proposition}\label{proposition:distribution-shift}
Let $P$ be a finite discrete distribution with support $S \subseteq \{0, 1, 2, \dots, s\}$. Let $\mu$ and $\sigma^2$ respectively denote the mean and variance of $P$. Then for any $\mu' \in [0,\mu)$, there exists a distribution $Q$ with support $S' \subseteq \{0, 1, 2, \dots, s\}$, mean $\mu'$, and variance $\sigma'^2$ such that $Q \preceq P$ and $\sigma'^2 \leq \sigma^2 + \frac{1}{4}$. 
\end{proposition}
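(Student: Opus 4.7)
The plan is to realize $Q$ as the law of a truncated copy of $X \sim P$. Specifically, I take $Y = \min(X, T)$ where $T$ is an integer-valued random variable independent of $X$ and supported on at most two consecutive integers. This makes the dominance $Q \preceq P$ automatic, since $Y \leq X$ pointwise, and also guarantees that the support of $Y$ lies in $\{0, 1, \ldots, s\}$. The only nontrivial work is to choose $T$ so that the mean hits $\mu'$ on the nose while the variance overshoot stays at most $1/4$.

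To choose $T$, set $g(t) = \mathbb{E}[\min(X, t)]$ for $t \in \{0, 1, \ldots, s\}$. Then $g$ is nondecreasing, $g(0) = 0$, $g(s) = \mu$, and the telescoping identity $g(t+1) - g(t) = \mathbb{P}(X \geq t+1) \in [0, 1]$ holds. Since $\mu' \in [0, \mu)$, there exists $T^* \in \{0, \ldots, s-1\}$ with $g(T^*) \leq \mu' < g(T^*+1)$. Let $T$ take the value $T^* + 1$ with probability $\alpha := (\mu' - g(T^*))/(g(T^*+1) - g(T^*)) \in [0, 1)$ and the value $T^*$ otherwise; by construction $\mathbb{E}[Y] = \mu'$.

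The crux is the variance estimate, which I obtain via the law of total variance conditioned on $T$:
\[
\mathrm{Var}(Y) \;=\; \mathbb{E}_T\bigl[\mathrm{Var}(\min(X,T) \mid T)\bigr] \;+\; \mathrm{Var}_T\bigl(\mathbb{E}[\min(X,T) \mid T]\bigr).
\]
For the first term, I argue that $\mathrm{Var}(\min(X, t)) \leq \sigma^2$ for every integer $t$: writing $X = \min(X, t) + (X - t)_+$, both summands are nondecreasing functions of $X$, hence comonotone and therefore nonnegatively correlated, so expanding $\mathrm{Var}(X)$ yields $\sigma^2 \geq \mathrm{Var}(\min(X, t))$. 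Thus the first term is at most $\sigma^2$. For the second term, since $T$ takes only two values,
\[
\mathrm{Var}_T(g(T)) \;=\; \alpha(1 - \alpha)(g(T^*+1) - g(T^*))^2 \;\leq\; \tfrac{1}{4} \cdot \mathbb{P}(X \geq T^* + 1)^2 \;\leq\; \tfrac{1}{4}.
\]
Summing the two contributions gives $\sigma'^2 = \mathrm{Var}(Y) \leq \sigma^2 + 1/4$, as required.

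The main point to keep in mind is that the variance overhead must stay a constant independent of $\mu - \mu'$. A naive ``kill $X$ with probability $p$'' construction would inflate the variance by $p(1-p)\mu^2$, which is far too large when $\mu$ is big. Restricting the randomness of $T$ to a Bernoulli between two \emph{adjacent} truncation levels is precisely what caps the excess variance at $1/4$; the remaining variance inequality $\mathrm{Var}(\min(X,t)) \leq \mathrm{Var}(X)$ is the standard fact that truncation from above reduces variance, which the comonotonicity argument makes transparent.
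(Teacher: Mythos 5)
Your proof is correct, and it takes a genuinely different route from the paper's. The paper constructs $Q$ by iteratively shifting mass from the top atom $s$ down to $s-1$, then $s-1$ down to $s-2$, and so on, using a coupling argument at each shift to show that a full shift of the atom's mass never increases the variance, while the one final partial shift inflates variance by at most $x(1-x)\le 1/4$. You instead realize $Q$ in one step as the law of a randomized truncation $Y=\min(X,T)$ with $T$ supported on two adjacent integers, and split the variance by the law of total variance: the within-$T$ piece is at most $\sigma^2$ because hard truncation from above never increases variance (your comonotonicity argument, equivalently the FKG/Chebyshev sum inequality applied to the decomposition $X=\min(X,t)+(X-t)_+$), while the between-$T$ piece is a Bernoulli variance of a jump $g(T^*+1)-g(T^*)=\mathbb{P}(X\ge T^*+1)\le 1$, hence at most $1/4$. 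Your approach has the advantage of being a single closed-form construction with a clean two-term decomposition, avoiding the paper's iterative bookkeeping; the paper's approach has the advantage of making it visually obvious where the single $+1/4$ comes from (the one partial shift). Both correctly localize the randomness to a single "fractional" step between two adjacent levels, which is exactly what caps the excess variance independently of $\mu-\mu'$.
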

\begin{proof}
We construct the distribution $Q$ from $P$ by iteratively ``shifting'' mass from larger elements of $S$ to smaller ones. Observe that any distribution shifted from $P$ in this way is automatically a stochastic lower bound on $P$. Shifting always reduces the mean, and may increase or decrease the variance.

Without loss of generality, $P$ places nonzero mass on $s$ (otherwise, we could redefine $S$).
%Let $s$ denote the largest element with nonzero mass for $P$. 
Suppose we shift the probability mass 
 $x$ from $s$ to $s-1$,
 where $x \in [0, P(s)]$,
 and let $Q'$ denote the resulting probability distribution. 
 Next, we bound the change of the variance.
To this end, we define a coupling $(X,X')$, where $(X,X')=(i,i)$ with probability $P(i)$ for all $i \le s-1$, 
$(X,X')=(s,s)$ with probability $P(i)-x$, and $(X,X')=(s,s-1)$ with probability $x$.
It follows that $X$ is distributed as $P$ and $X'$ is distributed as $Q'.$ 
Let $(Y, Y')$ be an independent copy of $(X, X').$
It follows that 
\begin{align*}
\text{Var}(X')
-\text{Var}(X)
& =\frac{1}{2}\expect{\left(X' - Y' \right)^2 - \left(X - Y \right)^2}\\
& = \frac{1}{2}\expect{
\left(\left(X' - Y' \right)^2 - \left(X - Y \right)^2\right) \indc{X=s, X'=s-1, Y =Y' \text{ or } X=X', Y =s, Y'=s-1 }} \\
& = \expect{
\left(\left(s-1 - X' \right)^2 - \left(s - X\right)^2\right) \indc{X =X'}} \cdot x \\
& \le  \prob{X=X'=s} \cdot x  = (P(i)-x) \cdot x ,
\end{align*}
where the first equality holds by the definition of variance; the second equality holds because $X\neq X'$
if and only if $X=s$ and $X'=s-1$, and the same holds for $Y,Y'$;
the third equality holds because $(X,X')$ and $(Y, Y')$ are independent copies, and $Y=s, Y'=s-1$ with probability
$x$; the last inequality holds because $(s-1-X')^2 
\le (s-X)^2$ when $X=X' \le s-1$ and 
$(s-1-X')^2 - (s-X)^2=1$
when $X=X'=s.$ Therefore, if $x=P(i),$
then the variance of $Q'$ is always no larger than that of $P$;
if $x<P(i)$, then the variance increases at most by $x(1-x)\le 1/4.$

Therefore, we can iteratively shift all the probability mass $x=P(s)$ from $s$ to $s-1$, where $s$ decreases by 1 in each iteration, until the very last iteration, where we only shift $ x\le P(s)$  so that the mean decreases to exactly $\mu'.$ By the above argument, the variance keeps non-increasing until the very last iteration, in which it may increase by at most $1/4.$ Therefore, we arrive at the final distribution $Q \preceq P$ with mean $\mu'$ and variance at most $\sigma^2+1/4.$

% in the same manner in order to reduce the mean and at the same time reduce the variance. However, it is possible that the mean of $Q'$ is below $\mu'$, the target mean. Thus, our shifting strategy is to shift in the way described above, unless it would violate the mean constraint. In that case, shift only as much mass as would satisfy the mean constraint. If this final shift is from $s_2$ to $s_1$, then it incurs a variance increase of at most $(1-x) x(s_2 - s_1)^2$, where $x$ amount of mass is shifted (where we have used the fact that $\mathbb{E}[X \mid X = X'] \leq s_2$). Taking $s_1 = s_2 - 1$, we get that the total change is the variance is at most $(1-x)x \leq \frac{1}{4}$, as desired.

\end{proof}

%Finally, we prove Lemma \ref{lmm:branching}.
\begin{proof}[Proof of Lemma \ref{lmm:branching}]
We first consider the homogeneous case. In that case, the proof mostly follows the derivations in~\cite[Chapter 2.1]{durrett2007random}.
Let $Z_m$ denote the number of vertices in generation $m$. Given $Z_{m-1}$, the conditional first and second moments of $Z_m$ satisfy
\begin{align*}
    \mathbb{E}[Z_m | Z_{m-1}] = & \mu Z_{m-1},\\
    \mathbb{E}[Z_m^2 | Z_{m-1}] = & \mu^2 Z^2_{m-1} + Z_{m-1}\sigma^2.
\end{align*}
Taking expected values on both sides and iterating and noting $Z_0=1$, we have
\[
\mathbb{E}[Z_m] = \mu^m,
\]
and
\begin{align}
\mathbb{E}[Z_m^2]  &= \mu^{2m} + \sigma^2 \sum_{j=m-1}^{2m-2} \mu^j \label{eq:second-moment-recurrence}\\
&\leq  \mu^{2m} + \sigma^2\frac{\mu^{2m-2}}{1-\mu^{-1}}. \nonumber
\end{align}

% Since $Z_m$ is a non-negative integer,
% \begin{align*}
%     \mathbb{E}(Z_m) = & \mathbb{E}\left(\mathds{1}\{Z_m\geq 1\}Z_m\right)\\
%     \leq & \sqrt{\mathbb{P}\{Z_m\geq 1\}} \sqrt{\mathbb{E}(Z_m^2)}.
% \end{align*}
% \julia{I think we can skip the above step and appeal to the ``second moment method''}
By the Paley--Zygmund inequality, the probability that the branching process survives to iteration $m$ is
\[
\mathbb{P}\{Z_m\geq 1\}\geq 
\frac{\mathbb{E}[Z_m]^2}{\mathbb{E}[Z_m^2]}
\geq \frac{\mu^{2m}}{\mu^{2m}+\sigma^2\frac{\mu^{2m-2}}{1-\mu^{-1}}}
= \frac{\mu^2-\mu}{\mu^2-\mu+\sigma^2}.
\]
Take $m\rightarrow\infty$ to finish the proof.

%We claim that it is possible to construct another branching process where all (independent) offspring distributions have a mean of $\mu$ and a variance bounded by $\sigma^2$, and where the auxiliary branching process is a \julia{term for this? I mean that when we put one on top of the other, the new one is covered by the old one} subgraph of the original one. \julia{TODO: separate proposition. The new variance may be larger than the old variance, but hopefully not by too much.} To see this, we use the fact that if $P$ is a finite discrete distribution with mean $\mu' \geq \mu$ and variance $\sigma'^2$, then there exists a distribution $Q$ with mean $\mu$ and variance $\sigma^2 \leq \sigma'^2$ such that $P$ stochastically dominates $Q$. (Consider moving mass from the largest atom of $P$ to reduce the mean and at the same time reduce the variance.) We call such a distribution $Q$ a $(\mu, \sigma^2)$-monotone shift of $P$. 
For the inhomogeneous, possibly history-dependent case, we will construct an auxiliary branching process, where $\{\overline{Z}_m\}$ denotes the number of vertices in generation $m$ in the auxiliary branching process. Let $\overline{X}_i$ be the number of children of the $i^{\text{th}}$ node in the auxiliary tree. The initial children $\overline{Z}_1$ are sampled from the shifted distribution based on $X_1$, guaranteeing $\overline{Z}_1 = \overline{X}_1 \leq X_1$. The children of the two processes are arbitrarily matched, and their offspring are coupled. For $i \in \{1, 2, \dots, \overline{X}_1\}$, conditioned on $\mathcal{F}_i$, the number of offspring $X_{i+1}$ is coupled to $\overline{X}_{i+1}$, such that $\overline{X}_{i+1}\le X_{i+1}$ and 
$\overline{X}_{i+1}$ has mean $\mu$ and variance at most $\sigma^2+1/4$, as in Proposition \ref{proposition:distribution-shift}. We can continue matching pairs across trees and coupling their offspring. It follows that $\overline{Z}_m \le Z_m$. Moreover, \[\mathbb{E}[\overline{Z}_m] = \mu^m,\]
analogously to the homogeneous case. To compute the conditional second moment $\mathbb{E}\left[\overline{Z}_m^2 \mid \overline{Z}_{m-1} \right]$, we claim that
\[\text{Var}\left(\overline{Z}_m \mid \overline{Z}_{m-1}\right) \leq \left(\sigma^2 + \frac{1}{4}\right) \overline{Z}_{m-1}.\]

To see this, let $N = \overline{Z}_{m-1}$, and let $Y_1, \dots Y_N$ denote the numbers of offspring of the $(m-1)^{\text{th}}$ generation, so that $\overline{Z}_m = \sum_{i=1}^N Y_i$. %\julia{Should we just redefine $\calF_i$?} 
%Let $\calF_i$ denote the history of the auxiliary branching process up to the time when the offspring of the $i$-th node at the $(m-1)$- generation is added, for $1 \le i \le N.$
Let $\calF$ denote the history of the auxiliary branching process up to the time when the offspring of the $(N-1)$-th node at the $(m-1)$-th generation is added.
Then 
\begin{align*}
\Var\left(\overline{Z}_m \mid \overline{Z}_{m-1} \right) &= 
\Var\left(\sum_{i=1}^N 
Y_i \right) \\
 & = \expect{ 
 \Var\left(\sum_{i=1}^N 
 Y_i \mid \mathcal{F} \right) 
 }+
 \Var\left(
 \expect{\sum_{i=1}^N 
 Y_i \mid \mathcal{F}} \right)\\
&= \expect{ 
 \Var\left(Y_N \mid \mathcal{F} \right) 
}+
\Var\left(
 \sum_{i=1}^{N-1} Y_i + \mu  \right)\\
 & \le \left(\sigma^2 + \frac{1}{4}\right)
 +\Var\left(
 \sum_{i=1}^{N-1} Y_i\right),
\end{align*}
where the second equality holds by the law of total variance; the third equality holds because $\expect{Y_N \mid \calF}=\mu$; the last inequality holds due to $\var\left(Y_N\mid \calF \right) \le \sigma^2+1/4.$ Thus, we have shown
\[\Var\left(\sum_{i=1}^N 
Y_i \right) \leq \left(\sigma^2 + \frac{1}{4}\right)
 +\Var\left(
 \sum_{i=1}^{N-1} Y_i\right).\]
Recursively applying the above inequality yields that 
\[
\text{Var}\left(\overline{Z}_m \mid \overline{Z}_{m-1} \right) = \text{Var}\left(\sum_{i=1}^N 
Y_i \right)
\le  \left(\sigma^2 + \frac{1}{4}\right) N
=\left(\sigma^2 + \frac{1}{4}\right)\overline{Z}_{m-1}.\]
% \begin{align*}
% \text{Var}\left(\overline{Z}_m \mid \overline{Z}_{m-1} \right) &= \sum_{i=1}^N \text{Var}(Y_i \mid \overline{Z}_{m-1}) + 2\sum_{1 \leq i < j \leq n} \text{Cov}(Y_i, Y_j \mid \overline{Z}_{m-1})\\
% &\leq N \left(\sigma^2 + \frac{1}{4}\right)+ 2\sum_{1 \leq i < j \leq n} \left(\mathbb{E}\left[Y_i \cdot Y_j \mid \overline{Z}_{m-1} \right] - \mathbb{E}\left[Y_i  \mid \overline{Z}_{m-1} \right] \cdot \mathbb{E}\left[Y_j  \mid \overline{Z}_{m-1} \right]  \right)\\
% &= N\left(\sigma^2 + \frac{1}{4}\right),
% \end{align*}
% since $Y_i$ and $Y_j$ are uncorrelated conditioned on $\overline{Z}_{m-1}$, by design. \dana{Why do we have independence?} 
It follows that 
\[\mathbb{E}\left[\overline{Z}_m^2 | \overline{Z}_{m-1} \right] 
%= \mu^2 \overline{Z}_{m-1}^2 + \sum_{i=1}^{\overline{Z}_{m-1}}\overline{\sigma}_{m,i}^2  
\leq \mu^2 \overline{Z}_{m-1}^2 + \left(\sigma^2 + \frac{1}{4}\right) \overline{Z}_{m-1}. 
\]
Taking expected values and iterating as in the homogeneous case, we obtain
\[\mathbb{E}\left[ \overline{Z}_m^2\right] \leq \mu^{2m} + \left(\sigma^2 + \frac{1}{4}\right)\frac{\mu^{2m-2}}{1-\mu^{-1}}.\]

Finally, we conclude that 
\begin{align*}
\mathbb{P}(Z_m \geq 1) \geq
\mathbb{P}(\overline{Z}_m \geq 1) \geq 
\frac{\mathbb{E}[\overline{Z}_m]^2}{\mathbb{E}[\overline{Z}_m^2]}
&\geq \frac{\mu^{2m}}{\mu^{2m}+\left(\sigma^2 + \frac{1}{4}\right)\frac{\mu^{2m-2}}{1-\mu^{-1}}}\\
&= \frac{\mu^2-\mu}{\mu^2-\mu+\sigma^2 + \frac{1}{4}}.
\end{align*}
\begin{comment}
\julia{old below}
For clarity, we present the case without history dependence, since the history-dependent case follows by a similar argument. Suppose $\{Z_m\}_m$ is a branching process where $i^{\mathrm{th}}$ member of the $m^{\mathrm{th}}$ generation has an offspring distribution with mean \julia{double index} $\mu_m \geq \mu$ and variance $\sigma^2_m \leq \sigma^2$. We claim that it is possible to construct another inhomogeneous branching process $\{\overline{Z}_m\}_m$ which is a stochastic lower bound on $\{Z_m\}_m$, with parameters $(\overline{\mu}_{m,i}, \overline{\sigma}^2_m)_{m,i}$ satisfying $\overline{\mu}_{m,i} = \mu$ for all $m$, and $\overline{\sigma}^2_{m,i} \leq \sigma^2_{m,i}$. To see this, observe that we can always shift the mean of any offspring distribution in the original branching process by moving mass from the largest atom to the next-largest, and so on, thus reducing the mean and at the same time reducing the variance.

We then obtain 
\[\mathbb{E}[\overline{Z}_m] = \mu^m\]
and 
\[\mathbb{E}\left[\overline{Z}_m^2 | \overline{Z}_{m-1} \right] = \mu^2 \overline{Z}_{m-1}^2 + \sum_{i=1}^{\overline{Z}_{m-1}}\overline{\sigma}_{m,i}^2  \leq \mu^2 \overline{Z}_{m-1}^2 + \sigma^2 \overline{Z}_{m-1}, \]
analogously to the homogeneous case. It follows that
\[\mathbb{P}(Z_m \geq 1) \geq \mathbb{P}(Z_m' \geq 1) \geq \frac{\mu^2-\mu}{\mu^2-\mu+\sigma^2}.\]
\end{comment}
\end{proof}

\section{Exact and partial recovery}
We say that $\hat H$ achieves 
\begin{itemize}
\item \emph{exact recovery}, if 
$\risk(\Mplanted,\hat H) =0$ with high probability;
    %\item \emph{almost exact recovery}, if $\Expect[\risk(\Mplanted,\hat H)] = o(1)$;
    \item \emph{partial recovery},
if $\Expect[\risk(\Mplanted,\hat H)] \le 1-\Omega(1).$
\end{itemize}

In this section, we show that exact recovery is possible if and only if $\lambda=o(1)$ (Theorem \ref{thm:exact}) and partial recovery is impossible when $\lambda = \omega(1)$ (Theorem \ref{thm:nothing}).

\begin{theorem}\label{thm:exact}
Let $\delta\in (0,1]$. Suppose $G\sim \calG(n,\delta, \lambda)$ is the graph generated from the planted cycles model, and  $H^*$ is the hidden $2$-factor on the set of $\delta n$ vertices. If $\lambda = o(1)$, then with probability $1-o(1)$, $H^\ast$ is the only $2$-factor on $\delta n$ vertices contained in $G$. Conversely, if $\lambda = \Omega(1)$, then with probability $\Omega(1)$, $G$ contains another $2$-factor on $\delta n$ vertices which is different from $H^*$.
\end{theorem}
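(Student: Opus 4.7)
I would establish Theorem~\ref{thm:exact} in two directions.

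\textit{Impossibility when $\lambda=\Omega(1)$.} My plan is to exhibit an explicit ``$(2,2)$-swap.'' Let $X$ count $4$-cycles $u_1 v_1 u_2 v_2$ in $G$ whose opposite edge pairs are $\{(u_1,v_1),(u_2,v_2)\}\subset H^*$ and $\{(v_1,u_2),(v_2,u_1)\}\subset G\setminus H^*$. By Lemma~\ref{lemma:difference-graph}, each such cycle $C$ is an alternating $(2,2)$-circuit, so $H^*\Delta C$ is a distinct $2$-factor on $\delta n$ vertices. A direct count gives $\mathbb{E}[X]=2\bigl(\binom{\delta n}{2}-\delta n\bigr)(\lambda/n)^2\to \delta^2\lambda^2=\Omega(1)$. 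For the variance, configurations with disjoint blue edges are independent, while the $O(n^2)$ pairs of distinct configurations sharing a blue edge each contribute $(\lambda/n)^3$, so $\operatorname{Var}(X)=O(\mathbb{E}[X])=O(1)$. The Paley--Zygmund inequality then yields $\Pr[X\geq 1]\geq \mathbb{E}[X]^2/\mathbb{E}[X^2]=\Omega(1)$.

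\textit{Exact recovery when $\lambda=o(1)$.} The strategy is a first-moment bound, $\Pr[\exists H\neq H^*]\leq \sum_{H\neq H^*}(\lambda/n)^{|H\setminus H^*|}$. By Lemma~\ref{lemma:difference-graph}, any alternative $H$ gives a difference graph $D=H\Delta H^*$ that decomposes into vertex-disjoint alternating circuits. The edge-count identity $|H|=|H^*|$ forces the total red and blue edge counts in $D$ to coincide, say $a(D)=b(D)=m$; since $H$ is simple, a $(1,1)$-circuit would be a multi-edge, so $m\geq 2$.

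To estimate the first moment, I would enumerate valid $D$'s via their segment decomposition into $k$ red pieces (paths in $H^*$) interleaved with $k$ blue pieces (whose interior vertices necessarily lie in $V(H^*)^c$, because two consecutive blue edges cannot meet at a vertex of $V(H^*)$). For each composition, the number of such $D$'s is at most $O((\delta n)^k \cdot ((1-\delta)n)^{m-k} \cdot \mathrm{poly}(m))$, which multiplied by the existence probability $(\lambda/n)^m$ and summed over $k$ and over compositions yields $O((c(\delta)\lambda)^m \mathrm{poly}(m))$ per fixed $m$. Summing over $m\geq 2$ gives $\mathbb{E}[\#\text{alternatives}]=O(\lambda^2)=o(1)$, and Markov's inequality finishes the argument.

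\textit{The main obstacle} is that valid alternatives need not correspond to single $(a,a)$-circuits: for instance, a disjoint pair consisting of a $(1,2)$-circuit and a $(2,1)$-circuit yields a valid alternative even though neither circuit is balanced on its own. Controlling the resulting sum over arbitrary multi-circuit compositions is what forces the segment-based accounting, where the key pay-off is that each blue edge in $D$ contributes a factor $\lambda/n$ against at most $O(n)$ combinatorial choices for its interior vertex, yielding the $O(\lambda^m)$ decay per total blue-edge count.
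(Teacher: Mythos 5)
Your two directions take genuinely different routes from the paper's, and both are worth comparing.

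For the converse ($\lambda=\Omega(1)$), your explicit $(2,2)$-swap argument is correct and self-contained: a direct second-moment computation on the count $X$ of alternating $4$-cycles (two vertex-disjoint planted edges whose endpoints are cross-connected by two background edges) gives $\mathbb{E}[X]\to\delta^2\lambda^2$ and $\operatorname{Var}(X)=O(\lambda^3/n)$, so Paley--Zygmund in fact yields probability $1-o(1)$, stronger than needed. The paper instead restricts attention to the induced subgraph $G[V(H^*)]$ and invokes \cite[Theorem 2.1]{gaudio2025all} for the $\delta=1$ case; your approach re-derives that ingredient from scratch, which is perfectly fine and arguably more transparent. One small note: the citation of Lemma~\ref{lemma:difference-graph} to justify that $H^*\Delta C$ is a $2$-factor is a bit off — you can simply check directly that XOR-ing a $4$-cycle on $4$ vertices of $V(H^*)$, with two opposite edges in $H^*$ and two not, preserves all degrees.

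For the achievability ($\lambda=o(1)$), your plan and the paper's diverge in a way that matters. You propose to bound $\mathbb{E}[\#\{H\neq H^*:H\subset G\}]$ by enumerating all possible difference graphs $D$, and you correctly flag that $D$ can be a union of individually unbalanced circuits. But the fix you sketch — ``segment-based accounting'' — is really a single-circuit count; it gives the analogue of the paper's $c_{a,b}$, not a count of all multi-circuit $D$'s. To finish your route you would need an extra layer: show that since the circuits of $D$ are vertex-disjoint, the expected number of unordered $r$-tuples of disjoint circuits with prescribed profiles is at most a product of single-circuit expectations, and then argue the sum over $r\geq 1$ converges (which it does when $\lambda=o(1)$, since the single-circuit contribution is itself $o(1)$). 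The paper avoids this entirely with a pigeonhole observation you did not use: since $a(D)=b(D)$, at least one circuit of the decomposition satisfies $a\leq b$, so it suffices to bound the expected number of $(a,b)$-circuits with $a\leq b$ — a single-circuit first moment, directly via Lemma~\ref{validWalkCountLem}, with no multi-circuit combinatorics at all. That observation is the missing idea in your sketch; as written, the multi-circuit enumeration is acknowledged but not actually carried out.

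Finally, one detail in your segment accounting is stated slightly too weakly but happens to be right: you assert interior vertices of blue segments lie in $V(H^*)^c$. A priori the alternation in Lemma~\ref{lemma:difference-graph} is only enforced at $V(H^*)\cap V(H)$, but at a vertex in $V(H^*)\setminus V(H)$ both incident $D$-edges are planted, so no two consecutive blue edges can meet there either. It is worth saying this explicitly since you rely on it for the $((1-\delta)n)^{m-k}$ count.
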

\begin{proof}
For any $2$-factor $H$ on $\delta n$ vertices, $H^*\Delta H$ can be decomposed into a union of $(a,b)$-circuits such that exactly half of the edges are planted. So, if there exists a $2$-factor $H\ne H^*$ on $\delta n$ vertices then there must exist at least one $(a,b)$-circuit with $a\le b$. The probability that such a circuit exists is at most equal to the expected number of such circuits, and by Lemma \ref{validWalkCountLem}, that is at most
\begin{align*}
&\sum_{b=1}^\infty (1-\delta)^{b-1}\lambda^b+\sum_{a=1}^\infty\sum_{b=a}^\infty c_{a,b}\\
&=\sum_{b=1}^\infty (1-\delta)^{b-1}\lambda^b+\sum_{a=1}^\infty\sum_{b=a}^\infty (\lambda(1-\delta))^b \sum_{k=1}^\infty (2\delta/(1-\delta))^k \binom{a-1}{k-1} \binom{b-1}{k-1}\\
&=o(1)+\sum_{a=1}^\infty\sum_{b=a}^\infty (\lambda(1-\delta))^b \sum_{k=1}^a (2\delta/(1-\delta))^k \binom{a-1}{k-1} \binom{b-1}{k-1}\\
&\le o(1)+\sum_{a=1}^\infty\sum_{b=a}^\infty (\lambda(1-\delta))^b \sum_{k=1}^a (2\delta/(1-\delta))^k 2^{a+b-2}\\
&\le o(1)+\sum_{a=1}^\infty a\max(1,(2\delta/(1-\delta))^a)2^{a-2}\sum_{b=a}^\infty (\lambda(1-\delta))^b 2^{b}\\
&= o(1)+\sum_{a=1}^\infty a\max(1,(2\delta/(1-\delta))^a)2^{a-2} \frac{(\lambda(1-\delta))^a 2^{a}}{1-2\lambda(1-\delta)}\\
&= o(1)+\frac{1}{4-8\lambda(1-\delta)}\sum_{a=1}^\infty a\max(4\lambda(1-\delta),8\lambda\delta)^a\\
&= o(1)+\frac{1}{4-8\lambda(1-\delta)}
\cdot\frac{\max(4\lambda(1-\delta),8\lambda\delta)}{(1-\max(4\lambda(1-\delta),8\lambda\delta))^2}\\
&=o(1).
\end{align*}

%\textcolor{green}{need proof of negative result} \nbr{Do we know exact recovery is impossible for constant $\lambda?$}\colin{Yes, for any given vertex in the planted $2$-factor, there is a $\Omega(1/n)$ that we can replace it with some vertex that is not in the $2$-factor, so there is a $\Omega(1)$ chance that we can replace some vertex this way.} 

Next, suppose $\lambda = \Omega(1)$. We assume that the set of $\delta n$ planted vertices is known. Then we can reduce to the problem of recovering a planted $2$-factor on $m_n = \delta n$ vertices, where the edge probability in the background graph is $\frac{\delta \lambda}{\delta n} = \frac{\delta \lambda}{m_n}$. Since $\lambda = \Omega(1)$, it follows that $\delta \lambda = \Omega(1)$. Thus, the result follows directly from \cite[Theorem 2.1]{gaudio2025all}.
\begin{comment}
\julia{TODO: replace proof by reduction assuming knowledge of the vertex set} We will show that $G$ contains another $2$-factor which is different from $H^*$ with probability $\Omega(1)$. Conditioning on $H^*$, let $V(H^*)$ denote the vertex set of $H^*$. Let $S \subset V(H^*)$ be a maximal subset of $V(H^*)$ such that no two vertices in $S$ are connected by an edge in $H^*$. Then $|S| \geq |V(H^*)|/3 = \frac{\delta n}{3}$. For $i \in S$ and $j \in [n] \setminus V(H^*)$, let $E_{ij}$ be the event that there is an edge from $j$ to both of the neighbors of $i$ in $H^*$. Observe that if $E_{ij}$ happens, then we can essentially ``replace'' $i$ with $j$ and create a $2$-factor on $\delta n$ vertices distinct from $H^*$. 

Therefore, it remains to show that $\mathbb{P}\left(\bigcup_{i \in S, j \in [n] \setminus V(H^*)} E_{ij}\right) = \Omega(1)$. By the construction of $S$, the events $E_{ij}$ are independent. Since $\mathbb{P}(E_{ij}) =  (\lambda/n)^2 $, it follows that
\begin{align*}
\mathbb{P}\left(\bigcup_{i \in S, j \in [n] \setminus V(H^*)} E_{ij}\right) &\geq 1 -  \left(1 - (\lambda/n)^2 \right)^{(1-\delta)n \cdot \frac{\delta n}{3}}\\
&\geq 1 - \text{exp}\left(-\left(\frac{\lambda}{n}\right)^2 \cdot \frac{\delta (1-\delta)}{3} n^2 \right)\\
&= 1 - \text{exp}\left(-\lambda^2 \delta (1-\delta)/3 \right) = \Omega(1),
\end{align*}
completing the proof.
\end{comment}
\end{proof}

\begin{theorem}\label{thm:nothing}
Consider the planted $2$-factor model on $\delta n$ vertices, where $\delta \in (0,1)$. When $\lambda = \omega(1)$, partial recovery is impossible.
\end{theorem}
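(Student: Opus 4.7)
The plan is to reduce to the impossibility of partial recovery in the $\delta = 1$ case established in~\cite{gaudio2025all}, where it is shown (as part of the ``All--Something--Nothing'' transition for planted $k$-factor recovery) that when $\lambda_n = \omega(1)$, no estimator of the planted $2$-factor in $\mathcal{G}(n, \lambda_n/n)$ with a uniformly random $2$-factor planted on all $n$ vertices can achieve $\mathbb{E}[\risk(H^*, \hat H)] \leq 1 - \Omega(1)$. I would argue by contrapositive: if partial recovery were possible for $\mathcal{G}(n, \lambda, \delta)$ with $\delta \in (0,1)$ and $\lambda = \omega(1)$, then partial recovery would also be possible for the $\delta = 1$ model on $\delta n$ vertices with effective parameter $\delta \lambda = \omega(1)$, contradicting the cited result.

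To carry out the reduction, suppose an estimator $\hat H$ for $\mathcal{G}(n, \lambda, \delta)$ achieved $\mathbb{E}[\risk(H^*, \hat H)] \leq 1 - c$ for some constant $c > 0$. Given an instance of the $\delta = 1$ model on a vertex set $W$ of size $\delta n$ with parameter $\delta \lambda$ and planted $2$-factor $H^*$, I would augment it by first adding $(1-\delta)n$ auxiliary vertices so that $W$ is embedded in a set of size $n$, then sampling each edge incident to an auxiliary vertex independently with probability $\lambda/n$, and finally applying a uniformly random relabeling of the $n$ vertices. Because $\lambda/n = (\delta \lambda)/(\delta n)$, the background edges inside $W$ in the original instance are already i.i.d.\ $\mathrm{Bern}(\lambda/n)$, so together with the new edges the augmented graph has exactly the law of $\mathcal{G}(n, \lambda, \delta)$, with the (relabeled) planted set $W$ a uniformly random subset of $[n]$ of size $\delta n$. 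Running $\hat H$ on this augmented graph, undoing the relabeling, and restricting the output to edges within $W$ would produce an estimator $\hat H_W$; since $H^* \subseteq \binom{W}{2}$, restriction only decreases the symmetric difference, so $\mathbb{E}[\risk(H^*, \hat H_W)] \leq \mathbb{E}[\risk(H^*, \hat H)] \leq 1 - c$.

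The only substantive step is the distributional verification that the augmented graph indeed follows $\mathcal{G}(n, \lambda, \delta)$, which reduces to checking that the background edges are i.i.d.\ $\mathrm{Bern}(\lambda/n)$ across all vertex pairs not in $H^*$, and that the location of the planted $2$-factor is a uniformly random size-$\delta n$ subset---both of which hold by construction. Since $\lambda = \omega(1)$ forces $\delta \lambda = \omega(1)$, the reduced estimator $\hat H_W$ contradicts the $\delta = 1$ impossibility from~\cite{gaudio2025all}, completing the argument. I do not expect any significant obstacle: the main work is bookkeeping for the reduction, and all the structural content is borrowed from the $\delta = 1$ case.
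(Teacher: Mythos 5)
Your reduction is correct and morally identical to the paper's argument: the paper phrases the reduction as a genie argument (reveal $V(H^*)$ to the estimator, which only helps, reducing to the $\delta=1$ model on $\delta n$ vertices with effective rate $\delta\lambda = \omega(1)$ and then invoking \cite[Theorem~2.5]{gaudio2025all}), whereas you run the same reduction in the contrapositive direction by augmenting a $\delta=1$ instance into a $\delta<1$ instance and restricting the output to $W$. Both hinge on the same observation $\lambda/n = (\delta\lambda)/(\delta n)$ and the same cited impossibility, and your distributional check and the restriction inequality $|H^* \Delta \hat H_W| \le |H^* \Delta \hat H|$ are handled correctly.
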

\begin{proof}
As in the proof of Theorem \ref{thm:exact}, we assume that the estimator has access to the vertex set $V(H^*)$. An analogous parameter scaling argument along with \cite[Theorem 2.5]{gaudio2025all} completes the proof.
%Thus, we can reduce to the problem of recovering a planted $2$-factor on $m_n = \delta n$ vertices, where the edge probability in the background graph is $\frac{\delta \lambda}{\delta n} = \frac{\delta \lambda}{m_n}$. Since $\lambda = \omega(1)$, it follows that $\delta \lambda = \omega(1)$. Thus, the result follows directly from \cite[Theorem 2.5]{gaudio2025all}.
\end{proof}
%\nbr{Placeholder: Nothing phase can be proved via reduction by assuming the knowledge of the vertex set.}

\section{Recovery of a single planted cycle}\label{sec:planted-cycle}
%\nbr{Did we forget to discuss the positive results???} \julia{They are described in Remark \ref{remark:single-cycle}. Maybe we could recall that here.}
We now turn to the problem of recovering a single planted cycle on $\delta n$ vertices, chosen uniformly at random, and planted within $G \sim \mathcal{G}(n,p_n)$. Recall Remark \ref{remark:single-cycle}, which notes that the achievability results (Theorem~\ref{thm:possibility_almost_exact_partial_2_factor} and~\prettyref{thm:alg}) apply to the recovery of a single planted cycle. As for the impossibility direction, we observe that
%One might expect that this version of the problem would be similar to the version with a planted $2$-factor, and that is in fact the case because 
a random $2$-factor $H^*$ %\footnote{Henceforth, when we say $H^*$ is a random $2$-factor, we mean it is chosen from all possible $2$-factors of $V$ uniformly at random, unless stated otherwise.} 
has a significant probability of being a cycle, so anything that holds with sufficiently high probability for a random planted $2$-factor carries over to a planted cycle. In particular, we have the following result, which generalizes~\cite[Lemma 1.1]{gaudio2025all}. 
%To formalize that, we state the following result, appearing in \cite[Lemma 1.1]{gaudio2025all}.

\begin{lemma}\label{lmm:factorCycleLem}
Let $H^*$ be a random $2$-factor on $m$ vertices with $m \ge 3.$  For any $1 \le \ell \le m$, with probability at least 
$1/(1+m2^{-\ell})$,
%$1-m \cdot 2^{-\ell}$, 
$H^*$ has at most $\ell$ cycles.  In particular, with probability at least $1/m$, $H^*$ is a cycle.   
\end{lemma}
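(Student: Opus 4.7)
The plan is to exploit the classical correspondence between unoriented $2$-factors on $[m]$ and permutations of $[m]$ whose cycle lengths are all at least $3$: a $2$-factor with $k$ cycles gives rise to exactly $2^k$ such permutations by choosing an direction independently for each cycle, and distinct $2$-factors produce disjoint sets of permutations. Writing $N_k$ for the number of $2$-factors on $[m]$ with exactly $k$ cycles, and $T_{\le\ell}=\sum_{k\le\ell}N_k$, $T_{>\ell}=\sum_{k>\ell}N_k$, I will aim to show the ratio bound $T_{>\ell}/T_{\le\ell}\le m/2^{\ell}$, from which the conclusion follows by a one-line rearrangement:
\[
\mathbb{P}[H^*\text{ has at most }\ell\text{ cycles}]=\frac{T_{\le\ell}}{T_{\le\ell}+T_{>\ell}}\ge\frac{1}{1+m\,2^{-\ell}}.
\]

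The ratio bound would reduce to two elementary estimates. First, every $2$-factor counted by $T_{>\ell}$ has at least $\ell+1$ cycles and therefore contributes at least $2^{\ell+1}$ distinct permutations of $[m]$ whose cycle lengths are all $\ge 3$; since there are only $m!$ permutations of $[m]$ in total, this gives $T_{>\ell}\cdot 2^{\ell+1}\le m!$, i.e., $T_{>\ell}\le m!/2^{\ell+1}$. Second, because $\ell\ge 1$ the count $T_{\le\ell}$ already includes every Hamilton cycle on $[m]$, so $T_{\le\ell}\ge N_1=(m-1)!/2$ (using $m\ge 3$ so this is the standard count of Hamilton cycles). Dividing the two bounds gives exactly $T_{>\ell}/T_{\le\ell}\le m/2^{\ell}$. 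This is a direct sharpening of the old argument in the commented-out \texttt{factorCycleLem}, which compared $T_{>\ell}$ to the total $T_{\le\ell}+T_{>\ell}$ instead of to $T_{\le\ell}$ alone.

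For the ``in particular'' assertion, I would specialize to $\ell=1$: the main bound gives $\mathbb{P}[H^*\text{ is a cycle}]\ge 1/(1+m/2)=2/(m+2)$, and the elementary inequality $2/(m+2)\ge 1/m$ (equivalent to $m\ge 2$, which is covered by the hypothesis $m\ge 3$) finishes the claim. I do not anticipate any substantive obstacle; the only subtleties are using $\ell\ge 1$ to place Hamilton cycles inside $T_{\le\ell}$, and $m\ge 3$ to make $N_1=(m-1)!/2$ nonzero.
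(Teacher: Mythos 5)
Your proof is correct and follows essentially the same approach as the paper: bound the number of $2$-factors with more than $\ell$ cycles by $m!/2^{\ell+1}$ via the direction-assignment injection into permutations, lower-bound the favorable count by the $(m-1)!/2$ Hamilton cycles, and combine. The only cosmetic difference is that you phrase the estimate as a ratio bound $T_{>\ell}/T_{\le\ell}\le m/2^{\ell}$ before plugging in, whereas the paper substitutes the two bounds directly into the probability expression.
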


\begin{proof} 
Every 2-factor can be converted to the cycle decomposition
of a permutation of the vertices. Specifically, we assign each of its cycles a direction and then we have that the
permutation maps each vertex to the next vertex in its cycle. Therefore, every $2$-factor with more than $\ell$ cycles can be converted to a permutation of the vertices in at least $2^{\ell+1}$ different ways by assigning each of its cycles a direction, and no two different $2$-factors can yield the same permutation. Thus, there are at most $m!/2^{\ell+1}$ possible $2$-factors with more than $\ell$ cycles on $m$ vertices. On the other hand, 
%there are $(m-1)!/2$ possible cycles on $m$ vertices, so 
there are at least $(m-1)!/2$ possible $2$-factors on $m$ vertices that consists of a single cycle. Therefore, the probability that a random $2$-factor on $m$ vertices has at most $\ell$ cycles  is at least 
$$
\frac{(m-1)!/2}{m!/2^{\ell+1} +(m-1)!/2 }
=\frac{1}{1+m2^{-\ell} }.
$$
When $\ell=1$, this probability is $1/(1+m/2) \ge 1/m$ as $m \ge 2.$
\end{proof}

% \begin{lemma}
% Let $H$ be a 
% random $2$-factor on $m$ vertices. With probability at least $1/m$, $H$ is a cycle.   
% \end{lemma}
\begin{comment}
\begin{proof}
There are $m!$ possible directed cycles with starting points on $m$ vertices, so there are $(m-1)!/2$ possible cycles on $m$ vertices. Meanwhile, every possible $2$-factor can be converted to the cycle decomposition of a permutation of the vertices. Specifically, we assign each of its cycles a direction and then we have the permutation map each vertex to the next vertex in its cycle. 
%\julia{It sounds like you are saying that each cycle has at least two direction choices, but I think you mean that the overall number of choices is at least 2, since there is at least one cycle and each cycle has two choices.} 
Since each $2$-factor has at least one cycle and each cycle has two choices of directions, 
%There are always at least $2$ possible choices for the direction of each cycle, so  
it follows that each $2$-factor can be mapped to at least $2$ permutations. Moreover, no two different $2$-factors can yield the same permutation. 
% \dana{I couldn't quite follow this counting argument. How are the permutations generated?}\colin{If I assign the cycles directions then I get a graph where every vertex has outdegree $1$ and indegree $1$. That means that I can view it as a permutation that maps every vertex to the next vertex in its cycle.}
% \jx{OK. To be precise, I think we can choose a direction for each cycle, so each 2-factor can be mapped to at least $2$ distinct permutations. 
% Also, different 2-factor will be mapped to different permutations. This shows that the number of $2$-factors is at most $n!/2.$  }
So, the total number of $2$-factors on $m$ vertices is at most $m!/2$. Therefore, at least $1/m$ fraction of the $2$-factors on $m$ vertices are cycles. 
\end{proof}
\end{comment}

\begin{corollary}
\label{cor:2_factor_cycle}
% Let $n\ge m$ be positive integers and $p\in [0,1]$. Next, let $P_1$ be the probability distribution on graphs $G$ and subsets of their edges $\beta$ corresponding to selecting a random subset of vertices of size $m$, $V$, then letting $\beta$ be a random $2$-factor on $V$, and then letting $G$ be a graph that contains $\beta$ and has an edge independently added between each other pair of vertices with probability $p$. Then, let $P_2$ be the probability distribution that is like $P_1$ except that $\beta$ is a random cycle containing all of the vertices in $V$. 
Let $P_1$ denote the joint distribution of $(H^*,G)$ generated from the planted $\mathcal{G}(n,\lambda,\delta)$ model. Let $P_2$ denote the distribution of $(H^*,G)$ conditional on $H^*$ being a cycle. Then for any event $E$ defined in terms of $(H^*,G)$, it holds that $P_2[E]\le \delta n \cdot P_1[E]$
\end{corollary}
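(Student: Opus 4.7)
The plan is to derive the corollary as a short consequence of Lemma~\ref{lmm:factorCycleLem}. Under $P_1$, the marginal of $H^*$ is the uniform distribution on $2$-factors on $\delta n$ randomly chosen vertices, and by Lemma~\ref{lmm:factorCycleLem} applied with $m = \delta n$ (assuming $\delta n \ge 3$; the small-$n$ cases are trivial), the conditional probability that $H^*$ is a single cycle given the choice of vertex subset is at least $1/(\delta n)$. Since this lower bound holds conditionally on every choice of vertex subset, averaging gives $P_1[H^* \text{ is a cycle}] \ge 1/(\delta n)$.

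Now observe that $P_2$ is by definition $P_1(\cdot \mid H^* \text{ is a cycle})$, since conditioning on $H^*$ being a cycle leaves the conditional distribution of $G$ given $H^*$ unchanged (the background edges are sampled independently given $H^*$, regardless of whether $H^*$ happens to be a single cycle or not). Therefore, for any event $E$,
\[
P_2[E] \;=\; \frac{P_1[E \cap \{H^* \text{ is a cycle}\}]}{P_1[H^* \text{ is a cycle}]} \;\le\; \frac{P_1[E]}{1/(\delta n)} \;=\; \delta n \cdot P_1[E],
\]
which is the desired inequality.

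There is essentially no obstacle here; the only point worth verifying carefully is that $P_2$ really is $P_1$ conditioned on $\{H^* \text{ is a cycle}\}$, which follows from the hierarchical structure of the model (choose vertex subset, choose $2$-factor uniformly, then sample background edges independently). This is what lets us use the implication ``anything that holds with sufficiently high probability under $P_1$ carries over to a planted cycle,'' invoked earlier in Remark~\ref{remark:single-cycle} to transfer the impossibility result of Theorem~\ref{thm:impossibility-delta} from the $2$-factor model to the single-cycle model at a cost of a $\delta n$ factor in the failure probability.
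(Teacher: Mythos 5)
Your proof is correct and follows essentially the same route as the paper: both invoke Lemma~\ref{lmm:factorCycleLem} to get $P_1[H^*\text{ is a cycle}] \ge 1/(\delta n)$ and then apply Bayes/conditioning to conclude $P_2[E] \le \delta n\cdot P_1[E]$. You merely rearrange the inequality chain (starting from $P_2[E]$ rather than $P_1[E]$) and add a brief check that $P_2$ equals $P_1$ conditioned on the cycle event, which the paper asserts without elaboration.
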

% \jx{Rewrite the statement to be 
% Let $\mathbb{P}_1$ be the joint distribution of $H^*$ and $G$ under the planted $(n,m,2)$ model.
% Let $\mathbb{P}_2$ be the joint distribution of $H^*$
% and $G$ conditional on $H^*$ is a cycle. 
% Then for any $E$ defined in terms of $(H^*,G)$,  $\mathbb{P}_{2}[E]\le m\cdot \mathbb{P}_{1}[E]$
% }
\begin{proof}
Let $C$ be the event that $H^*$ is a cycle, and note that $P_1$ conditioned on $C$ is $P_2$. Therefore,
\begin{align*}
P_1[E]\ge P_1[E,C]=P_1[E|C]\cdot P_1[C]=P_2[E]\cdot P_1[C] \ge P_2[E]/(\delta n),
\end{align*}
where the last inequality holds by~\prettyref{lmm:factorCycleLem}.
\end{proof}
%In view of~\prettyref{cor:2_factor_cycle}, any algorithm that recovers a planted $2$-factor on $\delta n$ vertices with probability $1-o(1/n)$ also recovers a planted cycle on $m$ vertices with probability $1-o(1)$, and if no algorithm recovers a planted $2$-factor on $\delta n$ vertices with probability $\Omega(1/n)$ then no algorithm recovers a planted cycle on $m$ vertices with probability $\Omega(1)$.
%\julia{I'm not sure if the above observations tie together with our results on recovery in the $G(n,\delta, \lambda_n)$ problem. For example, the exact recovery guarantee is whp (so $1-o(1))$). Similarly, the almost exact recovery guarantee is stated in terms of expectation.}

The impossibility result follows immediately.
\begin{lemma}\label{lemma:impossibility-cycle}
Theorem \ref{thm:impossibility-delta} continues to hold when $H^*$ is a planted cycle on $\delta n$ vertices.
\end{lemma}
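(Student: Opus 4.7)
The plan is to leverage Corollary~\ref{cor:2_factor_cycle} to transfer the impossibility bound from the random $2$-factor setting (Theorem~\ref{thm:impossibility-delta}) to the planted single-cycle setting, with only a polynomial factor loss in the failure probability—which is absorbed by the super-polynomially small bound $n^{-\Omega(\log n)}$ obtained in Theorem~\ref{thm:impossibility-delta}.

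More concretely, fix any estimator $\hat{H} = \hat{H}(G)$, which is a (possibly randomized) function of the observation $G$ alone, and let
\[
E \;\triangleq\; \bigl\{(H^*,G)\,:\, \ell(H^*, \hat{H}(G)) < \epsilon/2 \bigr\},
\]
where $\epsilon > 0$ is the constant from Theorem~\ref{thm:impossibility-delta}. Let $P_1$ denote the law of $(H^*,G)$ under the planted $2$-factor model $\mathcal{G}(n,\lambda,\delta)$, and let $P_2$ denote its law conditional on $H^*$ being a single cycle on $\delta n$ vertices. By Theorem~\ref{thm:impossibility-delta}, we have $P_1[E] \le n^{-\Omega(\log n)}$. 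Since $E$ is an event defined in terms of $(H^*, G)$, Corollary~\ref{cor:2_factor_cycle} applies and yields
\[
P_2[E] \;\le\; \delta n \cdot P_1[E] \;\le\; \delta n \cdot n^{-\Omega(\log n)} \;=\; n^{-\Omega(\log n)}.
\]
Taking complements gives $P_2\bigl[\ell(H^*,\hat H)\ge \epsilon/2\bigr] \ge 1 - n^{-\Omega(\log n)}$, matching the conclusion~\eqref{eq:error_lower_bound_prob} in Theorem~\ref{thm:impossibility-delta}.

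The expectation bound then follows exactly as in the proof of Theorem~\ref{thm:impossibility-delta}: for all sufficiently large $n$,
\[
\mathbb{E}_{P_2}\bigl[\ell(H^*,\hat H)\bigr] \;\ge\; \frac{\epsilon}{2} \cdot P_2\bigl[\ell(H^*,\hat H)\ge \epsilon/2\bigr] \;\ge\; \frac{\epsilon}{2}\bigl(1 - n^{-\Omega(\log n)}\bigr) \;\ge\; \frac{\epsilon}{4}.
\]
There is essentially no obstacle here: the only thing to check is that the event $E$ depends on $(H^*,G)$ in a way admissible by Corollary~\ref{cor:2_factor_cycle}, which is immediate because $\hat{H}$ is a function of $G$ only (and any external randomness used by $\hat{H}$ can be included on the joint probability space without affecting the measure-change argument, since independent randomness factors out identically under $P_1$ and $P_2$). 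The polynomial Radon–Nikodym ratio $\delta n$ from the corollary is harmless precisely because the impossibility bound from Theorem~\ref{thm:impossibility-delta} is quasi-polynomially small, which is why that theorem was stated with the stronger $n^{-\Omega(\log n)}$ rate rather than merely $o(1)$.
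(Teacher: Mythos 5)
Your proposal is correct and follows essentially the same route as the paper's own (very terse) proof: define the event $E = \{\ell(H^*,\hat H) < \epsilon/2\}$, apply Theorem~\ref{thm:impossibility-delta} to bound $P_1[E]$, then transfer to the conditional law via Corollary~\ref{cor:2_factor_cycle}, absorbing the factor $\delta n$ into the $n^{-\Omega(\log n)}$ rate. Your additional remarks on randomized estimators and on deriving the expectation bound are correct elaborations of steps the paper leaves implicit.
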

\begin{proof}
Let $E = \{\ell(H^*, \hat{H}) < \epsilon'\}$. Then Theorem \ref{thm:impossibility-delta} and Corollary \ref{cor:2_factor_cycle} imply that $\mathbb{P}(E) \le \delta n\cdot n^{-\Omega(\log n)} = n^{-\Omega(\log n)}$. 
%It follows that
%\begin{align*}
%\mathbb{P}(E^c) = \mathbb{P}\left(\ell(H^*, \hat{H}) \geq \epsilon' \right) \ge 1 -e^{-\Omega(n)}.
% \end{align*}
\end{proof}

% It is also worth noting that a random $2$-factor typically does not consist of too many cycles, as formalized in the following: 

\section*{Acknowledgement}
J. Gaudio is supported in part by an NSF CAREER award CCF-2440539. J. Xu is supported in part by an NSF CAREER award CCF-2144593. 

\bibliography{references}
\bibliographystyle{plain}

\end{document}